\documentclass[reqno,11pt,a4paper]{amsart}

\usepackage{amssymb,amsmath,amsfonts,amssymb}
\usepackage{hyperref}
\usepackage{enumerate}
\usepackage{dsfont}
\usepackage{graphicx}
\usepackage{verbatim}
\usepackage{array}
\usepackage{booktabs}
\usepackage{tabularx}
-\marginparwidth \oddsidemargin -9mm \evensidemargin \oddsidemargin
\usepackage{latexsym}
\usepackage{color}
\advance\hoffset by 5mm

\def\@abssec#1{\vspace{.05in}\footnotesize \parindent .2in
{\bf #1. }\ignorespaces}

\newtheorem{theorem}{Theorem}[section]

\newtheorem{lemma}[theorem]{Lemma}
\newtheorem{proposition}[theorem]{Proposition}
\newtheorem{corollary}[theorem]{Corollary}
\newtheorem{definition}[theorem]{Definition}
\newtheorem{remark}[theorem]{Remark}

\newcommand{\R}{\ensuremath{\mathbb{R}}}
\newcommand{\RR}{\ensuremath{\mathcal{R}}}
\newcommand{\WW}{\ensuremath{\mathcal{W}}}

\newcommand{\Z}{\ensuremath{\mathbb{Z}}}
\newcommand{\NN}{\ensuremath{\mathbb{N}}}

\newcommand{\TT}{\ensuremath{\mathbb{T}}}

\newcommand{\dd}{\mathrm{d}}

\renewcommand{\div}{{\rm div\,}}
\def \p {\partial}

\allowdisplaybreaks \numberwithin{equation}{section}

\begin{document}
\title[$W^{k,\infty}$ and $C^{k,\gamma}$ regularity of temperature patches for 2D Boussinesq system]{Global regularity and infinite Prandtl-number limit for 2D Boussinesq temperature patches in $W^{k,\infty}$ and $C^{k,\gamma}$: lower-order case}
\author{Omar Lazar}
\address{Department of mathematics and statistics, American University of Sharjah, Sharjah, UAE}
\email{olazar@aus.edu}
\author{Liutang Xue}
\address{School of Mathematical Sciences, Laboratory of Mathematics and Complex Systems (MOE),  Beijing Normal University, Beijing 100875, P.R. China}
\email{xuelt@bnu.edu.cn}
\author{Jiakun Yang}
\address{School of Mathematical Sciences, Laboratory of Mathematics and Complex Systems (MOE),  Beijing Normal University, Beijing 100875, P.R. China}
\email{yangjk@mail.bnu.edu.cn}

\thanks{\textit{Acknowledgment.} 
L. Xue is supported by the National Natural Science
Foundation of China (Grant Nos. 12271045, 12571245, and 12671270).
J. Yang is supported by the China Scholarship Council
(Scholarship No. 202506040134).
}

\subjclass[2020]{Primary 35Q35, 35R35;
Secondary 35B65, 35B25, 35R11}

\renewcommand{\subjclassname}{\textup{2020} Mathematics Subject Classification}

\keywords{Boussinesq system,
temperature patches,
boundary regularity,
striated regularity,
infinite Prandtl number limit}

\begin{abstract}
We study  global regularity and the infinite Prandtl number
limit of temperature patches for the two-dimensional
non-diffusive Boussinesq system on the whole space,
with velocity dissipation $\Lambda^{2\alpha}$,
$\frac12<\alpha\leq1$.
For bounded and integrable initial temperatures
and divergence-free initial velocities in $W^{1,p}$,
$2\leq p<\infty$, we establish global well-posedness.
Under suitable profile and striated regularity assumptions,
we prove global persistence of $C^{1,\gamma}$ and
$C^{2,\gamma}$ boundary regularity for temperature patches
with possibly nonconstant profiles, for every $0<\gamma\leq1$.
These results extend the previously established fractional
range beyond $\gamma=2\alpha-1$ and include the Lipschitz
endpoints $W^{2,\infty}$ and $W^{3,\infty}$.
Persistence at these endpoints contrasts with
the $W^{2,\infty}$ ill-posedness established
by Kiselev and Luo~\cite{KL23}
for two-dimensional Euler vortex patches.
For $\frac12<\alpha<1$, our $W^{2,\infty}$ persistence
result also answers the curvature question raised
by Khor and Xu~\cite{KX22} 
for initial boundaries
with only $W^{2,\infty}$ regularity.

The velocity gradient and boundary regularity estimates
are uniform in $\mathrm{Pr}\in[1,\infty)$
on each fixed finite time interval,
including $\alpha=1$ on the whole space.
We also obtain uniform global $L^r$ velocity bounds
for suitable finite exponents $r$ when $\alpha<1$,
and at $\alpha=1$ for zero-mean initial temperatures
with finite first absolute moment.
At $\alpha=1$, such uniform bounds fail for compactly supported
initial temperatures with nonzero mean.
For temperature patches, we justify the infinite
Prandtl number limit in the original frame
when $\alpha<1$ or the initial temperature has zero mean,
and in a moving frame for nonzero-mean data at $\alpha=1$.
In each case, the solutions converge
to the unique normalized global weak solution
of the (fractional) Stokes-transport system.
The limiting temperature retains its patch structure
and full boundary regularity, including both
Lipschitz endpoints.
For $\alpha=1$, this limiting argument provides
an indirect proof of Grayer II's H\"older boundary
persistence results~\cite{Gray23}
and extends them to the $W^{3,\infty}$ endpoint.
\end{abstract}

\maketitle
\tableofcontents
\vspace{-1cm}
\addtocontents{toc}{\protect\setcounter{tocdepth}{1}}

\section{Introduction}
\subsection{The problem and the main objectives}
We study the Cauchy problem for the 2D
non-diffusive Boussinesq system with velocity dissipation:
\begin{equation}\tag{B$_\alpha$}\label{BoussEq}
  \left\{
  \begin{aligned}
    \partial_t\theta+u\cdot\nabla\theta
    &=0,
    \qquad (t,x)\in\mathbb{R}_+\times\mathbb{R}^2,\\
    \frac{1}{\mathrm{Pr}}
    \bigl(\partial_tu+u\cdot\nabla u\bigr)
    +\nu\Lambda^{2\alpha}u
    &=-\nabla p+\theta e_2,\\
    \nabla\cdot u&=0,\\
    (u,\theta)|_{t=0}&=(u_0,\theta_0),
  \end{aligned}
  \right.
\end{equation}
where $u=(u_1,u_2)^T$, $p$, and $\theta$ denote the velocity,
pressure, and temperature, respectively;
$e_2=(0,1)^T$, $\nu\geq 0$ is the kinematic viscosity,
and $\mathrm{Pr}>0$ is the dimensionless Prandtl number.
The dissipation operator
$\Lambda^{2\alpha}=(-\Delta)^\alpha$, with $0<\alpha\leq1$,
is defined by the Fourier multiplier $\widehat{\Lambda^{2\alpha}f}(\xi)
  =|\xi|^{2\alpha}\widehat f(\xi)$. 

For $\nu>0$ and $\alpha=1$, the system is a classical model
of natural convection in geophysical fluid
dynamics~\cite{AJM03,JP1987}.
In the absence of thermal diffusion, the temperature
is transported by the flow and drives the velocity
through buoyancy.
The large Prandtl number regime is also motivated by
the study of Rayleigh-B\'enard convection~\cite{CD1999}.
For $\nu>0$ and $0<\alpha<1$, fractional dissipation
provides weaker velocity regularization than
the classical Laplacian.

The inviscid system, obtained by setting $\nu=0$,
shares a close structural analogy with the 3D axisymmetric Euler equations with swirl, particularly away from the symmetry axis~\cite{MB02}.
Finite-time singularity formation has been established
under various regularity and domain assumptions
in~\cite{EJ20,CH21,CH23a,CH25,EP23};
see also~\cite{CLMZ25} for a forced model.

In this paper, we study the persistence of temperature
interfaces and the infinite Prandtl number limit.
We fix $\nu=1$ and work in the subcritical dissipative range
\[
  \frac12<\alpha\leq1,
  \qquad
  \varepsilon:=\frac{1}{\mathrm{Pr}}\in(0,1].
\]

More precisely, we consider temperature patches
with possibly nonconstant profiles,
also referred to as temperature fronts~\cite{Gil82,AJM03}.
Their initial data take the form 
\[
  \theta_0(x)=\bar\theta_0(x)\mathds{1}_{D_0}(x),
\]
where $D_0\subset\mathbb R^2$ is a bounded simply connected
domain and $\bar\theta_0$ is a bounded profile on
$\overline{D_0}$.
Since the temperature is transported without diffusion,
a sufficiently regular velocity preserves the patch structure,
transporting both the profile and the domain by the fluid flow.
The Boussinesq temperature patch problem asks whether
the initial regularity of the patch boundary persists
for all time.
In this paper, we establish the global persistence of
$C^{1,\gamma}$ and $C^{2,\gamma}$ boundary regularity
for every $0<\gamma\leq1$, under suitable assumptions
on the initial profile and velocity.
For boundary parametrizations, $C^{k,1}$ is equivalently described by $W^{k+1,\infty}$; 
thus these endpoints correspond to $W^{2,\infty}$ and $W^{3,\infty}$ boundary regularity.
These endpoint persistence results contrast with
the behavior of 2D Euler vortex patches:
Kiselev and Luo~\cite{KL23} constructed
$C^2$ initial patch boundaries whose curvature
becomes unbounded immediately.
See Table~\ref{tab:Euler-Bouss-subcritical}
for a comparison.

A second objective is to justify the infinite Prandtl
number limit on $\mathbb R^2$.
A central issue is the control of the low-frequency velocity.
Uniform global $L^Q$ bounds allow us to work
in the original frame when $\alpha<1$
and for zero-mean temperature patches at $\alpha=1$.
For nonzero-mean patches at $\alpha=1$,
we instead use time-dependent translations
to obtain uniform local velocity bounds
and fix the normalization of the limiting Stokes velocity.
The uniform striated estimates then allow us
to transfer the boundary regularity to the limit.

As a foundation for these results, we establish
global well-posedness for $\theta_0\in L^1\cap L^\infty$
and divergence-free $u_0\in W^{1,p}$,
$2\leq p<\infty$, together with velocity gradient
and striated estimates uniform in $\mathrm{Pr}$.
Throughout, this uniformity is understood
on each fixed interval $[0,T]$,
with constants allowed to depend on $T$, $\alpha$,
the regularity parameters,
and the prescribed bounds on the initial data.

\subsection{Global well-posedness and uniform estimates}
In the full Laplacian case $\alpha=1$ with $\mathrm{Pr}=1$,
Chae~\cite{DC06} and, independently, Hou and Li~\cite{HL05}
established global well-posedness for smooth initial data,
answering Problem~3 posed by Moffatt~\cite{Moff01}.
The regularity assumptions were subsequently relaxed.
Abidi and Hmidi~\cite{AH07} proved global well-posedness for
$\theta_0\in B^0_{2,1}$ and
$u_0\in L^2\cap B^{-1}_{\infty,1}$.
Hmidi and Keraani~\cite{HK07} constructed global weak
solutions for $\theta_0\in L^2$ and $u_0\in H^s$,
$0\leq s<2$, whose uniqueness was later established
by Danchin and Pa\"icu~\cite{DP08}.
Further results on the persistance of Sobolev regularity
can be found in~\cite{HKZ15}.

For the critical fractional dissipation
$\alpha=\frac12$ with $\mathrm{Pr}=1$,
Hmidi, Keraani, and Rousset~\cite{HKR10}
proved global well-posedness for
$\theta_0\in L^2\cap B^0_{\infty,1}$ and
$u_0\in H^1\cap\dot W^{1,p}$ for some $p>2$.

For $\frac12<\alpha<1$ and $\mathrm{Pr}=1$,
Khor and Xu~\cite{KX22} established global well-posedness
for $\theta_0\in L^1\cap L^\infty$ and
$u_0\in H^1\cap W^{1,p}$ for some $p>2$.

Our first theorem covers the full subcritical range
$\frac12<\alpha\leq1$ and allows
$u_0\in W^{1,p}$ for any $2\leq p<\infty$,
including $p=2$ and requiring no additional $H^1$
assumption when $p>2$.
Moreover, it provides estimates for the velocity gradient
that are uniform in $\mathrm{Pr}\in[1,\infty)$,
as well as uniform global $L^Q$ velocity bounds
for suitable finite exponents $Q$ when $\alpha<1$,
and at $\alpha=1$ when $\theta_0$ has zero integral
and a finite first absolute moment.
The Besov and Chemin-Lerner notation used below
is recalled in Section~\ref{sec:tools}.
\begin{theorem}\label{thm:main1}
Let $\varepsilon =\frac{1}{\mathrm{Pr}} \in (0,1]$, $\alpha\in (\frac{1}{2},1]$.
Suppose that 
\begin{align*}
  \theta_0\in L^1\cap L^\infty(\mathbb{R}^2),\qquad u_0\in W^{1,p}(\mathbb{R}^2)\;\; \textrm{for some}
  \;\;2\leq p<\infty,\qquad \mathrm{div}\,u_0=0.
\end{align*}
Then there exists a unique global weak solution $(u,\theta)$ in the sense of Definition \ref{defn:weak-Bouss} 
to the 2D Boussinesq-Navier-Stokes system \eqref{BoussEq} 
such that, for every $T>0$,
\begin{align*}
  \theta\in L^\infty([0,T],L^1\cap L^\infty),\quad u\in L^\infty (0,T;W^{1,p})\cap \widetilde{L}^1(0,T;B^{2\alpha}_{\infty,\infty}).
\end{align*}
Moreover, $\nabla u$ belongs to the following spaces,
with norms bounded independently of $\varepsilon$:
\[
\nabla u\in
\begin{cases}
  L^\infty([0,T],L^2)
  \cap\widetilde L^1([0,T],B^{2\alpha-1}_{\infty,\infty}),
  & \text{if }\frac12<\alpha<1,\quad p=2,\\
  \widetilde L^1([0,T],B^1_{\infty,\infty}),
  & \text{if }\alpha=1,\quad p=2,\\
  L^\infty([0,T],L^p)
  \cap L^1([0,T],B^{2\alpha-1}_{\infty,\infty}),
  & \text{if }\frac12<\alpha\leq1,\quad 2<p<\infty.
\end{cases}
\]
In addition, we also have, with bounds independent of
$\varepsilon\in(0,1]$,
\begin{equation}\label{eq:u-in-LQ-unif}
  u\in L^\infty([0,T],L^Q),
\end{equation}
provided that either
\[
  \tfrac12<\alpha<1,
  \qquad
  \max\left\{p,\tfrac1{1-\alpha}\right\}<Q<\infty,
\]
or
\[
  \alpha=1,\qquad p<Q<\infty,\qquad
  \int_{\mathbb R^2}\theta_0\,\dd x=0,\qquad
  \int_{\mathbb R^2}|x|\,|\theta_0(x)|\,\dd x<\infty.
\]
In particular, if $p>2$, then $u\in L^1([0,T],B^{2\alpha}_{\infty,\infty})$.
\end{theorem}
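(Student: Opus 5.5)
Our plan has four steps: a priori estimates uniform in $\varepsilon$, existence by regularization and compactness, uniqueness by a Lagrangian/Wasserstein-type argument, and finally the low-frequency $L^Q$ bounds, which we expect to be the main obstacle. Throughout we write the momentum equation as
\[
\partial_t u + \tfrac1\varepsilon \Lambda^{2\alpha}u = -u\cdot\nabla u - \nabla \tilde p + \tfrac1\varepsilon \theta e_2
\]
and view it as a fractional heat equation with large rate $\varepsilon^{-1}$ and forcing $\varepsilon^{-1}\mathbb P(\theta e_2)$. Transport gives $\|\theta(t)\|_{L^1\cap L^\infty}\le\|\theta_0\|_{L^1\cap L^\infty}$ for free. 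The key is to use the vorticity $\omega=\partial_1u_2-\partial_2u_1$:
\[
\partial_t\omega+u\cdot\nabla\omega+\tfrac1\varepsilon\Lambda^{2\alpha}\omega=\tfrac1\varepsilon\partial_1\theta .
\]
Duhamel's formula with the semigroup $e^{-t\Lambda^{2\alpha}/\varepsilon}$ handles the linear part uniformly in $\varepsilon$. The reason is that $\varepsilon^{-1}\int_0^t e^{-(t-s)\Lambda^{2\alpha}/\varepsilon}\,ds\,\Lambda^{2\alpha}$ is bounded uniformly, which yields $\Lambda^{2\alpha-1}\omega$-type control in $\widetilde L^1_T B^{0}_{\infty,\infty}$ of order $\|\theta\|_{L^\infty}$, up to a $T$ factor. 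This is a smoothing estimate whose constants do not degenerate as $\varepsilon\to0$: the time integral of $\varepsilon^{-1}e^{-(t-s)2^{2j\alpha}/\varepsilon}$ equals $2^{-2j\alpha}$. We estimate dyadic blocks $\Delta_j\omega$ in $L^p$ with the standard lower bound $\int |\Delta_j\omega|^{p-2}\Delta_j\omega\,\Lambda^{2\alpha}\Delta_j\omega\gtrsim 2^{2j\alpha}\|\Delta_j\omega\|_{L^p}^p$, treating commutators $[\Delta_j,u\cdot\nabla]\omega$ as usual. This gives $\omega\in L^\infty_TL^p$ (the $L^2$ case for $p=2$) and $\omega\in\widetilde L^1_TB^{2\alpha-1}_{\infty,\infty}$ up to low frequencies, and these are uniform in $\varepsilon\le1$ because the factor $\varepsilon^{-1}$ multiplies both dissipation and forcing. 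To close the commutator estimates we first need $\|\nabla u\|_{L^1_TL^\infty}$. Following Hmidi--Keraani--Rousset, we would use $2\alpha-1>0$ and the logarithmic interpolation
\[
\|\nabla u\|_{L^\infty}\lesssim\|\omega\|_{L^p}+\|\omega\|_{B^{0}_{\infty,\infty}}\log\bigl(e+\|\omega\|_{B^{2\alpha-1}_{\infty,\infty}}\bigr),
\]
together with Gronwall. For $p>2$ this upgrades to $u\in L^1_TB^{2\alpha}_{\infty,\infty}$. At $p=2$ we only reach the Chemin--Lerner space, since $\omega\in L^2$ does not embed into $B^0_{\infty,\infty}$ at low frequency; we handle that case with $\widetilde L^1$ and low-frequency $L^2$ bounds.

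Existence follows by mollifying the data, solving the smooth problem globally (globality comes from the above bounds), and passing to the limit by compactness (Aubin--Lions on $u$ and weak-$*$ convergence of $\theta$). Uniqueness we would prove in the class $\nabla u\in L^1_TL^\infty$ when $p>2$, adapting Danchin--Pa\"icu. For $p=2$ we would use $\nabla u\in\widetilde L^1_T B^{1}_{\infty,\infty}$-type log-Lipschitz control. The method is to estimate $\delta\theta$ in $\dot H^{-1}$ (or in a Wasserstein/Lagrangian form) jointly with $\delta u$ in $L^2$, closing by an Osgood lemma.

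The uniform $L^Q$ bound is what we expect to be the hardest step. Here the velocity at low frequency is essentially $\varepsilon^{-1}\int_0^t e^{-(t-s)\Lambda^{2\alpha}/\varepsilon}\mathbb P(\theta e_2)\,ds$, which is close to $\Lambda^{-2\alpha}\mathbb P(\theta e_2)$. For $\alpha<1$ the kernel of $\Lambda^{-2\alpha}\mathbb P$ is homogeneous of degree $2\alpha-2>-2$, so Hardy--Littlewood--Sobolev maps $L^1\cap L^\infty$ into $L^Q$ for $Q>1/(1-\alpha)$. The integral $\varepsilon^{-1}\int_0^t e^{-\cdots}$ is bounded by $\Lambda^{-2\alpha}$ uniformly, and the nonlinear term $u\cdot\nabla u=\nabla\cdot(u\otimes u)$ is treated with the a priori $W^{1,p}$ bounds and interpolation, requiring $Q>p$. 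At $\alpha=1$ the kernel is logarithmic and fails to be in $L^Q$. In that case we would write $\theta=\nabla\cdot\Phi$ with $\Phi\in L^1$, which is possible because $\theta$ has zero mean and the moment $\int|x||\theta|$ is finite. The moment propagates along the flow by $\frac{d}{dt}\int|x||\theta|\le\|u\|_{L^Q}$-type bounds; this creates a feedback that we would close by a continuity/Gronwall argument on $[0,T]$. After that, $\Lambda^{-2}\mathbb P\nabla\cdot$ has a kernel of degree $-1$, so HLS gives the $L^Q$ bound.
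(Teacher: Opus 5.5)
There is a genuine gap in the uniform-in-$\varepsilon$ a priori estimates, and it sits at low frequencies. Your Duhamel computation $\varepsilon^{-1}\int_0^t e^{-(t-s)2^{2j\alpha}/\varepsilon}\,\dd s\le 2^{-2j\alpha}$ is a property of the linear semigroup. Once $u\cdot\nabla\omega$ is present you must run energy estimates on $\Delta_j\omega$. These capture the balance between dissipation and the forcing $\varepsilon^{-1}\partial_1\theta$ only for $j\geq0$, where $\int|\Delta_j\omega|^{p-2}\Delta_j\omega\,\Lambda^{2\alpha}\Delta_j\omega\gtrsim 2^{2j\alpha}\|\Delta_j\omega\|_{L^p}^p$. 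For the block $\Delta_{-1}\omega$ there is no such lower bound, so the forcing $\varepsilon^{-1}\Delta_{-1}\partial_1\theta$ gives a bound of size $t/\varepsilon$. The global energy estimate is no better: at $\alpha=1$, $p=2$ it gives $\|\omega(t)\|_{L^2}^2\le\|\omega_0\|_{L^2}^2+\varepsilon^{-1}t\|\theta_0\|_{L^2}^2$. You write ``up to low frequencies'' but never return to them.

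Your asserted uniform bound $\omega\in L^\infty_TL^2$ for $p=2$ also cannot hold at $\alpha=1$. Formally $\omega\to\mathcal R_{-1}\theta=\partial_1\Lambda^{-2}\theta$ as $\varepsilon\to0$. Near the origin its Fourier transform behaves like $i\xi_1|\xi|^{-2}\widehat\theta(0)$, which is not square integrable when $\int\theta_0\,\dd x\neq0$. This is why the theorem asserts no $L^\infty_TL^2$ bound for $\alpha=1$, $p=2$.

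The missing idea is the good unknown $\Gamma=\omega-\mathcal R_{1-2\alpha}\theta$, with $\mathcal R_{1-2\alpha}=\partial_1\Lambda^{-2\alpha}$. Since $\Lambda^{2\alpha}\mathcal R_{1-2\alpha}\theta=\partial_1\theta$, the $\varepsilon^{-1}$ forcing cancels at every frequency, including the lowest. Then $\Gamma$ solves \eqref{eq:Gamma-intr}, whose source $[\mathcal R_{1-2\alpha},u\cdot\nabla]\theta$ is of order one and is controlled by $\|\nabla u\|_{L^p}$ times norms of $\theta$ (Lemma~\ref{lem:Rbeta-cm}, via the Calder\'on and Coifman--Rochberg--Weiss commutator theorems). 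At $\alpha=1$, $p=2$ one also needs the splitting \eqref{eq:Gamma1}--\eqref{eq:Gamma2}, because $\Gamma_0=\omega_0-\mathcal R_{-1}\theta_0\notin L^2$ in general.

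The same issue breaks your closing step. Your Besov bounds come from transport--diffusion estimates whose commutators $[\Delta_j,u\cdot\nabla]\omega$ are controlled by $\|\nabla u\|_{L^\infty}$. They therefore carry a factor $\exp\bigl(C\int_0^t\|\nabla u\|_{L^\infty}\,\dd\tau\bigr)$. Logarithmic interpolation plus Gronwall closes only if the prefactor $\|\omega\|_{L^p}+\|\omega\|_{B^0_{\infty,\infty}}$ is bounded independently of the Lipschitz norm, and nothing in your proposal gives that uniformly in $\varepsilon$.

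The paper reverses the order:
\begin{enumerate}
\item It first proves a Lipschitz-free bound on $\|\Gamma\|_{L^p}$. This closes linearly by Gronwall because $\|\nabla u\|_{L^p}\lesssim\|\Gamma\|_{L^p}+\|\mathcal R_{1-2\alpha}\theta\|_{L^p}$, and the last term is controlled by $\|\theta_0\|_{L^1\cap L^\infty}$.
\item It then proves the smoothing estimate in $\widetilde L^1_TB^{2\alpha}_{p,\infty}$. The commutators are bounded via Lemma~\ref{lem:HKR} by $\|\nabla u\|_{L^p}\|\Gamma\|_{B^0_{\infty,\infty}}$. The factor $\|\Gamma\|_{B^0_{\infty,\infty}}$ is absorbed by interpolating between $L^1_TL^p$ and $\widetilde L^1_TB^{2\alpha}_{p,\infty}$ and applying Young's inequality.
\item The Lipschitz bound then follows by embedding; no logarithmic inequality is needed.
\end{enumerate}

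Two smaller points.
\begin{enumerate}
\item In the $L^Q$ step, bound the nonlinear term by $\|u\|_{L^Q}\|\nabla u\|_{L^\infty}$ and use the uniform Lipschitz bound in Gronwall, since $\|u\|_{W^{1,p}}$ is not uniform in $\varepsilon$. Your zero-mean and first-moment feedback argument at $\alpha=1$ is otherwise essentially the paper's.
\item For uniqueness at fixed $\varepsilon$ you already have full Lipschitz control even when $p=2$, because $\widetilde L^1_TB^{2\alpha}_{\infty,\infty}\hookrightarrow L^1_TB^1_{\infty,1}$. No log-Lipschitz argument is needed; the paper works in $B^0_{p,\infty}\times B^{-1}_{p,\infty}$ with Osgood's lemma.
\end{enumerate}
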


For families of initial data uniformly bounded in the spaces
specified in Theorem~\ref{thm:main1}, the estimates imply,
for every fixed $T>0$,
\[
  \sup_{0<\varepsilon\leq1}
  \int_0^T
  \Vert \nabla u^\varepsilon(t)\Vert _{L^\infty}\,\dd t
  <\infty.
\]
This bound controls the Lipschitz constants of the flow maps
and their inverses uniformly on $[0,T]$,
providing a starting point for the patch regularity analysis.

For $\frac12<\alpha<1$, \eqref{eq:u-in-LQ-unif}
provides uniform velocity bounds in
$L^\infty([0,T],L^Q)$
for every finite $Q>\max\{p,\frac1{1-\alpha}\}$.
At $\alpha=1$, the same conclusion holds for every finite
$Q>p$ if each $\theta_0^\varepsilon$ has zero mean
and the first absolute moments are uniformly bounded.

In contrast, Remark~\ref{rmk:nonuniform-Lp-alpha1}
shows that, at $\alpha=1$, if the initial temperatures
are supported
in a fixed ball and
$\int_{\mathbb R^2}\theta_0^\varepsilon\,\dd x\to M\ne0$,
then, for every $T>0$ and $1<r<\infty$,
\[
  \Vert u^\varepsilon\Vert _{L^\infty([0,T],L^r)}
  \geq c \ \varepsilon^{-1/r}
\]
for all sufficiently small $\varepsilon$ and where $c$ depends on $T$ and $r$.
The growth rate is optimal for every finite $r\geq p$.
This obstruction to uniform global velocity bounds
is closely related to the two-dimensional Stokes paradox:
the logarithmic growth of the Stokes kernel $G_1$
in \eqref{def:G-alp=1} produces a logarithmic far field
for the associated Stokes velocity when the total
temperature is nonzero;
see~\cite[Chapter~V]{Galdi11}.

This distinction guides our choice of frame
for the infinite Prandtl number limit of patches:
we use the original frame for $\frac12<\alpha<1$
and for zero-mean data at $\alpha=1$,
and a moving frame for nonzero-mean data at $\alpha=1$.


\subsection{Persistence of the patch boundary}\label{subsec:intr-patch-bdr}
The vortex patch problem for the 2D Euler equations is
a classical free-boundary problem in incompressible fluid
dynamics that has attracted the attention of many researchers.
A central question, raised in the 1980s, is whether
an initially smooth patch boundaries remain regular for all time.
Chemin~\cite{Chem93} resolved this question using
paradifferential calculus and striated estimates,
proving global persistence of $C^{k,\gamma}$ boundaries
for every integer $k\geq1$ and $0<\gamma<1$.
Bertozzi and Constantin~\cite{BerC} gave an alternative
proof based on geometric cancellation in singular integrals.
A presentation of this approach for $C^{k,\gamma}$ patches
can be found in Majda and Bertozzi~\cite[Section~8.3]{MB02};
see also~\cite{Serf94,Radu22} for related approaches.
At the integer endpoints, Kiselev and Luo~\cite{KL23}
constructed initially $C^2$ patches whose boundaries
instantaneously lose $W^{2,\infty}$ regularity.

For the 2D Boussinesq system \eqref{BoussEq} with $\alpha=1$ and
$\mathrm{Pr}=1$, Danchin and Zhang~\cite{DZ17}
proved global persistence of $C^{1,\gamma}$ boundary
regularity for $0<\gamma<1$.
Gancedo and Garc\'ia-Ju\'arez~\cite{GGJ17}
established persistence of $C^{1,\gamma}$, $W^{2,\infty}$ and
$C^{2,\gamma}$ boundaries by exploiting cancellations
in time-dependent singular integrals.
For non constante temperature fronts with $\alpha=1$,
Chae, Miao, and Xue~\cite{CMX22} developed higher-order
striated estimates allowing them to prove $C^{k,\gamma}$ persistence
for every integer $k\geq1$ and $0<\gamma<1$,
they also dealt with the $W^{2,\infty}$ endpoint.
For the same range of $k$ and $\gamma$,
our previous work~\cite{LXY25} established
$C^{k,\gamma}$ persistence on $\mathbb T^2$,
with bounds uniform in $\mathrm{Pr}\in[1,\infty)$
on each fixed finite time interval.
The corresponding uniform estimates on $\mathbb R^2$
in the case $\alpha=1$ were not obtained there.
For the 3D case, we refer the reader to
\cite{GGJ20,LLX24,MY26}.
Global persistence is compatible with growth of boundary
norms as $t\to\infty$. As a matter of fact, Park~\cite{Park24} constructed
a configuration of two disjoint temperature patches
and established algebraic lower bounds for the growth
of their curvature and perimeter.

For $\frac12<\alpha<1$ and $\mathrm{Pr}=1$,
Khor and Xu~\cite{KX22} proved global persistence
of $C^{1,\gamma}$ boundaries for
$0<\gamma\leq2\alpha-1$ and initial temperature
$\theta_0=\mathds{1}_{D_0}$.
They also asked whether curvature can be controlled
in this fractional regime, as in the classical case
$\alpha=1$.
Our previous work~\cite{LXY25} established global
$C^{k,\gamma}$ persistence for every integer $k\geq1$
and $0<\gamma\leq2\alpha-1$, 
with estimates uniform in
$\mathrm{Pr}$ on both $\mathbb R^2$ and $\mathbb T^2$.
Taking $k=2$ gives bounded curvature on every finite
time interval and thus answers this question
for $C^{2,\gamma}$ initial boundaries.
These results do not, however, establish global persistence
for initial boundaries with only $W^{2,\infty}$ regularity.

Here, we establish global $C^{k,\gamma}$ boundary persistence
for $k=1,2$, $\frac12<\alpha\leq1$, and $0<\gamma\leq1$,
under the corresponding profile assumptions
and weaker assumptions on the initial velocity.
These results extend the previously established fractional
range beyond $\gamma=2\alpha-1$ and include the
$W^{2,\infty}$ and $W^{3,\infty}$ endpoints.
In particular, the $W^{2,\infty}$ result answers
the curvature question raised in~\cite{KX22}
for initial boundaries with only $W^{2,\infty}$ regularity.
Crucially, all boundary estimates on $\mathbb R^2$ are uniform
in $\mathrm{Pr}$ on each fixed finite time interval.

We now specify the geometric setting for the results
on the persistence of boundary regularity.
By Theorem~\ref{thm:main1}, the velocity generates a unique flow
$X_t\colon\mathbb R^2\to\mathbb R^2$ satisfying
\begin{equation}\label{eq:flowmap}
  \frac{\dd}{\dd t}X_t(y)=u(t,X_t(y)),
  \qquad X_0(y)=y.
\end{equation}
For every $t\geq0$, $X_t$ is a bi-Lipschitz homeomorphism,
and the transport equation preserves the patch form,
with $D(t)=X_t(D_0)$.

We write $\partial D(t)\in C^{k,\gamma}$ if it admits
a one-to-one parametrization
$z(\cdot,t)\in C^{k,\gamma}(\mathbb S^1)$ satisfying
$\inf_{s\in\mathbb S^1}|\partial_s z(s,t)|>0$ with
$\mathbb S^1=\mathbb R/\mathbb Z$.
Boundary regularity bounds are understood modulo translations:
the notation
$\partial D\in L^\infty([0,T],C^{k,\gamma})$
means that the parametrizations can be chosen so that
\[
  \operatorname*{ess\,sup}_{0<t<T}
  \left(
    \Vert z(\cdot,t)-z(0,t)\Vert _{C^{k,\gamma}(\mathbb S^1)}
    +\frac{1}{
      \inf_{s\in\mathbb S^1}|\partial_s z(s,t)|}
  \right)<\infty.
\]
This convention separates boundary regularity
from the translational motion of the patch.

For $k\in\{1,2\}$, choose a level-set function
$\varphi_0\in C^{k,\gamma}(\mathbb R^2)$
if $0<\gamma<1$, or
$\varphi_0\in W^{k+1,\infty}(\mathbb R^2)$
if $\gamma=1$, such that
\[
  D_0=\{\varphi_0>0\},\qquad
  \partial D_0=\{\varphi_0=0\},\qquad
  \inf_{\partial D_0}|\nabla\varphi_0|>0.
\]
See, for example,~\cite[p.~919]{KRYZ16}.
We choose $\varphi_0$ to be a negative constant
outside a sufficiently large ball, so that
$W_0:=\nabla^\perp\varphi_0$ has compact support.
Throughout, we use $\nabla^\perp=(\partial_2,-\partial_1)$,
consistent with $\omega=\partial_1u_2-\partial_2u_1$ and
$u=\nabla^\perp\Lambda^{-2}\omega$.
We fix
$x_*\in\partial D_0$.
Since $W_0$ is continuous, tangent to $\partial D_0$,
and nowhere zero there, reparametrizing arclength
in the direction of $W_0$ gives a periodic curve
$\widetilde z_0$ satisfying
\[
  \frac{\dd}{\dd a}\widetilde z_0(a)
  =W_0(\widetilde z_0(a)),
  \qquad \widetilde z_0(0)=x_*.
\]
Its minimal period is
$P_0=\int_{\partial D_0}\frac{\dd\ell}{|W_0|}\in(0,\infty)$,
where $\dd\ell$ denotes arclength.
Thus $z_0(s):=\widetilde z_0(P_0s)$ is one-periodic
and satisfies $\partial_s z_0=P_0W_0\circ z_0$.
Replacing $\varphi_0$ by $P_0\varphi_0$
and retaining the notation $\varphi_0$ and $W_0$,
we obtain the normalized parametrization
\[
  z_0\colon\mathbb S^1\longrightarrow\partial D_0,
  \qquad \partial_s z_0=W_0\circ z_0.
\]

Our first result on the persistence of temperature patch
boundary regularity is the following.
\begin{theorem}\label{thm:C1}
Let $\varepsilon =\frac{1}{\mathrm{Pr}} \in (0,1]$, $\alpha\in (\frac{1}{2},1]$ and $\gamma\in(0,1]$.
Assume that the initial data satisfy the following conditions.
\begin{itemize}
\item 
The initial temperature has the patch form
\begin{align}\label{eq:the-data-patch}
  \theta_0(x) = \bar{\theta}_0(x) \mathbf{1}_{D_0}(x),
\end{align}
where 
$D_0\subset \mathbb{R}^2$ is a bounded simply connected domain with boundary 
$\partial D_0\in C^{1,\gamma}$, $\gamma\in(0,1]$. 
For $\alpha\in(\frac{1}{2},1)$, assume that
\begin{align*}
  \bar{\theta}_0(x) \in
  \begin{cases}
    L^\infty(\overline{D_0}),\quad & \textrm{if}\;\;\gamma\in(0,2\alpha-1],\\
    C^{\gamma+1-2\alpha}(\overline{D_0}),
    & \textrm{if}\;\;  \gamma\in(2\alpha-1,1),\\
    C^{\mu}(\overline{D_0}),\,\;\textrm{for some}\;\mu \in (2-2\alpha,1),
    & \textrm{if}\;\; \,\gamma=1,
  \end{cases}
\end{align*}
while for $\alpha=1$, assume that
\begin{align*}
  \bar{\theta}_0(x) \in
  \begin{cases}
    L^\infty(\overline{D_0}),\quad & \textrm{if}\;\;\gamma\in(0,1),\\
    C^{\mu}(\overline{D_0}),\,\;\textrm{for some}\;\mu \in (0,1),
    & \textrm{if}\;\; \,\gamma=1.
  \end{cases}
\end{align*}
\item 
The initial velocity $u_0$ satisfies $\operatorname{div}u_0=0$ and
\begin{align*}
   u_0\in 
   \begin{cases}
    W^{1,p}(\mathbb{R}^2), \, \textrm{for some}\,\,2\leq p<\infty,
      \quad & \textrm{if}\;\gamma\in(0,2\alpha-1), \\
     W^{1,p}(\mathbb{R}^2), \, \textrm{for some}\,\,\frac{2}{2\alpha-\gamma}<p<\infty,
     \quad & \textrm{if}\,\,\gamma\in[2\alpha-1,1].
   \end{cases}
 \end{align*}
\end{itemize}
Then the unique global solution $(u,\theta)$
to the 2D Boussinesq-Navier-Stokes system \eqref{BoussEq} satisfies
\begin{equation}\label{eq:the-patch-exp}
  \theta(t, x) = \bar{\theta}_0(X_t^{-1}(x)) \mathbf{1}_{D(t)}(x),\qquad D(t)=X_t(D_0),
\end{equation}
where $X_t$ is the flow map generated by $u$
and $X^{-1}_t$ is its inverse. Moreover, for every $T>0$,
\begin{equation}\label{eq:reg-parD(t)-1-0-main}
  \partial D(t) \in L^{\infty}\left(0, T ; C^{1,\gamma}\right)\;\; \textrm{uniformly in $\varepsilon \in (0,1]$}.
\end{equation}
\end{theorem}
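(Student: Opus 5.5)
The plan is the classical striated-regularity route of Chemin, carried out with constants uniform in $\varepsilon$. Set $W(t,x)=(\partial_{W_0}X_t)(X_t^{-1}(x))$. This is the pushforward of $W_0$, and it satisfies
\[
\partial_t W+u\cdot\nabla W=W\cdot\nabla u,\qquad \operatorname{div}W=0,\qquad W=\nabla^\perp\varphi,
\]
where $\varphi=\varphi_0\circ X_t^{-1}$. Then $z(s,t)=X_t(z_0(s))$ obeys $\partial_s z=W(t,z)$. Hence
\[
\partial D(t)\in L^\infty(0,T;C^{1,\gamma})
\]
follows once $W\in L^\infty(0,T;C^\gamma)$ (with $W^{1,\infty}$ replacing $C^{0,1}$ at $\gamma=1$) and $\inf_{s}|\partial_s z|\ge e^{-\int_0^T\|\nabla u\|_{L^\infty}}\inf|W_0|$. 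By Theorem~\ref{thm:main1}, $\int_0^T\|\nabla u^\varepsilon\|_{L^\infty}\,dt$ is bounded uniformly in $\varepsilon$, so the lower bound and the patch formula \eqref{eq:the-patch-exp} are immediate. The patch formula comes from the Lipschitz flow and uniqueness of the transport equation. It remains to bound $\|W\|_{C^\gamma}$ uniformly. Working with $z(\cdot,t)-z(0,t)$ removes the translation.

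\textbf{Step 1 (transport estimate).} The standard $C^\gamma$ estimate for the $W$-equation, with $0<\gamma<1$, gives
\[
\|W(t)\|_{C^\gamma}\le C\|W_0\|_{C^\gamma}+C\int_0^t\bigl(\|\nabla u\|_{L^\infty}\|W\|_{C^\gamma}+\|\partial_W u\|_{C^\gamma}\bigr)\,d\tau .
\]
At $\gamma=1$ we use $W^{1,\infty}$ and need $\|\nabla\partial_W u\|_{L^\infty}$. This is where an $L^1_t$ bound on $\partial_W u$ one order better than $\nabla u$ is needed. By Gronwall, everything reduces to controlling $\int_0^T\|\partial_W u\|_{C^\gamma}$, or $\int_0^T\|\partial_W u\|_{W^{1,\infty}}$ at $\gamma=1$, linearly in $\|W\|$.

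\textbf{Step 2 (striated estimate for the velocity).} Write the momentum equation as $\varepsilon\partial_t u+\Lambda^{2\alpha}u=\mathbb P(\theta e_2)-\varepsilon\mathbb P(u\cdot\nabla u)$. Apply $\partial_W$ and commute it with the material derivative. Since $\partial_W\theta=0$ for a patch with constant profile, or $\partial_W\theta=\partial_{W_0}\bar\theta_0\circ X_t^{-1}$ is a lower-order term in general, it is better to estimate the commutator
\[
\partial_W\Lambda^{-2\alpha}\mathbb P\theta=[\partial_W,\Lambda^{-2\alpha}\mathbb P]\theta+\Lambda^{-2\alpha}\mathbb P\operatorname{div}(W\theta).
\]
Here $\partial_W\theta=\operatorname{div}(W\theta)$ because $\operatorname{div}W=0$. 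For the term $\Lambda^{-2\alpha}\mathbb P\operatorname{div}(W\theta)$ with $\theta\in L^\infty$ only, we get $\partial_W u\in C^{2\alpha-1}$ through $\Lambda^{1-2\alpha}$ of an $L^\infty$ function. This explains the threshold $\gamma\le 2\alpha-1$.

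For $\gamma>2\alpha-1$ the extra regularity must come from the profile. Decompose $W\theta=W\,(\bar\theta_0\circ X_t^{-1})\mathbf 1_{D(t)}$ and use that $W$ is tangent to $\partial D(t)$. Then $\operatorname{div}(W\mathbf 1_{D})=\mathbf 1_D\operatorname{div}W=0$ distributionally, so $\partial_W\theta=\mathbf 1_{D(t)}\,\partial_W(\bar\theta_0\circ X_t^{-1})$. In the Besov space $B^{\gamma+1-2\alpha}_{\infty,\infty}$ this is handled using $\bar\theta_0\in C^{\gamma+1-2\alpha}$, transported by the bi-Lipschitz flow, and a paraproduct estimate for $\mathbf 1_D$ times a Hölder function in a space of negative regularity, via $\partial_W f$ with $f\in C^{s}$, which gives $C^{s-1}$. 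At $\gamma=1$ the condition $\mu>2-2\alpha$ gives the same gain with room to spare. The commutator $[\partial_W,\Lambda^{-2\alpha}\mathbb P]\theta$ is bounded by $\|W\|_{C^\gamma}\|\theta\|_{L^\infty\cap L^1}$ in $C^{\gamma+2\alpha-1}$, using the standard commutator lemma of Section~\ref{sec:tools}.

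The time-derivative part has to be handled uniformly in $\varepsilon$. Use the smoothing of $\varepsilon\partial_t+\Lambda^{2\alpha}$ in Chemin–Lerner spaces. The factor $\varepsilon$ cancels in the maximal regularity estimate: $\|f\|_{\widetilde L^1_T B^{s+2\alpha}}\lesssim\varepsilon\|f_0\|_{B^s}+\|g\|_{\widetilde L^1_T B^s}$ for $\varepsilon\partial_t f+\Lambda^{2\alpha}f=g$. The initial contribution $\varepsilon\,\partial_{W_0}u_0$ needs $u_0\in W^{1,p}$. The condition $p>\frac{2}{2\alpha-\gamma}$ is exactly what places $\nabla u_0\in L^p\subset B^{\gamma-2\alpha+1}_{\infty,\infty}$, after a low-frequency split. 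The convection terms $\varepsilon\,\partial_W(u\cdot\nabla u)$ and the commutator $[\partial_W,\varepsilon\partial_t+u\cdot\nabla]$ produce terms bounded by $\|\nabla u\|_{L^\infty}\|\partial_W u\|$ and $\|W\|\,\|u\|_{B^{2\alpha}}$. Both are integrable by Theorem~\ref{thm:main1}. This yields $\|\partial_W u\|_{\widetilde L^1_T C^{\gamma}}$, or its $W^{1,\infty}$ analogue, bounded by $C\bigl(1+\int_0^T(\cdots)\|W\|_{C^\gamma}\bigr)$, and Gronwall closes.

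\textbf{Main obstacle.} The hardest case is the endpoint $\gamma=1$. There Hölder and Lipschitz estimates fail for Riesz-type operators, which is exactly the mechanism behind Kiselev–Luo for Euler. The plan is to exploit the dissipation: $\partial_W u$ has $2\alpha-1+\mu>1$ derivatives in $\widetilde L^1_T B^{\cdot}_{\infty,\infty}$. The target therefore lies strictly inside a Besov space that embeds into $W^{1,\infty}$, so no endpoint singular integral appears. Making this rigorous requires the estimate of $\mathbf 1_D\,\partial_W(\bar\theta_0\circ X_t^{-1})$ in $B^{\mu-1}_{\infty,\infty}$ and the commutator with $W\in W^{1,\infty}$ only. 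The argument must also be kept uniform in $\varepsilon$; in particular, $\alpha=1$ needs only local norms of $u$ in the commutators, never $L^Q$ bounds.
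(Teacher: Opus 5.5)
Your Step 1 reduction, the patch formula and the tangent nondegeneracy bound are fine. The problem is the mechanism you propose for the Lipschitz endpoint $\gamma=1$ in Step 2, which cannot work. You want dissipation to place $\partial_W u$ in $\widetilde L^1_T B^{s}_{\infty,\infty}$ with $s>1$, and then embed into $W^{1,\infty}$. But $\partial_W u=W\cdot\nabla u$ can be no smoother than $W$, which is only Lipschitz. Take the smooth part $v:=u-\Lambda^{-2\alpha}\mathbb P(\theta e_2)$, which is exactly the paper's $\nabla^\perp\Lambda^{-2}\Gamma$: the product $W\cdot\nabla v$ is in general only $W^{1,\infty}$, no matter how smooth $\nabla v$ is. In your maximal-regularity estimate this shows up in the forcing $[\Lambda^{2\alpha},\partial_W]u$, which contains $\Lambda^{2\alpha}(T_{\nabla u}\cdot W)$. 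That term lies in $B^{1-2\alpha}_{\infty,\infty}$, but not in $B^{1+\delta-2\alpha}_{\infty,\infty}$ unless $W\in B^{1+\delta}_{\infty,\infty}$. It is precisely the extra term $\Vert\mathcal W\Vert_{B^{s+\sigma}_{p,r}}\Vert\nabla\phi\Vert_{L^\infty}$ in \eqref{eq:commEs2}, which the paper can only afford in the $C^{2,1}$ theorem, where $W\in C^{1,\beta}$ is already known. A Chemin--Lerner argument does not help either, because the $\nabla W$ equation has to be closed in $L^\infty$. So the endpoint is not avoided.

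The missing idea is to bound $\partial_W\nabla u$ in $L^1_TL^\infty$ directly, without ever differentiating the dissipative part tangentially. Write $\nabla u=\nabla\nabla^\perp\Lambda^{-2}\Gamma+\nabla\nabla^\perp\partial_1\Lambda^{-2-2\alpha}\theta$.
\begin{itemize}
\item The first piece is a plain product: $\Vert W\cdot\nabla^3\Lambda^{-2}\Gamma\Vert_{L^1_TL^\infty}\le C\Vert W\Vert_{L^\infty_TL^\infty}\Vert\Gamma\Vert_{L^1_T(L^p\cap B^1_{\infty,1})}$. This is uniform in $\varepsilon$ because $\Gamma\in\widetilde L^1_T(B^{2\alpha}_{p,\infty})$ and $p>\frac{2}{2\alpha-1}$. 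This, not the initial layer, is where the condition on $p$ is used.
\item For the second piece, write $\partial_W\nabla^3\Lambda^{-2-2\alpha}\theta=\nabla^3\Lambda^{-2-2\alpha}\partial_W\theta-[\nabla^3\Lambda^{-2-2\alpha},W\cdot\nabla]\theta$. The operator has negative order $1-2\alpha$, so its kernel bounds the commutator in $L^\infty$ by $C\Vert\nabla W\Vert_{L^\infty}\Vert\theta\Vert_{L^1\cap L^\infty}$ (\eqref{es:prod-es5} in Lemma \ref{lem:pW}).
\item Meanwhile $\partial_W\theta$ is transported from $\partial_{W_0}\theta_0\in C^{\mu-1}$, and $B^{\mu+2\alpha-2}_{\infty,\infty}\hookrightarrow L^\infty$ because $\mu>2-2\alpha$.
\end{itemize}
Gronwall on the $\nabla W$ equation then closes the estimate.

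The same split makes $0<\gamma<1$ much lighter than your striated maximal regularity, which may well be workable there but is unnecessary. Since $p>\frac{2}{2\alpha-\gamma}$, one has $\Gamma\in L^1_TC^\gamma$, so only the temperature part needs a commutator, namely \eqref{es:prod-es6}. For $\gamma\le2\alpha-1$ (or $\alpha=1$, $\gamma<1$) one simply has $\nabla u\in L^1_TC^\gamma$.

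There are also smaller slips:
\begin{itemize}
\item $\nabla u_0\in L^p\hookrightarrow B^{-2/p}_{\infty,\infty}$, so what $p>\frac{2}{2\alpha-\gamma}$ gives is $W_0\cdot\nabla u_0\in B^{\gamma-2\alpha}_{\infty,\infty}$, not $\nabla u_0\in B^{\gamma-2\alpha+1}_{\infty,\infty}$.
\item $[\partial_W,\Lambda^{-2\alpha}\mathbb P]\theta$ is only $C^\gamma$, being limited by the regularity of $W$, not $C^{\gamma+2\alpha-1}$.
\item Bounds involving $\Vert u\Vert_{B^{2\alpha}}$ must be replaced by norms of $\nabla u$ to remain uniform in $\varepsilon$ at $\alpha=1$.
\end{itemize}
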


For the next derivative of the patch boundary, we impose additional
regularity on the profile and on the initial velocity in the tangent
direction $W_0$.
\begin{theorem}\label{thm:main2}
Let $\varepsilon=\frac{1}{\mathrm{Pr}}\in(0,1],\,\alpha\in (\frac{1}{2},1]$ and $\gamma\in(0,1]$.
Assume that the initial data satisfy the following conditions.
\begin{itemize}
\item The initial temperature has the patch form \eqref{eq:the-data-patch}, 
where $D_0\subset \mathbb{R}^2$ is a bounded simply connected domain with boundary 
$\partial D_0\in C^{2,\gamma}$. 
For $\alpha\in(\frac{1}{2},1)$, assume that
\begin{equation}\label{eq:bar-the-cd3}
  \bar{\theta}_0(x) \in
  \begin{cases}
    C^{2 + \gamma-2\alpha}(\overline{D_0}),\quad & \textrm{if}\;\;\gamma\in(0,2\alpha-1) \cup (2\alpha -1,1),\\
    C^{1+\widetilde{\mu}}(\overline{D_0}),\,\;\textrm{for some}\;\widetilde{\mu}\in(0,1), \quad
    & \textrm{if}\;\; \gamma = 2\alpha-1,\\
    C^{1+\mu}(\overline{D_0}),\,\;\textrm{for some}\;\mu\in(2-2\alpha,1), \quad
    & \textrm{if}\;\; \gamma = 1,
  \end{cases}
\end{equation}
while for $\alpha=1$, assume that
\begin{equation}\label{eq:bar-the-cd4}
  \bar{\theta}_0(x) \in
  \begin{cases}
    C^{\gamma}(\overline{D_0}),\quad & \textrm{if}\;\;\gamma\in(0,1),\\
    C^{1+\mu}(\overline{D_0}),\,\;\textrm{for some}\;\mu\in(0,1), \quad
    & \textrm{if}\;\; \gamma = 1.
  \end{cases}
\end{equation}
\item The initial velocity satisfies $\operatorname{div}u_0=0$ and
\begin{align*}
  \textrm{$u_0 \in W^{1,q}(\mathbb{R}^2)$\;\; for some $\tfrac{2}{2\alpha-1}<q<\infty$, \qquad
  $\partial_{W_0}u_0\in \dot{W}^{1,p}(\mathbb{R}^2)$\;\; for some $\tfrac{2}{2\alpha-\gamma}<p<\infty$}.
\end{align*}
\end{itemize}
Then the unique global solution $(u,\theta)$
to the 2D Boussinesq system \eqref{BoussEq} satisfies \eqref{eq:the-patch-exp}
and, for every $T>0$,
\begin{equation}\label{eq:reg-parD(t)-1-main}
  \partial D(t) \in L^{\infty}\big(0, T ; C^{2,\gamma}\big)\quad \textrm{uniformly in $\varepsilon \in (0,1]$}.
\end{equation}
\end{theorem}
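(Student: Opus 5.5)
The plan is the striated-regularity approach at one higher order than for Theorem~\ref{thm:C1}. Set $W(t,x)=(\partial_{W_0}X_t)\circ X_t^{-1}(x)$, i.e.\ the transported tangent field. It solves
\[
\partial_t W+u\cdot\nabla W=\partial_W u,\qquad W|_{t=0}=W_0,
\]
and $z(s,t)=X_t(z_0(s))$ satisfies $\partial_s z=W\circ z$. Differentiating once more gives $\partial_s^2 z=(\partial_W W)\circ z$. So $\partial D(t)\in C^{2,\gamma}$, with a lower bound on $|\partial_s z|$, follows from three ingredients:
\begin{itemize}
\item the uniform bound on $\int_0^T\|\nabla u\|_{L^\infty}$ from Theorem~\ref{thm:main1}, which holds since $q>2$;
\item the $C^{1,\gamma}$ information $W\in L^\infty_T C^{\gamma}$ (or $W^{1,\infty}$ at $\gamma=1$), already furnished by Theorem~\ref{thm:C1} together with its proof;
\item a bound $\partial_W W\in L^\infty_T C^{\gamma}$, or $\partial_W W\in L^\infty_T W^{1,\infty}$ in the endpoint case $\gamma=1$.
\end{itemize}
Note that $\inf|\partial_s z|\ge \inf|W_0\circ z_0|\,e^{-\int_0^T\|\nabla u\|_{L^\infty}}$. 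The main work is therefore a second-order striated estimate, uniform in $\varepsilon$.

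First I would propagate $\partial_W\theta$. Since $[\partial_t+u\cdot\nabla,\partial_W]=0$, we have $\partial_W\theta(t)=(\partial_{W_0}\theta_0)\circ X_t^{-1}$. Because $W_0$ is tangent to $\partial D_0$, $\partial_{W_0}\theta_0=(W_0\cdot\nabla\bar\theta_0)\mathbf 1_{D_0}$. The profile assumptions \eqref{eq:bar-the-cd3}--\eqref{eq:bar-the-cd4} are designed so that this is a patch with profile in the Hölder class $C^{1+\gamma-2\alpha}$ (resp.\ $C^{\mu}$, $C^{\widetilde\mu}$). It is then bounded in $L^\infty$, and has the regularity needed to run the Theorem~\ref{thm:C1} argument with $\theta$ replaced by $\partial_W\theta$.

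Next I would apply $\partial_W$ to the velocity equation. Writing $v=\partial_W u$, the equation reads
\[
\varepsilon(\partial_t v+u\cdot\nabla v)+\Lambda^{2\alpha}v=-\nabla\partial_W p+\partial_W\theta\,e_2+[\Lambda^{2\alpha},\partial_W]u+\text{(pressure commutator)}.
\]
The initial datum is $\partial_{W_0}u_0\in\dot W^{1,p}$. Maximal-regularity estimates for $\varepsilon\partial_t+\Lambda^{2\alpha}$ that are uniform in $\varepsilon$ (the same smoothing used in Theorem~\ref{thm:main1}, applied in the Lagrangian or Chemin–Lerner form) will give
\[
\nabla\partial_W u\in L^1_T B^{2\alpha-1}_{\infty,\infty}+L^1_TC^{\gamma}.
\]
The initial-layer part is controlled through $\dot W^{1,p}\hookrightarrow C^{1-2/p}$ together with $p>2/(2\alpha-\gamma)$, which gives gain $2\alpha-\gamma$ in the time integral. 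The commutators $[\Lambda^{2\alpha},\partial_W]u$ and $[\nabla\Lambda^{-2\alpha}\mathbb P,\partial_W]$ are bounded by $\|W\|_{C^{\gamma'}}\|\nabla u\|$-type quantities via Bony's decomposition. The forcing $\partial_W\theta$, being only $L^\infty$, contributes $B^{2\alpha-1}_{\infty,\infty}$ smoothing from a direct estimate. When $\gamma>2\alpha-1$ it contributes $C^\gamma$ smoothing through its striated structure.

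Finally I would close the estimate for $\partial_W W$. It satisfies
\[
(\partial_t+u\cdot\nabla)\partial_W W=\partial_W\partial_W u=\partial_W v,
\]
so a Grönwall argument in $C^{\gamma}$ along the flow gives
\[
\|\partial_W W\|_{L^\infty_TC^\gamma}\lesssim e^{C\int_0^T\|\nabla u\|_{L^\infty}}\Big(\|\partial_{W_0}W_0\|_{C^\gamma}+\int_0^T\|W\|_{C^{\gamma}\cap L^\infty}\|\nabla v\|_{C^{\gamma}\cap L^\infty}\,\dd t\Big).
\]
The main obstacle is the endpoint $\gamma=1$, and more generally $\gamma\ge 2\alpha-1$. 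There $\nabla v$ must be shown to be Lipschitz (respectively $C^\gamma$), which exceeds the regularity the equation gives for free, so it has to come from striated regularity of $\partial_W\theta$ through a second iteration. The ordinary $C^1$ failure of the Riesz transforms is avoided only because the extra smoothing $2\alpha-1>0$, combined with $\bar\theta_0\in C^{1+\mu}$ for $\mu>2-2\alpha$, yields $\nabla v\in L^1_T B^{1+\delta}_{\infty,\infty}$ for some $\delta>0$, hence $L^1_TW^{1,\infty}$. I would first try to get this by showing that $\partial_W\partial_W\theta$ is again a bounded patch. Applying the $B^{2\alpha-1}$ smoothing to that patch, together with $\mu+2\alpha-1>1$, should then give strictly more than Lipschitz.
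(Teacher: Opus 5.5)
Your reduction (boundary identities, nondegeneracy of $\partial_s z$, transport of $\partial_W\theta$) is sound. The closing step, however, fails for $\frac12<\alpha<1$ and $\gamma>2\alpha-1$, which are exactly the new cases of the theorem.

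You bound $\partial_W^2u=W\cdot\nabla v$ by $\|W\|_{C^\gamma}\|\nabla v\|_{C^\gamma}$, and at $\gamma=1$ you aim for $\nabla v\in L^1_TB^{1+\delta}_{\infty,\infty}$. So you need the \emph{full} gradient of $v=\partial_Wu$ to be $C^\gamma$ (resp.\ Lipschitz), and this is false in general. Let $d$ be the signed distance to $\partial D(t)$ and $s$ the arclength. The temperature part $u_\theta=\nabla^\perp\partial_1\Lambda^{-2-2\alpha}\theta$ is an order $-2\alpha$ operator acting on a jump. Near $\partial D(t)$ its leading term is $c(s)|d|^{2\alpha}\operatorname{sgn}(d)$, with $c$ proportional to the jump of $\theta$ times $n_1(s)$; this is why $\nabla u$ is only $C^{2\alpha-1}$. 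A tangential derivative preserves this form: $\partial_Wu$ contains $\tilde c(s)|d|^{2\alpha}\operatorname{sgn}(d)$, where $\tilde c$ involves $\partial_s c$ and $\partial_n(W\cdot n)$. In general $\tilde c\not\equiv0$; for instance $c\propto n_1$ is never constant along a closed curve. Consequently:
\begin{itemize}
\item $\nabla\partial_Wu$ contains $2\alpha\tilde c(s)|d|^{2\alpha-1}$, which is exactly $C^{2\alpha-1}$;
\item $\nabla^2\partial_Wu\sim|d|^{2\alpha-2}$ is unbounded.
\end{itemize}
The norms you want to close are therefore infinite, and no second iteration can produce them.

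The same obstruction appears in your forcing terms. Both $[\Lambda^{2\alpha},\partial_W]u$ and the pressure commutator act on quantities that jump across $\partial D(t)$; for example, $\nabla p$ contains $\nabla\Delta^{-1}\partial_2\theta$. Hence the smoothing of $\varepsilon\partial_t+\Lambda^{2\alpha}$ can return at best $\nabla v\in C^{2\alpha-1}$.

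Separately, $\partial_W^2\theta$ is not a bounded patch. The profile hypotheses never give $\bar\theta_0\in C^2$, so $\partial_W^2\theta=\operatorname{div}(W\,\partial_W\theta)$ lies only in $C^{\gamma-2\alpha}$, resp.\ $C^{\mu-1}$.

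The missing idea is to keep both derivatives tangential. You must bound $\partial_W^2u$ in $L^1_TC^\gamma$ directly when $2\alpha-1<\gamma<1$, and $\partial_W^2\nabla u$ in $L^1_TL^\infty$ when $\gamma=1$ (via the transport equation for $\partial_W\nabla W$). The paper does this with the splitting $u=\nabla^\perp\Lambda^{-2}\Gamma+\nabla^\perp\partial_1\Lambda^{-2-2\alpha}\theta$, where the good unknown $\Gamma$ carries no patch singularity.

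For the $\Gamma$-part, Lemma~\ref{lem:pGamma-high} bounds $\partial_W\Gamma$ in $L^1_TB^\gamma_{\infty,1}$, resp.\ $L^1_TB^{\gamma'}_{\infty,1}$ with $\gamma'>1$, from the transport--diffusion equation for $\partial_W\Gamma$.
\begin{itemize}
\item The commutator $[\Lambda^{2\alpha},W\cdot\nabla]\Gamma$ is harmless because it hits $\Gamma$, not $u$.
\item At $\gamma=1$ this requires $W\in L^\infty_TB^{\gamma'}_{\infty,1}$. That is more than the $W^{1,\infty}$ bound of Theorem~\ref{thm:C1}; it comes from the lower case with an auxiliary exponent $\beta\in(\gamma'-1,2\alpha-1)$.
\item The forcing $\partial_W([\mathcal R_{1-2\alpha},u\cdot\nabla]\theta)$ is bounded in $L^\infty$ by Lemma~\ref{lem:pW}.
\end{itemize}

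For the temperature part, write $\partial_W^2\nabla^k\Lambda^{-2-2\alpha}\theta$ as $\nabla^k\Lambda^{-2-2\alpha}\partial_W^2\theta$ minus two commutators. Then use that $\partial_W^2\theta$ is transported in $C^{\gamma-2\alpha}$, resp.\ $C^{\mu-1}$; at the endpoint, the gain of $2\alpha$ derivatives together with $\mu>2-2\alpha$ lands in $B^0_{\infty,1}\hookrightarrow L^\infty$. The commutators and their tangential derivatives are controlled by the kernel bounds of Lemmas~\ref{lem:pW} and~\ref{lem:pW-Cgam}.
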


For $\frac12<\alpha<1$, Theorem~\ref{thm:C1}
improves on~\cite{KX22} by allowing $u_0\in H^1$
for $0<\gamma<2\alpha-1$ and removing the additional
$H^1$ condition for $u_0\in W^{1,p}$, $2<p<\infty$,
throughout $0<\gamma\leq2\alpha-1$.
The additional $H^1$ condition is also removed
at the classical $W^{2,\infty}$ endpoint
while it was a required conditon in~\cite{CMX22}.
Both theorems also improve on our previous
work~\cite{LXY25}; see Table~\ref{tab:comparison-LXY25}
for a detailled comparison.

\medskip
\noindent\textbf{Comparison with Euler vortex patches.}
For $k=1,2$ and $0<\gamma<1$, both Euler vortex patches
and the Boussinesq temperature patches considered here preserve
$C^{k,\gamma}$ boundary regularity.
Their behavior differs at the Lipschitz endpoints:
Kiselev and Luo~\cite{KL23} constructed initially $C^2$
Euler patches whose boundaries instantaneously lose
$W^{2,\infty}$ regularity, whereas the Boussinesq
temperature patches considered here preserve both
$W^{2,\infty}$ and $W^{3,\infty}$ boundary regularity.

The velocity laws help explain this difference.
Let $\omega_E$ and $\omega_B$ denote the Euler
and Boussinesq vorticities, respectively, and set $\mathcal R_{1-2\alpha}:=\partial_1\Lambda^{-2\alpha},\,\Gamma:=\omega_B-\mathcal R_{1-2\alpha}\theta$. 
Then
\begin{align}
  \nabla u_E
  &=\nabla\nabla^\perp\Lambda^{-2}\omega_E,
  \notag\\
  \nabla u_B
  &=\nabla\nabla^\perp\Lambda^{-2}\Gamma
    +\nabla\nabla^\perp\partial_1
      \Lambda^{-2-2\alpha}\theta.
  \label{eq:subcritical}
\end{align}
The Euler  vorticity patch enters the velocity gradient
through an operator of order zero, whereas the temperature
contribution to $\nabla u_B$ has order $1-2\alpha<0$.
The remaining Boussinesq term acts on the good unknown $\Gamma$,
whose evolution has a dissipatif structure.

For $C^{1,\gamma}$ Euler vortex patches, $0<\gamma<1$,
one has $\nabla u_E\in L^\infty([0,T],L^\infty)$.
For Boussinesq temperature patches, under the corresponding
data assumptions,
$\nabla u_B\in L^1([0,T],C^{2\alpha-1})$
when $\frac12<\alpha<1$~\cite{KX22,LXY25},
and $\nabla u_B\in L^1([0,T],W^{1,\infty})$
when $\alpha=1$~\cite{GGJ17,CMX22},
for every finite $T>0$.
The latter estimate relies on the geometric cancellation
lemma of Bertozzi and Constantin~\cite{BerC}
and gives $W^{2,\infty}$ boundary persistence.

Our endpoint proofs combine dissipative estimates
for $\Gamma$ with tangential commutator bounds,
using the gain of $2\alpha-1$ derivatives
to close the boundary estimates throughout
$\frac12<\alpha\leq1$.
This positive gain disappears at $\alpha=\frac12$,
which is not covered by the present argument.

Table~\ref{tab:Euler-Bouss-subcritical} summarizes
the comparison.

\begin{center}
\refstepcounter{table}\label{tab:Euler-Bouss-subcritical}
\begin{minipage}{\textwidth}
\centering
\small
\textbf{Table~\thetable.}
2D Euler vortex patches and Boussinesq temperature patches
with $\frac12<\alpha\leq1$.
\par\medskip
\renewcommand{\arraystretch}{1.22}
\begin{tabular}{@{}
  >{\raggedright\arraybackslash}m{0.16\textwidth}|
  >{\raggedright\arraybackslash}m{0.35\textwidth}|
  >{\raggedright\arraybackslash}m{0.37\textwidth}@{}}
\hline
& \textbf{Euler vortex patches}
& \textbf{Boussinesq temperature patches}
\\ \hline
Velocity gradient operator
& Order zero on the patch vorticity $\omega_E$.
& Order zero on $\Gamma$, and order $1-2\alpha<0$
  on the temperature $\theta$.
\\ \hline
Velocity gradient regularity
& $\nabla u_E\in L^\infty_T(L^\infty)$
  for $C^{1,\gamma}$ patches,
  $0<\gamma<1$~\cite{Chem93,BerC}.
& $\nabla u_B\in L^1_T(W^{1,\infty})$
  for $\alpha=1$~\cite{GGJ17,CMX22};
  $\nabla u_B\in L^1_T(C^{2\alpha-1})$
  for $\frac12<\alpha<1$ \cite{KX22,LXY25}.
\\ \hline
$0<\gamma<1$
& Global $C^{1,\gamma}$ and $C^{2,\gamma}$
  boundary persistence~\cite{Chem93,BerC,Serf94,Radu22}.
& Global $C^{1,\gamma}$ and $C^{2,\gamma}$
  boundary persistence.
\\ \hline
Lipschitz endpoints
& Some $C^2$ boundaries instantaneously lose
  $W^{2,\infty}$ regularity~\cite{KL23}.
& Global $W^{2,\infty}$ and $W^{3,\infty}$
  boundary persistence.
\\ \hline
\end{tabular}

\par\smallskip
{\footnotesize\raggedright
The Boussinesq boundary persistence results listed here
follow from Theorems~\ref{thm:C1} and~\ref{thm:main2},
with bounds uniform in $\mathrm{Pr}\in[1,\infty)$
on each fixed finite time interval.
For earlier results, see~\cite{DZ17,GGJ17,CMX22,KX22,LXY25}.
\par}
\end{minipage}
\end{center}

\subsection{The infinite Prandtl number limit on the whole space}

Formally letting $\varepsilon\to0$ in \eqref{BoussEq}
eliminates the inertial term and gives the (fractional)
Stokes-transport system
\begin{equation}\tag{ST$_\alpha$}\label{eq:ST}
  \left\{
  \begin{aligned}
    \partial_t\theta+u\cdot\nabla\theta
      &=0,
      \qquad (t,x)\in\mathbb R_+\times\mathbb R^2,\\
    \Lambda^{2\alpha}u
      &=-\nabla p+\theta e_2,\\
    \nabla\cdot u&=0,\\
    \left.\theta\right|_{t=0}&=\theta_0.
  \end{aligned}
  \right.
\end{equation}

For $\alpha=1$, $\Lambda^{2\alpha}=-\Delta$, and \eqref{eq:ST} is a two-dimensional analogue of continuum
models for particle sedimentation in viscous fluids~\cite{Hofer18,Mech19,HS25}.
In these models, the particle density supplies
the gravitational forcing in the Stokes equations
and, after removal of a constant settling drift
by a change of reference frame,
is transported by the resulting velocity.
Grayer II~\cite{Gray23} motivated \eqref{eq:ST} with $\alpha=1$
by the formal infinite Prandtl number limit.
He proved global well-posedness on $\mathbb R^2$
for bounded, compactly supported initial temperatures
and global persistence of $W^{2,\infty}$ and $C^{k,\gamma}$
boundary regularity for patches with constant profiles,
where $k=1,2$ and $0<\gamma<1$.
Leblond~\cite{Leb22} established global well-posedness
for bounded initial densities in bounded domains
of $\mathbb R^2$ and $\mathbb R^3$,
and in the infinite strip $\mathbb R\times(0,1)$.
Asymptotic stability of stratified states for the
Stokes-transport system in a periodic channel
was studied in~\cite{DGL23,Park25}.
Related results on the regularity and long-time dynamics
of density interfaces in gravity-driven Stokes flows
appear in~\cite{AMY00,GGBS23,GGBS24}.
More recently, Gancedo et al.~\cite{GGHSY26}
established infinite-time growth of interface length
or curvature in unstable configurations.

For the fractional Stokes-transport system \eqref{eq:ST},
Cobb~\cite{Cobb23} established well-posedness results
in critical function spaces in dimensions $d\geq2$.
In dimension two, his global existence and uniqueness
theorem applies to
$\frac12<\alpha<1$ and
$\theta_0\in L^p\cap L^{2/(2\alpha-1)}$,
where $1\leq p<1/\alpha$.

Our previous work~\cite{LXY25} justified
the infinite Prandtl number limit on $\mathbb T^2$
for $\frac12<\alpha\leq1$, with global persistence
of $C^{k,\gamma}$ boundary regularity
for the limiting temperature patches governed by
\eqref{eq:ST}.
Under the stated assumptions on the initial data,
this result holds for every integer $k\geq1$,
with $0<\gamma\leq2\alpha-1$ when $\frac12<\alpha<1$
and $0<\gamma<1$ when $\alpha=1$.

The present paper establishes the corresponding limit
on $\mathbb R^2$ throughout the subcritical range
$\frac12<\alpha\leq1$, preserving
$C^{k,\gamma}$ patch boundary regularity
for $k=1,2$ and $0<\gamma\leq1$,
under the stated initial data assumptions.

On $\mathbb T^2$, Poincar\'e's inequality controls
the velocity through its gradient and spatial mean.
On $\mathbb R^2$, however, uniform gradient estimates
alone do not yield uniform global $L^p$ velocity bounds.
The uniform $L^Q$ bound in \eqref{eq:u-in-LQ-unif}
allows convergence in the original frame for
$\frac12<\alpha<1$ and for zero-mean temperature patches
at $\alpha=1$.
In both cases, global $L^Q$ integrability fixes
the normalization of the limiting (fractional) Stokes velocity.
For nonzero-mean temperature patches at $\alpha=1$,
we use a time-dependent change of frame, corresponding
to a generalized Galilean transformation, to obtain
uniform local velocity bounds and fix the normalization
of the limit.
This use of time-dependent translations is reminiscent
of the accelerated change of reference frame in
Kelliher's characterization of bounded Euler flows
at infinity~\cite[Eq.~(1.5) and Lemma~6.1]{Kel15}.
Related transformations appear in the study of weak
Navier-Stokes solutions by Kukavica and Vicol~\cite{KV08},
while Cobb~\cite{Cobb24} characterizes far-field conditions
ensuring equivalence between the original and projected
Euler equations.

To state the limit theorem, we fix the velocity normalization $u=G_\alpha*\theta$. 
For $\frac12<\alpha<1$, $G_\alpha$ is the decaying fractional
Stokes kernel associated with the Fourier multiplier
\[
  \widehat{G_\alpha*\theta}(\xi)
  =
  |\xi|^{-2\alpha}
  \left(\mathrm{Id}-\frac{\xi\otimes\xi}{|\xi|^2}\right)
  e_2\,\widehat\theta(\xi),
  \qquad \xi\ne0.
\]
Its physical-space expression is
\begin{equation}\label{def:G-alp-frac}
  G_\alpha(x)
  =
  \frac{c_\alpha}{|x|^{2-2\alpha}}
  \begin{pmatrix}
    (2-2\alpha)\dfrac{x_1x_2}{|x|^2}\\[2mm]
    1-(2-2\alpha)\dfrac{x_1^2}{|x|^2}
  \end{pmatrix},
  \qquad x\ne0,
\end{equation}
where
$c_\alpha = \frac{\Gamma(1-\alpha)}{\alpha\,2^{2\alpha+1}\pi\,\Gamma(\alpha)}$.
For $\frac12<\alpha<1$ and $\theta\in L^1\cap L^\infty$,
the convolution $G_\alpha*\theta$ is the unique
fractional Stokes velocity in $\mathcal S'_h$,~\cite[Proposition~2.1]{Cobb23}.
At the endpoint $\alpha=1$, we use the logarithmic Stokes kernel
with its additive constant fixed as
in Grayer II~\cite[Section~2]{Gray23}:
\begin{equation}\label{def:G-alp=1}
  G_1(x)
  =
  \frac{1}{4\pi}
  \begin{pmatrix}
    \dfrac{x_1x_2}{|x|^2}\\[2mm]
    \dfrac12-\log|x|-\dfrac{x_1^2}{|x|^2}
  \end{pmatrix},
  \qquad x\ne0.
\end{equation}
For the bounded, compactly supported patch temperatures
considered here, $G_1*\theta$ is well defined.
In both cases, the normalization $u=G_\alpha*\theta$
fixes the reference frame of the limiting dynamics;
see Remark~\ref{rmk:norm-velocity}.

With this normalization, we now state our result
on the infinite Prandtl number limit
for temperature patches on $\mathbb R^2$.
\begin{theorem}\label{thm:infinite-Prandtl}
Let $\frac12<\alpha\leq1$, $k\in\{1,2\}$,
and $0<\gamma\leq1$.
Assume that $\theta_0(x)=\bar\theta_0(x)\mathds{1}_{D_0}(x)$ 
satisfies the assumptions
of Theorem~\ref{thm:C1} when $k=1$,
and of Theorem~\ref{thm:main2} when $k=2$.
In particular, $D_0\subset\mathbb R^2$ is a bounded,
simply connected domain with
$\partial D_0\in C^{k,\gamma}$.

For each $\varepsilon=\mathrm{Pr}^{-1}\in(0,1]$,
let $u_0^\varepsilon$ be divergence-free.
Assume that the following bounds hold for exponents
independent of $\varepsilon$:
\begin{itemize}
\item
If $k=1$, there exists
$\max\left\{2,\frac{2}{2\alpha-\gamma}\right\}<p<\infty$
such that
\[
  \sup_{0<\varepsilon\leq1}
  \Vert u_0^\varepsilon\Vert _{W^{1,p}(\mathbb R^2)}
  <\infty.
\]

\item
If $k=2$, there exist
$\frac{2}{2\alpha-\gamma}<p<\infty$ and $\frac{2}{2\alpha-1}<q<\infty$
such that
\[
  \sup_{0<\varepsilon\leq1}
  \left(
    \Vert u_0^\varepsilon\Vert _{W^{1,q}(\mathbb R^2)}
    +
    \Vert \partial_{W_0}u_0^\varepsilon
      \Vert _{\dot W^{1,p}(\mathbb R^2)}
  \right)
  <\infty,
\]
where $W_0=\nabla^\perp\varphi_0$
is the tangent vector field 
fixed above.
\end{itemize}

Let $(u^\varepsilon,\theta^\varepsilon)$
be the corresponding unique global solution
of \eqref{BoussEq} with initial data
$(u_0^\varepsilon,\theta_0)$.
Then, for every $T>0$, the following statements hold for the whole family as
$\varepsilon\to0$.
\begin{enumerate}[(i)]
\item
If $\frac12<\alpha<1$, or both $\alpha=1$ and
$\int_{\mathbb R^2}\theta_0\,\dd x=0$, the original solutions
$(u^\varepsilon,\theta^\varepsilon)$ converge to $(u,\theta)$.
Set $a^\varepsilon\equiv0$ and
$(U^\varepsilon,\Theta^\varepsilon)
:=(u^\varepsilon,\theta^\varepsilon)$.

\item
If $\alpha=1$ and $\int_{\mathbb R^2}\theta_0\,\dd x\ne0$,
there exist absolutely continuous paths
$a^\varepsilon:[0,T]\to\mathbb R^2$,
with $a^\varepsilon(0)=0$, such that the transformed fields
\begin{equation}\label{def:Theta-U-eps}
  \begin{aligned}
    \Theta^\varepsilon(t,x)
    &:=\theta^\varepsilon(t,x+a^\varepsilon(t)),\\
    U^\varepsilon(t,x)
    &:=u^\varepsilon(t,x+a^\varepsilon(t))
       -\dot a^\varepsilon(t)
  \end{aligned}
\end{equation}
converge to $(u,\theta)$.
\end{enumerate}
In both cases, $(u,\theta)$ is the unique normalized global
weak solution of \eqref{eq:ST} with initial temperature
$\theta_0$ and
\begin{equation}\label{eq:norm-u-exp-ST}
  u(t,x)=G_\alpha*\theta(t,x).
\end{equation}
Here $G_\alpha$ is given by
\eqref{def:G-alp-frac} and \eqref{def:G-alp=1}.
More precisely, for every $1\leq r<\infty$,
\[
  \Theta^\varepsilon\longrightarrow\theta
  \qquad\text{strongly in }
  C\bigl([0,T];L^r(\mathbb R^2)\bigr),
\]
and, for every $R>0$,
\begin{align*}
  U^\varepsilon
  &\rightharpoonup^\ast u
  \qquad\text{in }
  L^\infty\bigl(0,T;W^{1,p}(B_R)\bigr),
  \qquad k=1,\\
  U^\varepsilon
  &\rightharpoonup^\ast u
  \qquad \text{in }
  L^\infty\bigl(0,T;W^{1,q}(B_R)\bigr),
  \qquad k=2.
\end{align*}

Moreover, if $X_t$ denotes the flow generated by $u$,
then the limiting temperature retains the patch form
\[
  \theta(t,x)=\theta_0\bigl(X_t^{-1}(x)\bigr),
  \qquad D(t)=X_t(D_0),
\]
and its boundary satisfies $\partial D\in L^\infty\bigl(0,T;C^{k,\gamma}\bigr)$. 
\end{theorem}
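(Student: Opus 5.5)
The proof is a compactness argument driven by the $\varepsilon$-uniform bounds of Theorems~\ref{thm:main1}, \ref{thm:C1} and \ref{thm:main2}, followed by identification of the limit and a uniqueness argument that upgrades subsequential convergence to convergence of the whole family. The steps, in order, are as follows.

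\emph{Step 1: uniform bounds.} On $[0,T]$, Theorems~\ref{thm:C1} and \ref{thm:main2} give the following, uniformly in $\varepsilon$:
\begin{itemize}
\item $\int_0^T\Vert \nabla u^\varepsilon\Vert_{L^\infty}\,\dd t\le C$;
\item $\Vert\nabla u^\varepsilon\Vert_{L^\infty_TL^p}\le C$, or the $L^q$ version when $k=2$;
\item bounds on the striated quantities $\partial_{W^\varepsilon}\nabla u^\varepsilon$ and on the transported tangent fields $W^\varepsilon=(\partial_{W_0}X^\varepsilon_t)\circ(X^\varepsilon_t)^{-1}$ in $C^{k-1,\gamma}$, or $W^{k,\infty}$ at $\gamma=1$.
\end{itemize}
Velocities themselves need separate treatment. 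In case (i), \eqref{eq:u-in-LQ-unif} bounds $u^\varepsilon$ in $L^\infty_TL^Q$; together with the gradient bound this gives $W^{1,p}(B_R)$ bounds. In case (ii) no uniform $L^r$ bound exists. I therefore define $a^\varepsilon$ by $\dot a^\varepsilon(t)=u^\varepsilon(t,a^\varepsilon(t)+x_c)$ for a fixed reference point $x_c$; more robustly, $\dot a^\varepsilon$ is an average of $u^\varepsilon$ over a unit ball around $a^\varepsilon$. Then $U^\varepsilon$ has bounded local mean. Since $\nabla U^\varepsilon(t,x)=\nabla u^\varepsilon(t,x+a^\varepsilon)$, Poincar\'e on $B_R$ gives uniform $L^\infty_TW^{1,p}(B_R)$ bounds. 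Also $\Theta^\varepsilon$ solves $\partial_t\Theta^\varepsilon+U^\varepsilon\cdot\nabla\Theta^\varepsilon=0$, because the translation and the drift subtraction cancel.

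\emph{Step 2: compactness.} For the temperature I would avoid Aubin–Lions on $\Theta^\varepsilon$ directly, since it is only $L^1\cap L^\infty$. Instead I use the Lagrangian representation $\Theta^\varepsilon(t)=\theta_0\circ(Y^\varepsilon_t)^{-1}$. The flows $Y^\varepsilon$ of $U^\varepsilon$ are uniformly bi-Lipschitz, and they are equicontinuous in time because $U^\varepsilon$ is locally bounded on the supports, which stay in a fixed ball. Arzelà–Ascoli then gives uniform convergence of $Y^\varepsilon$ and its inverses. Approximating $\theta_0$ in $L^r$ by continuous compactly supported functions yields $\Theta^\varepsilon\to\theta$ in $C([0,T];L^r)$ for every finite $r$, since the Jacobians are $1$. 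Banach–Alaoglu gives $U^\varepsilon\rightharpoonup^* u$ locally.

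\emph{Step 3: identification of the limit.} Rewrite the velocity equation as $\Lambda^{2\alpha}u^\varepsilon-\mathbb P(\theta^\varepsilon e_2)=-\varepsilon\,\mathbb P(\partial_tu^\varepsilon+u^\varepsilon\cdot\nabla u^\varepsilon)$. In the moving frame there is an extra term $\varepsilon\ddot a^\varepsilon$, which is a pure gradient-free constant; against divergence-free test fields it pairs to zero once test functions are taken with zero mean. Tested against smooth compactly supported fields in $t,x$, the right-hand side tends to zero after integrating $\partial_t$ by parts: the time-derivative term produces $\varepsilon\,\Vert u^\varepsilon\Vert$ boundary terms, and the transport term is $\varepsilon$ times a uniformly locally bounded quantity. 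Hence $\Lambda^{2\alpha}u=\mathbb P(\theta e_2)$, up to an additive field that is harmonic for $\Lambda^{2\alpha}$, i.e.\ a polynomial.

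The \textbf{main obstacle} is fixing this normalization so that $u=G_\alpha*\theta$.
\begin{itemize}
\item In case (i), the limit inherits the $L^\infty_TL^Q$ bound by weak-$*$ lower semicontinuity, which kills every polynomial. Here one needs that $G_\alpha*\theta\in L^Q$ for $Q>1/(1-\alpha)$; at $\alpha=1$ this holds for zero-mean $\theta$ with a finite first moment, since the kernel difference then decays like $|x|^{-1}$.
\item In case (ii), $\nabla u\in L^p$ rules out nonconstant polynomials, leaving a time-dependent constant $c(t)$. I fix $c(t)$ by the choice of $a^\varepsilon$: the normalization of $\dot a^\varepsilon$ forces the limit local average to equal that of $G_1*\theta$. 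This requires defining $a^\varepsilon$ through the corresponding average of $u^\varepsilon-G_1*\theta^\varepsilon$-type quantities so that the identity passes to the limit, and this is where the argument is delicate.
\end{itemize}

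\emph{Step 4: uniqueness and boundary regularity.} Uniqueness of the normalized weak solution of \eqref{eq:ST} follows from a Yudovich/Loeper-type argument: $\nabla u\in L^1_TL^\infty$, and $\theta$ is transported. This upgrades subsequential convergence to convergence of the whole family. The patch form follows from the convergence of the flows. The boundary regularity follows by lower semicontinuity of the $C^{k,\gamma}$ norms, or $W^{k+1,\infty}$ norms, of the parametrizations $z^\varepsilon(s,t)=Y^\varepsilon_t(z_0(s))$, which converge uniformly. The lower bound on $|\partial_sz|$ also passes to the limit, because it follows from the uniform bi-Lipschitz bounds.
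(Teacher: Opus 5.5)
\textbf{Gap: the normalization in case~(ii).} You flag this step as delicate but do not carry it out, and resolving it breaks your Step~1 bounds.

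With your first choice $\dot a^\varepsilon=\int\eta\,u^\varepsilon(t,\cdot+a^\varepsilon)\,\dd x$, the local bounds are easy: zero local mean plus Poincar\'e. But the limit then satisfies $\int\eta\,u\,\dd x=0$, i.e.\ $c(t)=-\int\eta\,(G_1*\theta)(t)\,\dd x$. This is generally nonzero, so you converge to the wrong normalization.

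With the correct choice $\dot a^\varepsilon=b^\varepsilon(t,a^\varepsilon)$, where $b^\varepsilon(t,z)=\int\eta(x)\,(u^\varepsilon-G_1*\theta^\varepsilon)(t,x+z)\,\dd x$ (the paper's \eqref{def:a-eps}), the local mean of $U^\varepsilon$ is no longer zero. It equals $\int\eta\,G_1*\Theta^\varepsilon\,\dd x$, which is bounded only by $C(1+\log(2+R^\varepsilon(t)))$, where $R^\varepsilon(t)$ is the support radius of $\Theta^\varepsilon(t)$. But $R^\varepsilon$ is itself controlled only through $U^\varepsilon$. So your Step~2 assumption that the supports stay in a fixed ball becomes circular.

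The paper closes the loop as follows. Morrey plus the logarithmic kernel estimate give $|U^\varepsilon(t,x)|\leq C\bigl(1+|x|^\sigma+\log(2+R^\varepsilon(t))\bigr)$ with $\sigma=1-2/p$. Hence $R^\varepsilon(t)\leq R_0+C\int_0^t(1+R^\varepsilon)\,\dd s$, and Gronwall finishes; this works precisely because the logarithmic far field is sublinear.

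Two further pieces are needed in case~(ii):
\begin{itemize}
\item Existence of $a^\varepsilon$. The drift is merely measurable in $t$, so you need a Carath\'eodory argument as in Lemma~\ref{lem:Caratheodory-Lipschitz}. This uses, at fixed $\varepsilon$, that $u^\varepsilon\in L^\infty$, so $\theta^\varepsilon$ has bounded support and $G_1*\theta^\varepsilon$ is defined with logarithmic growth.
\item Passing \eqref{eq:averaged-normalization} to the limit. This requires convergence of $\int K_{1,\eta}\Theta^\varepsilon$ for the smooth but logarithmically growing kernel $K_{1,\eta}(y)=\int\eta(x)G_1(x-y)\,\dd x$, which again uses the uniform support bound.
\end{itemize}

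Two smaller points. First, the moving-frame term is a spatial constant, hence a gradient absorbed into the pressure. Since $a^\varepsilon$ is only absolutely continuous, handle it in the weak form rather than through $\ddot a^\varepsilon$: the term $\varepsilon\int\dot a^\varepsilon\cdot\partial_t\varphi$ vanishes because compactly supported divergence-free fields have zero mean. Second, for $\alpha<1$, $\Lambda^{2\alpha}\varphi$ is not compactly supported, so the dissipative term needs the global $L^Q$ bound, not just local bounds, to pass to the limit.

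\textbf{The rest of your argument.} Outside case~(ii) your argument is sound, and it takes a partly different, lighter route than the paper.
\begin{itemize}
\item Temperature convergence. You obtain $\Theta^\varepsilon\to\theta$ in $C([0,T];L^r)$ directly, from Arzel\`a--Ascoli on the uniformly bi-Lipschitz, time-equicontinuous flows and their inverses, density of $C_c$ in $L^r$, and measure preservation. The paper instead proves convergence in $C([0,T];H^{-1/2}(B_R))$ by Aubin--Lions and upgrades it using conservation of $\Vert\Theta^\varepsilon(t)\Vert_{L^2}$ (Corollary~\ref{cor:strong-limit-R2}).
\item Boundary regularity. You pass the uniform $C^{k,\gamma}$ bounds on $X^\varepsilon_t\circ z_0$ (translations do not affect derivatives) to the limit by lower semicontinuity under uniform convergence of $Y^\varepsilon_t\circ z_0$. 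The paper instead passes $\mathcal W^\varepsilon$ and $\partial_{\mathcal W^\varepsilon}\mathcal W^\varepsilon$ to the limit via Aubin--Lions, identifies the limiting tangent equation, and uses uniqueness along the flow.
\end{itemize}
Your boundary route is shorter, but it requires identifying the limit flow $Y$ as the flow of $u$, which you only assert. It does hold: for $p>2$, the map $s\mapsto\delta_{Y_s(x)}$ lies in $L^1(0,T;(W^{1,p}(B_R))^*)$. So weak-$*$ convergence passes to the limit in $\int_0^tU^\varepsilon(s,Y_s(x))\,\dd s$, and the remaining error is bounded by $\int_0^T\Vert\nabla U^\varepsilon\Vert_{L^\infty}\,\dd t\cdot\sup|Y^\varepsilon-Y|$. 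Finally, using $\nabla u\in L^\infty_TL^p$ to exclude nonconstant polynomials in case~(ii) is a valid substitute for the paper's sublinear-growth argument.
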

 
The limiting patch retains the full $C^{1,\gamma}$ and $C^{2,\gamma}$
boundary regularity, including $\gamma=1$.
In particular, for $\alpha=1$, we obtain global persistence
of $W^{3,\infty}$ boundaries, extending
the $C^{2,\gamma}$ result of Grayer II~\cite{Gray23},
$0<\gamma<1$, to the Lipschitz endpoint.

\begin{remark}\label{rmk:norm-velocity}
The normalization fixes the reference frame
of the limiting dynamics.
For any $a\in C^1([0,T];\mathbb R^2)$ with $a(0)=0$,
the transformation
\[
  (u,\theta,p)(t,x)\longmapsto
  \bigl(
    u(t,x+a(t))-\dot a(t),\,
    \theta(t,x+a(t)),\,
    p(t,x+a(t))
  \bigr)
\]
preserves \eqref{eq:ST} and its initial temperature:
the translation terms cancel in the transport equation,
and $\Lambda^{2\alpha}$ annihilates spatial constants.
Since convolution with $G_\alpha$ commutes with translations,
the normalization $u=G_\alpha*\theta$ is preserved
only if $\dot a=0$, hence $a\equiv0$.

For $a\in C^2([0,T];\mathbb R^2)$,
the same change of frame preserves
the Boussinesq equations in \eqref{BoussEq}
with the transformed pressure
\[
  P^\varepsilon(t,x)
  =
  p^\varepsilon(t,x+a(t))
  +\varepsilon\ddot a(t)\cdot x.
\]
Thus the frame acceleration is absorbed into the pressure.
For the patch data in Theorem~\ref{thm:infinite-Prandtl},
we take $a^\varepsilon\equiv0$ when $\frac12<\alpha<1$,
or when $\alpha=1$ and
$\int_{\mathbb R^2}\theta_0\,\dd x=0$.
In the latter case, zero total temperature cancels
the logarithmic far field, giving
$G_1*\theta=O(|x|^{-1})$ for compactly supported temperatures.
The uniform bound \eqref{eq:u-in-LQ-unif}
and the kernel estimates therefore place both
the limiting velocity $u$ and $G_\alpha*\theta$
in $L^Q$ for a suitable finite $Q$,
excluding a nonzero spatially constant difference.
For nonzero total temperature at $\alpha=1$,
the averaged normalization in Section~\ref{sec:limit}
provides uniform local velocity bounds in the moving frame
and fixes the normalization of the limiting velocity.

For related well-posedness results for the limiting
system \eqref{eq:ST}, see Cobb~\cite{Cobb23}
for $\frac12<\alpha<1$ and
Grayer II~\cite[Theorem~3]{Gray23} for $\alpha=1$.
Under the normalization $u=G_\alpha*\theta$,
Lemma~\ref{lem:ST-uniqueness} provides a unified,
self-contained Lagrangian proof of uniqueness
in the solution class considered here.
At $\alpha=1$, this class requires the temperature
support to remain bounded on every finite time interval.
\end{remark}

The present paper extends the results of our previous work
\cite{LXY25} in several directions.
For the reader's convenience, Table~\ref{tab:comparison-LXY25}
compares the two works and highlights the main extensions
obtained here.

\begin{center}
\refstepcounter{table}\label{tab:comparison-LXY25}
\begin{minipage}{\textwidth}
\centering
\small
\textbf{Table~\thetable.}
Comparison with our previous work \cite{LXY25}.
\par\medskip
\renewcommand{\arraystretch}{1.22}
\begin{tabular}{@{}
  >{\raggedright\arraybackslash}m{0.19\textwidth}|
  >{\raggedright\arraybackslash}m{0.33\textwidth}|
  >{\raggedright\arraybackslash}m{0.36\textwidth}@{}}
\hline
& \textbf{Previous work \cite{LXY25}}
& \textbf{This paper}
\\ \hline
Spatial setting
& $\mathbb{R}^2$ and $\mathbb{T}^2$.
& Results proved on $\mathbb{R}^2$;
  the arguments can also extend to $\mathbb{T}^2$.
\\ \hline
Boundary regularity
& $C^{k,\gamma}$ for every integer $k\geq1$, with
  $0<\gamma\leq2\alpha-1$ if $\frac12<\alpha<1$,
  and $0<\gamma<1$ if $\alpha=1$.
& $C^{k,\gamma}$ for $0<\gamma\leq1$ and $\frac12<\alpha\leq1$.
The present paper treats $k=1,2$; the general case $k\geq3$
will be addressed in a forthcoming paper.
\\ \hline
Initial velocity assumptions
for $C^{2,\gamma}$ persistence
& $u_0\in H^1\cap\dot W^{1,p},\,\partial_{W_0}u_0\in W^{1,p}$ for some $p>\frac{2}{2\alpha-1}$.
& $u_0\in W^{1,q}$ for some $q>\frac{2}{2\alpha-1}$, $\partial_{W_0}u_0\in\dot W^{1,p}$ for some $p>\frac{2}{2\alpha-\gamma}$.
\\ \hline
Boundary estimates uniform in $\mathrm{Pr}$
& On both $\mathbb{R}^2$ and $\mathbb{T}^2$
  for $\frac12<\alpha<1$;
  only on $\mathbb{T}^2$ for $\alpha=1$.
& On $\mathbb{R}^2$ throughout
  $\frac12<\alpha\leq1$.
\\ \hline
Infinite Prandtl number limit
& Established only on $\mathbb{T}^2$ in the original frame.
& 
Established on $\mathbb R^2$ in the original frame
for $\frac12<\alpha<1$ and for zero-mean patches when $\alpha=1$;
in a moving frame for nonzero-mean patches when $\alpha=1$.
\\ \hline
\end{tabular}
\end{minipage}
\end{center}

\subsection{Main ideas of the proof and organization}

Following~\cite{HKR10,HKR11}
and subsequent applications to temperature patches
in~\cite{CMX22,KX22,LXY25},
we introduce the so-called good unknown
\[
  \Gamma:=\omega-\mathcal R_{1-2\alpha}\theta,
  \qquad
  \mathcal R_{1-2\alpha}:=\partial_1\Lambda^{-2\alpha}.
\]
where $\omega=\partial_1u_2-\partial_2u_1$
is the vorticity.
It satisfies the following nice PDE
\begin{equation}\label{eq:Gamma-intr}
  \varepsilon\bigl(\partial_t\Gamma+u\cdot\nabla\Gamma\bigr)
  +\Lambda^{2\alpha}\Gamma
  =
  \varepsilon
  [\mathcal R_{1-2\alpha},u\cdot\nabla]\theta,
\end{equation}
which replaces the forcing $\partial_1\theta$
in the vorticity equation with a commutator.

A important improvement in Theorem~\ref{thm:main1}
in~\cite{LXY25} is the uniformity of the gradient estimates
in $\varepsilon$ (recall that $\varepsilon=\frac{1}{Pr}$) at $\alpha=1$ on $\mathbb R^2$.
In~\cite{LXY25}, the treatment of low frequencies
in the commutator estimates introduces zero-order
velocity norms, whose available bounds depend on $\varepsilon$.
The commutator estimates \eqref{eq:comm-es1}
and~\eqref{eq:R-1cm-es1} control
$[\mathcal R_{-1},u\cdot\nabla]\theta$
using only norms of $\nabla u$ and $\theta$,
allowing the gradient estimates to close
uniformly in $\varepsilon$.
There is a specific difficulty when $p=2$. Indeed, 
$\omega_0\in L^2$ does not imply $\Gamma_0\in L^2$,
since $\mathcal R_{-1}\theta_0$ need not belong to  $L^2$ necessairly.
We therefore decompose
\[
  \Gamma=\Gamma^{(1)}+\Gamma^{(2)},\qquad
  \Gamma^{(1)}|_{t=0}=\omega_0,\qquad
  \Gamma^{(2)}|_{t=0}=-\mathcal R_{-1}\theta_0,
\]
as in \eqref{eq:Gamma1}-\eqref{eq:Gamma2},
and estimate the components in $L^2$ and $L^q$,
respectively, for some $q>2$.
For $p>2$, direct $L^p$ estimates for $\Gamma$
avoid the additional $H^1$ assumption on $u_0$.
Together with frequency decomposition and dissipatif
smoothing, these arguments yield the wanted \textit{{\it{a priori}}} estimates
needed to prove existence in Theorem~\ref{thm:main1}.
Uniqueness follows from the smoothing estimate
for the linearized velocity equation,
combined with Besov estimates and Osgood's lemma (see Proposition~\ref{prop:uniq}).

The boundary estimates are expressed through
the transported tangent field
\[
  \varphi:=\varphi_0\circ X_t^{-1},
  \qquad W:=\nabla^\perp\varphi,
  \qquad \partial_W:=W\cdot\nabla,
\]
where $\varphi_0$ is the initial level-set function.
The tangent field $W$ satisfies
\begin{equation}\label{eq:W-eq}
  \partial_tW+u\cdot\nabla W=\partial_Wu,
  \qquad W|_{t=0}=W_0.
\end{equation}
For $z(a,t):=X_t(z_0(a))$, with
$\partial_a z_0=W_0\circ z_0$, we have
\begin{equation}\label{eq:boundary-identities}
  \partial_a z=W(t,z),\qquad
  \partial_a^2z=(\partial_WW)(t,z),\qquad
  \partial_a^3z=(\partial_W^2W)(t,z).
\end{equation}
See also~\cite[Equations~(4.2)-(4.6)]{CMX22}.
The uniform integral bound on $\Vert \nabla u\Vert _{L^\infty}$
preserves tangent nondegeneracy.
Thus $C^{1,\gamma}$ regularity persistence follows from controlling
$W$ in $L^\infty([0,T],C^{0,\gamma})$,
while the $C^{2,\gamma}$ regularity persistence additionally requires
the same control of $\partial_WW$.
We identify $C^{0,1}$ with $W^{1,\infty}$,
so this formulation include both Lipschitz endpoints.
The approximation arguments in the appendices
justify these identities for the limiting solutions.

When $\frac12<\alpha<1$ and $0<\gamma\leq2\alpha-1$,
or when $\alpha=1$ and $0<\gamma<1$,
Theorem~\ref{thm:main1} gives
$\nabla u\in L^1([0,T],C^\gamma)$
uniformly in $\varepsilon$,
under the stated velocity assumptions.
Standard flow estimates then give
$C^{1,\gamma}$ boundary persistence
without using the patch structure. For $C^{2,\gamma}$ persistence in this range,
we adapt the striated estimates of~\cite{LXY25}
to the weaker assumptions on the initial velocity.
Lemma~\ref{lem:pGamma} controls $\partial_W\Gamma$,
providing the estimate of $\partial_W^2u$
needed to propagate $\partial_WW$.

The new boundary arguments deals with the case
$\gamma>2\alpha-1$ and the endpoint $\gamma=1$.
Tangency ensures that $\partial_W\theta$
contains no boundary measure from the temperature jump,
while
$[\partial_t+u\cdot\nabla,\partial_W]=0$
shows that $\partial_W\theta$ is transported by $u$.
We exploit this structure through the (striated) commutator
estimates in Lemmas~\ref{lem:pW}
and~\ref{lem:pW-Cgam}, inspired by the kernel
representation of tangential derivatives
in Bertozzi and Constantin~\cite{BerC}.

For Theorem~\ref{thm:C1}, these lemmas control
the temperature contributions
\begin{align*}
  [\nabla^2\Lambda^{-2-2\alpha},\partial_W]\theta
  &\quad\text{in }C^\gamma,
  \,\,\,\,\,2\alpha-1<\gamma<1,\\
  [\nabla^3\Lambda^{-2-2\alpha},\partial_W]\theta
  &\quad\text{in }L^\infty,
   \,\,\,\,\,\gamma=1.
\end{align*}
Together with the bounds for $\Gamma$
and the transported striated regularity of $\theta$,
they yield
$\partial_Wu\in L^1([0,T],C^\gamma)$
and $\partial_W\nabla u\in L^1([0,T],L^\infty)$,
respectively.
The equations for $W$ and $\nabla W$
then close the required estimates uniformly
in $\varepsilon$.

For Theorem~\ref{thm:main2}, the remaining cases require stronger Besov estimates
for $\partial_W\Gamma$.
Tangential differentiation of \eqref{eq:Gamma-intr}
produces
\[
  [\Lambda^{2\alpha},\partial_W]\Gamma,
  \qquad
  \varepsilon\partial_W
  \bigl([\mathcal R_{1-2\alpha},u\cdot\nabla]\theta\bigr).
\]
Lemma~\ref{lem:CMX2.5} handles the first commutator,
while Lemmas~\ref{lem:pW} and~\ref{lem:pW-Cgam}
control the second.
Together with dissipative smoothing, these estimates yield the uniform Besov bounds
in Lemma~\ref{lem:pGamma-high}.
For $2\alpha-1<\gamma<1$, they provide
$\partial_W^2u\in L^1([0,T],C^\gamma)$,
which closes the transport estimate
for $\partial_WW$ in \eqref{pWW}, with a bound independant of $\varepsilon$.
At $\gamma=1$, the bound
$\partial_W\Gamma\in L^1([0,T],B^{\gamma'}_{\infty,1})$
for some $\gamma'>1$, together with the temperature
and lower order estimates, controls $\partial_W^2\nabla u$
in $L^1([0,T],L^\infty)$.
The equation for $\partial_W\nabla W$ then gives
$\partial_WW\in L^\infty([0,T],W^{1,\infty})$ uniformly in $\varepsilon$,
and hence $W^{3,\infty}$ boundary persistence.

For the infinite Prandtl number limit,
Proposition~\ref{prop:weak-limit-R2}
and Corollary~\ref{cor:strong-limit-R2} first establish convergence without assuming a patch structure.
Under the stated uniform bounds on the initial velocities,
they apply to initial temperatures uniformly bounded
in $L^1\cap L^\infty$ and converging strongly in $L^2$.
At $\alpha=1$, the initial temperature supports
must additionally lie in a common bounded set.

Estimate \eqref{eq:u-LQ-es} gives uniform global $L^Q$
velocity bounds for finite
$Q>\max\{p,1/(1-\alpha)\}$ when $\alpha<1$,
and for finite $Q>p$ when $\alpha=1$ and
$\int_{\mathbb R^2}\theta_0^\varepsilon\,\dd x=0$
for every $\varepsilon$.
In the latter case, the support and $L^1$ bounds
control the first absolute moments.
We therefore take $a^\varepsilon\equiv0$ in these cases.
For the remaining families at $\alpha=1$,
we choose $a^\varepsilon$ by \eqref{def:a-eps}
to enforce \eqref{eq:averaged-normalization}.
Together with the gradient and kernel estimates,
these choices yield uniform local bounds for
$(U^\varepsilon,\Theta^\varepsilon)$
defined in \eqref{def:Theta-U-eps}.
If the initial temperatures are supported in a common bounded set,
the supports of $\Theta^\varepsilon$ remain uniformly bounded
on each finite time interval.

These bounds, together with compactness from the transport
equation, allows us to pass to the limite.
The limiting (fractional) Stokes relation and
the sublinear growth bound determine the velocity
up to a spatially constant vector:
\[
  u=G_\alpha*\theta+c(t),
  \qquad c(t)\in\mathbb R^2.
\]
Global $L^Q$ integrability forces $c(t)=0$
in the original-frame cases,
while the averaged normalization gives the same conclusion
in the moving-frame case.
The flow representation, conservation of the $L^2$ norms,
and strong convergence of the initial data then upgrade
the local distributional convergence to strong convergence
in $C([0,T];L^2(\mathbb R^2))$.
Interpolation gives convergence in $C([0,T];L^r(\mathbb R^2))$
for every $1<r<\infty$.

Finally, we pass the uniform boundary estimates to the limit
using the translated tangent fields
\[
  \widetilde W^\varepsilon(t,x)
  :=W^\varepsilon(t,x+a^\varepsilon(t)).
\]
Their evolution equations and uniform spatial bounds
yield strong local compactness at lower regularity,
allowing us to identify the limiting tangent field
and its striated derivatives.
The H\"older bounds pass to the limit by lower semicontinuity,
while the Lipschitz bounds follow from weak-$*$ convergence
of the corresponding spatial derivatives.
Together with the flow representation
and uniform tangent nondegeneracy,
these bounds preserve the full boundary regularity,
including the endpoint $\gamma=1$,
and complete the proof of
Theorem~\ref{thm:infinite-Prandtl}.

The rest of the paper is organized as follows. 
Section~\ref{sec:tools} recalls the Besov and 
Chemin-Lerner notation and presents the commutator 
and striated estimates, together with estimates 
for the transport, transport-diffusion, 
and linearized velocity equations. 
Section~\ref{sec:well-pose} proves 
Theorem~\ref{thm:main1}, establishing global well-posedness 
and gradient bounds 
uniform in $\mathrm{Pr}$, together with uniform global $L^Q$ velocity bounds
under the stated conditions.
Sections~\ref{sec:C1} and~\ref{sec:C2} 
prove Theorems~\ref{thm:C1} and~\ref{thm:main2}, respectively, 
concerning the persistence of $C^{1,\gamma}$ and $C^{2,\gamma}$ 
patch boundary regularity, including the Lipschitz endpoints. 
Section~\ref{sec:limit} justifies 
the infinite Prandtl number limit on $\mathbb R^2$ 
and contains the proof of Theorem~\ref{thm:infinite-Prandtl}. 
Appendix~\ref{sec:A} constructs compatible smooth 
approximations of the initial data, 
and Appendix~\ref{sec:B} justifies passage to the limit 
in the regularized equations and preserves 
the uniform striated bounds through compactness.

\vskip1.5mm
\textbf{Notation.}
The following notation will be used throughout the paper.
\begin{itemize}

\item
For an open set $\Omega\subset\mathbb{R}^{d}$, we denote by
$\mathcal{D}(\Omega):=C_{c}^{\infty}(\Omega)$ the space of test
functions and by $\mathcal{D}'(\Omega)$ its topological dual, namely,
the space of distributions on $\Omega$. We denote by
$\mathcal{S}(\mathbb{R}^{d})$ the Schwartz class and by
$\mathcal{S}'(\mathbb{R}^{d})$ its topological dual, namely, the space
of tempered distributions.

\item For an integer $k\geq0$ and $1\leq p\leq\infty$, we denote by
$W^{k,p}(\mathbb{R}^d)$ the Sobolev space of functions whose
distributional derivatives up to order $k$ belong to
$L^p(\mathbb{R}^d)$, equipped with the norm $\Vert f\Vert _{W^{k,p}}
  :=\sum_{|\beta|\leq k}\Vert \partial^\beta f\Vert _{L^p}$, 
where $\beta\in\mathbb{N}_0^d$ is a multi-index.
In particular, $W^{0,p}=L^p$ and $H^k=W^{k,2}$.

For an integer $k\geq0$ and $0<\gamma\leq1$, we denote by
$C^{k,\gamma}(\mathbb{R}^d)$ the space of $k$ times continuously
differentiable functions with finite norm
\begin{align*}
  \Vert f\Vert _{C^{k,\gamma}}
  :=\sum_{|\beta|\leq k}\Vert \partial^\beta f\Vert _{L^\infty}
    +\sum_{|\beta|=k}[\partial^\beta f]_{C^\gamma},\quad
  [g]_{C^\gamma}
  :=\sup_{\substack{x,y\in\mathbb{R}^d\\x\neq y}}
    \frac{|g(x)-g(y)|}{|x-y|^\gamma}.
\end{align*}
We write $C^\gamma=C^{0,\gamma}$.
At $\gamma=1$, the seminorm above is the Lipschitz seminorm.

For $k\geq0$ and $0<\gamma<1$, the H\"older space
$C^{k,\gamma}(\mathbb{R}^d)$ coincides with the inhomogeneous
Besov space $B_{\infty,\infty}^{k+\gamma}(\mathbb{R}^d)$,
with equivalent norms: $\Vert f\Vert _{C^{k,\gamma}}
  \approx \Vert f\Vert _{B_{\infty,\infty}^{k+\gamma}}$, the definition of the inhomogeneous Besov spaces is stated in Section~\ref{sec:tools}. 
For negative exponents $s<0$, we use the notation
$C^s(\mathbb{R}^d):=B_{\infty,\infty}^s(\mathbb{R}^d)$.

    \item
    For a Banach
    space $X$ and $p\in[1,\infty]$, we use the abbreviation $L_ {T}^{p}(X):=L^{p}([0,T],X)$. 
    We also write
    $\Vert f\Vert _{X\cap Y}:=\Vert f\Vert _{X}+\Vert f\Vert _{Y}$ and
    $\Vert (f_{1},\ldots,f_{n})\Vert _{X}:=\sum_{i=1}^{n}\Vert f_{i}\Vert _{X}$.
\item $X \Subset Y$ means that the embedding is compact.

    \item
    Given a vector field $W$, we set
    $\partial_{W}f:=W\cdot\nabla f$.

    \item
    For two operators $T$ and $P$,
    $[T,P]:=T\circ P-P\circ T$ denotes their commutator.
    For two $d\times d$ matrices $A=(a_{ij})$ and $B=(b_{ij})$, we denote by
    $A:B$ the usual product of two matrices.
    \item
    For $R>0$, $B_{R}:=\{x\in\mathbb{R}^{d}:|x|<R\}$.
    For a set $E\subset\mathbb{R}^{d}$, $\mathbf{1}_{E}$ denotes its
    characteristic function, and
    $E+B_{R}:=\{x+y:x\in E,\ y\in B_{R}\}$.
\end{itemize}

\section{Tools}\label{sec:tools}
This section collects the analytic tools used to establish
global well-posedness, uniform velocity gradient bounds,
and patch boundary persistence.
We recall the Littlewood-Paley notation and basic
Besov estimates, then present commutator estimates
for Riesz-type and negative-order operators
and the associated striated estimates.
We conclude with estimates for the transport,
transport-diffusion, and linearized velocity equations.
\subsection{Littlewood-Paley theory and basic estimates}
\label{subsec:basic-tools}

Choose two nonnegative radial functions
$\chi,\varphi\in C_c^\infty(\R^d)$,
supported respectively in the ball
$\{|\xi|\leq4/3\}$ and the annulus
$\{3/4\leq|\xi|\leq8/3\}$,
such that (see~\cite{BCD11,LR02})
\begin{equation}\label{eq:POU}
  \chi(\xi)+\sum_{j\geq0}\varphi(2^{-j}\xi)=1,
  \qquad \xi\in\R^d.
\end{equation}
For every $f\in\mathcal S'(\R^d)$,
define the dyadic blocks and low-frequency cutoffs by
\begin{align}\label{exp:Del-Sj}
  \Delta_{-1}f
  &:=\chi(D)f=h'*f,
  \nonumber\\
  \Delta_jf
  &:=\varphi(2^{-j}D)f
  =2^{jd}h(2^j\cdot)*f,
  \qquad j\geq0,
  \nonumber\\
  S_jf
  &:=\sum_{-1\leq\ell\leq j-1}\Delta_\ell f
  =\chi(2^{-j}D)f
  =2^{jd}h'(2^j\cdot)*f,
  \qquad j\geq0,
\end{align}
where $h=\mathcal F^{-1}\varphi$,
$h'=\mathcal F^{-1}\chi$,
and $\mathcal F^{-1}$ denotes the inverse Fourier transform.
We also set $\Delta_j=0$ for $j\leq-2$ and $S_{-1}=0$.

For $f,g\in\mathcal S'(\R^d)$,
whenever the products and series below are well defined,
Bony's decomposition reads
\[
  fg=T_fg+T_gf+R(f,g),
\]
where
\begin{align*}
  T_f g:= \sum_{q\in \mathbb{N}} S_{q-1}f \Delta_q g,\quad R(f,g)=\sum_{q\geq -1}\Delta_q f \widetilde{\Delta}_q g,\quad
  \widetilde{\Delta}_q := \Delta_{q-1} + \Delta_q + \Delta_{q+1}.
\end{align*}

For $s\in\R$ and $p,r\in[1,\infty]$,
the nonhomogeneous Besov space $B^s_{p,r}(\R^d)$
consists of all $f\in\mathcal S'(\R^d)$ such that
\[
  \Vert f\Vert _{B^s_{p,r}}
  :=
  \left\Vert 
    \left(2^{js}\Vert \Delta_jf\Vert _{L^p}\right)_{j\geq-1}
  \right\Vert _{\ell^r}
  <\infty.
\]

For $\rho\in[1,\infty]$ and $T>0$,
the corresponding Chemin-Lerner space
$\widetilde L^\rho_T(B^s_{p,r})$
consists of time-dependent tempered distributions $g$
such that
\[
  \Vert g\Vert _{\widetilde L^\rho_T(B^s_{p,r})}
  :=
  \left\Vert 
    \left(
      2^{js}\Vert \Delta_jg\Vert _{L^\rho(0,T;L^p)}
    \right)_{j\geq-1}
  \right\Vert _{\ell^r}
  <\infty.
\]
Minkowski's inequality and dyadic summation give
\[
  \widetilde L^\rho_T(B^{s+\delta}_{p,r})
  \hookrightarrow L^\rho_T(B^s_{p,r}),
  \qquad \delta>0.
\]
The embedding also holds with $\delta=0$
when $r\leq\rho$.

We recall the following localized multiplier estimates;
see~\cite[Proposition~3.1]{HKR11}.
\begin{lemma}\label{lem:m(D)}
Let $m(D)=\Lambda^\sigma m_0(D)$ with $\sigma>0$,
where $m_0(D)$ is a Fourier multiplier
whose symbol $m_0\in C^\infty(\R^d\setminus\{0\})$
is homogeneous of degree zero.
Then, for every $p\in[1,\infty]$,
$j\in\mathbb N$, and $q\in\{-1\}\cup\mathbb N$,
\begin{equation}\label{es:mD-Sj-Lp}
\begin{aligned}
  \Vert m(D)S_ju\Vert _{L^p}
  \leq C2^{j\sigma}\Vert S_ju\Vert _{L^p},
  \qquad
  \Vert m(D)\Delta_qu\Vert _{L^p}
  \leq C2^{q\sigma}\Vert \Delta_qu\Vert _{L^p}.
\end{aligned}
\end{equation}
The constant $C$ depends only on $d$, $\sigma$,
and finitely many seminorms of $m_0$ on $\mathbb S^{d-1}$.
\end{lemma}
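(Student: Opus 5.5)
The statement to prove is Lemma~\ref{lem:m(D)}: for $m(D)=\Lambda^\sigma m_0(D)$ with $\sigma>0$, the bounds \eqref{es:mD-Sj-Lp} on $S_ju$ and $\Delta_qu$ in $L^p$. The plan is to reduce both bounds to a kernel estimate. For each $j$ I will write $m(D)S_j u$ as the convolution of an explicit $L^1$ kernel with $S_ju$, and then apply Young's inequality. The whole argument then rests on a uniform-in-$j$ bound for the $L^1$ norm of a rescaled kernel.

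\textbf{The $S_j$ estimate.} Fix a cutoff $\tilde\chi\in C_c^\infty$ with $\tilde\chi\equiv1$ on the ball $\{|\xi|\le 4/3\}$ and supported in $\{|\xi|\le 2\}$. Then $S_ju=\tilde\chi(2^{-j}D)S_ju$. Using homogeneity of $\Lambda^\sigma m_0$,
\[
m(D)S_ju=2^{j\sigma}K_j*S_ju,\qquad K_j:=2^{jd}K(2^j\cdot),
\]
where $K:=\mathcal F^{-1}\big(|\xi|^\sigma m_0(\xi)\tilde\chi(\xi)\big)$. Since $\Vert K_j\Vert_{L^1}=\Vert K\Vert_{L^1}$, Young's inequality gives the first bound as soon as $K\in L^1$.

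\textbf{Main obstacle: $K\in L^1$.} The symbol $a(\xi)=|\xi|^\sigma m_0(\xi)\tilde\chi(\xi)$ is compactly supported but is not smooth at the origin; only $|\partial^\beta a(\xi)|\lesssim|\xi|^{\sigma-|\beta|}$ holds there. I would handle this with a dyadic decomposition near the origin:
\[
a=\sum_{k\le 0} a\,\varphi(2^{-k}\cdot)+(\text{a smooth compactly supported remainder}).
\]
Each piece $a\,\varphi(2^{-k}\cdot)$ equals $2^{k\sigma}b_k(2^{-k}\xi)$, where the $b_k$ are supported in a fixed annulus and have $C^N$ norms bounded uniformly in $k$. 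The constants here involve only finitely many seminorms of $m_0$ on $\mathbb S^{d-1}$. Consequently $\Vert\mathcal F^{-1}b_k\Vert_{L^1}\le C$ uniformly, by the standard bound $(1+|x|)^{-d-1}$ obtained from $d+1$ integrations by parts. Dilation invariance of the $L^1$ norm then gives
\[
\Vert \mathcal F^{-1}(a\varphi(2^{-k}\cdot))\Vert_{L^1}\le C2^{k\sigma}.
\]
The sum over $k\le0$ converges because $\sigma>0$; this is exactly where positivity of $\sigma$ is used. The remainder is Schwartz, so its inverse Fourier transform is in $L^1$. Together these give $K\in L^1$. (Alternatively, one can bound $|K(x)|\lesssim |x|^{-d-\sigma}$ for large $|x|$ and use local integrability near $0$, but the dyadic route is cleaner.)

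\textbf{The $\Delta_q$ estimate.} For $q\ge0$, take $\tilde\varphi$ supported in an annulus with $\tilde\varphi\equiv1$ on the support of $\varphi$. Then
\[
m(D)\Delta_qu=2^{q\sigma}\,2^{qd}L(2^q\cdot)*\Delta_qu,\qquad L=\mathcal F^{-1}\big(|\xi|^\sigma m_0\tilde\varphi\big).
\]
This symbol is smooth and compactly supported, so $L\in\mathcal S\subset L^1$ directly. For $q=-1$, the block $\Delta_{-1}u=S_0u$ is covered by the $S_j$ argument with $j=0$, which gives $2^{0}\lesssim 2^{-\sigma}$ up to a constant.
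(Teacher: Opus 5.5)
Your proof is correct. The paper gives no argument of its own and simply quotes \cite[Proposition~3.1]{HKR11}; your proof (rescaling, then a dyadic splitting near $\xi=0$ to show the kernel of $|\xi|^\sigma m_0(\xi)\tilde\chi(\xi)$ is integrable, then Young's inequality) is the standard argument behind such a citation, and it makes explicit that $\sigma>0$ is needed only for the $S_j$ and $\Delta_{-1}$ bounds, while the annular bound for $\Delta_q$, $q\ge0$, holds for any real $\sigma$.
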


The next lemma collects product, commutator,
and tangential derivative estimates in Besov spaces.
\begin{lemma}\label{lem:CMX2.5}
Let $d\geq2$, and let $u$ and
$\mathcal W=\{W_i\}_{i=1}^N$
be smooth divergence-free vector fields on $\mathbb R^d$.
Let $\phi$ be a smooth scalar function.
Let $m(D)$ be the Fourier multiplier with symbol
\[
  m(\xi)=|\xi|^\sigma m_0(\xi),
  \qquad \sigma>-1,
\]
where $m_0\in C^\infty(\mathbb R^d\setminus\{0\})$
is homogeneous of degree zero.
For any function space $X$, write
$\Vert \mathcal W\Vert _X:=\sum_{i=1}^N\Vert W_i\Vert _X$,
and $\Vert \partial_{\mathcal W}f\Vert _X :=\sum_{i=1}^N\Vert W_i\cdot\nabla f\Vert _X$.
Then the following estimates hold.

\begin{enumerate}[(1)]
\item
For every $\delta\in(0,1)$ and $p,r\in[1,\infty]$,
\begin{equation}\label{eq:prodBes}
\begin{aligned}
  \Vert u\cdot\nabla\phi\Vert _{B^{-\delta}_{p,r}}
  \leq C\min\Bigl\{
    &\Vert u\Vert _{B^{-\delta}_{p,r}}\Vert \nabla\phi\Vert _{L^\infty},
    \Vert u\Vert _{L^\infty}
      \Vert \nabla\phi\Vert _{B^{-\delta}_{p,r}}
  \Bigr\}.
\end{aligned}
\end{equation}
At the endpoint, for every $p\in[1,\infty]$,
\begin{equation}\label{eq:prodBes-endp}
\begin{aligned}
  \Vert u\cdot\nabla\phi\Vert _{B^{-1}_{p,\infty}}
  \leq C\min\Bigl\{
    &\Vert u\Vert _{B^{-1}_{p,\infty}}
      \Vert \nabla\phi\Vert _{B^0_{\infty,1}},
    \Vert u\Vert _{B^0_{\infty,1}}
      \Vert \nabla\phi\Vert _{B^{-1}_{p,\infty}}
  \Bigr\}.
\end{aligned}
\end{equation}

\item
Let $s\in(-1,1)$ and $p,r\in[1,\infty]$.
If $-1<s+\sigma<1$, then
\begin{equation}\label{eq:commEs}
  \Vert [m(D),u\cdot\nabla]\phi\Vert _{B^s_{p,r}}
  \leq C\Vert u\Vert _{W^{1,\infty}}
  \Vert \phi\Vert _{B^{s+\sigma}_{p,r}}.
\end{equation}
If $s+\sigma\geq1$, then
\begin{equation}\label{eq:commEs2}
\begin{aligned}
  \Vert [m(D),u\cdot\nabla]\phi\Vert _{B^s_{p,r}}
  \leq {}
  C\Vert u\Vert _{W^{1,\infty}}
  \Vert \phi\Vert _{B^{s+\sigma}_{p,r}}
  +C\Vert u\Vert _{B^{s+\sigma}_{p,r}}
  \Vert \nabla\phi\Vert _{L^\infty}.
\end{aligned}
\end{equation}

\item
Let $s\in(-1,1)$ and $p,r\in[1,\infty]$.
If $-1<s+\sigma<1$, then
\begin{equation}\label{eq:paWmD-es}
\begin{aligned}
  \Vert \partial_{\mathcal W}(m(D)\phi)\Vert _{B^s_{p,r}}
  \leq {}
  C\Vert \partial_{\mathcal W}\phi\Vert _{B^{s+\sigma}_{p,r}}
  +C\Vert \mathcal W\Vert _{W^{1,\infty}}
  \Vert \phi\Vert _{B^{s+\sigma}_{p,r}}.
\end{aligned}
\end{equation}
If $s+\sigma\geq1$, then
\begin{equation}\label{eq:paWmD-es2}
\begin{aligned}
  \Vert \partial_{\mathcal W}(m(D)\phi)\Vert _{B^s_{p,r}}
  \leq {}&
  C\Vert \partial_{\mathcal W}\phi\Vert _{B^{s+\sigma}_{p,r}}
  +C\Vert \mathcal W\Vert _{W^{1,\infty}}
  \Vert \phi\Vert _{B^{s+\sigma}_{p,r}}
  +C\Vert \mathcal W\Vert _{B^{s+\sigma}_{p,r}}
  \Vert \nabla\phi\Vert _{L^\infty}.
\end{aligned}
\end{equation}
\end{enumerate}

The constants depend only on the stated parameters and,
for multiplier estimates, finitely many seminorms
of $m_0$ on $\mathbb S^{d-1}$.
\end{lemma}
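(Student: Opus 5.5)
The three parts of Lemma~\ref{lem:CMX2.5} will all be proved with Bony's decomposition and the localized multiplier bounds of Lemma~\ref{lem:m(D)}. Throughout, we use $\operatorname{div}u=0$ and $\operatorname{div}W_i=0$ to write $u\cdot\nabla\phi=\operatorname{div}(u\phi)$ wherever that helps the remainder terms.

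\emph{Part (1).} We decompose
\[
u\cdot\nabla\phi=T_{u}\cdot\nabla\phi+T_{\nabla\phi}\cdot u+R(u,\nabla\phi).
\]
For the first bound in \eqref{eq:prodBes}:
\begin{itemize}
\item The paraproduct $T_{\nabla\phi}u$ is bounded in $B^{-\delta}_{p,r}$ by $\Vert\nabla\phi\Vert_{L^\infty}\Vert u\Vert_{B^{-\delta}_{p,r}}$.
\item For $T_u\nabla\phi$, a low-frequency factor $S_{q-1}u$ with negative regularity is harmless, because $\sum_{\ell\le q}2^{\ell\delta}\lesssim 2^{q\delta}$. This gives $2^{-q\delta}\Vert S_{q-1}u\Vert_{L^p}\lesssim\Vert u\Vert_{B^{-\delta}_{p,\infty}}$-type bounds against $\Vert\Delta_q\nabla\phi\Vert_{L^\infty}$. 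This step needs $\delta>0$.
\item The remainder is the only delicate piece, since the total regularity is $-\delta+0$. We write it as $\operatorname{div}R(u,\phi)$, which gains a derivative: $R(u,\phi)$ has regularity $1-\delta>0$ in $B_{p,r}$, with bound $\Vert u\Vert_{B^{-\delta}_{p,r}}\Vert\phi\Vert_{\dot B^1_{\infty,\infty}}$. The low block $\Delta_{-1}\phi$ is handled by noting that only $\nabla\phi$ enters after the divergence.
\end{itemize}
The second bound is symmetric. At the endpoint $\delta=1$ in \eqref{eq:prodBes-endp}, the low-frequency sum $\sum_{\ell\le q}2^{\ell}$ is still fine. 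The failure is in pairing $L^\infty$ with $B^{0}$: that is why $B^0_{\infty,1}$ replaces $L^\infty$ and makes the paraproduct and remainder sums converge.

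\emph{Part (2).} We write
\[
[m(D),u\cdot\nabla]\phi=\sum_j\bigl([m(D),T_{u^j}]\partial_j\phi+m(D)(T_{\partial_j\phi}u^j+R(u^j,\partial_j\phi))-T_{m(D)\partial_j\phi}u^j-R(u^j,m(D)\partial_j\phi)\bigr).
\]
\begin{itemize}
\item \emph{Main term.} This is the paraproduct commutator $[m(D)\widetilde\Delta_q,S_{q-1}u]\Delta_q\partial\phi$. Since $m(D)\widetilde\Delta_q$ has kernel $2^{q(d+\sigma)}k(2^q\cdot)$ with $|x|k\in L^1$, the first-order Taylor formula gives
\[
\Vert\cdot\Vert_{L^p}\lesssim 2^{q(\sigma-1)}\Vert\nabla u\Vert_{L^\infty}2^q\Vert\Delta_q\phi\Vert_{L^p}.
\]
This yields $\Vert\nabla u\Vert_{L^\infty}\Vert\phi\Vert_{B^{s+\sigma}_{p,r}}$ for every $s$.
\item \emph{Remaining paraproducts.} The terms $T_{\partial\phi}u$ and $T_{m(D)\partial\phi}u$ put derivatives on $u$. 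They are bounded by $\Vert\nabla u\Vert_{L^\infty}$ times a low-frequency sum of $\phi$. This sum converges when $s+\sigma<1$, using $\sigma>-1$ in the low-frequency piece. When $s+\sigma\ge1$ it does not converge, and we instead bound these terms by $\Vert u\Vert_{B^{s+\sigma}_{p,r}}\Vert\nabla\phi\Vert_{L^\infty}$; this produces the extra term in \eqref{eq:commEs2}.
\item \emph{Remainders.} These are again written in divergence form. They require $s+\sigma>-1$ for the summation over high frequencies.
\end{itemize}

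\emph{Part (3).} We write
\[
\partial_{\mathcal W}(m(D)\phi)=m(D)\partial_{\mathcal W}\phi+[W_i\cdot\nabla,m(D)]\phi,
\]
and apply part (2) with $u$ replaced by $W_i$. The first term contributes $\Vert\partial_{\mathcal W}\phi\Vert_{B^{s+\sigma}_{p,r}}$ after noting that $m(D)$ maps $B^{s+\sigma}$ to $B^{s}$ modulo the low block. The low block is controlled through Lemma~\ref{lem:m(D)} for $\sigma>0$, and for $-1<\sigma\le0$ by writing $\Delta_{-1}m(D)\partial_{\mathcal W}\phi=\Delta_{-1}m(D)\operatorname{div}(W\phi)$. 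This gives the bound $\Vert\mathcal W\Vert_{L^\infty}\Vert\phi\Vert_{L^p}$, which is absorbed into the stated right-hand sides.

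\emph{Main obstacle.} The hard point is the low-frequency block $\Delta_{-1}$ when $m(D)$ is nonsmooth at $\xi=0$ and $\sigma\le0$. There $m(D)\Delta_{-1}$ is not bounded on $L^p$ uniformly, and the divergence-free structure must supply an extra derivative so that $|\xi|^{\sigma+1}$ is integrable. This is exactly where $\sigma>-1$ enters.
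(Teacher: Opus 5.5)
\textbf{Overall structure.} Your architecture matches the paper's. The three groups of Bony pieces you use in part (2) are the paper's $I_1$ (your paraproduct commutator), $I_2$ (your $m(D)T_{\partial_j\phi}u^j-T_{m(D)\partial_j\phi}u^j$) and $I_3$ (the two remainders). For part (3), the identity $\partial_W(m(D)\phi)=m(D)\partial_W\phi-[m(D),W\cdot\nabla]\phi$ combined with part (2) is exactly how the paper proves \eqref{eq:paWmD-es2}. There, $s<1\le s+\sigma$ forces $\sigma>0$, so the low block is handled by Lemma~\ref{lem:m(D)}.

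\textbf{The gap: the low block of part (3) when $-1<\sigma\le 0$.} For \eqref{eq:paWmD-es} in this range, you bound $\Vert\Delta_{-1}m(D)\operatorname{div}(W\phi)\Vert_{L^p}$ by $\Vert\mathcal W\Vert_{L^\infty}\Vert\phi\Vert_{L^p}$ and say this is absorbed into the right-hand side. It is not when $s+\sigma<0$, nor in general at $s+\sigma=0$, because $B^{s+\sigma}_{p,r}\not\hookrightarrow L^p$. A large-amplitude, highly oscillating $\phi$ has $\Vert\phi\Vert_{B^{s+\sigma}_{p,r}}\ll\Vert\phi\Vert_{L^p}$. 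This is not a corner case: the paper uses \eqref{eq:paWmD-es} with $\sigma=1-2\alpha<0$ and $s=\gamma-1$ or $s=0$ in Lemma~\ref{lem:pGamma-high}, so $s+\sigma<0$.

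\textbf{How to repair it.} Apply Lemma~\ref{lem:m(D)} to the operator $m(D)\operatorname{div}$, which has positive order $1+\sigma$. This gives $\Vert\Delta_{-1}m(D)\operatorname{div}(W\phi)\Vert_{L^p}\le C\Vert\Delta_{-1}(W\phi)\Vert_{L^p}$. Then decompose $W\phi$ by Bony. The two paraproducts meet $\Delta_{-1}$ only through finitely many blocks. The remainder is controlled by
\[
\sum_{k\ge-1}\Vert\Delta_kW\Vert_{L^\infty}\Vert\widetilde\Delta_k\phi\Vert_{L^p}
\le C\Vert W\Vert_{W^{1,\infty}}\sum_{k\ge-1}2^{-k(1+s+\sigma)}\,2^{k(s+\sigma)}\Vert\widetilde\Delta_k\phi\Vert_{L^p},
\]
which is bounded by $C\Vert W\Vert_{W^{1,\infty}}\Vert\phi\Vert_{B^{s+\sigma}_{p,r}}$ because $s+\sigma>-1$. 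So this block needs both the Lipschitz norm of $W$ and the lower bound $s+\sigma>-1$; the crude $L^\infty\times L^p$ bound throws away both.

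\textbf{Smaller imprecisions in part (2).}
\begin{itemize}
\item When $\sigma<0$, do not put the low-frequency terms of $R(u,m(D)\nabla\phi)$ in divergence form, since $\Delta_{-1}m(D)\phi$ is then not controlled in $L^p$. Keep the gradient on $m(D)\phi$ there, where the operator has order $1+\sigma>0$, as the paper does when it splits off $j\le2$.
\item The high-frequency sum for that same remainder needs $s>-1$, not $s+\sigma>-1$.
\item $T_{m(D)\nabla\phi}u$ is bounded by $\Vert\nabla u\Vert_{L^\infty}\Vert\phi\Vert_{B^{s+\sigma}_{p,r}}$ for every $s<1$. Only $m(D)T_{\nabla\phi}u$ genuinely produces the extra term in \eqref{eq:commEs2}. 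Your alternative bound for $T_{m(D)\nabla\phi}u$ by $\Vert u\Vert_{B^{s+\sigma}_{p,r}}\Vert\nabla\phi\Vert_{L^\infty}$ is still valid, but it relies on $\sigma>0$.
\end{itemize}
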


\begin{proof}[Proof of Lemma~\ref{lem:CMX2.5}]
Estimates \eqref{eq:prodBes}, \eqref{eq:commEs},
and \eqref{eq:paWmD-es} follow from
\cite[Lemma~2.3]{LXY25}.
We prove the remaining three estimates.

We prove \eqref{eq:prodBes-endp} using Bony's decomposition: $u\cdot\nabla\phi
  =
  T_u\cdot\nabla\phi
  +T_{\nabla\phi}\cdot u
  +R(u\cdot,\nabla\phi)$. 
For the paraproduct terms, the spectral localization
and the bound
$\sup\limits_{j\in\mathbb{N}} 2^{-j}\Vert S_{j-1}f\Vert _{L^p} \leq C\Vert f\Vert _{B^{-1}_{p,\infty}}$
yield
\begin{align*}
  \Vert T_u\cdot\nabla\phi\Vert _{B^{-1}_{p,\infty}}
  &\leq \sup_{q\geq -1}2^{-q}\sum_{\substack{j\in\mathbb N; \,|j-q|\leq 4}}
  \Vert \Delta_q(S_{j-1}u\cdot \nabla \Delta_j\phi)\Vert _{L^p} \\
  &\leq C \sup_{j\in\mathbb{N}} 2^{-j}
  \min\Bigl\{ \Vert S_{j-1}u\Vert _{L^p}\Vert \nabla \Delta_j\phi\Vert _{L^\infty},
  \ \Vert S_{j-1}u\Vert _{L^\infty}\Vert \nabla \Delta_j\phi\Vert _{L^p} \Bigr\}\\
  &\leq C \min\Bigl\{\Vert u\Vert _{B^{-1}_{p,\infty}}\Vert \nabla \phi\Vert _{L^\infty},
  \ \Vert u\Vert _{L^\infty}\Vert \nabla\phi\Vert _{B^{-1}_{p,\infty}}\Bigr\},
\end{align*}
and
\begin{align*}
  \Vert T_{\nabla\phi}\cdot u\Vert _{B^{-1}_{p,\infty}}
  & \leq \sup_{q\geq -1} 2^{-q}\sum_{\substack{j\in\mathbb N;\, |j-q|\leq 4}}
  \Vert \Delta_q(\Delta_j u\cdot \nabla S_{j-1}\phi)\Vert _{L^p} \\
  &\leq C \min\Bigl\{\Vert u\Vert _{B^{-1}_{p,\infty}} \Vert \nabla \phi\Vert _{L^\infty},
  \ \Vert u\Vert _{L^\infty}\Vert \nabla\phi\Vert _{B^{-1}_{p,\infty}}\Bigr\}.
\end{align*}

For the remainder, since $\div u=0$, we have $R(u\cdot,\nabla \phi) = \div R(u,\phi) $, thus
\begin{align*}
  \Vert R(u\cdot,\nabla \phi)\Vert _{B^{-1}_{p,\infty}} 
  & = \sup_{q\geq -1} 2^{-q}\Vert \Delta_q\, R(u\cdot,\nabla \phi)\Vert _{L^p}  \\
  &\leq  C \sup_{q\geq -1} \sum_{\substack{j\ge q-3;\,j\ge 2 }}
  \Vert \Delta_q(\Delta_j u\,\widetilde{\Delta}_j\phi)\Vert _{L^p}
  + \sup_{q\geq -1} 2^{-q}\sum_{\substack{j\ge q-3;\,j\leq 2 }}
  \Vert \Delta_q(\Delta_j u\cdot \nabla \widetilde{\Delta}_j\phi)\Vert _{L^p} \\
  &\leq C  \sum_{\substack{ j\ge 2}} 2^{-j}
  \min\Bigl\{\Vert \Delta_j u\Vert _{L^p}\Vert \nabla \widetilde{\Delta}_j\phi\Vert _{L^\infty},
  \ \Vert \Delta_j u\Vert _{L^\infty}\Vert \nabla \widetilde{\Delta}_j\phi\Vert _{L^p}\Bigr\}\\
  &\quad+ C  \sum_{\substack{-1\leq j\leq 2}} 2^{-j}
  \min\Bigl\{\Vert \Delta_j u\Vert _{L^p}\Vert \nabla \widetilde{\Delta}_j\phi\Vert _{L^\infty},
  \ \Vert \Delta_j u\Vert _{L^\infty}\Vert \nabla \widetilde{\Delta}_j\phi\Vert _{L^p}\Bigr\}\\
  &\leq C \min\Bigl\{\Vert u\Vert _{B^{-1}_{p,\infty}}\Vert \nabla \phi\Vert _{B^0_{\infty,1}},
  \ \Vert u\Vert _{B^0_{\infty,1}}\Vert \nabla\phi\Vert _{B^{-1}_{p,\infty}}\Bigr\}.
\end{align*}
Combining these estimates and using
$B^0_{\infty,1}\hookrightarrow L^\infty$
proves \eqref{eq:prodBes-endp}.

To prove \eqref{eq:commEs2}, Bony's decomposition gives
\begin{align*}
  [m(D),u\cdot\nabla]\phi
  &=
  \sum_{j\geq0}
  [m(D),S_{j-1}u\cdot\nabla]\Delta_j\phi
  + \sum_{j\geq0}
  [m(D),\Delta_j u\cdot\nabla]S_{j-1}\phi
  \\
  &\quad+
  \sum_{j\geq-1}
  [m(D),\Delta_j u\cdot\nabla]\widetilde\Delta_j\phi
  :=I_1+I_2+I_3.
\end{align*}
The estimates for $I_1$ and $I_3$ proceed as in
the proof of \eqref{eq:commEs} and yield
\[
  \Vert I_1\Vert _{B^s_{p,r}}+\Vert I_3\Vert _{B^s_{p,r}}
  \leq
  C\Vert u\Vert _{W^{1,\infty}}\Vert \phi\Vert _{B^{s+\sigma}_{p,r}}.
\]
We therefore detail the estimate of $I_2$,
which produces the additional term in \eqref{eq:commEs2}.
By Lemma~\ref{lem:m(D)},
\begin{equation}\label{eq:fact1a}
  \Vert \nabla m(D)\Delta_j\phi\Vert _{L^p}
  \leq
  C2^{j(1+\sigma)}\Vert \Delta_j\phi\Vert _{L^p},
  \qquad
  j\geq-1,\quad 1\leq p\leq\infty,\quad \sigma>-1.
\end{equation}
Using spectral localization and \eqref{eq:fact1a},
we obtain, for $q\geq-1$,
\begin{align*}
  2^{qs}\Vert \Delta_q I_2\Vert _{L^p}
  &\leq
  C2^{qs}
  \sum_{\substack{j\geq0;\,|j-q|\leq4}}
  \Bigl(
    2^{j\sigma}\Vert \Delta_j u\Vert _{L^p}
    \Vert \nabla S_{j-1}\phi\Vert _{L^\infty}
    +\Vert \Delta_j u\Vert _{L^\infty}
    \Vert \nabla m(D)S_{j-1}\phi\Vert _{L^p}
  \Bigr)
  \\
  & \leq
  C\sum_{\substack{j\geq0;\,|j-q|\leq4}}
  \biggl(
    2^{j(s+\sigma)}\Vert \Delta_j u\Vert _{L^p}
    \Vert \nabla\phi\Vert _{L^\infty}
    +\Vert \nabla u\Vert _{L^\infty}
    \sum_{\ell=-1}^{j-2}
    2^{-(j-\ell)(1-s)}
    2^{\ell(s+\sigma)}\Vert \Delta_\ell\phi\Vert _{L^p}
  \biggr).
\end{align*}
Taking the $\ell^r$ norm in $q$
and applying the discrete Young inequality gives
\[
  \Vert I_2\Vert _{B^s_{p,r}}
  \leq
  C\Vert u\Vert _{B^{s+\sigma}_{p,r}}\Vert \nabla\phi\Vert _{L^\infty}
  +
  C\Vert \nabla u\Vert _{L^\infty}\Vert \phi\Vert _{B^{s+\sigma}_{p,r}}.
\]
Combining the estimates for $I_1$, $I_2$, and $I_3$
proves \eqref{eq:commEs2}.

We next prove \eqref{eq:paWmD-es2}.
Since $s<1$ and $s+\sigma\geq1$, we have $\sigma>0$.
The dyadic multiplier estimates, including the
low-frequency block, therefore give
$\Vert m(D)f\Vert _{B^s_{p,r}}\leq C\Vert f\Vert _{B^{s+\sigma}_{p,r}}$.
For each $W\in\mathcal W$, we use the identity
\[
  \partial_W(m(D)\phi)
  =
  m(D)\partial_W\phi-[m(D),W\cdot\nabla]\phi.
\]
Applying \eqref{eq:commEs2} with $u=W$
and summing over the family gives the desired inequality \eqref{eq:paWmD-es2}.
\end{proof}

We recall the following commutator estimate
from~\cite[Lemma~6.10(1)]{HKR10}.
\begin{lemma}\label{lem:HKR}
Let $v$ be a smooth divergence-free vector field
on $\R^d$, $d\geq2$, and let $f$ be a smooth scalar function.
Then, for every $p\in[1,\infty]$
and $q\in\{-1\}\cup\mathbb N$,
\[
  \Vert [\Delta_q,v\cdot\nabla]f\Vert _{L^p}
  \leq C\Vert \nabla v\Vert _{L^p}
  \Vert f\Vert _{B^0_{\infty,\infty}},
\]
where $C$ is independent of $q$.
\end{lemma}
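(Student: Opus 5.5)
This estimate is recalled from \cite[Lemma~6.10(1)]{HKR10}; to reprove it, I would write the commutator as an explicit integral against the dyadic kernel and use the divergence-free condition to move the derivative off $f$. Since $\Delta_q f=2^{qd}h(2^q\cdot)*f$ for $q\geq0$ (and $h'*f$ for $q=-1$), we have
\[
  [\Delta_q,v\cdot\nabla]f(x)
  =\int_{\mathbb R^d}2^{qd}h\bigl(2^q(x-y)\bigr)
   \bigl(v(y)-v(x)\bigr)\cdot\nabla f(y)\,\dd y .
\]
Because $\div v=0$, $(v(y)-v(x))\cdot\nabla_y f(y)=\div_y\bigl((v(y)-v(x))f(y)\bigr)$. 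Integrating by parts in $y$ moves the derivative onto the kernel:
\[
  [\Delta_q,v\cdot\nabla]f(x)
  =\int 2^{q(d+1)}(\nabla h)\bigl(2^q(x-y)\bigr)\cdot
   \bigl(v(y)-v(x)\bigr)f(y)\,\dd y .
\]
This is legitimate for smooth $v,f$, since $h$ is Schwartz; if $v$ is unbounded, one first cuts off and passes to the limit.

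The key step is to avoid putting $\Vert f\Vert_{L^\infty}$ on $f$ directly, since we only have $\Vert f\Vert_{B^0_{\infty,\infty}}$. I would therefore split $f=S_{q+N}f+\sum_{j\geq q+N}\Delta_j f$ with a fixed $N$, or more cleanly use Bony's decomposition of $v\cdot\nabla f$ and the corresponding commutator pieces as in \cite{HKR10}. The three pieces are treated as follows.
\begin{enumerate}[(1)]
\item \emph{The paraproduct $[\Delta_q,S_{j-1}v\cdot\nabla]\Delta_jf$ with $|j-q|\leq4$.} Only finitely many $j$ contribute. The kernel formula above with the Taylor bound $|S_{j-1}v(y)-S_{j-1}v(x)|\leq|x-y|\int_0^1|\nabla S_{j-1}v(x+\tau(y-x))|\dd\tau$, followed by Minkowski's inequality in $L^p_x$, gives $C\Vert\nabla v\Vert_{L^p}\Vert\Delta_jf\Vert_{L^\infty}$. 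The factor $2^q|x-y|$ is absorbed by $\nabla h$.
\item \emph{The terms $\Delta_q(\Delta_jv\cdot\nabla S_{j-1}f)$ and $\Delta_jv\cdot\nabla\Delta_qS_{j-1}f$ with $|j-q|\leq4$.} Here Bernstein's inequality bounds $\Vert\nabla S_{j-1}f\Vert_{L^\infty}$ by $\sum_{\ell\leq j}2^\ell\Vert\Delta_\ell f\Vert_{L^\infty}\lesssim2^j\Vert f\Vert_{B^0_{\infty,\infty}}$. This is paired with $\Vert\Delta_jv\Vert_{L^p}\lesssim2^{-j}\Vert\nabla v\Vert_{L^p}$ for $j\geq0$.
\item \emph{The remainder $\sum_{j\geq q-3}\Delta_q\div(\Delta_jv\,\widetilde\Delta_jf)$.} I write it in divergence form and bound it by $2^q\sum_{j\geq q-3}\Vert\Delta_jv\Vert_{L^p}\Vert\widetilde\Delta_jf\Vert_{L^\infty}\lesssim\sum_j2^{q-j}\Vert\nabla v\Vert_{L^p}\Vert f\Vert_{B^0_{\infty,\infty}}$, which sums geometrically. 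The corresponding piece $\Delta_jv\cdot\nabla\Delta_q\widetilde\Delta_jf$ only involves $|j-q|\leq2$ and is handled as in (2).
\end{enumerate}

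The main obstacle is the low-frequency part of $v$. The bound $\Vert\Delta_{-1}v\Vert_{L^p}\lesssim\Vert\nabla v\Vert_{L^p}$ fails, so $\Delta_{-1}v$ must always appear inside a difference $v(y)-v(x)$ (case (1)) or be differentiated. When $q$ is small, the terms pairing $\Delta_{-1}v$ with $\nabla\Delta_q f$ are exactly where this is needed. For those I would keep the unsplit integral representation and apply the mean-value argument to the whole low-frequency block $S_{q+N}v$. This costs only $\Vert\nabla S_{q+N}v\Vert_{L^p}\leq C\Vert\nabla v\Vert_{L^p}$ together with $\Vert S_{q+N}f\Vert_{L^\infty}$-type bounds on the frequency-localized factors of $f$, and gives the uniform-in-$q$ estimate.
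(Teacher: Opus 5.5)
The paper gives no proof of this lemma; it quotes Hmidi--Keraani--Rousset. Your plan is the standard argument for it: Bony's decomposition, the kernel/mean-value estimate for the commutator pieces, Bernstein for $\Delta_jv$ with $j\geq0$, and divergence form for the remainder. As written, however, item~(2) drops a whole family of terms. In the Bony decomposition of $v\cdot\nabla\Delta_qf$, only the half $\Delta_q(\Delta_jv\cdot\nabla S_{j-1}f)$ is localized to $|j-q|\leq4$. The other half, $\Delta_jv\cdot\nabla\Delta_qS_{j-1}f$, is nonzero for every $j\geq q+1$. Since $S_{j-1}\Delta_q=\Delta_q$ once $j\geq q+3$, you have omitted
\[
  -\sum_{j\geq q+5}\Delta_jv\cdot\nabla\Delta_qf ,
\]
that is, the transport of $\Delta_qf$ by the high frequencies of $v$. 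Your estimate in (2) would not control it. Pairing $\Vert\Delta_jv\Vert_{L^p}\lesssim2^{-j}\Vert\nabla v\Vert_{L^p}$ with $\Vert\nabla S_{j-1}f\Vert_{L^\infty}\lesssim2^j\Vert f\Vert_{B^0_{\infty,\infty}}$ gives an $O(1)$ bound per $j$, which is not summable over $j\geq q+5$. The repair is to keep the $\Delta_q$: since $\Vert\nabla\Delta_qS_{j-1}f\Vert_{L^\infty}\lesssim2^q\Vert f\Vert_{B^0_{\infty,\infty}}$, these terms are bounded by $C\sum_{j\geq q+1}2^{q-j}\Vert\nabla v\Vert_{L^p}\Vert f\Vert_{B^0_{\infty,\infty}}$, uniformly in $q$. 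Here $j\geq1$, so Bernstein is legitimate.

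Your last paragraph is also more complicated than necessary. In this decomposition $\Delta_{-1}v$ appears undifferentiated only in the remainder term with $j=-1$, which occurs only for $q\leq2$. Keep its two halves together as the single commutator $[\Delta_q,\Delta_{-1}v\cdot\nabla]\widetilde\Delta_{-1}f$, which is legitimate since $\operatorname{div}\Delta_{-1}v=0$. Then apply exactly the estimate of your item~(1), which gives $C\Vert\nabla v\Vert_{L^p}\Vert\widetilde\Delta_{-1}f\Vert_{L^\infty}$. Switching to an unsplit representation with $S_{q+N}v$ for small $q$ can be made to work. But you would then have to say how it replaces the Bony pieces without double counting. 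You would also have to justify replacing $f$ by a truncation $S_{q+M}f$, which is possible for $M$ large relative to $N$ by frequency support. With these two corrections the argument is complete.
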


The following product estimates follow from Bony's decomposition,
by an argument similar to that in
\cite[Lemma~6.10(2)]{HKR10}.
We omit the proof.
\begin{lemma}\label{lem-610-2}
Let $v$ be a smooth divergence-free vector field
on $\R^d$, $d\geq2$, and let $f$ be a smooth scalar function.
For every $p\in[1,\infty]$ and $s\in[-1,0)$,
\[
  \Vert v\cdot\nabla f\Vert _{B^s_{p,\infty}}
  \leq C\Vert f\Vert _{B^{1+s}_{\infty,\infty}}
  \Vert v\Vert _{B^0_{p,1}}.
\]
\end{lemma}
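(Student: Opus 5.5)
The plan is to follow the proof of \cite[Lemma~6.10(2)]{HKR10} and split the product with Bony's decomposition,
\[
  v\cdot\nabla f=T_v\cdot\nabla f+T_{\nabla f}\cdot v+R(v\cdot,\nabla f),
\]
estimating each piece in $B^s_{p,\infty}$ with $s\in[-1,0)$. Since $-1\le s<0$, the weight $2^{qs}$ decays at high frequencies. This decay is what lets us place $f$ in the weak space $B^{1+s}_{\infty,\infty}$ and sum the $\ell^1$-type norm of $v$.

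\textbf{Paraproduct terms.} For $T_v\cdot\nabla f=\sum_j S_{j-1}v\cdot\nabla\Delta_jf$, spectral localization restricts to $|j-q|\le4$. We have
\[
  \Vert S_{j-1}v\Vert_{L^p}\le\sum_{\ell\le j-2}\Vert\Delta_\ell v\Vert_{L^p}\le\Vert v\Vert_{B^0_{p,1}},
  \qquad
  \Vert\nabla\Delta_jf\Vert_{L^\infty}\le C2^{j}\Vert\Delta_jf\Vert_{L^\infty}\le C2^{-js}\Vert f\Vert_{B^{1+s}_{\infty,\infty}}.
\]
Multiplying by $2^{qs}$ therefore gives a uniform bound. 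For $T_{\nabla f}\cdot v=\sum_j\Delta_jv\cdot\nabla S_{j-1}f$, we use
\[
  \Vert\nabla S_{j-1}f\Vert_{L^\infty}\le C\sum_{\ell\le j-2}2^{\ell}\Vert\Delta_\ell f\Vert_{L^\infty}\le C\sum_{\ell\le j-2}2^{-\ell s}\Vert f\Vert_{B^{1+s}_{\infty,\infty}}.
\]
Because $-s>0$, the geometric sum is bounded by $C2^{-js}\Vert f\Vert_{B^{1+s}_{\infty,\infty}}$, and the constant blows up only as $s\to0$. Then $2^{qs}\cdot 2^{-js}\Vert\Delta_jv\Vert_{L^p}\le C\Vert v\Vert_{B^0_{p,1}}$ for $|j-q|\le4$.

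\textbf{Remainder.} This is the main obstacle, because $s\le0$ rules out the usual positive-regularity summation. I would use $\div v=0$ to write $R(v\cdot,\nabla f)=\div R(v,f)$.

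For $q\ge0$, the block $\Delta_q$ gains a factor $2^q$. Combined with $2^{qs}$, this gives the weight $2^{q(1+s)}$ with $1+s\ge0$. Hence
\[
  2^{qs}\Vert\Delta_q\div R(v,f)\Vert_{L^p}\le C\sum_{j\ge q-3}2^{q(1+s)}\Vert\Delta_jv\Vert_{L^p}\Vert\widetilde\Delta_jf\Vert_{L^\infty}
  \le C\Vert f\Vert_{B^{1+s}_{\infty,\infty}}\sum_{j\ge q-3}2^{(q-j)(1+s)}\Vert\Delta_jv\Vert_{L^p}.
\]
Since $2^{(q-j)(1+s)}\le C$ for $j\ge q-3$, including the endpoint $s=-1$, this is bounded by $C\Vert f\Vert_{B^{1+s}_{\infty,\infty}}\Vert v\Vert_{B^0_{p,1}}$. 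That is exactly why the $\ell^1$ summability of $v$ is required.

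For the low block $q=-1$, Bernstein gives $\Vert\Delta_{-1}\partial_i g\Vert_{L^p}\le C\Vert g\Vert_{L^p}$, so the same bound holds.

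Finally, the finitely many low frequencies of $f$ are harmless, since $\Vert\Delta_{-1}f\Vert_{L^\infty}\le\Vert f\Vert_{B^{1+s}_{\infty,\infty}}$. Summing the three contributions gives the claimed inequality, with a constant depending on $s$ through the geometric sum in $T_{\nabla f}\cdot v$.
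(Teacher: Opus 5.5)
Your proposal is correct and is essentially the argument the paper has in mind. The paper omits this proof, saying only that it follows from Bony's decomposition as in \cite[Lemma~6.10(2)]{HKR10}. Your scheme matches the paper's own proof of \eqref{eq:prodBes-endp}: spectral localization for the two paraproducts, with $s<0$ used to sum $\nabla S_{j-1}f$, and $R(v\cdot,\nabla f)=\div R(v,f)$ for the remainder. There $1+s\ge 0$ keeps the weight $2^{(q-j)(1+s)}$ bounded for $j\ge q-3$, which is exactly why the $\ell^1$ norm of $v$ is needed.
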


The following lemma gives the striated regularity
of the initial temperature patch.
\begin{lemma}\label{lem:pWk-theta}
Let $k\geq2$ be an integer and $0<\mu<1$.
Let $D_0\subset\R^2$ be a bounded simply connected
Lipschitz domain, and let
$\varphi_0\in W^{k-1,\infty}(\R^2)$ satisfy
$D_0=\{\varphi_0>0\}$.
Set $W_0:=\nabla^\perp\varphi_0$ and
\[
  \theta_0=\bar\theta_0\mathds{1}_{D_0},
  \qquad
  \bar\theta_0\in C^{k-2,\mu}(\overline{D_0}).
\]
Then
\begin{equation}\label{pWj-theta}
  \partial_{W_0}^j\theta_0
  \in L^1\cap L^\infty(\R^2),
  \qquad 0\leq j\leq k-2,
\end{equation}
and
\begin{equation}\label{pWk-theta}
  \partial_{W_0}^{k-1}\theta_0
  \in B^{\mu-1}_{\infty,\infty}(\R^2).
\end{equation}
The tangential derivatives are understood
in the distributional sense.
\end{lemma}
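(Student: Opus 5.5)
We prove the statement by induction on $j$, using the Leibniz rule for the divergence-free field $W_0$ together with the fact that $W_0$ is tangent to $\partial D_0$, so that $\partial_{W_0}$ never falls on the indicator function. The basic identity is
\[
  \partial_{W_0}\bigl(f\mathds 1_{D_0}\bigr)
  =(\partial_{W_0}f)\mathds 1_{D_0},
\]
valid in $\mathcal D'(\R^2)$ for $f$ Lipschitz on $\overline{D_0}$, extended to $\R^2$. To verify it, use $\div W_0=0$ to write $\partial_{W_0}(f\mathds 1_{D_0})=\div(W_0f\mathds 1_{D_0})$ and pair with a test function $\psi$. Then
\[
  -\int_{D_0}fW_0\cdot\nabla\psi\,\dd x
  =\int_{D_0}\psi\,\partial_{W_0}f\,\dd x
  -\int_{\partial D_0}f\psi\,W_0\cdot n\,\dd\ell ,
\]
and the boundary term vanishes because $W_0=\nabla^\perp\varphi_0$ and $n\parallel\nabla\varphi_0$ on $\{\varphi_0=0\}$. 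For rigor on a merely Lipschitz domain, one can instead approximate $\mathds 1_{D_0}$ by $H_\delta(\varphi_0)$, with $H_\delta$ a smoothed Heaviside function. Then $\partial_{W_0}H_\delta(\varphi_0)=H_\delta'(\varphi_0)\nabla^\perp\varphi_0\cdot\nabla\varphi_0=0$ exactly, and letting $\delta\to0$ gives the identity without any trace theorem. This is the cleanest route and the one I would use.

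With the identity in hand, the first claim \eqref{pWj-theta} follows by iteration. Extend $\bar\theta_0$ to a $C^{k-2,\mu}$ function on $\R^2$ by Whitney extension. Since $W_0\in W^{k-2,\infty}$, for $j\le k-2$ the function $\partial_{W_0}^j\bar\theta_0$ is a combination of products of derivatives of $W_0$ of order at most $j-1$ and derivatives of $\bar\theta_0$ of order at most $j$. Hence it is bounded on the bounded set $D_0$. Applying the identity $j$ times gives $\partial_{W_0}^j\theta_0=(\partial_{W_0}^j\bar\theta_0)\mathds 1_{D_0}$, which lies in $L^1\cap L^\infty$. At intermediate steps, the identity is needed for $f=\partial_{W_0}^{i}\bar\theta_0$ with $i\le k-3$, and these are Lipschitz, so the identity applies.

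For the endpoint claim \eqref{pWk-theta}, set $g:=\partial_{W_0}^{k-2}\bar\theta_0$. The top-order term $W_0^{\otimes(k-2)}:\nabla^{k-2}\bar\theta_0$ is only $C^\mu$. The lower-order terms involve derivatives of $W_0$ up to order $k-3$, which are Lipschitz, times derivatives of $\bar\theta_0$ of order at most $k-3$, which are at least $C^{\mu}$. Hence $g\in C^{\mu}$ near $\overline{D_0}$, and without loss of generality $g$ is compactly supported. We then have $\partial_{W_0}^{k-1}\theta_0=\div\bigl(W_0 g\mathds 1_{D_0}\bigr)$, with $W_0g\mathds 1_{D_0}\in L^\infty$, so this distribution lies in $B^{-1}_{\infty,\infty}$; the task is to gain $\mu$.

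This gain is the main obstacle. I would use the same mollified indicator $\chi_\delta:=H_\delta(\varphi_0)$ and a mollified $g_\delta$, and estimate the dyadic block $\Delta_q\bigl[\div(W_0g\mathds 1_{D_0})\bigr]$ by choosing $\delta=2^{-q}$. The term $\div\bigl(W_0(g-g_\delta)\mathds 1_{D_0}\bigr)$ has $L^\infty$ size $2^{-q\mu}$ before the divergence, hence contributes $2^{q(1-\mu)}$ after it. The term $\div(W_0g_\delta(\mathds 1_{D_0}-\chi_\delta))$ is supported in a $2^{-q}$-neighborhood of $\partial D_0$; here I would use the kernel of $\Delta_q\div$ together with the thin support, but bounded data in a thin layer only gives $2^{q}\cdot O(1)$ in $L^\infty$, so this naive estimate fails. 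Instead, I would exploit tangency again: $\div(W_0\,h\,\mathds 1_{D_0})=(\partial_{W_0}h)\mathds 1_{D_0}$ whenever $h$ is Lipschitz, and apply it with $h=g_\delta$, for which $\|\partial_{W_0}g_\delta\|_{L^\infty}\lesssim\delta^{\mu-1}=2^{q(1-\mu)}$. This yields
\[
  \partial_{W_0}^{k-1}\theta_0=(\partial_{W_0}g_\delta)\mathds 1_{D_0}+\div\bigl(W_0(g-g_\delta)\mathds 1_{D_0}\bigr),
\]
where the first term is $O(2^{q(1-\mu)})$ in $L^\infty$ and the second is $O(2^{q(1-\mu)})$ after applying $\Delta_q$. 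Together these give $\|\Delta_q\partial_{W_0}^{k-1}\theta_0\|_{L^\infty}\lesssim2^{q(1-\mu)}$, which is exactly $B^{\mu-1}_{\infty,\infty}$. The low-frequency block is handled in the same way using compact support.
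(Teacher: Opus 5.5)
Your proposal is correct. For \eqref{pWj-theta} it matches the paper up to presentation. The paper encodes tangency as $W_0\mathds{1}_{D_0}=\nabla^\perp(\varphi_0)_+$, hence $\operatorname{div}(W_0\mathds{1}_{D_0})=0$. Your smoothed-Heaviside computation $\partial_{W_0}H_\delta(\varphi_0)=0$ gives the same fact in the limit. The paper then iterates the product rule on a Rychkov (rather than Whitney) extension, as you do.

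The routes genuinely differ at the endpoint \eqref{pWk-theta}.

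The paper writes $\partial_{W_0}^{k-1}\theta_0=\operatorname{div}(W_0\mathds{1}_{D_0}\,g)$, i.e.\ the bounded divergence-free field $W_0\mathds{1}_{D_0}$ paired with $\nabla g$, where $g\in C^\mu$. It then quotes the paraproduct estimate \eqref{eq:prodBes} with $p=r=\infty$ and exponent $1-\mu$. There the divergence-free structure enters through the remainder identity $R(u\cdot,\nabla\phi)=\operatorname{div}R(u,\phi)$.

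You prove this particular instance by hand. On the $q$-th dyadic block you mollify $g$ at scale $2^{-q}$. Tangency absorbs the divergence on the smooth part, giving $\|(\partial_{W_0}g_{2^{-q}})\mathds{1}_{D_0}\|_{L^\infty}\lesssim 2^{q(1-\mu)}$. Bernstein then costs one derivative only on the small remainder $g-g_{2^{-q}}=O(2^{-q\mu})$. The two pieces balance exactly at $2^{q(1-\mu)}$.

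Your argument is a $K$-functional argument. It is elementary and self-contained, and it applies directly to the discontinuous field $W_0\mathds{1}_{D_0}$. By contrast, Lemma~\ref{lem:CMX2.5} is stated for smooth fields, so the paper's one-line application tacitly needs a mollification and lower-semicontinuity step. The paper's version is shorter given the toolbox already in place, and it gives the bound in every $B^{-\delta}_{p,r}$ at once.

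Two cosmetic points. Your first attempt via $\operatorname{div}\bigl(W_0g_\delta(\mathds{1}_{D_0}-\chi_\delta)\bigr)$ plays no role in your final decomposition and can simply be deleted. You should also use different letters for the Heaviside smoothing parameter and the mollification scale $2^{-q}$.
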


\begin{proof}[Proof of Lemma~\ref{lem:pWk-theta}]
We argue as \cite[Proposition 3.1]{Sue15} and \cite[Lemma 2.6]{CMX22}.
By Rychkov's extension theorem~\cite{VSR1999},
there exists
$\widetilde\theta_0\in C^{k-2,\mu}(\R^2)$
such that
$\widetilde\theta_0|_{D_0}=\bar\theta_0$.

Since $D_0=\{\varphi_0>0\}$ and
$W_0=\nabla^\perp\varphi_0$,
the Sobolev chain rule gives
\[
  W_0\mathds{1}_{D_0}
  =\nabla^\perp(\varphi_0)_+,
  \qquad
  \div(W_0\mathds{1}_{D_0})=0
  \quad\text{in }\mathcal D'(\R^2),
\]
where $(\varphi_0)_+:=\max\{\varphi_0,0\}$.
Using this identity and the product rule inductively,
we obtain
\[
  \partial_{W_0}^j\theta_0
  =
  \bigl(\partial_{W_0}^j\widetilde\theta_0\bigr)
  \mathds{1}_{D_0},
  \qquad 0\leq j\leq k-2.
\]
The assumptions
$\widetilde\theta_0\in C^{k-2,\mu}$
and $W_0\in W^{k-2,\infty}$
ensure that
$\partial_{W_0}^j\widetilde\theta_0\in L^\infty$
for these values of $j$.
Since $D_0$ is bounded,
this immediately proves \eqref{pWj-theta}.

For the highest-order derivative, we have
$\partial_{W_0}^{k-1}\theta_0
  =
  \div\bigl(
    W_0\mathds{1}_{D_0}\,
    \partial_{W_0}^{k-2}\widetilde\theta_0
  \bigr)$.
Since $\div(W_0\mathds{1}_{D_0})=0$,
we apply \eqref{eq:prodBes} with $p=r=\infty$ and $\epsilon=1-\mu$ to obtain
\begin{align*}
  \Vert \partial_{W_0}^{k-1}\theta_0\Vert _{B^{\mu-1}_{\infty,\infty}}
  \leq
  C\Vert W_0\mathds{1}_{D_0}\Vert _{L^\infty}
  \Vert \nabla\partial_{W_0}^{k-2}\widetilde\theta_0
    \Vert _{B^{\mu-1}_{\infty,\infty}}
  \leq
  C\Vert W_0\Vert _{L^\infty}
  \Vert \partial_{W_0}^{k-2}\widetilde\theta_0\Vert _{C^\mu}
  <\infty.
\end{align*}
This proves \eqref{pWk-theta}. 
\end{proof}

\subsection{Commutator estimates for Riesz-type operators}\label{subsec:Riesz-commutators}

We recall two classical commutator estimates
for second-order Riesz transforms.
Set $\mathcal R_j:=\frac{1}{i}\partial_j\Lambda^{-1},\, j=1,2$. 
The first is a Calder\'on commutator estimate;
see~\cite{Calderon1965}
and~\cite[Lemma~2.2]{LZ24}.

\begin{lemma}\label{lem:riesz-comm}
Let $p,p_1\in(1,\infty)$ and $p_2\in(1,\infty]$
satisfy
$\frac1p=\frac1{p_1}+\frac1{p_2}$.
For every $X\in C_c^1(\R^2;\R^2)$
and $g\in L^{p_1}(\R^2)$, we have
\[
  \Vert [\mathcal R_j\mathcal R_\ell,\partial_X]g\Vert _{L^p}
  \leq
  C\Vert \nabla X\Vert _{L^{p_2}}\Vert g\Vert _{L^{p_1}},
  \qquad j,\ell\in\{1,2\},
\]
where $\partial_X:=X\cdot\nabla$
and $C$ depends only on $p,p_1,p_2$.
\end{lemma}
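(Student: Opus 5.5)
We write the commutator as a Calder\'on-type commutator and reduce it to the classical Calder\'on estimate for the first commutator, or equivalently to Coifman--Meyer bilinear bounds. The operator $\mathcal R_j\mathcal R_\ell$ is a Calder\'on--Zygmund operator with kernel $K(x-y)$, where $K$ is smooth off the origin, homogeneous of degree $-2$, has zero mean on the circle, and is accompanied by a multiple of the identity. The identity part commutes with $\partial_X$, so only the principal value part matters. For smooth compactly supported $g$, we integrate by parts in $y$, using $\partial_X g=\mathrm{div}(Xg)-(\mathrm{div}X)g$. This gives
\[
  [\mathcal R_j\mathcal R_\ell,\partial_X]g(x)
  =\mathrm{p.v.}\!\int (X(y)-X(x))\cdot\nabla K(x-y)\,g(y)\,\dd y
  -\mathcal R_j\mathcal R_\ell\bigl((\mathrm{div}X)g\bigr)(x),
\]
up to sign conventions and a harmless local term.

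The second term is handled directly. By the $L^p$ boundedness of the Riesz transforms and H\"older's inequality, its $L^p$ norm is at most $C\Vert\nabla X\Vert_{L^{p_2}}\Vert g\Vert_{L^{p_1}}$.

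The first term is the Calder\'on commutator $C_{\nabla K}[X]g$. Its kernel $(X(y)-X(x))\cdot\nabla K(x-y)$ is of degree $-2$ after accounting for the difference quotient of $X$. Calder\'on's theorem~\cite{Calderon1965} gives $\Vert C[X]g\Vert_{L^p}\le C\Vert\nabla X\Vert_{L^\infty}\Vert g\Vert_{L^p}$. This covers the case $p_2=\infty$.

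For general $p_2<\infty$ we need the H\"older-type bilinear version. We view $(\nabla X,g)\mapsto C[X]g$ as a bilinear operator. Writing $X(y)-X(x)=\int_0^1\nabla X(x+t(y-x))\,(y-x)\,\dd t$ represents it as an average over $t$ of bilinear Calder\'on--Zygmund operators. These satisfy $L^{p_1}\times L^{p_2}\to L^p$ bounds for all $1<p_1,p_2\le\infty$ with $1/p=1/p_1+1/p_2$, and we invoke the multilinear Calder\'on--Zygmund theory (Coifman--Meyer, Grafakos--Torres). Equivalently, we may cite \cite[Lemma~2.2]{LZ24}, which states exactly this. The constant depends only on $p,p_1,p_2$, because $K$ is fixed. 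Since $X\in C^1_c$, all integrations by parts are justified, and density extends the estimate from $g\in C_c^\infty$ to $g\in L^{p_1}$.

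The main obstacle is the bilinear $L^{p_1}\times L^{p_2}$ bound for the Calder\'on commutator when $p_2<\infty$. This step requires genuinely multilinear singular integral theory rather than the linear $L^\infty$ Calder\'on estimate. Here we would simply rely on the cited literature rather than reprove it.
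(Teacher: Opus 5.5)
The paper does not prove this lemma. It quotes it from Calder\'on's commutator theorem \cite{Calderon1965} and \cite[Lemma~2.2]{LZ24}, and the cited result is stated precisely for $\nabla X\in L^{p_2}$ with $1<p_2\le\infty$. Your fallback, citing \cite[Lemma~2.2]{LZ24}, is therefore exactly the paper's argument, and your split into the cases $p_2=\infty$ and $p_2<\infty$ is unnecessary. Your preliminary reduction is fine. The identity part of $\mathcal R_j\mathcal R_\ell$ commutes with $\partial_X$. Integrating by parts produces three pieces:
\begin{itemize}
\item the first Calder\'on commutator;
\item the term $\mathcal R_j\mathcal R_\ell((\operatorname{div}X)g)$;
\item a local term $c(x)g(x)$ with $|c|\le C|\nabla X|$.
\end{itemize}
The last two are handled by H\"older's inequality.

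The mechanism you propose for $p_2<\infty$ is wrong, however. For fixed $t\in(0,1)$ the slice operator
\[
  T_t(F,g)(x)=\mathrm{p.v.}\!\int_{\mathbb R^2}F\bigl(x+t(y-x)\bigr)\,(y-x)\otimes\nabla K(x-y)\,g(y)\,\dd y
\]
is not a bilinear Calder\'on--Zygmund operator. Written as $\iint\mathcal K_t(x,z,y)F(z)g(y)\,\dd z\,\dd y$, its kernel is the singular measure $\delta_0\bigl(z-x-t(y-x)\bigr)\,(y-x)\otimes\nabla K(x-y)$. This has none of the size or regularity properties that the Coifman--Meyer or Grafakos--Torres theory requires.

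In one dimension your formula is exactly Calder\'on's representation of his commutator as $\int_0^1H_{t,1}(a,f)\,\dd t$. Here $H_{t,1}(a,f)(x)=\mathrm{p.v.}\int a(x-tu)f(x-u)\,\frac{\dd u}{u}$ is the bilinear Hilbert transform. These operators are modulation invariant, and their boundedness (Lacey--Thiele) needs time--frequency analysis. Moreover, the uniform-in-$t$ bounds needed to integrate over $t$ are known only in restricted ranges of exponents. So the multilinear theory you invoke does not apply to the operators your decomposition produces.

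If you want an actual argument rather than a citation, there are two options.
\begin{itemize}
\item Use Calder\'on's theorem in its $L^{p_2}$ form.
\item Decompose with Bony's paraproduct. The only genuine commutator piece is $\sum_q\bigl[\mathcal R_j\mathcal R_\ell,\,S_{q-1}X\cdot\nabla\bigr]\Delta_q g$. Each block is bounded pointwise by $M(\nabla X)$ times maximal functions of $\Delta_q g$, where $M$ is the Hardy--Littlewood maximal function. Littlewood--Paley theory, the Fefferman--Stein inequality and H\"older then give the $L^{p_1}\times L^{p_2}\to L^p$ bound. The remaining pieces are Coifman--Meyer paraproducts in $(\nabla X,g)$, possibly composed with the $L^p$-bounded operator $\mathcal R_j\mathcal R_\ell$.
\end{itemize}
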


The second follows from the
Coifman-Rochberg-Weiss BMO commutator theorem;
see~\cite{CRW76} or \cite[Theorem 7.5.6]{Gra08}.

\begin{lemma}\label{lem:gra}
Let $1<p<\infty$ and $b\in\mathrm{BMO}(\R^2)$.
For $j,\ell\in\{1,2\}$,
the commutator $[\mathcal R_j\mathcal R_\ell,b]$,
initially defined on $C_c^\infty(\R^2)$,
extends to a bounded operator on $L^p(\R^2)$, with
\[
  \Vert [\mathcal R_j\mathcal R_\ell,b]f\Vert _{L^p}
  \leq
  C_p\Vert b\Vert _{\mathrm{BMO}}\Vert f\Vert _{L^p}.
\]
\end{lemma}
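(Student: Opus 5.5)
The statement just above is Lemma~\ref{lem:gra}, the Coifman--Rochberg--Weiss bound for $[\mathcal R_j\mathcal R_\ell,b]$ with $b\in\mathrm{BMO}$. We cite it from~\cite{CRW76} (see also \cite[Theorem~7.5.6]{Gra08}) and only sketch the standard proof.

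\emph{Step 1: Calder\'on--Zygmund structure.} For $j\neq\ell$, or after subtracting the multiple $-\frac12\delta_{j\ell}\mathrm{Id}$ of the identity (which commutes with $b$), the operator $T:=\mathcal R_j\mathcal R_\ell$ is a convolution Calder\'on--Zygmund operator. Its kernel $K(x)=c\,\Omega(x/|x|)|x|^{-2}$ has smooth, mean-zero $\Omega$ and satisfies the size and smoothness bounds $|K(x)|\lesssim|x|^{-2}$ and $|\nabla K(x)|\lesssim|x|^{-3}$. Since $\mathrm{Id}$ commutes with $b$, we have $[\mathcal R_j\mathcal R_\ell,b]=[T,b]$. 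It therefore suffices to treat $[T,b]$ for a general CZ operator $T$ bounded on $L^p$ for all $1<p<\infty$.

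\emph{Step 2: sharp maximal function estimate.} We use the Fefferman--Stein sharp maximal function $M^\#$ and the maximal operator $M_s f:=(M|f|^s)^{1/s}$ with $1<s<p$. The key pointwise bound, for $1<s<r<p$, is
\[
  M^\#([T,b]f)(x)\lesssim\Vert b\Vert_{\mathrm{BMO}}\bigl(M_r(Tf)(x)+M_r f(x)\bigr).
\]
To prove it, fix a ball $B\ni x$ and write $[T,b]f=(b-b_{2B})Tf-T((b-b_{2B})f)$. Then split $f=f\mathds 1_{2B}+f\mathds 1_{(2B)^c}$ and handle the pieces as follows.
\begin{itemize}
\item The term $(b-b_{2B})Tf$ is controlled by H\"older's inequality and John--Nirenberg, which gives $\bigl(\frac1{|B|}\int_B|b-b_{2B}|^{r'}\bigr)^{1/r'}\lesssim\Vert b\Vert_{\mathrm{BMO}}$.
\item The local piece $T((b-b_{2B})f\mathds 1_{2B})$ is bounded by the weak $(1,1)$ or $L^{s}$ boundedness of $T$, followed by H\"older and John--Nirenberg again.
\item The far piece is compared with its value at the center of $B$. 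The kernel smoothness produces the sum $\sum_k 2^{-k}\frac{1}{|2^kB|}\int_{2^kB}|b-b_{2B}||f|$. Using $|b_{2^kB}-b_{2B}|\lesssim k\Vert b\Vert_{\mathrm{BMO}}$, this is bounded by $\sum_k k2^{-k}\Vert b\Vert_{\mathrm{BMO}}M_rf(x)$, which converges.
\end{itemize}

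\emph{Step 3: conclusion.} For $f\in C_c^\infty$ we apply the Fefferman--Stein inequality $\Vert g\Vert_{L^p}\lesssim\Vert M^\#g\Vert_{L^p}$, which requires $g\in L^{p_0}$ for some $p_0$. Combined with the $L^p$-boundedness of $M_r$ for $r<p$ and of $T$, this gives $\Vert[T,b]f\Vert_{L^p}\lesssim\Vert b\Vert_{\mathrm{BMO}}\Vert f\Vert_{L^p}$. The bound then extends to all of $L^p$ by density.

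\emph{Main obstacle.} The one technical point is the a priori requirement $[T,b]f\in L^{p_0}$. When $b$ is unbounded it is not obvious. We handle it in the standard way: first truncate $b$ to $b_N:=\max(-N,\min(b,N))$, noting that $\Vert b_N\Vert_{\mathrm{BMO}}\lesssim\Vert b\Vert_{\mathrm{BMO}}$. For bounded $b_N$ the commutator is clearly in $L^p$, so Steps 2--3 apply. We then let $N\to\infty$ and pass to the limit by Fatou's lemma, using convergence in measure along a subsequence.
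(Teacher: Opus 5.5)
Your proposal is correct and matches the paper, which gives no argument of its own and simply invokes the Coifman--Rochberg--Weiss theorem from the literature. Your sharp-maximal-function sketch, including the truncation $b_N$ that secures the a priori $L^p$ membership needed for Fefferman--Stein, is the standard proof of that theorem, and it has only two harmless sign slips: with $\mathcal R_j=\frac1i\partial_j\Lambda^{-1}$ the symbol of $\mathcal R_j\mathcal R_\ell$ is $\xi_j\xi_\ell/|\xi|^2$, so one subtracts $+\frac12\delta_{j\ell}\mathrm{Id}$, and your splitting actually computes $-[T,b]f$; neither matters, since scalar multiples of the identity commute with $b$ and $M^\#$ ignores the sign.
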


We next establish commutator estimates for
$\mathcal R_{1-\beta}:=\partial_1\Lambda^{-\beta}$.

\begin{lemma}\label{lem:Rbeta-cm}
Let $d\geq2$, $p\in[2,\infty]$, and $r\in[1,\infty]$.
Assume that $u:\R^d\to\R^d$ is a smooth
divergence-free vector field and
$\phi:\R^d\to\R$ is a smooth function.
The following estimates hold.
\begin{enumerate}[(1)]
\item
For $\beta\in(1,2)$ and $s\in(\beta-2,1)$,
\begin{equation}\label{eq:Rbeta-cm-es}
  \Vert [\mathcal R_{1-\beta},u\cdot\nabla]\phi\Vert _{B^s_{p,r}}
  \leq
  C\Vert \nabla u\Vert _{L^p}
  \Bigl(
    \Vert \phi\Vert _{B^{s+1-\beta}_{\infty,r}}
    +\Vert \phi\Vert _{L^2}
  \Bigr).
\end{equation}

\item
If $d=2$ and $\beta=2$, then for every $q\in[2,\infty)$,
\begin{equation}\label{eq:comm-es1}
  \Vert [\mathcal R_{-1},u\cdot\nabla]\phi\Vert _{L^q}
  \leq
  C_q\Vert \nabla u\Vert _{L^q}\Vert \phi\Vert _{L^2}.
\end{equation}
Moreover, for every $s\in(0,1)$,
\begin{equation}\label{eq:R-1cm-es1}
\begin{aligned}
  \Vert [\mathcal R_{-1},u\cdot\nabla]\phi\Vert _{B^s_{p,r}}
  \leq
  \begin{cases}
    C\Vert \nabla u\Vert _{L^p}
    \Bigl(
      \Vert \phi\Vert _{B^{s-1}_{\infty,r}}
      +\Vert \phi\Vert _{L^2}
    \Bigr),
    & 2\leq p<\infty,
    \\[2mm]
    C\Vert \nabla u\Vert _{L^\infty}
    \left(\Vert \phi\Vert _{B^{s-1}_{\infty,r}}+\Vert \phi\Vert _{L^2}\right)
    +C\Vert \nabla u\Vert _{L^q}\Vert \phi\Vert _{L^2},
    & p=\infty,
  \end{cases}
\end{aligned}
\end{equation}
where $q\in[2,\infty)$ is arbitrary in the case $p=\infty$.
\end{enumerate}
The constants depend only on $d$ and the displayed exponents.
\end{lemma}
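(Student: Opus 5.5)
We prove both parts from the kernel representation of the commutator combined with a frequency splitting. Write $\mathcal R_{1-\beta}=\partial_1\Lambda^{-\beta}$. Since $\operatorname{div}u=0$, we have $u\cdot\nabla\phi=\operatorname{div}(u\phi)$, and hence
\[
  [\mathcal R_{1-\beta},u\cdot\nabla]\phi
  =\operatorname{div}\bigl([\mathcal R_{1-\beta},u]\phi\bigr)
  =\sum_i\partial_1\partial_i\Lambda^{-\beta}(u_i\phi)-u_i\partial_i\partial_1\Lambda^{-\beta}\phi .
\]
Using $\Lambda^{-\beta}$, this becomes a kernel integral with $K(x-y)$, where $\nabla^2\Lambda^{-\beta}$ has a kernel homogeneous of degree $-d-2+\beta$. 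After one integration by parts it can be written as
\[
  \int \nabla K_\beta(x-y)\,\bigl(u(x)-u(y)\bigr)\,\phi(y)\,dy ,
\]
with $|\nabla K_\beta(z)|\lesssim |z|^{-d-1+\beta}$. The key point is that no zero-order norm of $u$ appears, only differences of $u$.

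\textbf{Part (2), estimate \eqref{eq:comm-es1}.} Here $d=2$ and $\beta=2$, so $\nabla K(z)$ is a homogeneous degree $-1$ kernel of the form $\nabla\partial_1\log|z|$. We use $|u(x)-u(y)|\lesssim |x-y|\,(M\nabla u(x)+M\nabla u(y))$ (pointwise Hajłasz-type inequality).

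\begin{itemize}
\item Near field $|x-y|\le 1$: the kernel $|x-y|^{-1}$ gets multiplied by $|x-y|$, giving an integrable kernel times $M\nabla u$. Young plus Hölder gives an $L^q$ bound by $\|\nabla u\|_{L^q}\|\phi\|_{L^2}$. This requires localizing $\phi$ through Hölder on unit balls, with $\mathbf 1_{B_1}\in L^{2}$.
\item Far field: the cleaner route is to view $[\mathcal R_{-1},u\cdot\nabla]\phi$ via the Calderón-type identity $\mathcal R_{-1}\partial_i=\partial_1\partial_i\Lambda^{-2}=-\mathcal R_1\mathcal R_i$. The commutator is then $-\sum_i[\mathcal R_1\mathcal R_i,u_i]\phi$, and Lemma~\ref{lem:riesz-comm} is not directly applicable.
\end{itemize}

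Instead I would use that $[\mathcal R_1\mathcal R_i,u_i]\phi$ with $\phi\in L^2$ is controlled via the fractional Sobolev version of Calderón's theorem. By Lemma~\ref{lem:riesz-comm} with $X=u$ (approximated in $C^1_c$), $g=\Lambda^{-1}$-type potentials, and $p_2=q$, $p_1=2$, we get $\frac1p=\frac12+\frac1q$. Then Sobolev $\dot W^{1,p}\hookrightarrow L^{q}$, valid because $\frac1p-\frac12=\frac1q$, gives \eqref{eq:comm-es1}. Concretely, $\nabla$ of the commutator equals a sum of $[\mathcal R\mathcal R,\partial_X]$-type commutators acting on $\mathcal R_{-1}$-regularized $\phi$, plus $\nabla u\cdot\mathcal R\mathcal R\phi$ terms, all in $L^{p}$. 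I expect this step to be the main obstacle: it requires organizing the derivative of the commutator so that only $\nabla u\in L^q$ and $\phi\in L^2$ appear, and then handling the edge case $q=2$ ($p=1$) separately via the kernel argument above.

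\textbf{Besov estimates \eqref{eq:Rbeta-cm-es} and \eqref{eq:R-1cm-es1}.} Split the commutator with the low-frequency cutoff $\Delta_{-1}$ into
\[
  \bigl[\mathcal R_{1-\beta},u\cdot\nabla\bigr]\bigl((\mathrm{Id}-\Delta_{-1})\phi\bigr)
  +\bigl[\mathcal R_{1-\beta},u\cdot\nabla\bigr]\Delta_{-1}\phi .
\]
\begin{itemize}
\item High-frequency part: use Bony's decomposition as in the proof of \eqref{eq:commEs}, with $m(D)=\mathcal R_{1-\beta}$ of order $\sigma=1-\beta\in(-1,0)$. In the paraproduct pieces I would place $\nabla u$ in $L^p$ and $\phi$ in $B^{s+1-\beta}_{\infty,r}$. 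The condition $s+1-\beta>-1$ is exactly $s>\beta-2$, which makes the low-high sums converge. For $p=\infty$ in part (2) this replaces $\|\nabla u\|_{L^p}$ by $\|\nabla u\|_{L^\infty}$.
\item Low-frequency block: the norm $B^s_{p,r}$ of a low-frequency quantity reduces to $L^p$. The term $\Delta_{-1}\phi\in L^2$ is handled by the $L^q$ estimate \eqref{eq:comm-es1} (or its $\beta<2$ analogue, proven identically using the Hardy–Littlewood–Sobolev inequality in place of Sobolev embedding). This yields the $\|\phi\|_{L^2}$ terms; when $p=\infty$ it produces the extra $\|\nabla u\|_{L^q}\|\phi\|_{L^2}$ term.
\end{itemize}

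Finally, the $p\ge2$ restriction enters through $L^2\cdot L^p$ Hölder bounds on the remainder blocks.
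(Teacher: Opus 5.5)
Your argument for $2<q<\infty$ matches the paper's. You differentiate, writing $\partial_k[\mathcal R_{-1},u\cdot\nabla]\phi=[\partial_k\mathcal R_{-1},u\cdot\nabla]\phi-\partial_ku\cdot\nabla\mathcal R_{-1}\phi$. You apply Lemma~\ref{lem:riesz-comm} to the first term with $g=\phi$ itself (not a potential of $\phi$), $p_1=2$, $p_2=q$, and bound the second term by H\"older. You then conclude with $\dot W^{1,\frac{2q}{q+2}}\hookrightarrow L^q$.

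The genuine gap is the endpoint $q=2$. There $\frac{2q}{q+2}=1$ and Lemma~\ref{lem:riesz-comm} is unavailable, yet the paper needs this case for \eqref{es:GamL2} and \eqref{es:fq2}. You fall back on your kernel argument, which cannot work. The kernel of $\partial_1\partial_i\Lambda^{-2}$ is homogeneous of degree $-2$ (Calder\'on--Zygmund), not $-1$. After inserting $|u(x)-u(y)|\lesssim|x-y|\,(M\nabla u(x)+M\nabla u(y))$, you are left with the Riesz potential $I_1$ (kernel $|x-y|^{-1}$) and the two terms $M\nabla u\cdot I_1|\phi|$ and $I_1(M\nabla u\,|\phi|)$. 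For $\phi\in L^2(\R^2)$, $I_1|\phi|$ can be identically $+\infty$, and even its truncation to $|x-y|\le1$ is unbounded, so the product with $M\nabla u$ need not lie in $L^q$. Meanwhile $M\nabla u\,|\phi|\in L^1$ is mapped by $I_1$ only into weak $L^2$. Taking absolute values destroys exactly the cancellation that makes the estimate true, and your far field is never estimated by this route anyway. The missing tool is the Coifman--Rochberg--Weiss theorem, Lemma~\ref{lem:gra}. You already wrote $[\mathcal R_{-1},u\cdot\nabla]\phi=-\sum_i[\mathcal R_1\mathcal R_i,u_i]\phi$, and this is bounded in $L^2$ by $C\Vert u\Vert_{BMO}\Vert\phi\Vert_{L^2}\le C\Vert\nabla u\Vert_{L^2}\Vert\phi\Vert_{L^2}$, using $\dot H^1(\R^2)\hookrightarrow BMO$.

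For the Besov bounds, your frequency cut is in the wrong place. The term $[\mathcal R_{1-\beta},u\cdot\nabla]\Delta_{-1}\phi$ is not spectrally localized, since $u$ carries all frequencies. Its $B^s_{p,r}$ norm with $s>0$ therefore does not reduce to an $L^p$ norm.

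Conversely, the Bony remainder of $[\mathcal R_{1-\beta},u\cdot\nabla](\mathrm{Id}-\Delta_{-1})\phi$ still contains the interaction of $\Delta_{-1}u$ with $\widetilde\Delta_{-1}(\mathrm{Id}-\Delta_{-1})\phi$, which is generally nonzero. There the Lipschitz bound on $u$ leaves a kernel $|x-y|^{-d-1+\beta}$ that is not integrable at infinity for $\beta\ge1$. Following the proof of \eqref{eq:commEs}, which uses $\Vert u\Vert_{W^{1,\infty}}$, would reintroduce the zero-order velocity norm the lemma is designed to avoid.

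The paper instead isolates the low--low part of Bony's decomposition itself, $I_4=\sum_{-1\le j\le2}[\mathcal R_{-1},\Delta_ju\cdot\nabla]\widetilde\Delta_j\phi$, whose Fourier support lies in a fixed ball. Its Besov norm is therefore a finite sum of $L^p$ norms, using Bernstein from $L^q$ to $L^\infty$ when $p=\infty$. Applying \eqref{eq:comm-es1} to $\Delta_ju$ and $\widetilde\Delta_j\phi$ then produces the $\Vert\nabla u\Vert_{L^p}\Vert\phi\Vert_{L^2}$ (respectively $\Vert\nabla u\Vert_{L^q}\Vert\phi\Vert_{L^2}$) terms. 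The paraproducts over $j\in\mathbb N$ and the remainder over $j\ge3$ involve only $\nabla u$, $\Vert\phi\Vert_{B^{s-1}_{\infty,r}}$ and $\Vert\phi\Vert_{L^2}$. The same misplaced cut affects your sketch of \eqref{eq:Rbeta-cm-es}, which the paper does not reprove but takes from earlier work.
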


\begin{proof}[Proof of Lemma \ref{lem:Rbeta-cm}] 
For \eqref{eq:Rbeta-cm-es},
we refer to~\cite[Lemma~2.6]{LXY25}.
We prove \eqref{eq:comm-es1}
and \eqref{eq:R-1cm-es1} below.

For $2<q<\infty,$ thanks to the Sobolev embedding 
$\dot{W}^{1,\frac{2q}{q+2}}(\mathbb{R}^2) \hookrightarrow L^q(\mathbb{R}^2)$, H\"older's inequality and Lemma \ref{lem:riesz-comm}, 
we find
\begin{align}
 \Vert [\mathcal{R}_{-1}, u\cdot\nabla]\phi\Vert _{L^q(\mathbb{R}^2)}
  &\leq C \Vert \nabla[\mathcal{R}_{-1}, u\cdot\nabla]\phi \Vert _{L^{\frac{2q}{q+2}}(\mathbb{R}^2)} \nonumber  \\
  &\leq C \Vert [\nabla\mathcal{R}_{-1}, u\cdot\nabla]\phi\Vert _{L^{\frac{2q}{q+2}}(\mathbb{R}^2)} 
  + C \Vert \nabla u\cdot\nabla\mathcal{R}_{-1}\phi \Vert _{L^{\frac{2q}{q+2}}(\mathbb{R}^2)} \nonumber  \\
  &\leq C \Vert \nabla u\Vert _{L^q(\mathbb{R}^2)}\Vert \phi\Vert _{L^2(\mathbb{R}^2)}.\nonumber
\end{align}
For $q=2$, applying Lemma \ref{lem:gra} and the continuous embedding 
$\dot{H}^1(\mathbb{R}^2)\hookrightarrow BMO(\mathbb{R}^2)$, we deduce
\begin{align*}
  \Vert [\mathcal{R}_{-1}, u\cdot\nabla]\phi\Vert _{L^2(\mathbb{R}^2)} 
  = \Vert [\partial_i\mathcal{R}_{-1}, u_i]\phi\Vert _{L^2(\mathbb{R}^2)} 
  \leq C \Vert u\Vert _{BMO(\mathbb{R}^2)}\Vert \phi\Vert _{L^2(\mathbb{R}^2)}  
  \leq C \Vert \nabla u\Vert _{L^2(\mathbb{R}^2)} \Vert \phi\Vert _{L^2(\mathbb{R}^2)},
\end{align*}
where the summation convention on repeated indices is adopted.

We now prove \eqref{eq:R-1cm-es1}.
Bony's decomposition gives
\begin{align*}
  [\mathcal{R}_{-1}, u\cdot \nabla ] \phi
  & = \sum_{j\in \NN}[\mathcal{R}_{-1}, S_{j-1}u\cdot\nabla] \Delta_j \phi
  + \sum_{j\in \NN} [\mathcal{R}_{-1}, \Delta_j u \cdot\nabla] S_{j-1}\phi
  + \sum_{j\geq 3} [\mathcal{R}_{-1}, \Delta_j u\cdot\nabla] \widetilde{\Delta}_j \phi \nonumber \\
  & \quad + \sum_{-1\leq j\leq 2} [\mathcal{R}_{-1},\Delta_j u\cdot \nabla] \widetilde{\Delta}_j\phi  := {I}_1 + {I}_2 + {I}_3 + I_4.
\end{align*}
Arguing as \cite[Lemma 2.6]{LXY25}, we have
\begin{align*}
  \Vert I_1 + I_2+I_3\Vert _{B^s_{p,r}(\mathbb{R}^2)} \leq C_s \Vert \nabla u\Vert _{L^p(\mathbb{R}^2)} \left(\Vert \phi\Vert _{B^{s-1}_{\infty,r}(\mathbb{R}^2)} + \Vert \phi\Vert _{L^2(\mathbb{R}^2)}\right) .
\end{align*}
For $I_4$, applying \eqref{eq:comm-es1} and Bernstein's inequality gives
\begin{align*}
  \Vert I_4\Vert _{B^s_{p,r}(\mathbb{R}^2)} & \leq C \sum_{-1\leq l\leq 6} 
  \sum_{-1\leq j\leq 2} \Vert \Delta_l [\mathcal{R}_{-1}, \Delta_j u 
  \cdot \nabla]\widetilde{\Delta}_j \phi\Vert _{L^p(\mathbb{R}^2)} \\
  & \leq 
  \begin{cases}
    C \sum_{-1\leq j \leq 2} \Vert [\mathcal{R}_{-1},\Delta_j u\cdot \nabla]\widetilde{\Delta}_j\phi\Vert _{L^q(\mathbb{R}^2)},
    \quad & \textrm{for}\;\; p=\infty, \\
    C \sum_{-1\leq j \leq 2} \Vert [\mathcal{R}_{-1},\Delta_j u\cdot \nabla]\widetilde{\Delta}_j\phi\Vert _{L^p(\mathbb{R}^2)},
    \quad & \textrm{for}\;\; 2\leq p<\infty, 
  \end{cases} \\
  & \leq
  \begin{cases}
    C_q \Vert \nabla u\Vert _{L^q(\mathbb{R}^2)} \Vert \phi\Vert _{L^2(\mathbb{R}^2)}, \quad & \textrm{for}\;\; p=\infty, \\
    C_p \Vert \nabla u\Vert _{L^p(\mathbb{R}^2)} \Vert \phi\Vert _{L^2(\mathbb{R}^2)}, \quad & \textrm{for}\;\; 2\leq p<\infty.
  \end{cases} 
\end{align*}
Combining the above two estimates leads to the desired estimate \eqref{eq:R-1cm-es1}.
\end{proof}

\subsection{Commutator estimates for negative-order operators}
\label{subsec:tangential-tools}

Using kernel representations, we estimate commutators
involving negative-order Fourier multipliers
and their tangential derivatives.

The following kernel representation of homogeneous
Fourier multipliers of negative order
will be used in the commutator estimates below.
\begin{lemma}\label{lem:mDphi}
Let $m(D)=\Lambda^\sigma m_0(D)$, where $-d<\sigma<0$
and $m_0(D)$ is a Fourier multiplier with symbol
$m_0\in C^\infty(\mathbb R^d\setminus\{0\})$
homogeneous of degree zero.
Then there exists $\Omega\in C^\infty(\mathbb S^{d-1})$
such that, for every $\phi\in\mathcal S(\mathbb R^d)$,
\begin{equation}\label{mDphi}
  m(D)\phi(x)
  =K_\sigma*\phi(x)
  =\int_{\mathbb R^d}K_\sigma(x-y)\phi(y)\,\dd y,
  \qquad
  K_\sigma(x)
  =\frac{\Omega(x/|x|)}{|x|^{d+\sigma}},
  \quad x\neq0.
\end{equation}
\end{lemma}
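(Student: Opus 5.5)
The plan is to compute the inverse Fourier transform of the symbol $m(\xi)=|\xi|^\sigma m_0(\xi)$ directly as a tempered distribution, and then read off its homogeneity and smoothness. Since $-d<\sigma<0$, the symbol is locally integrable near the origin because $|\xi|^\sigma\in L^1_{\mathrm{loc}}$. It is also bounded by $C|\xi|^\sigma$ at infinity. Hence $m\in\mathcal S'(\mathbb R^d)$, and for $\phi\in\mathcal S$ we have $m(D)\phi=\check m*\phi$, where $\check m:=\mathcal F^{-1}m$ is a tempered distribution. It remains to show that $\check m$ is the locally integrable function $K_\sigma$.

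\textbf{Homogeneity.} The symbol $m$ is homogeneous of degree $\sigma$, so by the scaling rule for the Fourier transform $\check m$ is homogeneous of degree $-d-\sigma$ as a distribution. Because $-d<-d-\sigma<0$, any smooth away-from-zero distribution with this homogeneity is given by a locally integrable function. This is because the degree exceeds $-d$, so the function cannot carry a delta-type part supported at $0$.

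\textbf{Smoothness away from the origin.} This is the main step. I would use the standard argument, as in Grafakos' treatment of homogeneous distributions or the Calder\'on--Zygmund kernel lemma. Take a smooth partition $1=\chi(\xi)+(1-\chi(\xi))$. The piece $\chi m$ is compactly supported and integrable, so its inverse transform is smooth; in fact it is real-analytic. For the piece $(1-\chi)m$, note that $\partial_\xi^\beta[(1-\chi)m]=O(|\xi|^{\sigma-|\beta|})$, since $m_0$ is smooth on the sphere. This is integrable once $|\beta|>d+\sigma$. Therefore $x^\beta\,\mathcal F^{-1}[(1-\chi)m]$ is a continuous function, and repeating the argument with extra factors of $\xi$ shows that it is $C^k$ for every $k$. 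Thus $\check m\in C^\infty(\mathbb R^d\setminus\{0\})$.

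\textbf{Conclusion.} Combining the two steps, define $\Omega(\omega):=\check m(\omega)$ for $\omega\in\mathbb S^{d-1}$. Homogeneity of degree $-d-\sigma$ then gives $\check m(x)=\Omega(x/|x|)|x|^{-d-\sigma}$ for $x\neq0$, with $\Omega\in C^\infty(\mathbb S^{d-1})$. The remaining check is that this pointwise function agrees with $\check m$ as a distribution on all of $\mathbb R^d$, and not only on $\mathbb R^d\setminus\{0\}$. The difference between the two is a distribution supported at $\{0\}$, hence a finite combination of derivatives of $\delta$, which are homogeneous of degrees $-d-|\beta|$. None of these equals $-d-\sigma$, since $\sigma\in(-d,0)$ forces $-d-\sigma\in(-d,0)$ while $-d-|\beta|\le -d$. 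So the difference vanishes. Finally, $K_\sigma\in L^1_{\mathrm{loc}}$ and decays like $|x|^{-d-\sigma}$, so the convolution with $\phi\in\mathcal S$ converges absolutely, which yields \eqref{mDphi}.
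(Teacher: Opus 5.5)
Your proposal is correct and follows essentially the same route as the paper. Both arguments establish that $\mathcal F^{-1}m$ is homogeneous of degree $-d-\sigma$ and smooth away from the origin, define $\Omega$ by restriction to $\mathbb S^{d-1}$, rule out a point-supported remainder because $\partial^\beta\delta_0$ has degree $-d-|\beta|\neq-d-\sigma$, and conclude by absolute convergence of the convolution. The only difference is that you sketch the smoothness away from the origin directly, by splitting $m=\chi m+(1-\chi)m$ and using integrability of $\partial_\xi^\beta[(1-\chi)m]$ for $|\beta|>d+\sigma$; the paper simply cites Grafakos, Proposition~2.4.8 for this step.
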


\begin{proof}
Since $\sigma>-d$, the symbol
$m(\xi)=|\xi|^\sigma m_0(\xi)$ is locally integrable
and defines a tempered distribution.
Its inverse Fourier transform $K:=\mathcal F^{-1}m$
is homogeneous of degree $-d-\sigma$.
Moreover, by \cite[Proposition~2.4.8]{Gra14},
$K$ is smooth away from the origin.
Setting $\Omega:=K|_{\mathbb S^{d-1}}$, we therefore obtain
\[
  K(x)=\frac{\Omega(x/|x|)}{|x|^{d+\sigma}}
  =K_\sigma(x),
  \qquad x\neq0.
\]

Because $-d<-d-\sigma<0$, the function $K_\sigma$
is locally integrable and decays at infinity,
so it also defines a tempered distribution.
The difference $K-K_\sigma$ is supported at the origin
and is homogeneous of degree $-d-\sigma>-d$.
By \cite[Proposition~2.4.1]{Gra14}, it is a finite
linear combination of derivatives of $\delta_0$.
Since $\partial^\beta\delta_0$ is homogeneous
of degree $-d-|\beta|$, this forces $K-K_\sigma=0$.

The Fourier convolution identity now gives
$m(D)\phi=K_\sigma*\phi$ in $\mathcal S'$.
Local integrability and decay of $K_\sigma$,
together with the rapid decay of $\phi$,
ensure absolute convergence of the integral
in \eqref{mDphi}.
Differentiation under the integral sign gives
$\partial^\beta(K_\sigma*\phi) =K_\sigma*(\partial^\beta\phi)$
for every multi-index $\beta$.
Thus the convolution is smooth, and
\eqref{mDphi} holds pointwise.
\end{proof}

We next establish $L^\infty$ estimates for commutators
of negative-order Fourier multipliers with transport operators,
together with estimates for their tangential derivatives.

\begin{lemma}\label{lem:pW}
Let $m(D)=\Lambda^\sigma m_0(D)$ with $-d<\sigma<0$,
where $m_0\in C^\infty(\R^d\setminus\{0\})$
is homogeneous of degree zero.
Let $u\in W^{1,\infty}_{\mathrm{loc}}(\R^d;\R^d)$
be divergence-free with $\nabla u\in L^\infty(\R^d)$,
and let $\phi\in L^1\cap L^\infty(\R^d)$.
Then the following estimates hold
with a constant $C=C(d,\sigma,m_0)>0$.
\begin{enumerate}[(1)]
\item
We have
\begin{equation}\label{es:prod-es5}
  \Vert [m(D),u\cdot\nabla]\phi\Vert _{L^\infty}
  \leq C\Vert \nabla u\Vert _{L^\infty}
  \Vert \phi\Vert _{L^1\cap L^\infty}.
\end{equation}

\item
Let $\WW=\{W_i\}_{i=1}^N$ be a finite familly
of divergence-free vector fields
in $W^{1,\infty}_{\mathrm{loc}}(\R^d;\R^d)$.
Assume additionally that, for each $i$,
$\nabla W_i,\ \nabla(\partial_{W_i}u)\in L^\infty$
and $\partial_{W_i}\phi\in L^1\cap L^\infty$.
Then
\begin{equation}\label{es:prod-es4}
\begin{aligned}
  \Vert \partial_{\WW}([m(D),u\cdot\nabla]\phi)\Vert _{L^\infty}
  &\leq
  C\bigl(
    \Vert \nabla\WW\Vert _{L^\infty}\Vert \nabla u\Vert _{L^\infty}
    +\Vert \nabla(\partial_{\WW}u)\Vert _{L^\infty}
  \bigr)\Vert \phi\Vert _{L^1\cap L^\infty}
  \\
  &\quad+
  C\Vert \nabla u\Vert _{L^\infty}
  \Vert \partial_{\WW}\phi\Vert _{L^1\cap L^\infty}.
\end{aligned}
\end{equation}
\end{enumerate}
\end{lemma}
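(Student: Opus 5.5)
We use the kernel representation of Lemma~\ref{lem:mDphi}: $m(D)\phi=K_\sigma*\phi$ with $K_\sigma(x)=\Omega(x/|x|)|x|^{-d-\sigma}$. Since $\operatorname{div}u=0$, we have $u\cdot\nabla\phi=\operatorname{div}(u\phi)$. Integrating by parts in $y$ then gives the standard commutator form
\[
  [m(D),u\cdot\nabla]\phi(x)
  =\int_{\mathbb R^d}\nabla K_\sigma(x-y)\cdot\bigl(u(x)-u(y)\bigr)\phi(y)\,\dd y .
\]
This step is justified by approximating $\phi$ by Schwartz functions, truncating near $y=x$, and noting that the boundary term vanishes because $|u(x)-u(y)|\leq\Vert\nabla u\Vert_{L^\infty}|x-y|$. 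Only $\nabla u$ enters, which is why no zero-order norm of $u$ is needed.

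For (1), we use $|\nabla K_\sigma(z)|\leq C|z|^{-d-\sigma-1}$ and $|u(x)-u(y)|\leq\Vert\nabla u\Vert_{L^\infty}|x-y|$. The integrand is then bounded by $C\Vert\nabla u\Vert_{L^\infty}|x-y|^{-d-\sigma}|\phi(y)|$. We split at $|x-y|=1$. On the near region the kernel $|z|^{-d-\sigma}$ is integrable because $\sigma<0$, which gives $\Vert\phi\Vert_{L^\infty}$. On the far region the kernel is bounded by $1$, because $-d-\sigma<0$, which gives $\Vert\phi\Vert_{L^1}$. This proves \eqref{es:prod-es5}.

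For (2), fix $W\in\WW$ and compute $\partial_W$ of the integral $I(x)$ above, first for smooth data, and then pass to the limit using the uniform bound. The idea, following Bertozzi--Constantin, is to write
\[
  W(x)\cdot\nabla_x=\bigl(W(x)-W(y)\bigr)\cdot\nabla_x+W(y)\cdot\nabla_x .
\]
Since the kernel depends on $x-y$, the second piece acting on $\nabla K_\sigma(x-y)$ equals $-W(y)\cdot\nabla_y$. We then integrate by parts in $y$, using $\operatorname{div}W=0$, so that $\partial_W$ is transferred onto $(u(x)-u(y))\phi(y)$. Collecting the terms yields:
\begin{itemize}
\item $\displaystyle\int(W(x)-W(y))\cdot\nabla^2K_\sigma(x-y)\,(u(x)-u(y))\phi(y)\,\dd y$, which has size $\Vert\nabla W\Vert_{L^\infty}\Vert\nabla u\Vert_{L^\infty}|x-y|^{-d-\sigma}|\phi|$;
\item $\displaystyle\int\nabla K_\sigma(x-y)\cdot\bigl(\partial_Wu(x)-\partial_Wu(y)\bigr)\phi(y)\,\dd y$, which arises from $\partial_W$ landing on $u(x)$ and $u(y)$ after the $W(y)$-shift and is controlled by $\Vert\nabla(\partial_Wu)\Vert_{L^\infty}$;
\item $\displaystyle\int\nabla K_\sigma(x-y)\cdot(u(x)-u(y))\,\partial_W\phi(y)\,\dd y$, which is controlled by $\Vert\nabla u\Vert_{L^\infty}\Vert\partial_W\phi\Vert_{L^1\cap L^\infty}$;
\item a residual term in which $\partial_W u(x)$ is mismatched with the shift. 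This term involves $(W(x)-W(y))\cdot\nabla u(y)$ type factors times $\nabla K_\sigma$, and it is again bounded by $\Vert\nabla W\Vert_{L^\infty}\Vert\nabla u\Vert_{L^\infty}|x-y|^{-d-\sigma}|\phi|$.
\end{itemize}
Each term has the same homogeneity $|x-y|^{-d-\sigma}$ as in part (1), so the near/far split closes exactly as there and gives \eqref{es:prod-es4}.

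The main obstacle is the bookkeeping in (2). One must check that every term carries a factor $|x-y|^{2}$ against $\nabla^2K_\sigma$, or a factor $|x-y|$ against $\nabla K_\sigma$, so that no kernel of homogeneity $-d-\sigma-1$ survives. Such a kernel would not be integrable at infinity when $\sigma\geq -1$, and near zero when $\sigma\leq -1$. One must also justify the integrations by parts when $\phi$ is only in $L^1\cap L^\infty$ and $W$, $u$ are only locally Lipschitz. For the justification, we first mollify $\phi$, $u$, $W$, use a cutoff $|x-y|>\delta$, and check that the boundary terms on $|x-y|=\delta$ are $O(\delta^{-\sigma})\to0$. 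The constants are uniform, so the estimate passes to the limit by weak-$*$ lower semicontinuity in $L^\infty$.
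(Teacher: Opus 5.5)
Your proposal is correct and follows essentially the paper's proof. For (1) you use the kernel form of the commutator obtained by integrating by parts with $\operatorname{div}u=0$ (your formula has an immaterial overall sign flip) and split at $|x-y|=1$. For (2) you use the Bertozzi--Constantin splitting $W(x)=W(y)+(W(x)-W(y))$ and integrate by parts in $y$ with $\operatorname{div}W=0$, which gives exactly the three-term identity \eqref{eq:pWcomm-u-phi}; your fourth ``residual'' term therefore cancels identically, and estimating it separately is harmless but unnecessary.

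Two small points need tidying. First, the $W(y)$-weighted boundary terms on $\{|x-y|=\delta\}$ are individually only $O(\delta^{-1-\sigma})$, which does not tend to zero for $\sigma\geq-1$. They become $O(\delta^{-\sigma})$ only after being combined into the single $(W(x)-W(y))$-weighted term. The paper does this directly in the weak formulation by converting $\mathcal I_1^\delta$ into a volume integral. That route also avoids mollifying $u$ and $W$, which in your approach would force you to control $\partial_{W_\delta}\phi_\delta$ and $\nabla(\partial_{W_\delta}u_\delta)$ by commutator estimates. Second, your integrability remark is reversed: $|z|^{-d-\sigma-1}$ fails to be integrable near the origin precisely when $\sigma\geq-1$, whereas on $\{|z|\geq1\}$ it is bounded and hence harmless against $\phi\in L^1$.
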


\begin{proof}[Proof of Lemma~\ref{lem:pW}]
Let $K_\sigma$ be the kernel given by
Lemma~\ref{lem:mDphi}. Its homogeneity and smoothness
away from the origin imply
\[
  |K_\sigma(z)|\leq C|z|^{-d-\sigma},
  \qquad
  |\nabla K_\sigma(z)|
  \leq C|z|^{-d-\sigma-1},
  \qquad z\ne0.
\]
We first take $\phi\in C_c^\infty(\R^d)$.
The convolution formula gives
\[
  [m(D),u\cdot\nabla]\phi(x)
  =
  \int_{\R^d}K_\sigma(x-y)
  \bigl(u(y)-u(x)\bigr)\cdot\nabla\phi(y)\,\dd y.
\]
Integrating by parts over $\{|x-y|>\delta\}$
and using $\div u=0$, we obtain
\begin{equation}\label{eq:comm-u-phi}
  [m(D),u\cdot\nabla]\phi(x)
  =
  \int_{\R^d}
  \bigl(u(y)-u(x)\bigr)\cdot
  \nabla K_\sigma(x-y)\phi(y)\,\dd y.
\end{equation}
Indeed, the boundary term on $\{|x-y|=\delta\}$
is bounded by
$C\delta^{-\sigma} \Vert \nabla u\Vert _{L^\infty}\Vert \phi\Vert _{L^\infty}$,
which tends to zero as $\delta$ tends to 0, this is
because $\sigma<0$.

The Lipschitz bound on $u$ gives
\begin{equation}\label{eq:integrad-es1}
  \bigl|
  \bigl(u(y)-u(x)\bigr)\cdot
  \nabla K_\sigma(x-y)\phi(y)
  \bigr|
  \leq
  C\Vert \nabla u\Vert _{L^\infty}
  |x-y|^{-d-\sigma}|\phi(y)|.
\end{equation}
Since $-d<\sigma<0$, splitting the integral in two parts
at $|x-y|=1$ gives
\begin{equation}\label{conv-phi}
\begin{aligned}
  \sup_{x\in\R^d}
  \int_{\R^d}|x-y|^{-d-\sigma}|\phi(y)|\,\dd y
  \leq
  \Vert \phi\Vert _{L^\infty}
  \int_{|z|\leq1}|z|^{-d-\sigma}\,\dd z
  +\Vert \phi\Vert _{L^1}
  \leq C\Vert \phi\Vert _{L^1\cap L^\infty}.
\end{aligned}
\end{equation}
Thus the integral in \eqref{eq:comm-u-phi}
is absolutely convergent for $\phi\in L^1\cap L^\infty$
and defines the commutator under the stated assumptions.
Combining \eqref{eq:comm-u-phi},
\eqref{eq:integrad-es1}, and \eqref{conv-phi}
proves \eqref{es:prod-es5}.

We next prove \eqref{es:prod-es4}.
It suffices to consider a single vector field
$W\in\mathcal W$.
We claim that, in the distributional sense,
\begin{equation}\label{eq:pWcomm-u-phi}
\begin{aligned}
  \partial_W[m(D),u\cdot\nabla]\phi(x)
  &=
  \int_{\R^d}
  (u(y)-u(x))\cdot\nabla^2K_\sigma(x-y)
  \cdot(W(x)-W(y))\phi(y)\,\dd y
  \\
  &\quad+
  \int_{\R^d}
  (\partial_Wu(y)-\partial_Wu(x))
  \cdot\nabla K_\sigma(x-y)\phi(y)\,\dd y
  \\
  &\quad+
  \int_{\R^d}
  (u(y)-u(x))\cdot\nabla K_\sigma(x-y)
  \partial_W\phi(y)\,\dd y.
\end{aligned}
\end{equation}

Let $\varphi\in C_c^\infty(\R^d)$.
By $\div W=0$ and \eqref{eq:comm-u-phi},
\begin{align*}
  \bigl\langle
  \partial_W[m(D),u\cdot\nabla]\phi,\varphi
  \bigr\rangle
  &=
  \int_{\R^d}\int_{\R^d}
  (u(x)-u(y))\cdot\nabla K_\sigma(x-y)
  \partial_W\varphi(x)\phi(y)\,\dd x\,\dd y
  \\
  &=
  \lim_{\delta\rightarrow0}
  \int_{\R^d}\int_{|x-y|>\delta}
  (u(x)-u(y))\cdot\nabla K_\sigma(x-y)
  \partial_W\varphi(x)\phi(y)\,\dd x\,\dd y
  \\
  &:=\lim_{\delta\rightarrow0}\mathcal I^\delta.
\end{align*}
The limit follows from \eqref{eq:integrad-es1}
and \eqref{conv-phi}.
Integrating by parts in $x$ and splitting
$W(x)$ as follows $W(y)+(W(x)-W(y))$, we obtain
\begin{align*}
  \mathcal I^\delta
  &=
  \int_{\R^d}\int_{|x-y|=\delta}
  \bigl[(u(x)-u(y))\cdot\nabla K_\sigma(x-y)\bigr]
  W(y)\cdot\frac{y-x}{|y-x|}
  \varphi(x)\phi(y)\,\dd S(x)\,\dd y
  \\
  &\quad-
  \int_{\R^d}\int_{|x-y|>\delta}
  W(x)\cdot\nabla_x
  \bigl[(u(x)-u(y))\cdot\nabla K_\sigma(x-y)\bigr]
  \varphi(x)\phi(y)\,\dd x\,\dd y
  \\
  &\quad+
  \int_{\R^d}\int_{|x-y|=\delta}
  \bigl[(u(x)-u(y))\cdot\nabla K_\sigma(x-y)\bigr]
  (W(x)-W(y))\cdot\frac{y-x}{|y-x|}
  \varphi(x)\phi(y)\,\dd S(x)\,\dd y
  \\
  &:=\mathcal I_1^\delta+\mathcal I_2^\delta
     +\mathcal I_3^\delta.
\end{align*}
For the stated regularity, the integrations by parts
are justified by smooth cutoffs;
the surface identities hold for almost every $\delta>0$.

Expanding the derivative in $\mathcal I_2^\delta$ gives
\begin{align*}
  \mathcal I_2^\delta
  &=
  \int_{\R^d}\int_{|x-y|>\delta}
  (u(y)-u(x))\cdot\nabla^2K_\sigma(x-y)\cdot W(x)
  \varphi(x)\phi(y)\,\dd x\,\dd y
  \\
  &\quad-
  \int_{\R^d}\int_{|x-y|>\delta}
  \partial_Wu(x)\cdot\nabla K_\sigma(x-y)
  \varphi(x)\phi(y)\,\dd x\,\dd y.
\end{align*}

For $\mathcal I_1^\delta$, we interchange the surface
integrations and then integrate by parts in $y$:
\begin{align*}
  \mathcal I_1^\delta
  &=
  -\int_{\R^d}\int_{|y-x|=\delta}
  \bigl[(u(x)-u(y))\cdot\nabla K_\sigma(x-y)\bigr]
  W(y)\cdot\frac{x-y}{|x-y|}
  \phi(y)\,\dd S(y)\,\varphi(x)\,\dd x
  \\
  &=
  -\int_{\R^d}\int_{|y-x|>\delta}
  \div_y\Bigl(
    W(y)
    \bigl[(u(x)-u(y))\cdot\nabla K_\sigma(x-y)\bigr]
    \phi(y)
  \Bigr)\,\dd y\,\varphi(x)\,\dd x
  \\
  &=
  \int_{\R^d}\int_{|y-x|>\delta}
  (u(x)-u(y))\cdot\nabla^2K_\sigma(x-y)\cdot W(y)
  \phi(y)\,\dd y\,\varphi(x)\,\dd x
  \\
  &\quad+
  \int_{\R^d}\int_{|y-x|>\delta}
  \partial_Wu(y)\cdot\nabla K_\sigma(x-y)
  \phi(y)\,\dd y\,\varphi(x)\,\dd x
  \\
  &\quad-
  \int_{\R^d}\int_{|y-x|>\delta}
  (u(x)-u(y))\cdot\nabla K_\sigma(x-y)
  \partial_W\phi(y)\,\dd y\,\varphi(x)\,\dd x.
\end{align*}
Integration by parts at infinity is justified
by a standard cutoff argument.
The cutoff error is bounded by
$C_{\varphi,u,W}R^{-d-\sigma}\Vert \phi\Vert _{L^1}$,
which tends to zero as $R\to\infty$
because $d+\sigma>0$.

Finally, the Lipschitz bounds on $u$ and $W$,
together with
$|\nabla K_\sigma(z)|\leq C|z|^{-d-\sigma-1}$,
give
\[
  |\mathcal I_3^\delta|
  \leq
  C\delta^{-\sigma}
  \Vert \nabla u\Vert _{L^\infty}
  \Vert \nabla W\Vert _{L^\infty}
  \Vert \phi\Vert _{L^\infty}
  \Vert \varphi\Vert _{L^1}
  \longrightarrow0
  \qquad\text{as }\delta\rightarrow0.
\]
Combining $\mathcal I_1^\delta$ and $\mathcal I_2^\delta$
produces precisely the three truncated integrals
in \eqref{eq:pWcomm-u-phi}, paired with $\varphi$.
The differences of $u$, $W$, and $\partial_Wu$,
together with the kernel bounds and \eqref{conv-phi},
make all three integrals absolutely convergent.
Dominated convergence therefore gives
\eqref{eq:pWcomm-u-phi}.

Using the kernel bounds
\[
  |\nabla K_\sigma(z)|
  \leq C|z|^{-d-\sigma-1},
  \qquad
  |\nabla^2K_\sigma(z)|
  \leq C|z|^{-d-\sigma-2},
\]
and the Lipschitz bounds on $u$, $W$, and $\partial_Wu$,
we deduce from \eqref{eq:pWcomm-u-phi} that
\begin{align*}
  \Vert \partial_W[m(D),u\cdot\nabla]\phi\Vert _{L^\infty}
  &\leq
  C\Bigl(
    \Vert \nabla W\Vert _{L^\infty}\Vert \nabla u\Vert _{L^\infty}
    +\Vert \nabla(\partial_Wu)\Vert _{L^\infty}
  \Bigr)
  \sup_{x\in\R^d}
  \int_{\R^d}|x-y|^{-d-\sigma}|\phi(y)|\,\dd y
  \\
  &\quad+
  C\Vert \nabla u\Vert _{L^\infty}
  \sup_{x\in\R^d}
  \int_{\R^d}|x-y|^{-d-\sigma}
  |\partial_W\phi(y)|\,\dd y
  \\
  &\leq
  C\Bigl(
    \Vert \nabla W\Vert _{L^\infty}\Vert \nabla u\Vert _{L^\infty}
    +\Vert \nabla(\partial_Wu)\Vert _{L^\infty}
  \Bigr)\Vert \phi\Vert _{L^1\cap L^\infty}
  \\
  &\quad+
  C\Vert \nabla u\Vert _{L^\infty}
  \Vert \partial_W\phi\Vert _{L^1\cap L^\infty},
\end{align*}
where the last inequality follows from \eqref{conv-phi}.
Summing over the vector fields in $\mathcal W$
proves \eqref{es:prod-es4}.
\end{proof}

We next establish H\"older bounds for
$[m_\sigma(D),u\cdot\nabla]\phi$
and its tangential derivatives when $-d<\sigma<-1$.

\begin{lemma}\label{lem:pW-Cgam}
Let $d\geq2$, and let $u$ and
$\mathcal W=\{W_i\}_{i=1}^N$
be smooth divergence-free vector fields on $\R^d$.
Let $\phi$ be a smooth scalar function.
Assume that
\[
  m_\sigma(D)=\Lambda^\sigma m_0(D),
  \qquad -d<\sigma<-1,
\]
where 
$m_0\in C^\infty(\R^d\setminus\{0\})$
is homogeneous of degree zero.

For every $0<s<1$, there exists a constant
$C=C(d,s,\sigma,m_0)>0$ such that
\begin{equation}\label{es:prod-es6}
  \Vert [m_\sigma(D),u\cdot\nabla]\phi\Vert _{C^s}
  \leq
  C\Vert u\Vert _{C^s}\Vert \phi\Vert _{L^1\cap L^\infty},
\end{equation}
and
\begin{equation}\label{es:prod-es7}
\begin{aligned}
  \Vert \partial_{\mathcal W}
  ([m_\sigma(D),u\cdot\nabla]\phi)\Vert _{C^s}
  &\leq
  C\Vert \nabla u\Vert _{L^\infty}
  \Bigl(
    \Vert \nabla\mathcal W\Vert _{L^\infty}
    \Vert \phi\Vert _{L^1\cap L^\infty}
    +
    \Vert \partial_{\mathcal W}\phi\Vert _{L^1\cap L^\infty}
  \Bigr)
  \\
  &\quad+
  C\Vert \partial_{\mathcal W}u\Vert _{C^s}
  \Vert \phi\Vert _{L^1\cap L^\infty}.
\end{aligned}
\end{equation}
\end{lemma}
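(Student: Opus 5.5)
The plan follows Lemma~\ref{lem:pW}, working with the kernel representations \eqref{eq:comm-u-phi} and \eqref{eq:pWcomm-u-phi}; both remain valid here because $\sigma<-1<0$. Since $\sigma<-1$, the kernel $\nabla K_\sigma$ is homogeneous of degree $-d-\sigma-1>-d$. Its singularity is therefore integrable, and its growth at infinity is controlled against $\phi\in L^1$ through a $|x-y|\geq1$ split.

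\textbf{The estimate \eqref{es:prod-es6}.} Write
\[
  F(x):=[m_\sigma(D),u\cdot\nabla]\phi(x)=\int (u(y)-u(x))\cdot\nabla K_\sigma(x-y)\phi(y)\,\dd y
  = \nabla K_\sigma*(u\phi)(x)-u(x)\cdot(\nabla K_\sigma*\phi)(x).
\]
I do not use the Lipschitz bound on $u$ here; only $u\in C^s$ is assumed.

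For the $L^\infty$ bound, I split at $|x-y|=1$. On the near region, $|u(y)-u(x)|\leq [u]_{C^s}|x-y|^s$ and $|\nabla K_\sigma|\lesssim |x-y|^{-d-\sigma-1}$, which is integrable since $-\sigma-1>0$. On the far region, $|u(y)-u(x)|\leq 2\Vert u\Vert_{L^\infty}$ and $|\nabla K_\sigma|\lesssim1$ because $-d-\sigma-1<0$. Hence $|F|\lesssim \Vert u\Vert_{C^s}\Vert\phi\Vert_{L^1\cap L^\infty}$.

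For the Hölder seminorm, set $h=|x-x'|\leq1$. I decompose
\[
  F(x)-F(x') = (u(x')-u(x))\cdot(\nabla K_\sigma*\phi)(x') + \int (u(y)-u(x))\cdot\bigl(\nabla K_\sigma(x-y)-\nabla K_\sigma(x'-y)\bigr)\phi(y)\,\dd y .
\]
The first term is bounded by $[u]_{C^s}h^s\Vert\nabla K_\sigma*\phi\Vert_{L^\infty}$, and $\Vert\nabla K_\sigma*\phi\Vert_{L^\infty}\lesssim \Vert\phi\Vert_{L^1\cap L^\infty}$ by the same split. For the second term I split into $|x-y|<2h$ and $|x-y|\geq 2h$. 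On $|x-y|<2h$, I bound each kernel separately, getting $\int_{|z|<3h}|z|^{s-d-\sigma-1}\,\dd z\lesssim h^{s-\sigma-1}\leq h^s$. On $|x-y|\geq2h$, I use the mean value bound $|\nabla^2K_\sigma|\lesssim|z|^{-d-\sigma-2}$ together with $|u(y)-u(x)|\lesssim \min(|x-y|^s,1)\Vert u\Vert_{C^s}$. This gives $h\int_{2h\le|z|\le1}|z|^{s-d-\sigma-2}\,\dd z$. The borderline concerns the sign of $s-\sigma-2$: when it is negative, the integral is $\lesssim h^{s-\sigma-1}$; when it is zero, I get a logarithm, which is still $\lesssim h^s$ since $-\sigma-1>0$. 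The tail $|z|\geq1$ contributes $h\Vert\phi\Vert_{L^1}\sup_{|z|\geq1}|\nabla^2K_\sigma|$, and the supremum is finite because $-d-\sigma-2<0$. For $h\geq1$ the $L^\infty$ bound suffices.

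\textbf{The estimate \eqref{es:prod-es7}.} I take \eqref{eq:pWcomm-u-phi}, valid for smooth data, which expresses $\partial_W F$ as three integrals $J_1+J_2+J_3$.
\begin{itemize}
\item $J_3$ is the same commutator as in \eqref{es:prod-es6} with $\partial_W\phi$ in place of $\phi$.
\item $J_2$ is the same commutator with $\partial_W u$ in place of $u$. Applying the argument of \eqref{es:prod-es6} gives $\Vert\partial_W u\Vert_{C^s}\Vert\phi\Vert_{L^1\cap L^\infty}$.
\end{itemize}
For $J_3$, however, I must use the Lipschitz norm of $u$ rather than $\Vert u\Vert_{C^s}$. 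So I redo the estimate with $|u(y)-u(x)|\leq\Vert\nabla u\Vert_{L^\infty}|x-y|$ near the diagonal. Far from the diagonal, the $L^\infty$ norm of $u$ is not available, so I rewrite $(u(y)-u(x))\cdot\nabla K_\sigma$ there as $\int_0^1\nabla u(x+\tau(y-x))\,\dd\tau\,(y-x)\cdot\nabla K_\sigma(x-y)$. The factor $(y-x)\otimes\nabla K_\sigma$ is homogeneous of degree $-d-\sigma$, bounded for $|z|\geq1$ when $\sigma>-d$ and $\sigma<0$, so the tail is controlled by $\Vert\nabla u\Vert_{L^\infty}\Vert\phi\Vert_{L^1}$. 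The same device handles the Hölder difference.

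$J_1$ has the double-difference kernel $(u(y)-u(x))\otimes(W(x)-W(y)):\nabla^2K_\sigma(x-y)$, which is bounded by $\Vert\nabla u\Vert_{L^\infty}\Vert\nabla W\Vert_{L^\infty}|x-y|^{-d-\sigma}$. For its Hölder difference I split at $2h$ as before. Near the diagonal I get $h^{-\sigma}\leq h^s$. Away from it, moving $x$ produces three contributions:
\begin{itemize}
\item a kernel difference, bounded via $|\nabla^3K_\sigma|\lesssim|z|^{-d-\sigma-3}$ by $h\int_{2h}^{1}r^{-\sigma-2}\,\dd r$, which is fine because $-\sigma-1>0$ gives at worst $h\cdot O(1)$;
\item terms in which $u(x)$ or $W(x)$ is replaced by its value at $x'$, each costing a factor $h$ times a kernel of order $|z|^{-d-\sigma-1}$.
\end{itemize}

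\textbf{Main obstacle.} The principal difficulty I anticipate is the far field $|x-y|\geq1$ in these terms. There the Lipschitz differences grow linearly in $|x-y|$, so the kernel factors have degree $-d-\sigma$, or even $-d-\sigma+1$ for $J_1$. That degree is positive when $\sigma<-d+1$, so the tails are not obviously bounded by $\Vert\phi\Vert_{L^1}$.

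My first attempt is to use the Hölder difference in $x$ only. Since $x$ and $x'$ are close, the $x$-differences of $u$ and $W$ are $O(h)$ while the $y$-dependence stays fixed, so the relevant factor is $|\nabla^k K_\sigma|$ with the single most singular order. If that still fails for very negative $\sigma$, I would restrict to the $\sigma$ actually needed in the paper, namely $\sigma=-1-2\alpha+\ldots$ with $d=2$. There $-d-\sigma\leq0$ can be checked, and the general statement would be handled by homogeneity bookkeeping of this kind.
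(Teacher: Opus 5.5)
Your argument is correct and is essentially the paper's proof. You use the same kernel representations \eqref{eq:comm-u-phi} and \eqref{eq:pWcomm-u-phi} and the same three-term split. The $\partial_W u$ term is handled by rerunning the proof of \eqref{es:prod-es6} at the level of the integral formula, which is the right level since $\partial_W u$ need not be divergence-free. The other two terms are handled with global Lipschitz differences of $u$ and $W$. The only technical difference is in the H\"older bound for $J_1+J_3$. The paper does not take increments there. It bounds $\Vert F_2\Vert_{L^\infty}+\Vert\nabla F_2\Vert_{L^\infty}$ using the differentiated formula \eqref{eq:pWcomm-u-phi3}, which requires justifying differentiation under the integral sign. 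Your increment version bypasses that step at the cost of the $h$-splitting.

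The ``main obstacle'' you describe does not exist, and you should drop the fallback of restricting $\sigma$; as written it would not prove the lemma as stated.

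Here is the degree count. In $J_1$ the two Lipschitz differences give $|x-y|^2$ against $|\nabla^2K_\sigma(x-y)|\lesssim|x-y|^{-d-\sigma-2}$. In $J_3$ one difference gives $|x-y|$ against $|\nabla K_\sigma|\lesssim|x-y|^{-d-\sigma-1}$. So both terms involve only $|z|^{-d-\sigma}$. Each $x$-increment either trades one factor $|z|$ for $h$ or costs one more derivative of $K_\sigma$ times $h$; either way it produces $h|z|^{-d-\sigma-1}$. The same holds for an $x$-derivative, without the factor $h$. No kernel of degree $-d-\sigma+1$ ever arises.

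Since $-d<\sigma<-1$, both exponents $-d-\sigma$ and $-d-\sigma-1$ are strictly negative. These kernels are therefore locally integrable and bounded on $\{|z|\geq1\}$. Hence the far-field contributions are controlled by $\Vert\phi\Vert_{L^1}$ and $\Vert\partial_{\mathcal W}\phi\Vert_{L^1}$ for every $\sigma$ in the stated range.

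Note also that the inequality $-d-\sigma\leq0$ you propose to check for the paper's particular $\sigma$ is exactly the standing hypothesis $\sigma>-d$. You already invoked it yourself for the far field of $J_3$.
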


\begin{proof}[Proof of Lemma~\ref{lem:pW-Cgam}]
We first prove \eqref{es:prod-es6}.
Let $K_\sigma$ be the kernel of $m_\sigma(D)$
given by Lemma~\ref{lem:mDphi}.
Since $-d<\sigma+1<0$, the same argument
as in \eqref{conv-phi} gives
\[
  \sup_{x\in\R^d}
  \int_{\R^d}|x-y|^{-d-1-\sigma}|\phi(y)|\,\dd y
  \leq C\Vert \phi\Vert _{L^1\cap L^\infty}.
\]
Thus, by \eqref{eq:comm-u-phi} and the bound
$|\nabla K_\sigma(z)|\leq C|z|^{-d-1-\sigma}$,
\begin{equation}\label{es:convu-phi}
\begin{aligned}
  \Vert [m_\sigma(D),u\cdot\nabla]\phi\Vert _{L^\infty}
  &\leq
  C\Vert u\Vert _{L^\infty}
  \sup_{x\in\R^d}
  \int_{\R^d}|x-y|^{-d-1-\sigma}|\phi(y)|\,\dd y
  \\
  &\leq
  C\Vert u\Vert _{L^\infty}\Vert \phi\Vert _{L^1\cap L^\infty}.
\end{aligned}
\end{equation}

To estimate the H\"older seminorm, let $h\in\R^d$
with $0<|h|<1$. We write
\begin{align*}
  &([m_\sigma(D),u\cdot\nabla]\phi)(x)
  -([m_\sigma(D),u\cdot\nabla]\phi)(x+h)
  \\
  &\quad=
  \int_{|x-y|<2|h|}
  (u(x+h)-u(y))\cdot\nabla K_\sigma(x+h-y)
  \phi(y)\,\dd y
  \\
  &\qquad-
  \int_{|x-y|<2|h|}
  (u(x)-u(y))\cdot\nabla K_\sigma(x-y)
  \phi(y)\,\dd y
  \\
  &\qquad+
  \int_{|x-y|\geq2|h|}
  (u(x+h)-u(x))\cdot\nabla K_\sigma(x+h-y)
  \phi(y)\,\dd y
  \\
  &\qquad+
  \int_{|x-y|\geq2|h|}
  (u(x)-u(y))\cdot
  \bigl(\nabla K_\sigma(x+h-y)-\nabla K_\sigma(x-y)\bigr)
  \phi(y)\,\dd y
  \\
  &\quad:=\mathcal I_1+\mathcal I_2+\mathcal I_3+\mathcal I_4.
\end{align*}

For the first two terms, the H\"older continuity of $u$
and the kernel bound yield
\begin{align*}
  |\mathcal I_1|+|\mathcal I_2|
  &\leq
  C[u]_{C^s}\Vert \phi\Vert _{L^\infty}
  \int_{|z|<3|h|}|z|^{-d-1-\sigma+s}\,\dd z
  \\
  &\leq
  C[u]_{C^s}\Vert \phi\Vert _{L^\infty}
  |h|^{s-1-\sigma}
  \leq
  C|h|^s[u]_{C^s}\Vert \phi\Vert _{L^\infty},
\end{align*}
where the last inequality uses $\sigma<-1$ and $|h|<1$.

On $|x-y|\geq2|h|$, we have
$|x+h-y|\geq|x-y|/2$.
Consequently,
\begin{align*}
  |\mathcal I_3|
  &\leq
  C|h|^s[u]_{C^s}
  \int_{|x-y|\geq2|h|}
  |x-y|^{-d-1-\sigma}|\phi(y)|\,\dd y
  \\
  &\leq
  C|h|^s[u]_{C^s}
  \Vert \phi\Vert _{L^1\cap L^\infty}.
\end{align*}

For $\mathcal I_4$, the mean value formula and
$|\nabla^2K_\sigma(z)|\leq C|z|^{-d-2-\sigma}$
give
\[
  |\nabla K_\sigma(x+h-y)-\nabla K_\sigma(x-y)|
  \leq C|h||x-y|^{-d-2-\sigma}
  \qquad\text{for }|x-y|\geq2|h|.
\]
Since $s<1$, on this region
$|h||x-y|^{s-1}\leq C|h|^s$. 
It follows that
\begin{align*}
  |\mathcal I_4|
  &\leq
  C|h|[u]_{C^s}
  \int_{|x-y|\geq2|h|}
  |x-y|^{-d-2-\sigma+s}|\phi(y)|\,\dd y
  \\
  &\leq
  C|h|^s[u]_{C^s}
  \int_{\R^d}
  |x-y|^{-d-1-\sigma}|\phi(y)|\,\dd y
  \leq
  C|h|^s[u]_{C^s}
  \Vert \phi\Vert _{L^1\cap L^\infty}.
\end{align*}

Combining these estimates controls all increments
with $0<|h|<1$, while those with $|h|\geq1$
are controlled by \eqref{es:convu-phi}.
This proves \eqref{es:prod-es6}.

We next prove \eqref{es:prod-es7}.
Fix $W\in\mathcal W$.
By \eqref{eq:pWcomm-u-phi}, we write $\partial_W[m_\sigma(D),u\cdot\nabla]\phi
  =F_1+F_2$  
where
\begin{align*}
  F_1(x)
  &:=
  \int_{\R^d}
  (\partial_Wu(y)-\partial_Wu(x))
  \cdot\nabla K_\sigma(x-y)\phi(y)\,\dd y,
  \\
  F_2(x)
  &:=
  \int_{\R^d}
  (u(y)-u(x))\cdot\nabla^2K_\sigma(x-y)
  \cdot(W(x)-W(y))\phi(y)\,\dd y
  \\
  &\quad+
  \int_{\R^d}
  (u(y)-u(x))\cdot\nabla K_\sigma(x-y)
  \partial_W\phi(y)\,\dd y.
\end{align*}

The kernel estimates used to prove \eqref{es:prod-es6}
apply to $F_1$ with $u$ replaced by $\partial_Wu$,
and yield
\[
  \Vert F_1\Vert _{C^s}
  \leq
  C\Vert \partial_Wu\Vert _{C^s}\Vert \phi\Vert _{L^1\cap L^\infty}.
\]

For $F_2$, differentiation gives, for $1\leq i\leq d$,
\begin{equation}\label{eq:pWcomm-u-phi3}
\begin{aligned}
  \partial_iF_2(x)
  &=
  -\int_{\R^d}
  \partial_i u(x)\cdot\nabla^2K_\sigma(x-y)
  \cdot(W(x)-W(y))\phi(y)\,\dd y
  \\
  &\quad+
  \int_{\R^d}
  (u(y)-u(x))\cdot\nabla^2\partial_iK_\sigma(x-y)
  \cdot(W(x)-W(y))\phi(y)\,\dd y
  \\
  &\quad+
  \int_{\R^d}
  (u(y)-u(x))\cdot\nabla^2K_\sigma(x-y)
  \cdot\partial_iW(x)\phi(y)\,\dd y
  \\
  &\quad-
  \int_{\R^d}
  \partial_i u(x)\cdot\nabla K_\sigma(x-y)
  \partial_W\phi(y)\,\dd y
  \\
  &\quad+
  \int_{\R^d}
  (u(y)-u(x))\cdot\nabla\partial_iK_\sigma(x-y)
  \partial_W\phi(y)\,\dd y.
\end{aligned}
\end{equation}
This identity is justified by differentiating truncated
integrals and passing to the limit.
The cutoff contribution near $x=y$ is
$O(\delta^{-1-\sigma})$ and therefore vanishes
as $\delta\rightarrow0$, since $\sigma<-1$.

The kernel bounds
\[
  |\nabla^jK_\sigma(z)|
  \leq C|z|^{-d-\sigma-j},
  \qquad j=1,2,3,
\]
and the Lipschitz differences of $u$ and $W$
reduce the estimates of $F_2$ and $\nabla F_2$
to convolution with
$|z|^{-d-\sigma}$ and $|z|^{-d-\sigma-1}$,
respectively.
Both kernels are locally integrable and bounded
on $\{|z|\geq1\}$.
Using \eqref{conv-phi} and its analogue
with $\sigma$ replaced by $\sigma+1$, we obtain that, for $0<s<1$
\begin{align*}
  \Vert F_2\Vert _{C^s}
  &\leq
  C\bigl(\Vert F_2\Vert _{L^\infty}
       +\Vert \nabla F_2\Vert _{L^\infty}\bigr)
  \\
  &\leq
  C\Vert \nabla u\Vert _{L^\infty}
  \Bigl(
    \Vert \nabla W\Vert _{L^\infty}
    \Vert \phi\Vert _{L^1\cap L^\infty}
    +
    \Vert \partial_W\phi\Vert _{L^1\cap L^\infty}
  \Bigr).
\end{align*}
Combining the bounds for $F_1$ and $F_2$
and summing over $W\in\mathcal W$
proves \eqref{es:prod-es7}.
\end{proof}

\subsection{Estimates for transport, transport-diffusion,
and linearized velocity equations}
\label{subsec:evolution-estimates}

We first recall the Besov estimates
for the transport equation;
see~\cite{BCD11}
and~\cite[Proposition~4.1]{HKR10}.
\begin{lemma}\label{lem:TE-prog-es}
Let $p\in[1,\infty]$.
Assume that $u$ is a smooth divergence-free vector field
on $\R^d$, $d\geq2$, and that $\phi$ is a smooth solution of
\[
  \partial_t\phi+u\cdot\nabla\phi=f,
  \qquad
  \phi|_{t=0}=\phi_0.
\]
Then the following estimates hold.
\begin{enumerate}[(1)]
\item
For every $r\in[1,\infty]$ and $s\in(-1,1)$,
there exists a constant $C=C(d,s)>0$ such that
\begin{equation}\label{eq:T-BesEs0}
\begin{aligned}
  \Vert \phi\Vert _{L^\infty_t(B^s_{p,r})}
  \leq C\Bigl(
    \Vert \phi_0\Vert _{B^s_{p,r}}
    +\Vert f\Vert _{\widetilde L^1_t(B^s_{p,r})}
    +\int_0^t
    \Vert \nabla u(\tau)\Vert _{L^\infty}
    \Vert \phi(\tau)\Vert _{B^s_{p,r}}\,\dd\tau
  \Bigr).
\end{aligned}
\end{equation}
Consequently,
\begin{equation}\label{eq:T-BesEs}
  \Vert \phi\Vert _{L^\infty_t(B^s_{p,r})}
  \leq
  Ce^{C\int_0^t\Vert \nabla u(\tau)\Vert _{L^\infty}\,\dd\tau}
  \left(
    \Vert \phi_0\Vert _{B^s_{p,r}}
    +\Vert f\Vert _{\widetilde L^1_t(B^s_{p,r})}
  \right).
\end{equation}

\item
At the endpoint $s=-1$, there exists a constant $C>0$
such that
\begin{equation}\label{eq:TE-prog-es3}
  \Vert \phi(t)\Vert _{B^{-1}_{p,\infty}}
  \leq
  Ce^{C\int_0^t\Vert u(\tau)\Vert _{B^1_{\infty,1}}\,\dd\tau}
  \left(
    \Vert \phi_0\Vert _{B^{-1}_{p,\infty}}
    +\int_0^t
    \Vert f(\tau)\Vert _{B^{-1}_{p,\infty}}\,\dd\tau
  \right).
\end{equation}
\end{enumerate}
\end{lemma}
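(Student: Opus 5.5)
The argument is the standard dyadic one; the paper cites \cite{BCD11} and \cite[Proposition~4.1]{HKR10} for it. I localize the equation in frequency and estimate each block along the flow. Applying $\Delta_q$ gives
\[
  \partial_t\Delta_q\phi+u\cdot\nabla\Delta_q\phi
  =\Delta_q f+[u\cdot\nabla,\Delta_q]\phi .
\]
Since $\operatorname{div}u=0$, the flow preserves measure, so $L^p$ norms are transported without loss. For every $p\in[1,\infty]$ this yields
\[
  \Vert\Delta_q\phi(t)\Vert_{L^p}
  \le\Vert\Delta_q\phi_0\Vert_{L^p}
  +\int_0^t\Vert\Delta_qf\Vert_{L^p}\,\dd\tau
  +\int_0^t\Vert[u\cdot\nabla,\Delta_q]\phi\Vert_{L^p}\,\dd\tau .
\]
Multiplying by $2^{qs}$, taking the $\ell^r$ norm in $q$, and using Minkowski's inequality (time integral outside the $\ell^r$ norm for the commutator, and the Chemin--Lerner norm for $f$) reduces part (1) to a single commutator bound:
\[
  \Bigl\Vert\bigl(2^{qs}\Vert[u\cdot\nabla,\Delta_q]\phi\Vert_{L^p}\bigr)_q\Bigr\Vert_{\ell^r}
  \le C\Vert\nabla u\Vert_{L^\infty}\Vert\phi\Vert_{B^s_{p,r}},
  \qquad -1<s<1 .
\]
Inequality \eqref{eq:T-BesEs0} follows once this bound is in hand, and Gronwall's lemma then gives \eqref{eq:T-BesEs}.

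For the commutator bound I use Bony's decomposition $u\cdot\nabla\phi=T_u\cdot\nabla\phi+T_{\nabla\phi}\cdot u+R(u\cdot,\nabla\phi)$, and the step I expect to be hardest is controlling each piece using only $\Vert\nabla u\Vert_{L^\infty}$ over the range $-1<s<1$.
\begin{itemize}
\item \emph{Paraproduct $T_u$.} The term $[S_{j-1}u\cdot\nabla,\Delta_q]\Delta_j\phi$ with $|j-q|\le4$ is a convolution commutator. The mean value theorem bounds it by $C2^{-q}\Vert\nabla S_{j-1}u\Vert_{L^\infty}\Vert\nabla\Delta_j\phi\Vert_{L^p}$, which is $\lesssim\Vert\nabla u\Vert_{L^\infty}\Vert\Delta_j\phi\Vert_{L^p}$. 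There is also the correction $(S_{j-1}u-S_{q-1}u)\cdot\nabla\Delta_q\Delta_j\phi$, which is handled the same way. This works for every $s$.
\item \emph{Paraproduct $T_{\nabla\phi}$.} The term $\Delta_ju\cdot\nabla S_{j-1}\phi$ is estimated using $\Vert\Delta_ju\Vert_{L^\infty}\lesssim2^{-j}\Vert\nabla u\Vert_{L^\infty}$ for $j\ge0$. One then sums $\sum_{\ell\le j}2^{\ell(1-s)}2^{\ell s}\Vert\Delta_\ell\phi\Vert_{L^p}$, and the weighted sum converges by discrete Young exactly because $s<1$.
\item \emph{Remainder.} Using divergence-freeness, I write $R(u\cdot,\nabla\phi)=\operatorname{div}R(u,\phi)$. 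The resulting high--high sum $\sum_{j\ge q-3}2^{q(1+s)}2^{-j}\Vert\nabla u\Vert_{L^\infty}\Vert\widetilde\Delta_j\phi\Vert_{L^p}$ converges because $s>-1$. The low block $j=-1$, where $\Vert\Delta_{-1}u\Vert_{L^\infty}$ is not controlled by $\nabla u$, is absorbed by writing the commutator form $[\Delta_q,\Delta_{-1}u\cdot\nabla]$ and using the mean value estimate again.
\end{itemize}
The terms $u\cdot\nabla\Delta_q\phi$ that Bony's decomposition generates on the other side of the commutator are treated symmetrically.

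For part (2), the endpoint $s=-1$, the high--high remainder sum has no decay, and only $\ell^\infty$ summability is available there. So I replace $\Vert\nabla u\Vert_{L^\infty}$ by $\Vert u\Vert_{B^1_{\infty,1}}$, which gives $\sum_j2^{j}\Vert\Delta_ju\Vert_{L^\infty}$ and hence summability of the remainder. Concretely, for $\phi\in B^{-1}_{p,\infty}$ I use
\[
  \sup_q2^{-q}\Vert[u\cdot\nabla,\Delta_q]\phi\Vert_{L^p}
  \le C\Vert u\Vert_{B^1_{\infty,1}}\Vert\phi\Vert_{B^{-1}_{p,\infty}},
\]
estimating the paraproducts as above and the remainder as in the proof of \eqref{eq:prodBes-endp}. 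Gronwall then yields \eqref{eq:TE-prog-es3}.
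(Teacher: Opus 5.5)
Your proposal is correct and is the argument the paper relies on. The paper gives no proof here; it cites \cite{BCD11} and \cite[Proposition~4.1]{HKR10}, whose proofs are exactly this frequency localization along the measure-preserving flow combined with a Bony-decomposition bound on $[u\cdot\nabla,\Delta_q]\phi$: $\Vert\nabla u\Vert_{L^\infty}$ suffices for $-1<s<1$, and an $\ell^1$-summable norm of $u$ is needed at $s=-1$. The same splitting appears in the paper's own proof of the endpoint commutator bound \eqref{eq:comm-es-end} in Lemma~\ref{lem:TD-sm}, which even gets the slightly sharper factor $\Vert\nabla u\Vert_{B^0_{\infty,1}}$ in place of $\Vert u\Vert_{B^1_{\infty,1}}$.
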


The following smoothing estimates for the transport-diffusion
equation will be used to propagate regularity.
\begin{lemma}\label{lem:TD-sm}
Let $\nu>0$, $\rho,p,r\in[1,\infty]$, and $0<\alpha\leq1$.
Assume that $u$ is a smooth divergence-free vector field
on $\R^d$, $d\geq2$, and that $\phi$ is a smooth solution of
\begin{equation}\label{eq:TD}
  \partial_t\phi+u\cdot\nabla\phi+\nu\Lambda^{2\alpha}\phi=f,
  \qquad
  \phi|_{t=0}=\phi_0.
\end{equation}
For every $-1<s<1$, there exists
$C=C(d,s,\alpha)>0$, independent of $\nu$, such that
for all $t>0$,
\begin{equation}\label{eq:TD-sm4}
\begin{aligned}
  \nu^{1/\rho}
  \left\Vert 
    \left(
      2^{q(s+2\alpha/\rho)}
      \Vert \Delta_q\phi\Vert _{L^\rho_t(L^p)}
    \right)_{q\in\mathbb N}
  \right\Vert _{\ell^r}
  \leq
  Ce^{C\int_0^t\Vert \nabla u(\tau)\Vert _{L^\infty}\,\dd\tau}
  \left(
    \Vert \phi_0\Vert _{B^s_{p,r}}
    +\Vert f\Vert _{L^1_t(B^s_{p,r})}
  \right).
\end{aligned}
\end{equation}
At the endpoint $s=-1$, $r=\infty$,
there exists $C=C(d,\alpha)>0$, independent of $\nu$,
such that
\begin{equation}\label{eq:TD-sm5}
\begin{aligned}
  \nu^{1/\rho}
  \sup_{q\in\mathbb N}
  2^{q(-1+2\alpha/\rho)}
  \Vert \Delta_q\phi\Vert _{L^\rho_t(L^p)}
  \leq
  Ce^{C\int_0^t
    \Vert \nabla u(\tau)\Vert _{B^0_{\infty,1}}\,\dd\tau}
  \left(
    \Vert \phi_0\Vert _{B^{-1}_{p,\infty}}
    +\Vert f\Vert _{L^1_t(B^{-1}_{p,\infty})}
  \right).
\end{aligned}
\end{equation}
\end{lemma}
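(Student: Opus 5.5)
The plan is to localize in frequency, treating the high-frequency blocks $q\in\mathbb N$ by a Lagrangian change of variables. The low-frequency block $\Delta_{-1}$ does not appear on the left of \eqref{eq:TD-sm4}, so it needs no separate treatment.

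First, apply $\Delta_q$ to \eqref{eq:TD}. This gives
\[
  \partial_t\Delta_q\phi+u\cdot\nabla\Delta_q\phi+\nu\Lambda^{2\alpha}\Delta_q\phi
  =\Delta_qf+[u\cdot\nabla,\Delta_q]\phi .
\]
Next, use the generalized Bernstein inequality for the fractional Laplacian on annuli: for $q\geq0$ and $1\leq p\leq\infty$,
\[
  \int|g|^{p-2}g\,\Lambda^{2\alpha}g\geq c\,2^{2\alpha q}\Vert g\Vert_{L^p}^p,
  \qquad \widehat g\ \text{supported in }2^q\mathcal C .
\]
For $p=\infty$, argue instead through a limit in $p$, or through the maximum principle applied to $e^{-t\nu\Lambda^{2\alpha}}$ acting on localized data. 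Since $\div u=0$, the transport term drops out of the $L^p$ energy identity, and we obtain
\[
  \Vert\Delta_q\phi(t)\Vert_{L^p}\leq e^{-c\nu t2^{2\alpha q}}\Vert\Delta_q\phi_0\Vert_{L^p}
  +\int_0^te^{-c\nu(t-\tau)2^{2\alpha q}}\bigl(\Vert\Delta_qf\Vert_{L^p}+\Vert[u\cdot\nabla,\Delta_q]\phi\Vert_{L^p}\bigr)\dd\tau .
\]
The key point is that the constant $c$ is independent of $u$. This is cleaner than a Lagrangian conjugation (Hmidi--Keraani--Rousset style) and suffices here because the exponential factor in the target bound absorbs the commutator.

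Second, take the $L^\rho$ norm in time. Young's inequality in time gives
\[
  \Vert e^{-c\nu t2^{2\alpha q}}\Vert_{L^\rho}\leq C(\nu2^{2\alpha q})^{-1/\rho}.
\]
Multiplying by $\nu^{1/\rho}2^{q(s+2\alpha/\rho)}$ then yields
\[
  \nu^{1/\rho}2^{q(s+2\alpha/\rho)}\Vert\Delta_q\phi\Vert_{L^\rho_tL^p}
  \leq C\,2^{qs}\Bigl(\Vert\Delta_q\phi_0\Vert_{L^p}+\int_0^t\bigl(\Vert\Delta_qf\Vert_{L^p}+\Vert[u\cdot\nabla,\Delta_q]\phi\Vert_{L^p}\bigr)\dd\tau\Bigr),
\]
with $C$ independent of $\nu$. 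Taking the $\ell^r$ norm over $q$ and using Minkowski's inequality, the $f$ term is bounded by $\Vert f\Vert_{L^1_t(B^s_{p,r})}$.

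Third, handle the commutator with the standard estimate for $-1<s<1$:
\[
  \bigl\Vert\bigl(2^{qs}\Vert[u\cdot\nabla,\Delta_q]\phi\Vert_{L^p}\bigr)_q\bigr\Vert_{\ell^r}
  \leq C\Vert\nabla u\Vert_{L^\infty}\Vert\phi\Vert_{B^s_{p,r}} .
\]
This is the usual Bony decomposition argument in which $\div u=0$ handles the remainder; see \cite{BCD11}. It then remains to bound $\Vert\phi\Vert_{L^\infty_t(B^s_{p,r})}$. This follows from Lemma~\ref{lem:TE-prog-es}: the dissipative term only helps, since the estimate above with $\rho=\infty$ dropped gives the same Gronwall-type bound. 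More precisely, \eqref{eq:T-BesEs0} extended to the dissipative case, with $\widetilde L^1\supset L^1$ and $\Vert f\Vert_{\widetilde L^1_t B^s_{p,r}}\leq\Vert f\Vert_{L^1_tB^s_{p,r}}$, together with Gronwall gives
\[
  \Vert\phi\Vert_{L^\infty_tB^s_{p,r}}\leq Ce^{C\int_0^t\Vert\nabla u\Vert_{L^\infty}}\bigl(\Vert\phi_0\Vert_{B^s_{p,r}}+\Vert f\Vert_{L^1_tB^s_{p,r}}\bigr).
\]
Inserting this bound yields \eqref{eq:TD-sm4}, using $\int_0^t\Vert\nabla u\Vert_{L^\infty}\leq e^{\int_0^t\Vert\nabla u\Vert_{L^\infty}}$.

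The endpoint \eqref{eq:TD-sm5} ($s=-1$, $r=\infty$) follows the same scheme. The only change is that the commutator bound
\[
  \sup_q2^{-q}\Vert[u\cdot\nabla,\Delta_q]\phi\Vert_{L^p}\leq C\Vert\nabla u\Vert_{B^0_{\infty,1}}\Vert\phi\Vert_{B^{-1}_{p,\infty}}
\]
requires $B^0_{\infty,1}$ because the remainder is borderline. This is proved as in \eqref{eq:prodBes-endp}. The $L^\infty_t$ control then comes from \eqref{eq:TE-prog-es3}, noting that $\Vert u\Vert_{B^1_{\infty,1}}$ is controlled by $\Vert\nabla u\Vert_{B^0_{\infty,1}}$ at high frequencies; the low block is lost for smooth $u$, so it is cleaner to rerun the Gronwall argument directly. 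The main obstacle is the $\nu$- and $u$-uniform Bernstein lower bound at $p=1$ and $p=\infty$, which I would justify via positivity of the kernel of $e^{-t\Lambda^{2\alpha}}$ on frequency-localized functions (Chemin--Lerner type lemma, valid for $0<\alpha\leq1$).
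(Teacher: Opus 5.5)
\paragraph{The gap is in the per-block decay at $p\in\{1,\infty\}$.} Your overall scheme is the one the paper relies on: dyadic localization, exponential decay of each block from the dissipation, the commutator bound, and Gronwall for the $L^\infty_t(B^s_{p,r})$ norm. The paper quotes \eqref{eq:TD-sm4} from \cite{LXY25} and proves only the endpoint commutator bound \eqref{eq:comm-es-end}, which is your $B^0_{\infty,1}$ estimate. Your first step, in the form you state it, is
\[
  \Vert\Delta_q\phi(t)\Vert_{L^p}\leq e^{-c\nu t2^{2\alpha q}}\Vert\Delta_q\phi_0\Vert_{L^p}+\cdots,
\]
with constant $1$ in front of the datum. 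This is false at $p=\infty$, $\alpha=1$, already for $u=0$ and $f=0$.

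On $\mathbb R^2$ take $g(x)=J_0(|x|)-\frac14J_0(2|x|)$. Its spectrum lies on the circles $|\xi|=1,2$, inside the open annulus where $\varphi>0$. One checks that $\Vert g\Vert_{L^\infty}=g(0)=\frac34$, while $-\Delta g(0)=1-1=0$. Hence
\[
  e^{t\Delta}g(0)=e^{-t}-\tfrac14e^{-4t}=\tfrac34-\tfrac32t^2+O(t^3),
\]
so $\Vert e^{t\Delta}g\Vert_{L^\infty}$ has zero right derivative at $t=0$. A radial Schwartz truncation $g(x)\chi(\epsilon x)$ shows that no uniform $c>0$ works in $\mathcal S$ either.

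This has three consequences for the justifications you propose:
\begin{itemize}
\item The constants in the generalized Bernstein inequality of \cite{CMZ07} must degenerate as $p\to\infty$, so your limiting argument fails.
\item Positivity of the kernel of $e^{-t\Lambda^{2\alpha}}$ gives only $L^p$ contraction, not decay.
\item The frequency-localized semigroup bound $\Vert e^{-\nu t\Lambda^{2\alpha}}\Delta_qg\Vert_{L^p}\leq Ce^{-c\nu t2^{2\alpha q}}\Vert\Delta_qg\Vert_{L^p}$ has $C>1$. It cannot be combined with $u\cdot\nabla$ through Duhamel without losing a derivative and paying $\Vert u\Vert_{L^\infty}$.
\end{itemize}
This case is not peripheral. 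Lemmas~\ref{lem:pGamma} and~\ref{lem:pGamma-high} apply \eqref{eq:TD-sm4} and \eqref{eq:TD-sm5} with $p=\infty$, including $\alpha=1$.

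\paragraph{How to close it.} For $0<\alpha<1$ your route can be repaired. At a maximum point $x_0$ of $|g|$:
\begin{itemize}
\item Write $\Lambda^{2\alpha}$ as a singular integral with positive kernel.
\item Replace the kernel by a smooth minorant $\tilde K$ vanishing on $|y|<R$; the discarded part is nonnegative at a maximum point.
\item Use that $g$ has no low frequencies to bound $|g*\tilde K(x_0)|\leq C2^{-2q}R^{-2-2\alpha}\Vert g\Vert_{L^\infty}$.
\end{itemize}
With $R=A2^{-q}$ and $A$ large, this gives $\operatorname{sgn}(g(x_0))\Lambda^{2\alpha}g(x_0)\geq c_\alpha2^{2\alpha q}\Vert g\Vert_{L^\infty}$, and similarly at $p=1$. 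However, $c_\alpha\to0$ as $\alpha\to1$.

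At $\alpha=1$ you need the Lagrangian conjugation you set aside. On time intervals where $\int\Vert\nabla u\Vert_{L^\infty}$ is small, compose with the flow. Treat the resulting variable-coefficient correction as a perturbation of the heat semigroup, paying its constant $C>1$ on each interval. Then glue the $O(1+\int_0^t\Vert\nabla u\Vert_{L^\infty}\,\dd\tau)$ intervals using your $L^\infty_t(B^s_{p,r})$ bound; this is where the factor $e^{C\int_0^t\Vert\nabla u\Vert_{L^\infty}\dd\tau}$ comes from.

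\paragraph{A smaller point.} The endpoint commutator bound is not obtained ``as in'' \eqref{eq:prodBes-endp}. That is a product estimate, and applied to the two terms of the commutator separately it loses a derivative. You need the kernel commutator bound for $[T_{u^j},\Delta_q]\partial_j\phi$. The blocks $-1\leq q\leq3$ also enter your Gronwall step and need separate treatment; the paper handles them by splitting $\phi=S_8\phi+(\mathrm{Id}-S_8)\phi$.
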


\begin{proof}[Proof of Lemma~\ref{lem:TD-sm}]
Estimate \eqref{eq:TD-sm4} follows from
\cite[Lemma~2.9]{LXY25}.
The proof of \eqref{eq:TD-sm5} is similar to that of
\eqref{eq:TD-sm4}, with the main difference lying
in the treatment of the commutator term.
It therefore suffices to establish the endpoint estimate
\begin{equation}\label{eq:comm-es-end}
  \sup_{q\geq-1}
  2^{-q}\Vert [\Delta_q,u\cdot\nabla]\phi\Vert _{L^p}
  \leq
  C\Vert \nabla u\Vert _{B^0_{\infty,1}}
  \Vert \phi\Vert _{B^{-1}_{p,\infty}}.
\end{equation}
We treat $q\geq4$ and $-1\leq q\leq3$ separately.

For $q\geq4$, Bony's decomposition and $\div u=0$ give
\begin{equation}\label{eq:comm-decomp-u-phi}
\begin{aligned}
  \bigl[\Delta_q,u\cdot\nabla\bigr]\phi
  &=
  \Delta_q\partial_jR(u^j,\phi)
  -R(u^j,\partial_j\Delta_q\phi)
  \\
  &\quad+
  \Delta_qT_{\partial_j\phi}u^j
  -T_{\Delta_q\partial_j\phi}u^j
  -[T_{u^j},\Delta_q]\partial_j\phi
  :=\sum_{i=1}^5R_q^i.
\end{aligned}
\end{equation}
Here and below, repeated spatial indices are summed.
By Bernstein's inequality,
\begin{equation}\label{eq:R1-u-phi}
\begin{aligned}
  2^{-q}\Vert R_q^1\Vert _{L^p}
  &\leq
  C\sum_{k\geq q-3}
  \Vert \Delta_k\phi\Vert _{L^p}
  \Vert \widetilde\Delta_ku\Vert _{L^\infty}
  \\
  &\leq
  C\Vert \phi\Vert _{B^{-1}_{p,\infty}}
  \sum_{k\geq q-3}
  \sum_{|k'-k|\leq1}
  \Vert \Delta_{k'}\nabla u\Vert _{L^\infty}
  \leq
  C\Vert \nabla u\Vert _{B^0_{\infty,1}}
  \Vert \phi\Vert _{B^{-1}_{p,\infty}}
\end{aligned}
\end{equation}
where $k\geq1$ and $k'\geq0$ because $q\geq4$.
For the next three terms, spectral localization, Bernstein's inequality, 
and $\Vert S_{k-1} \nabla \phi\Vert _{L^p} \leq C 2^{2k} \Vert \phi\Vert _{B^{-1}_{p,\infty}}$, give
\begin{align*}
  2^{-q}\sum_{i=2}^4\Vert R_q^i\Vert _{L^p}
  &\leq C\Vert \phi\Vert _{B^{-1}_{p,\infty}}
  \bigg(
    \sum_{\substack{k\in\mathbb{N};\,|k-q|\leq4}}
      2^{|k-q|}\Vert \Delta_k\nabla u\Vert _{L^\infty}
    +\sum_{k\geq q-2}
      2^{q-k}\Vert \Delta_k\nabla u\Vert _{L^\infty}
  \bigg)
  \\
  &\leq C\Vert \nabla u\Vert _{B^0_{\infty,\infty}}
    \Vert \phi\Vert _{B^{-1}_{p,\infty}}.
\end{align*}

Finally,
$R_q^5 =  \sum_{\substack{k\in\mathbb N;\,|k-q|\leq4}}
  [\Delta_q,S_{k-1}u^j]\Delta_k\partial_j\phi$.
The standard kernel commutator estimate
(see~\cite[Lemma~2.97]{BCD11}) gives
\begin{equation}\label{eq:R4-u-phi}
  \sup_{q\geq4}2^{-q}\Vert R_q^5\Vert _{L^p}
  \leq C \Vert \nabla u\Vert _{L^\infty}\sup_{q\ge -1}2^{-q}\Vert \Delta_q \phi\Vert _{L^p} \leq C \Vert \nabla u\Vert _{L^\infty}\;\Vert \phi\Vert _{B^{-1}_{p,\infty}}.
\end{equation}
Combining the five estimates and using
$B^0_{\infty,1}\hookrightarrow L^\infty$ gives
\begin{equation}\label{es:qge4}
  \sup_{q\geq4}
  2^{-q}\Vert [\Delta_q,u\cdot\nabla]\phi\Vert _{L^p}
  \leq
  C\Vert \nabla u\Vert _{B^0_{\infty,1}}
  \Vert \phi\Vert _{B^{-1}_{p,\infty}}.
\end{equation}

For $-1\leq q\leq3$, decompose $\phi=S_8\phi+\phi^h,\,\phi^h:=(\mathrm{Id}-S_8)\phi$.

We first estimate the low-frequency contribution.
Let $h_q$ denote the convolution kernel of $\Delta_q$.
Since $\div u=0$, integration by parts gives
\[
  [\Delta_q,u\cdot\nabla]S_8\phi(x)
  =
  \int_{\R^d}
  \nabla h_q(x-y)\cdot
  \bigl(u(y)-u(x)\bigr)S_8\phi(y)\,\dd y.
\]
Using
$|u(y)-u(x)|\leq |x-y|\Vert \nabla u\Vert _{L^\infty}$ and $\int_{\R^d}|z|\,|\nabla h_q(z)|\,\dd z\leq C$,
we obtain
\[
  \Vert [\Delta_q,u\cdot\nabla]S_8\phi\Vert _{L^p}
  \leq
  C\Vert \nabla u\Vert _{L^\infty}\Vert S_8\phi\Vert _{L^p}.
\]
Moreover,
$\Vert S_8\phi\Vert _{L^p}
  \leq  C\Vert \phi\Vert _{B^{-1}_{p,\infty}}$.
Therefore, for $-1\leq q\leq3$,
\[
  2^{-q}\Vert [\Delta_q,u\cdot\nabla]S_8\phi\Vert _{L^p}
  \leq
  C\Vert \nabla u\Vert _{L^\infty}
  \Vert \phi\Vert _{B^{-1}_{p,\infty}}.
\]

For the high-frequency contribution,
$\Delta_q\phi^h=0$ for $-1\leq q\leq 3$, and hence
$$[\Delta_q,u\cdot\nabla]\phi^h = \Delta_q\div(u\phi^h).$$.
By spectral localization, the two paraproduct terms
vanish after applying $\Delta_q$. Thus
\[
  \Delta_q\div(u\phi^h)
  =
  \sum_{k\geq6}
  \Delta_q\div
  \bigl(\Delta_ku\,\widetilde\Delta_k\phi^h\bigr).
\]
Using
\[
  k\geq 6,\qquad\Vert \Delta_ku\Vert _{L^\infty}
  \leq C2^{-k}\Vert \Delta_k\nabla u\Vert _{L^\infty},
  \qquad
  \Vert \widetilde\Delta_k\phi^h\Vert _{L^p}
  \leq C2^k\Vert \phi\Vert _{B^{-1}_{p,\infty}},
\]
we infer that
\begin{align*}
  \sup_{-1\leq q\leq 3} 2^{-q}\Vert [\Delta_q,u\cdot\nabla]\phi^h\Vert _{L^p}
  & \leq
  C\sum_{k\geq6}
  \Vert \Delta_ku\Vert _{L^\infty}
  \Vert \widetilde\Delta_k\phi^h\Vert _{L^p} \\
  & \leq
  C\Vert \phi\Vert _{B^{-1}_{p,\infty}}
  \sum_{k\geq6}\Vert \Delta_k\nabla u\Vert _{L^\infty}
  \leq
  C\Vert \nabla u\Vert _{B^0_{\infty,1}}
  \Vert \phi\Vert _{B^{-1}_{p,\infty}}.
\end{align*}
Together with the low-frequency estimate
and \eqref{es:qge4}, this proves
\eqref{eq:comm-es-end}.
\end{proof}

We next establish an analogue of
\cite[Proposition~4.3]{HKR10}
for the linearized velocity equation with
$\alpha\in(\tfrac12,1]$ and $p\in[2,\infty)$.

\begin{proposition}\label{prop-linear-velocity-eqn}
Let $d\geq2$, $\alpha\in(\tfrac12,1]$, $s\in(-1,1)$,
$p\in[2,\infty)$, and $\rho\in[1,\infty]$.
Let $v$ be a smooth divergence-free vector field on
$\mathbb R^d$, and let $u$ be a smooth solution of
\begin{equation}\label{eq:LB}
  \partial_t u+v\cdot\nabla u+\Lambda^{2\alpha}u
  +\nabla \pi=f,
  \qquad
  \div u=0,
  \qquad
  u|_{t=0}=u_0.
\end{equation}
Then, for every $t>0$,
\begin{equation}\label{es:linear-velocity-eqn}
\begin{aligned}
  \Vert u\Vert _{L^\infty_t(B^s_{p,\infty})}
  \leq
  Ce^{C\int_0^t\Vert v(\tau)\Vert _{W^{1,\infty}}\,\dd\tau}
  \bigg(
    \Vert u_0\Vert _{B^s_{p,\infty}}
    +(1+t^{1-\frac1\rho})
    \Vert f\Vert _{\widetilde L^\rho_t
      (B^{s-2\alpha(1-\frac1\rho)}_{p,\infty})}
  \bigg).
\end{aligned}
\end{equation}
More generally, if $f=f_1+f_2$, then, for every
$\rho_1,\rho_2\in[1,\infty]$ and $t>0$,
\begin{equation}\label{es:linear-velocity-eqn-2}
\begin{aligned}
  \Vert u\Vert _{L^\infty_t(B^s_{p,\infty})}
  \leq
  Ce^{C\int_0^t\Vert v(\tau)\Vert _{W^{1,\infty}}\,\dd\tau}
  \bigg(
    \Vert u_0\Vert _{B^s_{p,\infty}}
    +\sum_{i=1}^2
    (1+t^{1-\frac1{\rho_i}})
    \Vert f_i\Vert _{\widetilde L^{\rho_i}_t
      (B^{s-2\alpha(1-\frac1{\rho_i})}_{p,\infty})}
  \bigg).
\end{aligned}
\end{equation}
The constants depend only on $d,\alpha,s,p$
and the indicated time integrability exponents.
\end{proposition}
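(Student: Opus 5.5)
We follow the dyadic strategy of \cite[Proposition~4.3]{HKR10}, adapted to $\Lambda^{2\alpha}$ and $L^p$ with $2\leq p<\infty$. Write $\mathbb P$ for the Leray projector. Since $\div u=\div v=0$, we have $\nabla\pi=(\mathrm{Id}-\mathbb P)(f-v\cdot\nabla u)$. Applying $\mathbb P\Delta_q$ to \eqref{eq:LB}, and using $\mathbb Pu=u$, gives for $u_q:=\Delta_q u$
\[
  \partial_t u_q+v\cdot\nabla u_q+\Lambda^{2\alpha}u_q
  =\Delta_q\mathbb P f+\mathcal C_q,
  \qquad
  \mathcal C_q:=-[\Delta_q\mathbb P,v\cdot\nabla]u .
\]
We commute the projector past $v\cdot\nabla$ rather than estimating $\nabla\pi$ separately. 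Hence $\mathcal C_q=-[\Delta_q,v\cdot\nabla]u-\Delta_q[\mathbb P,v\cdot\nabla]u$.

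\textbf{Step 1: $L^p$ energy estimate for each block.} For $q\geq0$ we multiply by $|u_q|^{p-2}u_q$ and integrate. The transport term drops out since $\div v=0$. The generalized Bernstein inequality for $\Lambda^{2\alpha}$, with $0<\alpha\leq1$ and $p\in[2,\infty)$ (Chen--Miao--Zhang type), gives
\[
  \int\Lambda^{2\alpha}u_q\,|u_q|^{p-2}u_q\geq c2^{2\alpha q}\Vert u_q\Vert_{L^p}^p .
\]
This yields
\[
  \tfrac{\dd}{\dd t}\Vert u_q\Vert_{L^p}+c2^{2\alpha q}\Vert u_q\Vert_{L^p}
  \leq\Vert\Delta_q f\Vert_{L^p}+\Vert\mathcal C_q\Vert_{L^p},
\]
with the Leray projector bounded on each annulus. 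For $q=-1$ there is no damping and we only use $\tfrac{\dd}{\dd t}\Vert u_{-1}\Vert_{L^p}\leq\Vert\Delta_{-1}\mathbb Pf\Vert_{L^p}+\Vert\mathcal C_{-1}\Vert_{L^p}$.

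\textbf{Step 2: the commutator.} We aim for
\[
  \sup_q 2^{qs}\Vert\mathcal C_q\Vert_{L^p}\leq C\Vert v\Vert_{W^{1,\infty}}\Vert u\Vert_{B^s_{p,\infty}},
  \qquad s\in(-1,1).
\]
For $[\Delta_q,v\cdot\nabla]u$ this is the standard estimate \cite{BCD11}, valid for $-1<s<1$ in divergence-free form; this is where the restriction on $s$ enters. For $\Delta_q[\mathbb P,v\cdot\nabla]u$ we write $[\mathbb P,v\cdot\nabla]u=\nabla\Lambda^{-2}\div(\ldots)$ and use $\div u=0$, so that $[\mathbb P,v\cdot\nabla]u=\nabla\Lambda^{-2}(\partial_iv^j\partial_ju^i)$ up to signs. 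We treat it by Bony's decomposition as in \eqref{eq:commEs} with $m(D)=\mathbb P$, $\sigma=0$. The low-frequency block, where $\nabla\Lambda^{-2}$ is singular, is handled by rewriting $\partial_iv^j\partial_ju^i=\partial_i\partial_j(v^ju^i)$. This uses $\Vert v\Vert_{L^\infty}$, which explains the $W^{1,\infty}$ rather than Lipschitz norm in the exponent. I expect this low-frequency and pressure part of the commutator to be the main technical obstacle.

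\textbf{Step 3: Duhamel and Young in time.} For $q\geq0$, Gronwall on each block gives
\[
  \Vert u_q(t)\Vert_{L^p}\leq e^{-ct2^{2\alpha q}}\Vert u_q(0)\Vert_{L^p}
  +\int_0^te^{-c(t-\tau)2^{2\alpha q}}\bigl(\Vert\Delta_qf\Vert_{L^p}+\Vert\mathcal C_q\Vert_{L^p}\bigr)\dd\tau .
\]
Hölder in time with $\Vert e^{-c\cdot 2^{2\alpha q}}\Vert_{L^{\rho'}}\leq C2^{-2\alpha q(1-1/\rho)}$ bounds the $f$ contribution by
\[
  C2^{-2\alpha q(1-1/\rho)}\Vert\Delta_qf\Vert_{L^\rho_tL^p}
  \leq C2^{-qs}\Vert f\Vert_{\widetilde L^\rho_t(B^{s-2\alpha(1-1/\rho)}_{p,\infty})}.
\]
For $q=-1$ we instead bound $\int_0^t\Vert\Delta_{-1}f\Vert_{L^p}\dd\tau\leq t^{1-1/\rho}\Vert\Delta_{-1}f\Vert_{L^\rho_tL^p}$. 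This produces the factor $(1+t^{1-1/\rho})$. The commutator contribution is bounded crudely by $\int_0^t\Vert\mathcal C_q\Vert_{L^p}\dd\tau$.

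\textbf{Step 4: closing.} Multiplying by $2^{qs}$ and taking the supremum over $q$ and over time in $[0,t]$ gives
\[
  \Vert u\Vert_{L^\infty_t(B^s_{p,\infty})}\leq C\bigl(\Vert u_0\Vert_{B^s_{p,\infty}}+(1+t^{1-1/\rho})\Vert f\Vert_{\widetilde L^\rho_t(\cdot)}\bigr)
  +C\int_0^t\Vert v\Vert_{W^{1,\infty}}\Vert u\Vert_{B^s_{p,\infty}}\dd\tau .
\]
Gronwall then yields \eqref{es:linear-velocity-eqn}. The estimate \eqref{es:linear-velocity-eqn-2} follows identically by splitting the Duhamel term into its $f_1$ and $f_2$ parts and applying Hölder with $\rho_1$ and $\rho_2$ separately, since everything is linear in $f$.
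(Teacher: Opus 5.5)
Your argument is correct. Its architecture matches the paper's: apply $\mathbb P\Delta_q$, run an $L^p$ energy estimate with the generalized Bernstein inequality, use Duhamel and H\"older in time, and close with Gronwall. Where you differ is in the handling of the projector.

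\textbf{Your route.} You keep the full field $v$ in the transport term and estimate the global commutator $[\Delta_q\mathbb P,v\cdot\nabla]u$. This requires a paradifferential bound for $[\mathbb P,v\cdot\nabla]u=\nabla\Lambda^{-2}(\partial_iv^j\partial_ju^i)$ in $B^s_{p,\infty}$, which is true for $s\in(-1,1)$. Note, however, that the divergence rewriting is needed at high frequency as well, not only at low frequency. For $s\leq0$ the remainder $R(\partial_iv^j,\partial_ju^i)$ has negative total index. It must be written blockwise as $\partial_i\partial_jR(v^j,u^i)$, whose index $1+s$ is positive. More simply, you can apply \eqref{eq:commEs} componentwise with $m(D)=\mathcal R_i\mathcal R_j$ and $\sigma=0$. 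This is legitimate for $2\leq p<\infty$ and does not even use $\div u=0$.

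\textbf{The paper's route.} For $q\geq0$ the paper transports $\Delta_qu$ by $S_{q-1}v$ rather than $v$. The projector commutator $[\mathbb P,S_{q-1}v\cdot\nabla]\Delta_qu$ then only acts on functions spectrally supported in a dilated annulus. There $\mathbb P$ coincides with convolution by $2^{qd}\overline h(2^q\cdot)$, $\overline h\in\mathcal S$, and a first-order Taylor expansion directly gives $C\Vert\nabla v\Vert_{L^\infty}2^{qs}\Vert\Delta_qu\Vert_{L^p}$. The price is an extra term $(S_{q-1}v-v)\cdot\nabla\Delta_qu$, controlled by Bernstein. At $q=-1$ the paper drops the transport structure and puts $\mathbb P(v\cdot\nabla\Delta_{-1}u)$ on the right-hand side. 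These two places, together with the $q=0$ block where $S_{-1}=0$, are where $\Vert v\Vert_{L^\infty}$ enters.

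So the step you flagged as the main obstacle is exactly what the $S_{q-1}v$ localization is designed to avoid. The paper's route is more elementary. Yours treats all dyadic blocks uniformly, at the cost of a full commutator estimate for a zero-order multiplier.
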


\begin{proof}[Proof of Proposition~\ref{prop-linear-velocity-eqn}]
We prove \eqref{es:linear-velocity-eqn}.
Estimate \eqref{es:linear-velocity-eqn-2}
follows by estimating the two forcing terms separately
in the same argument.

For $q\in\mathbb N$, applying $\mathbb P\Delta_q$
to \eqref{eq:LB}, where
$\mathbb P:=\mathrm{Id}-\nabla\Delta^{-1}\div$
is the Leray projector, gives
\begin{align*}
  \partial_t\Delta_q u
  +S_{q-1}v\cdot\nabla\Delta_q u
  +\Lambda^{2\alpha}\Delta_q u
  =
  \mathbb P\Delta_q f+\mathbb P R_q
  -[\mathbb P,S_{q-1}v\cdot\nabla]\Delta_q u,
\end{align*}
where
\[
  R_q:=(S_{q-1}v-v)\cdot\nabla\Delta_q u
       -[\Delta_q,v\cdot\nabla]u.
\]
Testing the localized equation with $|\Delta_q u|^{p-2}\Delta_q u$,
using $\div S_{q-1}v=0$, the $L^p$ boundedness of $\mathbb P$, 
and the frequency-localized Bernstein inequality
in~\cite{CMZ07}, we obtain
\begin{align*}
  \frac{\dd}{\dd t}\Vert \Delta_q u(t)\Vert _{L^p}
  +c2^{2\alpha q}\Vert \Delta_q u(t)\Vert _{L^p}
  \leq
  C\Big(
    \Vert R_q\Vert _{L^p}
    +\Vert \Delta_q f\Vert _{L^p}
    +\Vert [\mathbb P,S_{q-1}v\cdot\nabla]
      \Delta_q u\Vert _{L^p}
  \Big).
\end{align*}
where $c>0$ is independent of $q$.
Integrating this differential inequality and H\"older's inequality in time yield 
\begin{align*}
  \Vert \Delta_q u\Vert _{L^\infty_t(L^p)}
  &\leq
  \Vert \Delta_q u_0\Vert _{L^p}
  +C2^{-2\alpha q(1-\frac1\rho)}
    \Vert \Delta_q f\Vert _{L^\rho_t(L^p)}
  \\
  &\quad+
  C\int_0^t
  \Big(
    \Vert R_q(\tau)\Vert _{L^p}
    +\Vert [\mathbb P,S_{q-1}v(\tau)\cdot\nabla]
      \Delta_q u(\tau)\Vert _{L^p}
  \Big)\,\dd\tau.
\end{align*}
Multiplying by $2^{qs}$ and taking the supremum over
$q\in\mathbb N$, we find
\begin{equation}\label{rrr}
\begin{aligned}
  \sup_{q\in\mathbb N}
  2^{qs}\Vert \Delta_q u\Vert _{L^\infty_t(L^p)}
  &\leq
  C\Vert u_0\Vert _{B^s_{p,\infty}}
  +C\Vert f\Vert _{\widetilde L^\rho_t(B^{s'}_{p,\infty})}
  \\
  &\quad+
  C\int_0^t
  \sup_{q\in\mathbb N}2^{qs}
  \Big(
    \Vert R_q\Vert _{L^p}
    +\Vert [\mathbb P,S_{q-1}v\cdot\nabla]
      \Delta_q u\Vert _{L^p}
  \Big)\,\dd\tau,
\end{aligned}
\end{equation}
where $s':=s-2\alpha(1-\frac1\rho)$.

We next estimate the two remainder terms.
By \cite[Lemma~2.100]{BCD11}, for $-1<s<1$,
\[
  \sup_{q\in\mathbb N}
  2^{qs}\Vert [\Delta_q,v\cdot\nabla]u\Vert _{L^p}
  \leq
  C\Vert \nabla v\Vert _{L^\infty}\Vert u\Vert _{B^s_{p,\infty}}.
\]
For $q\geq1$, Bernstein's inequality gives
\begin{align*}
  2^{qs}
  \Vert (S_{q-1}v-v)\cdot\nabla\Delta_q u\Vert _{L^p}
  \leq
  C2^{qs}
  \sum_{k\geq q-1}
  \Vert \Delta_k v\Vert _{L^\infty}
  2^q\Vert \Delta_q u\Vert _{L^p}
  \leq
  C\Vert \nabla v\Vert _{L^\infty}
  2^{qs}\Vert \Delta_q u\Vert _{L^p}.
\end{align*}
For $q=0$, the convention $S_{-1}=0$ instead gives
$\Vert (S_{-1}v-v)\cdot\nabla\Delta_0 u\Vert _{L^p}\leq C\Vert v\Vert _{L^\infty}\Vert \Delta_0 u\Vert _{L^p}$.
Thus
\[
  \sup_{q\in\mathbb N}2^{qs}\Vert R_q\Vert _{L^p}
  \leq
  C\Vert v\Vert _{W^{1,\infty}}\Vert u\Vert _{B^s_{p,\infty}}.
\]
To estimate the projection commutator, first note that
it vanishes for $q=0$, since $S_{-1}=0$.
For $q\geq1$, the Fourier supports of both
$\Delta_q u$ and
$S_{q-1}v\cdot\nabla\Delta_q u$
lie in a fixed annulus dilated by $2^q$.
Choose
$\widetilde\psi\in
C_c^\infty(\mathbb R^d\setminus\{0\})$
equal to one on this annulus.
Then
\[
  [\mathbb P,S_{q-1}v\cdot\nabla]\Delta_q u
  =
  [\mathbb P\widetilde\psi(2^{-q}D),
    S_{q-1}v\cdot\nabla]\Delta_q u.
\]
Since the symbol of $\mathbb P$ is homogeneous of degree zero,
the localized operator has the convolution kernel
$2^{qd}\overline h(2^q\cdot)$,
where $\overline h$ is a matrix-valued Schwartz function.
Hence
\begin{align*}
  [\mathbb P,S_{q-1}v\cdot\nabla]\Delta_q u(x)
  =
  2^{qd}\int_{\mathbb R^d}
  \overline h(2^q y)
  \Big(
    \big(S_{q-1}v(x-y)-S_{q-1}v(x)\big)
    \cdot\nabla\Delta_q u(x-y)
  \Big)\,\dd y.
\end{align*}
Using the Lipschitz bound on $S_{q-1}v$
and Bernstein's inequality, we obtain
\begin{align*}
  2^{qs}
  \Vert [\mathbb P,S_{q-1}v\cdot\nabla]\Delta_q u\Vert _{L^p}
  &\leq
  C2^{q(s-1)}
  \Vert |y|\overline h(y)\Vert _{L^1}
  \Vert \nabla S_{q-1}v\Vert _{L^\infty}
  \Vert \nabla\Delta_q u\Vert _{L^p}
  \\
  &\leq
  C\Vert \nabla v\Vert _{L^\infty}
  2^{qs}\Vert \Delta_q u\Vert _{L^p} \leq
  C\Vert \nabla v\Vert _{L^\infty}\Vert u\Vert _{B^s_{p,\infty}}.
\end{align*}
Inserting these estimates into \eqref{rrr} gives
\begin{align*}
  \sup_{q\in\mathbb N}
  2^{qs}\Vert \Delta_q u\Vert _{L^\infty_t(L^p)}
  \leq
  C\Vert u_0\Vert _{B^s_{p,\infty}}
  +C\Vert f\Vert _{\widetilde L^\rho_t(B^{s'}_{p,\infty})}
  +
  C\int_0^t
  \Vert v(\tau)\Vert _{W^{1,\infty}}
  \Vert u(\tau)\Vert _{B^s_{p,\infty}}\,\dd\tau.
\end{align*}

For the low-frequency block, applying
$\mathbb P\Delta_{-1}$ to \eqref{eq:LB} gives
\begin{align*}
  \partial_t\Delta_{-1}u
  +\Lambda^{2\alpha}\Delta_{-1}u
  &=
  -\mathbb P([\Delta_{-1},v\cdot\nabla]u)
  +\mathbb P\Delta_{-1}f
  -\mathbb P(v\cdot\nabla\Delta_{-1}u).
\end{align*}
The nonnegativity of the dissipation term
(see~\cite{CorC04}), together with the commutator estimate
in~\cite[Lemma~2.100]{BCD11}
and Bernstein's inequality, gives
\begin{align*}
  \frac{\dd}{\dd t}\Vert \Delta_{-1}u(t)\Vert _{L^p}
  &\leq
  C\Big(
    \Vert \nabla v(t)\Vert _{L^\infty}
    \Vert u(t)\Vert _{B^s_{p,\infty}}
    +\Vert \Delta_{-1}f(t)\Vert _{L^p}
    +\Vert v(t)\Vert _{L^\infty}\Vert \Delta_{-1}u(t)\Vert _{L^p}
  \Big)
  \\
  &\leq
  C\Big(
    \Vert v(t)\Vert _{W^{1,\infty}}
    \Vert u(t)\Vert _{B^s_{p,\infty}}
    +\Vert \Delta_{-1}f(t)\Vert _{L^p}
  \Big).
\end{align*}
Integrating in time and applying H\"older's inequality,
we conclude that
\begin{align*}
  \Vert \Delta_{-1}u\Vert _{L^\infty_t(L^p)}
  \leq
  C\Vert u_0\Vert _{B^s_{p,\infty}}
  +C\int_0^t
    \Vert v(\tau)\Vert _{W^{1,\infty}}
    \Vert u(\tau)\Vert _{B^s_{p,\infty}}\,\dd\tau
  +
  Ct^{1-\frac1\rho}
  \Vert f\Vert _{\widetilde L^\rho_t(B^{s'}_{p,\infty})}.
\end{align*}

Combining the high- and low-frequency estimates gives
\begin{align*}
  \Vert u\Vert _{L^\infty_t(B^s_{p,\infty})}
  \leq
  C\Vert u_0\Vert _{B^s_{p,\infty}}
  +C\int_0^t
    \Vert v(\tau)\Vert _{W^{1,\infty}}
    \Vert u(\tau)\Vert _{B^s_{p,\infty}}\,\dd\tau
  +
  C(1+t^{1-\frac1\rho})
  \Vert f\Vert _{\widetilde L^\rho_t(B^{s'}_{p,\infty})}.
\end{align*}
Gronwall's inequality now proves
\eqref{es:linear-velocity-eqn}.
\end{proof}

\addtocontents{toc}{\protect\setcounter{tocdepth}{2}}
\section{Global well-posedness for the 2D Boussinesq system}
\label{sec:well-pose}
The purpose of this section is to give the proof of Theorem \ref{thm:main1}. 

We begin with the definition of weak solutions.

\begin{definition}\label{defn:weak-Bouss}
Let $T>0$ be fixed and let $(u_0,\theta_0)\in L_{\mathrm{loc}}^2 (\mathbb R^2)\times L_{\mathrm{loc}}^2 (\mathbb R^2)$ satisfies $\operatorname{div}u_0=0$. We say that
\begin{align*}
    (u,\theta)
    \in
    L^2([0,T], L_{\mathrm{loc}}^2 (\mathbb R^2))
    \times
    L^2([0,T], L_{\mathrm{loc}}^2 (\mathbb R^2))
\end{align*}
is a weak solution to \eqref{BoussEq} on $[0,T]$ with initial data
$(u_0,\theta_0)$ if, in addition, $u\in L^1([0,T],L^p(\mathbb R^2))$ for some $1\leq p\leq\infty$, and the following identities hold.

For every divergence-free
$\phi\in C_c^\infty([0,T)\times\mathbb R^2;\mathbb R^2)$,
\begin{align*}
 \int_0^T\int_{\mathbb R^2}
 \left(
   \varepsilon u\cdot\partial_t\phi
   +\varepsilon u\otimes u:\nabla\phi
   -u\cdot\Lambda^{2\alpha}\phi
   +\theta\phi\cdot e_2
 \right)\,\dd x\,\dd t +\varepsilon\int_{\mathbb R^2}
    u_0(x)\cdot\phi(0,x)\,\dd x
 =0,
\end{align*}
and, for every
$\psi\in C_c^\infty([0,T)\times\mathbb R^2)$,
\begin{align*}
 \int_0^T\int_{\mathbb R^2}
 \left(
   \theta\partial_t\psi
   +\theta u\cdot\nabla\psi
 \right)\,\dd x\,\dd t
 +\int_{\mathbb R^2}
    \theta_0(x)\psi(0,x)\,\dd x
 &=0,\\
 \int_0^T\int_{\mathbb R^2}
 u\cdot\nabla\psi\,\dd x\,\dd t
 &=0.
\end{align*}
If $(u,\theta)$ is a weak solution on $[0,T]$ for every $T>0$,
then we say that $(u,\theta)$ is a global weak solution to \eqref{BoussEq}.
\end{definition}

We next have the following basic \textit{{\it{a priori}}} estimates.
\begin{proposition}\label{prop:ap-es-basic}
  The smooth solution $(u,\theta)$ to the 2D Boussinesq-Navier-Stokes system \eqref{BoussEq} satisfies the following estimates.
\begin{enumerate}[(1)]
\item
If $\alpha\in(\frac{1}{2},1),\,u_0\in H^1(\mathbb{R}^2)$, $\theta_0\in L^1\cap L^\infty(\mathbb{R}^2)$, then 
\begin{align}\label{eq:u-Gam-es2}
  \Vert \nabla u\Vert _{L^\infty_T(L^2)} +  \Vert \Gamma\Vert _{L^\infty_T(L^2)}+ \Vert \Gamma\Vert _{L^1_T(B^1_{2,1})}
  + \left\Vert \Gamma\right\Vert _{\widetilde{L}_T^1 (B_{2, \infty}^{2\alpha})}
  + \Vert \nabla u\Vert _{\widetilde{L}^1_T(C^{2\alpha-1})} 
  \leq C e^{C T} .
\end{align}
\item 
If $\alpha=1,\,u_0\in H^1(\mathbb{R}^2)$, $\theta_0\in L^1\cap L^\infty(\mathbb{R}^2)$, then 
\begin{align}\label{es:nabu-Gam2-2b}
  \Vert \nabla u\Vert _{\widetilde{L}^1_T( B^1_{\infty,\infty})} \leq C e^{CT}. \quad   
\end{align}
\item
 If $\alpha\in(\frac{1}{2},1]$, $u_0\in W^{1,p}(\mathbb{R}^2)$ for some $2<p<\infty$,  
$\theta_0\in L^1\cap L^\infty(\mathbb{R}^2)$, 
then
\begin{align}\label{es:nabu-Gam2}
  \Vert \nabla u\Vert _{L^\infty_T(L^p)}+\Vert \Gamma\Vert _{L^\infty_T(L^p)}
  + \Vert \Gamma\Vert _{\widetilde{L}^1_T(B^{2\alpha}_{p,\infty})}
  +  \Vert \nabla u\Vert _{L^1_T(B^{2\alpha-1}_{\infty,\infty})}
  \leq C e^{CT} .
\end{align}
\item
Let $u_0\in W^{1,p}(\mathbb R^2)$ for some
$2\leq p<\infty$, and let
$\theta_0\in L^1\cap L^\infty(\mathbb R^2)$.
Assume that either
\[
  \tfrac12<\alpha<1,
  \qquad
  \max\left\{p,\tfrac1{1-\alpha}\right\}<Q<\infty,
\]
or
\[
  \begin{gathered}
    \alpha=1,\qquad p<Q<\infty,\qquad
    \int_{\mathbb R^2}\theta_0(x)\,\dd x=0,
    \qquad
    \int_{\mathbb R^2}|x|\,|\theta_0(x)|\,\dd x<\infty.
  \end{gathered}
\]
Then
\begin{equation}\label{eq:u-LQ-es}
  \Vert u\Vert _{L^\infty_T(L^Q)}
  \leq Ce^{\exp(CT)}.
\end{equation}
\end{enumerate}
In the above, the constant $C>0$ is independent of the coefficient $\varepsilon=\frac{1}{\mathrm{Pr}}$.
\end{proposition}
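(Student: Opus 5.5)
The proof runs through the good unknown $\Gamma=\omega-\mathcal R_{1-2\alpha}\theta$, which satisfies \eqref{eq:Gamma-intr}. We use two facts throughout: the transport structure gives $\Vert\theta(t)\Vert_{L^r}=\Vert\theta_0\Vert_{L^r}$ for all $1\le r\le\infty$, and $\nabla u$ is recovered from $\Gamma$ and $\theta$ through \eqref{eq:subcritical}.

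\textbf{Energy-level step.} First take the $L^2$ energy: testing the velocity equation with $u$ gives $\tfrac{\varepsilon}{2}\tfrac{\dd}{\dd t}\Vert u\Vert_{L^2}^2+\Vert\Lambda^\alpha u\Vert_{L^2}^2\le \Vert\theta\Vert\,\Vert u\Vert$ in suitable dual norms. This step is $\varepsilon$-dependent and is used only qualitatively. Uniform bounds must come from $\Gamma$.

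\textbf{Parts (1) and (3).} Perform $L^p$ estimates on \eqref{eq:Gamma-intr}. Multiply by $|\Gamma|^{p-2}\Gamma$, use $\div u=0$ and the positivity of $\Lambda^{2\alpha}$, and divide by $\varepsilon$ only through the dissipation weight. This gives
\[\Vert\Gamma(t)\Vert_{L^p}\le \Vert\Gamma_0\Vert_{L^p}+\int_0^t\Vert[\mathcal R_{1-2\alpha},u\cdot\nabla]\theta\Vert_{L^p}.\]
The commutator is bounded by Lemma~\ref{lem:Rbeta-cm}: \eqref{eq:Rbeta-cm-es} with $s=0$ for $\alpha<1$, and \eqref{eq:comm-es1} for $\alpha=1$. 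The result is $C\Vert\nabla u\Vert_{L^p}\Vert\theta_0\Vert_{L^1\cap L^\infty}$. Since $\Vert\nabla u\Vert_{L^p}\le C\Vert\Gamma\Vert_{L^p}+C\Vert\theta_0\Vert_{L^1\cap L^\infty}$ (the temperature part has negative order $1-2\alpha$, so we use Hardy--Littlewood--Sobolev and the $L^1\cap L^\infty$ bound), Gronwall closes the estimate with $Ce^{CT}$, uniformly in $\varepsilon$.

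When $p=2$ we have $\Gamma_0\notin L^2$ in general. There we split $\Gamma=\Gamma^{(1)}+\Gamma^{(2)}$ as announced: $\Gamma^{(1)}$ carries $\omega_0$ and is estimated in $L^2$, while $\Gamma^{(2)}$ carries $-\mathcal R_{-1}\theta_0\in L^q$ and is estimated in $L^q$ for some $q>2$.

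Smoothing comes from Lemma~\ref{lem:TD-sm} with $\nu=\varepsilon^{-1}$, after rewriting \eqref{eq:Gamma-intr} as $\partial_t\Gamma+u\cdot\nabla\Gamma+\varepsilon^{-1}\Lambda^{2\alpha}\Gamma=[\cdot,\cdot]$. Taking $\rho=1$ makes the factor $\nu^{1/\rho}$ cancel the $\varepsilon^{-1}$, which gives $\Gamma\in\widetilde L^1_T(B^{2\alpha}_{p,\infty})$ uniformly. The forcing must be placed in $L^1_T(B^0_{p,\infty})$, again via Lemma~\ref{lem:Rbeta-cm}. To handle the exponential factor $e^{C\int\Vert\nabla u\Vert_{L^\infty}}$, we use the standard trick of first working with $\Delta_q$ commutators bounded by Lemma~\ref{lem:HKR}, so that only $\Vert\nabla u\Vert_{L^p}$ appears.

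Then $\nabla u=\nabla\nabla^\perp\Lambda^{-2}\Gamma+$ (order $1-2\alpha$ in $\theta$) yields $\nabla u\in \widetilde L^1_T(C^{2\alpha-1})$: the high frequencies of $\Gamma$ are handled by Bernstein $B^{2\alpha}_{p,\infty}\subset B^{2\alpha-2/p}_{\infty,\infty}$, with $p>2$ or an interpolation with $B^1_{2,1}$ when $p=2$, and the $\theta$ term lies in $B^{2\alpha-1}_{\infty,\infty}$ by $L^1\cap L^\infty$.

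\textbf{Part (2).} For $\alpha=1$, the same procedure applied to $\Gamma^{(1)},\Gamma^{(2)}$ gives \eqref{es:nabu-Gam2-2b}.

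\textbf{Part (4) (main obstacle).} Write $u=u_\Gamma+u_\theta$, where $u_\Gamma=\nabla^\perp\Lambda^{-2}\Gamma$ and $u_\theta=\nabla^\perp\partial_1\Lambda^{-2-2\alpha}\theta$ has kernel $G_\alpha$-type growth.

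For $u_\Gamma$ we avoid $\Gamma\in L^{r}$ with $r<2$. Instead we bound it via $\nabla u_\Gamma\in L^p$ together with a low-frequency bound, using Gagliardo--Nirenberg / Sobolev $\dot W^{1,p}\cap L^{Q_0}\subset L^Q$. The low-frequency bound is the delicate part. I would estimate $u$ directly from the velocity equation, treating $\varepsilon(\partial_t+u\cdot\nabla)u$ through the damped form $\partial_t u+\varepsilon^{-1}\Lambda^{2\alpha}u=\varepsilon^{-1}\mathbb P\theta e_2-u\cdot\nabla u$. Duhamel with the semigroup $e^{-t\Lambda^{2\alpha}/\varepsilon}$ then produces $\Lambda^{-2\alpha}(1-e^{-t\Lambda^{2\alpha}/\varepsilon})\mathbb P\theta$, whose $L^Q$ norm is uniformly bounded. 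For $\alpha<1$ this follows from HLS with $Q>1/(1-\alpha)$. For $\alpha=1$, zero mean plus a finite first moment write $\theta=\div F$ with $F\in L^1\cap L^\infty$ (the moment is propagated using $|u|$ growth), which gains one derivative.

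The nonlinear term $\int e^{-(t-s)\Lambda^{2\alpha}/\varepsilon}\mathbb P\div(u\otimes u)$ is controlled in $L^Q$ by $\Vert u\Vert_{L^Q}\Vert\nabla u\Vert_{L^p}$-type bounds with a small-time factor; closing this by a bootstrap yields the double exponential $e^{\exp(CT)}$.

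I expect the $\alpha=1$ moment propagation and this uniform low-frequency Duhamel bound to be the hardest step.
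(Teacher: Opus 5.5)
\textbf{Overall.} Your route for Parts (1), (3) and for the $\alpha<1$ half of Part (4) is the paper's. Part (2) has a genuine gap.

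\textbf{The gap in Part (2).} You say the case $\alpha=1$, $u_0\in H^1$ follows from ``the same procedure'' applied to $\Gamma^{(1)},\Gamma^{(2)}$. That procedure rests on Lemma~\ref{lem:HKR} and \eqref{eq:comm-es1} ``so that only $\Vert\nabla u\Vert_{L^p}$ appears''. With $p=2$, both steps need $\Vert\nabla u\Vert_{L^\infty_T(L^2)}$ bounded uniformly in $\varepsilon$, and this is false at $\alpha=1$:
\begin{itemize}
\item $\omega=\Gamma^{(1)}+\Gamma^{(2)}+\mathcal R_{-1}\theta$ is uniformly bounded only in $L^2+L^q$.
\item $\mathcal R_{-1}\theta\notin L^2$ whenever $\int\theta\neq0$, since $i\xi_1|\xi|^{-2}\widehat\theta(\xi)$ is not square integrable near $\xi=0$.
\item Splitting does not help. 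Both $\Gamma^{(1)}$ and $\Gamma^{(2)}$ are transported by the full $u$, so the localized commutators $[\Delta_q,u\cdot\nabla]\Gamma^{(i)}$ are not controlled by Lemma~\ref{lem:HKR} in $L^2$, resp.\ $L^q$.
\end{itemize}
The only $L^2$ information available is the $\varepsilon$-dependent vorticity bound $\Vert\nabla u\Vert_{L^\infty_T(L^2)}\le C(1+\sqrt{T/\varepsilon})$. This is exactly the kind of estimate you declare to be used ``only qualitatively''.

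\textbf{The missing idea.} Use that bound quantitatively. For $q\geq0$ the localized parabolic estimate is $\Vert\Delta_q\Gamma\Vert_{L^1_T(L^2)}\le\frac{\varepsilon}{c_0}2^{-2q}\bigl(\Vert\Delta_q\Gamma_0\Vert_{L^2}+\Vert f_q\Vert_{L^1_T(L^2)}\bigr)$. So the factor $\nu^{1/\rho}$ in your Lemma~\ref{lem:TD-sm} step does not merely cancel $\varepsilon^{-1}$; it leaves a spare factor $\varepsilon$. Then $\varepsilon\Vert\nabla u\Vert_{L^\infty_T(L^2)}\le C\sqrt{\varepsilon}(1+\sqrt T)$ is harmless. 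The argument then runs as follows.
\begin{enumerate}
\item Bound $\Vert\omega\Vert_{L^1_T(B^0_{\infty,1})}$ by splitting at frequency $2^N$. Treat low frequencies of $\Gamma^{(1)}$ in $L^2$, and of $\Gamma^{(2)}$ and $\mathcal R_{-1}\theta$ in $L^q$, via Bernstein. Treat high frequencies via the smoothing estimate above.
\item Absorb the term $\Vert\Gamma\Vert_{L^1_T(B^0_{\infty,\infty})}$ produced by Lemma~\ref{lem:HKR}, by choosing $N$ with $C(1+\sqrt T)2^{-N}\le\frac12$.
\item Read off $\sup_q 2^{2q}\Vert\Delta_q\Gamma\Vert_{L^1_T(L^2)}$, which gives \eqref{es:nabu-Gam2-2b}.
\end{enumerate}

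\textbf{Knock-on effects in Part (4).} At $(\alpha,p)=(1,2)$, your ``$\Vert\nabla u\Vert_{L^p}$-type'' control of the Duhamel nonlinearity is not uniform, for the same reason. Use instead the $L^Q$-contractivity of the semigroup and $\Vert\mathbb P(u\cdot\nabla u)\Vert_{L^Q}\le C\Vert u\Vert_{L^Q}\Vert\nabla u\Vert_{L^\infty}$. The uniform bound on $\int_0^T\Vert\nabla u\Vert_{L^\infty}\,\dd t$ comes from Parts (1)--(3), so no small-time factor is needed.

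For the first moment, no pointwise growth bound on $u$ is available uniformly in $\varepsilon$ before the $L^Q$ estimate is known. Relying on ``$|u|$ growth'' is therefore circular. Instead propagate the moment along the flow:
\[
\int|x||\theta(t,x)|\,\dd x\le\int|y||\theta_0(y)|\,\dd y+\Vert\theta_0\Vert_{L^{Q'}}\int_0^t\Vert u(s)\Vert_{L^Q}\,\dd s .
\]
Feed this into the Duhamel bound for the forcing term. There the mean-value estimate $\Vert e^{r\Delta}\theta\Vert_{L^Q}\le Cr^{-3/2+1/Q}\int|x||\theta|\,\dd x$ suffices for large $r$, with $\Vert\theta\Vert_{L^Q}$ for small $r$, so you do not need to construct $F$ with $\theta=\div F$. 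Then close the moment and $\Vert u\Vert_{L^Q}$ together in a single Gronwall argument.
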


\begin{remark}
\label{rmk:nonuniform-Lp-alpha1}
The endpoint $\alpha=1$ is qualitatively different.
Global $L^r$ velocity bounds (which are uniform in $\varepsilon$)  cannot hold
in general for any $1<r<\infty$.
This failure occurs even for fixed, compactly supported
initial temperatures with nonzero total integral,
as the quantitative lower bounds below show.

Let $\alpha=1$ and consider the solutions of
Theorem~\ref{thm:main1} with uniformly bounded prescribed
initial norms, for a fixed velocity exponent
$2\leq p<\infty$.
Suppose that, for some $R_0>0$ independent of $\varepsilon$,
\begin{align}\label{eq:velocity-bdd-assum}
 \operatorname{supp}\theta_0^\varepsilon\subset B_{R_0},
 \qquad
 M_\varepsilon:=\int_{\mathbb R^2}\theta_0^\varepsilon\,\dd x
 \longrightarrow M\ne0.
\end{align}
For every $T>0$ and $1<r<\infty$,
there exist
$c_{T,r}>0$ and $\varepsilon_0>0$ such that
\begin{equation}\label{eq:u-Lp-nonuniform-alpha1}
 A_\varepsilon:=\Vert u^\varepsilon\Vert _{L^\infty([0,T],L^r)}
 \geq c_{T,r}\varepsilon^{-1/r},
 \qquad 0<\varepsilon<\varepsilon_0.
\end{equation}
For every finite $r\geq p$, this lower bound matches
the upper bound from \eqref{eq:u-Lp-es-refined}:
\begin{equation}\label{eq:u-Lr-sharp-alpha1}
  c_{T,r}\,\varepsilon^{-1/r}
  \leq
  \Vert u^\varepsilon\Vert _{L^\infty([0,T],L^r)}
  \leq
  C_{T,r}\,\varepsilon^{-1/r},
  \qquad 0<\varepsilon<\varepsilon_0.
\end{equation}
To prove the lower bound \eqref{eq:u-Lp-nonuniform-alpha1}, we may assume
$A_\varepsilon<\infty$.
The interpolation inequality
\[
  \Vert u^\varepsilon(t)\Vert _{L^\infty}
  \leq C_r
  \Vert u^\varepsilon(t)\Vert _{L^r}^{\frac r{r+2}}
  \Vert \nabla u^\varepsilon(t)\Vert _{L^\infty}^{\frac2{r+2}},
\]
together with the uniform bound on
$\int_0^T\Vert \nabla u^\varepsilon\Vert _{L^\infty}\,\dd t$,
gives
\begin{align}\label{eq:u-eps-L1Linf}
  \Vert u^\varepsilon\Vert _{L^1([0,T],L^\infty)}
  \leq C_TA_\varepsilon^{\frac r{r+2}}.
\end{align}
The flow equation \eqref{eq:flowmap} therefore ensures
that the temperature supports remain in
$B_{R_0+C_TA_\varepsilon^{r/(r+2)}}$
for $0\leq t\leq T$.

Choose $\Psi\in C_c^\infty(B_2;\mathbb R^2)$ with
$\div\Psi=0$ and $\Psi=e_2$ on $B_1$,
and set $\Psi_R(x):=\Psi(x/R)$.
Let $\eta\in C_c^\infty(0,T)$ be nonnegative with
$\int_0^T\eta\,\dd t=1$.
For $R>R_0+C_TA_\varepsilon^{r/(r+2)}$,
choose $\phi(t,x)=\eta(t)\Psi_R(x)$
in the velocity weak formulation of
Definition~\ref{defn:weak-Bouss}.
Since $\eta(0)=0$ and $\Psi_R=e_2$
on the temperature supports,
conservation of the total temperature 
$\int_{\mathbb{R}^2} \theta^\varepsilon(t,x)\dd x = \int_{\mathbb{R}^2} \theta^\varepsilon_0(x) \dd x =M_\varepsilon$ gives
\begin{align*}
  M_\varepsilon
  ={}
  -\varepsilon\int_0^T\!\int_{\mathbb R^2}
  \Bigl(
    \eta' u^\varepsilon\cdot\Psi_R
    +\eta\,u^\varepsilon\otimes u^\varepsilon:
      \nabla\Psi_R
  \Bigr)\,\dd x\,\dd t
  -\int_0^T\!\int_{\mathbb R^2}
    \eta\,u^\varepsilon\cdot\Delta\Psi_R
    \,\dd x\,\dd t.
\end{align*}
The inertial terms are controlled using \eqref{eq:u-eps-L1Linf} and 
$\Vert u^\varepsilon\Vert _{L^\infty([0,T],L^1(B_{2R}))}
\leq C_rR^{2-2/r}A_\varepsilon$. 
Moreover, scaling gives
\[
  \Vert \nabla\Psi_R\Vert _{L^\infty}\leq C/R,
  \quad
  \Vert \Delta\Psi_R\Vert _{L^{\frac{r}{r-1}}}
  =R^{-2/r}\Vert \Delta\Psi\Vert _{L^{\frac{r}{r-1}}}.
\]
Consequently, H\"older's inequality gives
\[
  |M_\varepsilon|
  \leq C_{T,r}\,\varepsilon
  \left(
    A_\varepsilon R^{2-2/r}
    +A_\varepsilon^{1+\frac r{r+2}}R^{1-2/r}
  \right)
  +C_rA_\varepsilon R^{-2/r}.
\]

Set $R=L(1+A_\varepsilon)^{r/2}$.
Since $r/(r+2)<r/2$, we may fix $L$ independently
of $\varepsilon$ so that $B_R$ contains all
temperature supports and $C_rA_\varepsilon R^{-2/r}
  \leq C_rL^{-2/r}\leq |M|/4$. 
The remaining terms satisfy
\begin{align*}
  A_\varepsilon R^{2-2/r}
  \leq C_{r,L}(1+A_\varepsilon)^r,\quad
  A_\varepsilon^{1+\frac r{r+2}}R^{1-2/r}
  \leq
  C_{r,L}(1+A_\varepsilon)^{\frac r2+\frac r{r+2}}
  \leq C_{r,L}(1+A_\varepsilon)^r.
\end{align*}
By \eqref{eq:velocity-bdd-assum}, for sufficiently small $\varepsilon$,
$|M_\varepsilon|\geq|M|/2$, and hence $\varepsilon(1+A_\varepsilon)^r\geq c_{T,r}$. 
Thus
$A_\varepsilon\geq c_{T,r}\varepsilon^{-1/r}$
for sufficiently small $\varepsilon$,
which in particular proves
\eqref{eq:u-Lp-nonuniform-alpha1}.
\end{remark}

\begin{proof}[Proof of Proposition~\ref{prop:ap-es-basic}]
For $\frac12<\alpha<1$, estimate \eqref{eq:u-Gam-es2}
follows from~\cite[Proposition~3.2]{LXY25}.

For $\frac12<\alpha<1$ and $2<p<\infty$,
we adapt the proof of~\cite[Proposition~3.3]{LXY25}
by estimating $\Gamma$ and $\nabla u$ directly in $L^p$,
without imposing an additional $H^1$ assumption.
This gives
\[
  \Vert \Gamma\Vert _{L^\infty_T(L^p)}
  +\Vert \nabla u\Vert _{L^\infty_T(L^p)}
  \leq Ce^{CT}, \qquad
  \Vert \Gamma\Vert _{\widetilde L^1_T(B^{2\alpha}_{p,\infty})}
  \leq Ce^{CT}
  \bigl(1+\Vert \Gamma\Vert _{L^1_T(B^0_{\infty,\infty})}\bigr),
\]
with constants independent of $\varepsilon$.
Since $2\alpha>1$, Bernstein's inequality
and interpolation yield
\[
  \Vert \Gamma\Vert _{L^1_T(B^0_{\infty,\infty})}
  \leq
  C\Vert \Gamma\Vert _{\widetilde L^1_T(B^{2\alpha}_{p,\infty})}^{2/p}
  \Vert \Gamma\Vert _{L^1_T(L^p)}^{1-2/p}.
\]
As $2/p<1$, Young's inequality and the preceding
$L^p$ bound therefore imply
$\Vert \Gamma\Vert _{\widetilde L^1_T(B^{2\alpha}_{p,\infty})}\leq Ce^{CT}$.
Similarly as in~\cite[Eq.~(75)]{LXY25},
with $\Vert \nabla u\Vert _{L^1_T(L^2)}$ replaced by
$\Vert \nabla u\Vert _{L^1_T(L^p)}$, we obtain
$\Vert \nabla u\Vert _{L^1_T(C^{2\alpha-1})}\leq Ce^{CT}$.
Together with the preceding estimates,
this proves \eqref{es:nabu-Gam2} in this case.

It remains to treat the case $\alpha=1$,
for which the whole-space estimates
in~\cite[Section~3]{LXY25} depend on $\varepsilon$.
The commutator estimate \eqref{eq:comm-es1}
in Lemma~\ref{lem:Rbeta-cm}
controls $[\mathcal R_{-1},u\cdot\nabla]\theta$
using only norms of $\nabla u$ and $\theta$.
This allows us to prove
\eqref{es:nabu-Gam2-2b} and \eqref{es:nabu-Gam2}
with constants independent of $\varepsilon$.
  
\underline{\textit{Proof of \eqref{es:nabu-Gam2-2b} with $\alpha=1$}.}  

In view of the relation 
\begin{align}\label{eq:u-Gam-rela}
  u=\nabla^\perp \Lambda^{-2}\omega = \nabla^\perp \Lambda^{-2} \Gamma + \nabla^\perp \partial_1 \Lambda^{-2-2\alpha} \theta,
\end{align}
the equation of $\Gamma$ recasts
\begin{equation}\label{eq:Gamma}
  \partial_t \Gamma +  u \cdot \nabla \Gamma  - \frac{1}{\varepsilon}\Delta \Gamma
   = \left[\mathcal{R}_{-1}, u\cdot \nabla\right] \theta  = \big[\mathcal{R}_{-1}, (\nabla^{\bot}\Lambda^{-2}\Gamma+\nabla^{\bot}\partial_1\Lambda^{-4}\theta)\cdot \nabla\big] \theta,
\end{equation}
with initial data $\Gamma|_{t=0}=\Gamma_0=\omega_0 - \partial_1\Lambda^{-2}\theta_0$.
We decompose $\Gamma$ into $\Gamma\triangleq \Gamma^{(1)}+\Gamma^{(2)}$, where
\begin{equation}\label{eq:Gamma1}
\left\{\begin{aligned}
  \partial_t \Gamma^{(1)} +  u \cdot \nabla \Gamma^{(1)}  - \frac{1}{\varepsilon}\Delta \Gamma^{(1)}
  &= \big[\mathcal{R}_{-1}, \nabla^{\bot}\Lambda^{-2}\Gamma^{(1)}\cdot \nabla\big] \theta,\\
  \Gamma^{(1)}|_{t=0}&=\omega_0 ,
\end{aligned}\right.
\end{equation}
and
\begin{equation}\label{eq:Gamma2}
  \left\{\begin{aligned}
   \partial_t \Gamma^{(2)} +  u \cdot \nabla \Gamma^{(2)}  - \frac{1}{\varepsilon}\Delta \Gamma^{(2)}
  &= \big[\mathcal{R}_{-1}, \nabla^{\bot}\Lambda^{-2}(\Gamma^{(2)}+\partial_1\Lambda^{-2}\theta)\cdot \nabla\big] \theta,\\
  \Gamma^{(2)}|_{t=0}&=-\partial_1\Lambda^{-2}\theta_0.
  \end{aligned}\right.
\end{equation}

Taking advantage of \eqref{eq:comm-es1} in Lemma~\ref{lem:Rbeta-cm}, we get
\begin{align*}
  \frac{1}{2} \frac{\dd }{\dd t} \Vert \Gamma^{(1)}(t)\Vert _{L^2}^2 +& \frac{1}{\varepsilon} \Vert \nabla \Gamma^{(1)}(t)\Vert _{L^2}^2
   \leq \Vert [\mathcal{R}_{-1}, \nabla^{\bot}\Lambda^{-2}\Gamma^{(1)}\cdot \nabla]\theta\Vert _{L^2}(t) \Vert \Gamma^{(1)}\Vert _{L^2} \nonumber \\
 & \leq C \Vert \nabla\nabla^{\perp}\Lambda^{-2}\Gamma^{(1)}(t)\Vert _{L^2} \Vert \theta(t)\Vert _{L^2}  \Vert \Gamma^{(1)}(t)\Vert _{L^2}  \leq C \Vert \theta_0\Vert _{L^2}  \Vert \Gamma^{(1)}(t)\Vert ^2_{L^2}. \nonumber 
\end{align*}
Gronwall's inequality leads to
\begin{equation}\label{es:GamL2}
\begin{split}
  \Vert \Gamma^{(1)}\Vert _{L^\infty_T(L^2)}^2 + \frac{1}{\varepsilon} \Vert \nabla \Gamma^{(1)}\Vert _{L^2_T(L^2)}^2 
  \leq C  \Vert \omega_0\Vert _{L^2}^2  e^{C \Vert \theta_0\Vert _{L^2}T} \leq \, C e^{CT}.
\end{split}
\end{equation}

Multiplying both sides of the $\Gamma^{(2)}$-equation \eqref{eq:Gamma2} with $|\Gamma^{(2)}|^{p-2}\Gamma^{(2)}(\cdot,t)$ 
($2<p<\infty$) and integrating over the spatial variable, we use the integration by parts and H\"older's inequality to get
\begin{equation}
\begin{split}
  &\frac{1}{p} \frac{\dd}{\dd t} \Vert \Gamma^{(2)}(t)\Vert _{L^p}^p + \frac{p-1}{\varepsilon} \int_{\mathbb{R}^2} |\Gamma^{(2)}(x,t)|^{p-2}
  |\nabla\Gamma^{(2)}(x,t)|^2 \dd x\\
   &\leq \big\Vert  \big[\mathcal{R}_{-1}, \nabla^{\bot}\Lambda^{-2}(\Gamma^{(2)}+\partial_1\Lambda^{-2}\theta)\cdot \nabla\big] \theta\big\Vert _{L^p} \Vert \Gamma^{(2)}(t)\Vert _{L^p}^{p-1},
\end{split}
\end{equation}
which together with the fact 
$\frac{1}{p} \frac{\dd}{\dd t} \Vert \Gamma^{(2)}\Vert _{L^p}^p=\Vert \Gamma^{(2)}\Vert _{L^p}^{p-1} 
\frac{\dd}{\dd t}\Vert \Gamma^{(2)}\Vert _{L^p}$ 
implies that
\begin{align*}
  \frac{\dd}{\dd t} \Vert \Gamma^{(2)}(t)\Vert _{L^p} \leq \big\Vert \big[\mathcal{R}_{-1}, \nabla^{\bot}\Lambda^{-2}
  (\Gamma^{(2)}+\partial_1\Lambda^{-2}\theta)\cdot \nabla \big] \theta \big\Vert _{L^p} .
\end{align*}
Thanks to \eqref{eq:comm-es1}, we have 
\begin{align}
  \frac{\dd}{\dd t}\Vert \Gamma^{(2)}(t)\Vert _{L^p} 
  & \leq C \Vert \nabla \nabla^{\bot}\Lambda^{-2}(\Gamma^{(2)} 
  + \partial_1\Lambda^{-2}\theta)(t)\Vert _{L^p}\Vert \theta(t)\Vert _{L^2} \nonumber\\
  & \leq C \big(\Vert \Gamma^{(2)}(t)\Vert _{L^p} + \Vert \RR_{-1}\theta(t)\Vert _{L^p} \big)\Vert \theta(t)\Vert _{L^2}\nonumber\\
  & \leq C \big(\Vert \Gamma^{(2)}(t)\Vert _{L^p} + \Vert \theta_0\Vert _{L^1\cap L^\infty}\big) \Vert \theta_0\Vert _{L^1\cap L^\infty},
\end{align}
where in the last line we have used the fact that 
\begin{align}\label{eq:fact-R-1the}
  \Vert \mathcal{R}_{-1}\theta(t)\Vert _{L^p(\mathbb{R}^2)} \leq C \Vert \theta(t)\Vert _{L^{\frac{2p}{p+2}}(\mathbb{R}^2)} 
  \leq C \Vert \theta_0\Vert _{L^{\frac{2p}{p+2}}(\mathbb{R}^2)}.
\end{align}
Taking advantage of Gronwall's inequality gives $\Vert \Gamma^{(2)}\Vert _{L^\infty_T(L^p)} \leq C e^{CT}$ for every $2<p<\infty$. 

Next, we intend to show the estimate of $\Vert \omega\Vert _{L^1_T(B^0_{\infty,1})}$. 
Using the identity $\omega= \Gamma^{(1)} + \Gamma^{(2)} +  \mathcal{R}_{-1}\theta$ 
together with the above estimates, we have that for some $N\in \mathbb{N}$ (to be chosen later),
\begin{align*}
  \Vert \omega\Vert _{L^1_T(B^0_{\infty,1})} & \leq \Vert \mathcal{R}_{-1}\theta\Vert _{L^1_T(B^0_{\infty,1})} 
  + \sum_{-1\leq q\leq N} \Vert \Delta_q (\Gamma^{(1)} + \Gamma^{(2)})\Vert _{L^1_T(L^\infty)} 
  + \sum_{q\geq N} \Vert \Delta_q \Gamma\Vert _{L^1_T(L^\infty)} \\
  & \leq C T\Vert \theta_0\Vert _{L^1\cap L^\infty} + 
  C \sum_{-1\leq q\leq N}\Big(2^q \Vert \Gamma^{(1)}\Vert _{L^1_T(L^2)} 
  + 2^{q \frac{2}{p}} \Vert \Gamma^{(2)}\Vert _{L^1_T(L^p)} \Big) 
  + \sum_{q\geq N} \Vert \Delta_q \Gamma\Vert _{L^1_T(L^\infty)} \\
  & \leq CT + C e^{CT} 2^N +  \sum_{q\geq N} 2^q\Vert \Delta_q \Gamma\Vert _{L^1_T(L^2)},
\end{align*}
where in the second line we have used the fact that 
\begin{equation}\label{eq:R-1-the-bdd}
\begin{split}
  \Vert \mathcal{R}_{-1}\theta(t)\Vert _{B^0_{\infty,1}} & = \sum_{q\geq -1} \Vert \Delta_q \mathcal{R}_{-1}\theta(t)\Vert _{L^\infty} \\
  & \leq C \Vert \Delta_{-1} \mathcal{R}_{-1}\theta(t)\Vert _{L^p} + C \sum_{q\geq 0} 2^{-q} 
  \Vert \Delta_q \theta(t)\Vert _{L^\infty}
  \leq C\Vert \theta_0\Vert _{L^1\cap L^\infty}.
\end{split}
\end{equation}
For every $q\in \mathbb{N}$, applying the frequency localization operator $\Delta_q$ to the $\Gamma$-equation \eqref{eq:Gamma} gives
\begin{equation}\label{eq:Delta-q-Gamm-2}
\begin{aligned}
  \partial_t \Delta_q\Gamma + u \cdot \nabla \Delta_q\Gamma - \frac{1}{\varepsilon}\Delta \Delta_q\Gamma
  = - \left[\Delta_q, u \cdot \nabla\right] \Gamma +
  \Delta_q\big(\left[\mathcal{R}_{-1}, u \cdot \nabla\right] \theta\big)  := f_q .
\end{aligned}
\end{equation}
Taking the scalar product of the above equation with $\Delta_q \Gamma$, we use integration by parts and Bernstein's inequality to obtain
\begin{align*}
  \frac{1}{2}\frac{\dd}{\dd t} \Vert \Delta_q\Gamma(t)\Vert _{L^2}^2 
  + \frac{c_0}{\varepsilon} 2^{2 q} \Vert  \Delta_q \Gamma(t)\Vert _{L^2}^2
  \leq \Vert \Delta_q \Gamma(t)\Vert _{L^2} \Vert f_q(t)\Vert _{L^2},
\end{align*}
for some fixed $c_0>0$ independent of $q\in\mathbb{N}$, this implies that
\begin{align*}
  \Vert \Delta_q \Gamma(t)\Vert _{L^2} \leq e^{-\frac{c_0}{\varepsilon}2^{2q}t} \Vert \Delta_q \Gamma_0\Vert _{L^2}
  + \int_0^t e^{-\frac{c_0}{\varepsilon}2^{2q}(t-\tau)} \Vert f_q(\tau)\Vert _{L^2} \dd \tau,  
\end{align*}
and
\begin{align}\label{eq:Gamma-est1}
  \Vert \Delta_q\Gamma\Vert _{L^1_t(L^2)} \leq \frac{\varepsilon}{c_0} 2^{-2 q} \Vert \Delta_q \Gamma_0\Vert _{L^2}
  + \frac{\varepsilon}{c_0} 2^{-2 q} \Vert f_q\Vert _{L^1_t(L^2)},
  \quad \forall q\in\mathbb{N}.
\end{align}
According to Lemmas \ref{lem:HKR}, \eqref{eq:comm-es1} and \eqref{eq:R-1-the-bdd},
we infer that for all $q\in \mathbb{N}$,
\begin{align}
  \Vert [\Delta_q, u\cdot\nabla]\Gamma(t)\Vert _{L^2} & \leq \Vert \nabla u(t)\Vert _{L^2} \Vert \Gamma(t)\Vert _{B^0_{\infty,\infty}}\nonumber\\
  & \leq \Vert \nabla u(t)\Vert _{L^2} \big(\Vert \omega(t)\Vert _{B^0_{\infty,\infty}} 
  + \Vert \mathcal{R}_{-1}\theta(t)\Vert _{B^0_{\infty,\infty}}\big)\label{es:fq1}\\
  & \leq C \Vert \nabla u(t)\Vert _{L^2} \big(\Vert \omega(t)\Vert _{B^0_{\infty,\infty}} + \Vert \theta_0\Vert _{L^1\cap L^\infty}\big),\nonumber
\end{align}
and
\begin{align}\label{es:fq2}
  \Vert [\mathcal{R}_{-1},u\cdot\nabla]\theta(t)\Vert _{L^2} \leq C \Vert \nabla u(t)\Vert _{L^2} \Vert \theta(t)\Vert _{L^2}
   \leq C \Vert \nabla u(t)\Vert _{L^2} \Vert \theta_0\Vert _{L^2}.
\end{align}
Note that for $\Vert \nabla u(t)\Vert _{L^2}$, by performing the classical $L^2$-estimate of vorticity $\omega$, 
we have the following $\varepsilon$-dependent bound (see p.1405 of \cite{LXY25})
\begin{align*}
  \Vert \nabla u \Vert _{L^\infty_T(L^2)} + \frac{1}{\sqrt{\varepsilon}} \Vert \nabla^2 u\Vert _{L^2_T(L^2)}
  \leq C \big(\Vert \omega_0\Vert _{L^2} + \Vert \theta_0\Vert _{L^2} \big)\Big(1+\sqrt{\tfrac{T}{\varepsilon}}\Big).
\end{align*}
Hence, gathering the above estimates leads to
\begin{align*}
  \sum_{q\geq N} 2^q \Vert \Delta_q \Gamma\Vert _{L^1_T(L^2)} 
  & \leq C \varepsilon \sum_{q\geq N} 2^{-q} \Big( \Vert \Delta_q \Gamma_0\Vert _{L^2} 
  + \Vert [\Delta_q, u\cdot\nabla]\Gamma\Vert _{L^1_T(L^2)}
  + \Vert [\mathcal{R}_{-1},u\cdot\nabla]\theta\Vert _{L^1_T(L^2)}\Big)\\
  & \leq 
  C \varepsilon  2^{-N} \Big( \Vert (\omega_0,\theta_0)\Vert _{L^2}
  + \Vert \nabla u\Vert _{L^\infty_T(L^2)} \big(\Vert \omega\Vert _{L^1_T(B^0_{\infty,\infty})} + T \Vert \theta_0\Vert _{L^1\cap L^\infty} \big) \Big) \\
  & \leq C \sqrt{\varepsilon} 2^{-N} \big(1+\sqrt{T} \big) \big(\Vert \omega\Vert _{L^1_T(B^0_{\infty,\infty})} 
  + \Vert \theta_0\Vert _{L^1\cap L^\infty}T\big)  + C \varepsilon 2^{-N}  \Vert (\omega_0,\theta_0)\Vert _{L^2},
\end{align*}
and
\begin{align*}
  \Vert \omega\Vert _{L^1_T(B^0_{\infty,1})} \leq C T + C e^{CT} 2^N 
  + C\sqrt{\varepsilon} 2^{-N} \big(1+\sqrt{T} \big) \big(\Vert \omega\Vert _{L^1_T(B^0_{\infty,\infty})} 
  + T\big)  + C \varepsilon 2^{-N} .
\end{align*}
Choosing $N\in \mathbb{N}$ such that $C (1+\sqrt{T})2^{-N}\approx \frac{1}{2}$ (recalling $\varepsilon\in (0,1]$) 
allows us to get $\Vert \omega\Vert _{L^1_T(B^0_{\infty,1})} \leq C e^{CT}$, 
where $C>0$ depends only on the norms of $(u_0,\theta_0)$ but is independent of $\varepsilon$.
As a consequence of the above estimate and \eqref{eq:R-1-the-bdd},
we infer from the identity $\Gamma =\omega - \mathcal{R}_{-1}\theta$ that
\begin{equation}\label{eq:Gam-B0inf-1}
\begin{split}
  \Vert \Gamma\Vert _{L^1_T(B^0_{\infty,1})} \leq \Vert \omega\Vert _{L^1_T(B^0_{\infty,1})} + \Vert \mathcal{R}_{-1}\theta\Vert _{L^1_T(B^0_{\infty,1})} 
  \leq C e^{CT} + C T \Vert \theta_0\Vert _{L^1\cap L^\infty} \leq C e^{CT}.
\end{split}
\end{equation}

Similarly as above, taking advantages of \eqref{eq:Gamma-est1}, \eqref{es:fq1} and \eqref{es:fq2}, we find that for some $p\in (2,\infty)$,
\begin{align*}
  \Vert \nabla u\Vert _{\widetilde{L}^1_T( B^1_{\infty,\infty})}
  & \leq  C \Vert \Delta_{-1}\omega\Vert _{L^1_T(L^p)}
  + C \sup_{q\in\mathbb{N}} 2^{q}\Vert \Delta_q \omega\Vert _{L^1_T(L^\infty)} \\
  &\leq  C \Vert \Delta_{-1} (\Gamma^{(1)} + \Gamma^{(2)} + \mathcal{R}_{-1}\theta)\Vert _{L^1_T(L^p)}
  + C \sup_{q\in\mathbb{N}} 2^{q} \Big(\Vert \Delta_q \mathcal{R}_{-1}\theta\Vert _{L^1_T(L^\infty)}
  + \Vert \Delta_q \Gamma\Vert _{L^1_T(L^\infty)}\Big) \\
  &\leq C \Vert \Gamma^{(1)}\Vert _{L^1_T(L^2)} + C \Vert \Gamma^{(2)}\Vert _{L^1_T(L^p)} 
  + C \Vert \mathcal{R}_{-1}\theta\Vert _{L^1_T(L^p)}
  + C \sup_{q\in \mathbb{N}} \Vert \Delta_q \theta\Vert _{L^1_T(L^\infty)} \\
  & \quad + C \sup_{q\in\mathbb{N}} 2^{2q} \Vert  \Delta_q \Gamma\Vert _{L^1_T(L^2)} \\
  &\leq C e^{CT} +  \ C \varepsilon \sup_{q\in\mathbb{N}} \Big( \Vert \Delta_q \Gamma_0\Vert _{L^2}
  + \Vert [\Delta_q,u\cdot\nabla]\Gamma \Vert _{L^1_T(L^2)} + \Vert [\mathcal{R}_{-1},u\cdot\nabla]\theta\Vert _{L^1_T(L^2)}\Big) \\
  &\leq C e^{CT} + C \sqrt{\varepsilon} \big(1+\sqrt{T} \big)
  \Vert \omega\Vert _{L^1_T(B^0_{\infty,\infty})} \leq C e^{CT}.
\end{align*}
Therefore, we finish the proof of the desired estimate \eqref{es:nabu-Gam2-2b}.

\underline{\textit{Proof of \eqref{es:nabu-Gam2} when $\alpha=1$}.}  

Multiplying both sides of the $\Gamma$-equation \eqref{eq:Gamma} (i.e. $\partial_t\Gamma+u\cdot\nabla\Gamma-\frac1\varepsilon\Delta\Gamma=[\mathcal R_{-1}, u\cdot\nabla]\theta$) with $|\Gamma|^{p-2}\Gamma(x,t)$
and integrating over the spatial variable, we use integration by parts to get
\begin{align}\label{eq:Gam-Lp-es1a}
  \frac{1}{p} \frac{\dd}{\dd t} \Vert \Gamma(t)\Vert _{L^p}^p + \frac{p-1}{\varepsilon} \int_{\mathbb{R}^2} |\Gamma(x,t)|^{p-2}
  |\nabla\Gamma(x,t)|^2 \dd x \leq \Vert [\mathcal{R}_{-1}, u\cdot\nabla]\theta \Vert _{L^p} \Vert \Gamma(t)\Vert _{L^p}^{p-1},
\end{align}
Thanks to \eqref{eq:comm-es1} and \eqref{eq:fact-R-1the}, we have 
\begin{align*}
  \frac{\dd}{\dd t}\Vert \Gamma(t)\Vert _{L^p} &\leq \Vert [\mathcal{R}_{-1}, u\cdot\nabla]\theta(t)\Vert _{L^p}\nonumber\leq C \Vert \omega(t)\Vert _{L^p}\Vert \theta(t)\Vert _{L^2}\\
  &\leq C \big(\Vert \Gamma(t)\Vert _{L^p} + \Vert \RR_{-1}\theta(t)\Vert _{L^p}\big) \Vert \theta(t)\Vert _{L^2}\leq C \big(\Vert \Gamma(t)\Vert _{L^p} + \Vert \theta_0\Vert _{L^1\cap L^\infty}\big) \Vert \theta_0\Vert _{L^1\cap L^\infty}.
\end{align*}
Gronwall's inequality ensures that $\Vert \Gamma\Vert _{L^\infty_T(L^p)}\leq C e^{CT}$. 
Consequently,
\begin{align}\label{eq:nab-u-Lp-es}
  \Vert \nabla u\Vert _{L^\infty_T(L^p)} \leq C \Vert \omega\Vert _{L^\infty_T(L^p)}
  \leq C \left(\Vert \Gamma\Vert _{L^\infty_T(L^p)} + \Vert \mathcal{R}_{-1} \theta\Vert _{L^\infty_T(L^p)} \right)
  \leq C e^{CT},
\end{align}
where $C>0$ depends only on the norms of $u_0$ and $\theta_0$, but is independent of $\varepsilon$.

Next, we prove the estimate of $\Vert \Gamma\Vert _{\widetilde{L}^1_T(B^{2}_{p,\infty})}$.
Similarly as obtaining \eqref{eq:Gamma-est1} and using the fact that (see \cite{CMZ07})
\begin{align*}
  -\int_{\mathbb{R}^2} \Delta \Delta_q \Gamma(x)\, |\Delta_q \Gamma|^{p-2}
  \Delta_q\Gamma(x) \dd x \geq c\, 2^{2q} \Vert \Delta_q \Gamma\Vert _{L^p}^p,
  \quad q\in\mathbb{N},
\end{align*}
with some $c>0$ independent of $q$, we have
\begin{align}\label{eq:Gamma-est2}
  \Vert \Delta_q\Gamma\Vert _{L^1_t(L^p)} \leq \frac{\varepsilon}{c} 2^{-2 q} \Vert \Delta_q \Gamma_0\Vert _{L^p}
  + \frac{\varepsilon}{c} 2^{-2 q} \Vert f_q\Vert _{L^1_t(L^p)}.
\end{align}
Taking advantage of \eqref{eq:Gamma-est2},  Lemmas \ref{lem:Rbeta-cm} and \ref{lem:HKR} 
and the interpolation inequality
\begin{align*}
  \Vert f\Vert _{L^1_T(B^0_{\infty,\infty})} 
  \leq C\Vert f\Vert _{L^1_T(B^1_{p,\infty})}^{\frac{2}{p}} 
  \Vert f\Vert _{L^1_T (B^0_{p,\infty})}^{1-\frac{2}{p}}\leq C\Vert f\Vert _{\widetilde{L}^1_T(B^2_{p,\infty})}^{\frac{2}{p}} 
  \Vert f\Vert _{L^1_T (L^p)}^{1-\frac{2}{p}},
\end{align*}
with any $\epsilon'\in (0,1)$, we find
\begin{align}
  & \Vert \Gamma\Vert _{\widetilde{L}^1_T(B^{2}_{p,\infty})} \leq \Vert \Delta_{-1} \Gamma\Vert _{L^1_T(L^p)}
  + \sup_{q \in \mathbb{N}} 2^{2q} \Vert \Delta_q \Gamma\Vert _{L^1_T(L^p)} \nonumber \\
  & \leq C \Vert \Delta_{-1}\Gamma\Vert _{L^1_T(L^p)}
  + C \sup_{q\in \mathbb{N}} \Big( \Vert \Gamma_0\Vert _{L^p}
  + \varepsilon \Vert [\Delta_q, u\cdot\nabla]\Gamma\Vert _{L^1_T(L^p)}
  + \varepsilon \Vert \Delta_q([\mathcal{R}_{-1},u\cdot\nabla]\theta)\Vert _{L^1_T(L^p)} \Big) \nonumber \\
  & \leq C \Vert \Gamma\Vert _{L^1_T(L^p)}  + C \Vert \Gamma_0\Vert _{L^p} +  C \varepsilon\Vert \nabla u\Vert _{L^\infty_T(L^p)}
  \Big(\Vert \Gamma\Vert _{L^1_T(B^0_{\infty,\infty})} +  \Vert \theta\Vert _{L^1_T(L^2)} \Big)\nonumber\\
  & \leq C \Vert \Gamma\Vert _{L^1_T(L^p)}  + C \Vert \Gamma_0\Vert _{L^p} +  C \varepsilon\Vert \nabla u\Vert _{L^\infty_T(L^p)}
  \Vert \theta\Vert _{L^1_T(L^2)}+  C \varepsilon\Vert \nabla u\Vert _{L^\infty_T(L^p)}\Vert \Gamma\Vert _{\widetilde{L}^1_T(B^2_{p,\infty})}^{\frac{2}{p}} 
  \Vert \Gamma\Vert _{L^1_T (L^p)}^{1-\frac{2}{p}}
  \nonumber\\
   & \leq C \Vert \Gamma\Vert _{L^1_T(L^p)}  + C 
  +  C \varepsilon\Vert \nabla u\Vert _{L^\infty_T(L^p)}
  T + \tfrac{C }{(\epsilon')^{2/(p-2)}}(\varepsilon\Vert \nabla u\Vert _{L^\infty_T(L^p)})^{\frac{p}{p-2}}
 \Vert \Gamma\Vert _{L^1_T(L^p)} + \epsilon' \Vert \Gamma\Vert _{\widetilde{L}^1_T(B^2_{p,\infty})}. \nonumber 
\end{align}
Collecting the above estimates and letting $\epsilon'>0$ be small enough, together with the continuous embedding 
$\widetilde{L}_T^1 (B^2_{p,\infty}) \hookrightarrow L_T^1(B^{\gamma^\prime}_{\infty,1})$ 
for $\gamma^\prime\in(0,2-\frac{2}{p})$, we deduce that for every 
$\gamma'\in (0,2-\frac{2}{p})$,
\begin{align}\label{eq:Gam-B2p-inf1}
  \Vert \Gamma\Vert _{\widetilde{L}^1_T(B^{2}_{p,\infty})} 
  + \Vert \Gamma\Vert _{L^1_T(B^{\gamma'}_{\infty,1})} \leq C e^{CT}.
\end{align}

Next, we intend to prove that $\nabla u\in L^1_T(B^{1}_{\infty,\infty})$. 
By using the identity
\begin{align}\label{eq:nab-u-rela}
  \nabla u = \nabla \nabla^\perp \Lambda^{-2} \omega 
  = \nabla \nabla^\perp \Lambda^{-2} \Gamma + \nabla \nabla^\perp \partial_1 \Lambda^{-4} \theta,
\end{align}
and \eqref{eq:nab-u-Lp-es}, \eqref{eq:Gam-B2p-inf1}, we have
\begin{equation}\label{eq:nab-u-B1inf}
\begin{split}
  \Vert \nabla u\Vert _{L^1_T(B^1_{\infty,\infty})} & \leq C \Vert \Delta_{-1}\nabla u\Vert _{L^1_T(L^\infty)} +
  C \Big\Vert  \sup_{q\in \mathbb{N}}2^{q} \Vert \Delta_q \nabla u\Vert _{L^\infty} \Big\Vert _{L^1_T} \\
  & \leq C\Vert \nabla u\Vert _{L^1_T(L^p)} + C \Vert \Gamma\Vert _{L^1_T(B^1_{\infty,\infty})} + C \Vert \theta\Vert _{L^1_T(B^0_{\infty,\infty})}\leq C e^{CT}.
\end{split}
\end{equation}
Hence, gathering the above inequalities completes the proof of \eqref{es:nabu-Gam2}
in the case $\alpha=1$.

\underline{Proof of \eqref{eq:u-LQ-es} when $\frac{1}{2}<\alpha < 1$.}

We first claim that, for $\frac12<\alpha\leq1$
and $2\leq p<\infty$,
\begin{equation}\label{eq:u-Lp-es-refined}
  \Vert u\Vert _{L^\infty_T(L^p)}
  \leq
  Ce^{\exp(CT)}
  \bigg(
    1+\int_0^{T/\varepsilon}
    \min\left\{1,r^{-\frac{1-1/p}{\alpha}}\right\}\,\dd r
  \bigg),
\end{equation}
where $C>0$ depends only on $\alpha$, $p$,
and the prescribed initial norms,
but is independent of $\varepsilon$.
Indeed, applying the Leray projector
$\mathbb P:=\mathrm{Id}-\nabla\Delta^{-1}\div$
and writing the velocity equation in mild form gives
\begin{equation}\label{eq:u-mild-Lp}
\begin{aligned}
  u(t)
  ={} e^{-\frac{t}{\varepsilon}\Lambda^{2\alpha}}u_0
  +\frac1\varepsilon\int_0^t
    e^{-\frac{t-\tau}{\varepsilon}\Lambda^{2\alpha}}
    \mathbb P\bigl(\theta(\tau)e_2\bigr)\,\dd\tau
    -\int_0^t
    e^{-\frac{t-\tau}{\varepsilon}\Lambda^{2\alpha}}
    \mathbb P(u\cdot\nabla u)(\tau)\,\dd\tau.
\end{aligned}
\end{equation}
The fractional heat semigroup estimates
(see~\cite{MYZ08}), together with the commutation
of $\mathbb P$ with the semigroup
and its $L^p$ boundedness, give, for $r>0$,
\begin{align*}
  \bigl\Vert e^{-r\Lambda^{2\alpha}}
    \mathbb P(\theta(\tau)e_2)\bigr\Vert _{L^p}
  &\leq C_{\alpha,p}
  \min\left\{
    \Vert \theta(\tau)\Vert _{L^p},
    r^{-\frac{1-1/p}{\alpha}}\Vert \theta(\tau)\Vert _{L^1}
  \right\}
  \\
  &\leq C_{\alpha,p}\Vert \theta_0\Vert _{L^1\cap L^\infty}
    \min\left\{1,r^{-\frac{1-1/p}{\alpha}}\right\},
\end{align*}
where we used conservation of the temperature norms.
Applying this bound with $r=(t-\tau)/\varepsilon$
and using
$\Vert u\cdot\nabla u\Vert _{L^p}
\leq\Vert u\Vert _{L^p}\Vert \nabla u\Vert _{L^\infty}$
in \eqref{eq:u-mild-Lp}, we obtain
\begin{align*}
  \Vert u(t)\Vert _{L^p}
  \leq {}
  \Vert u_0\Vert _{L^p}
  +C_{\alpha,p}\Vert \theta_0\Vert _{L^1\cap L^\infty}
  \int_0^{t/\varepsilon}
    \min\left\{1,r^{-\frac{1-1/p}{\alpha}}\right\}\,\dd r
    +C_p\int_0^t
    \Vert u(\tau)\Vert _{L^p}
    \Vert \nabla u(\tau)\Vert _{L^\infty}\,\dd\tau.
\end{align*}
Gronwall's inequality and the preceding uniform bound on
$\int_0^T\Vert \nabla u(t)\Vert _{L^\infty}\,\dd t$
prove \eqref{eq:u-Lp-es-refined}.

The time factor satisfies
\begin{equation}\label{eq:u-Lp-time-factor}
  \int_0^{T/\varepsilon}
    \min\left\{1,r^{-\frac{1-1/p}{\alpha}}\right\}\,\dd r
  \leq C_{\alpha,p}
  \begin{cases}
    (1+T/\varepsilon)^{1-\frac{1-1/p}{\alpha}},
      &\quad \textrm{for}\;\, 1-1/p<\alpha,\\[1mm]
    \log(e+T/\varepsilon),
      & \quad \textrm{for}\;\, 1-1/p=\alpha,\\[1mm]
    1,
      & \quad \textrm{for}\;\, 1-1/p>\alpha.
  \end{cases}
\end{equation}
Fix
$Q>\max\{p,\frac{1}{1-\alpha}\}$, since
$W^{1,p}(\mathbb{R}^2)\hookrightarrow L^Q(\mathbb{R}^2)$,
we have $u_0\in L^Q(\mathbb{R}^2)$. Applying
\eqref{eq:u-Lp-es-refined} and \eqref{eq:u-Lp-time-factor}
with $Q$ in place of $p$, and observing that
$1-\frac{1}{Q}>\alpha$, we obtain the $\varepsilon$-independent velocity bound in
$L^\infty_T(L^Q)$ as in \eqref{eq:u-LQ-es}.

\medskip
\underline{Proof of \eqref{eq:u-LQ-es} when $\alpha=1$.}

Fix a finite exponent $Q>p$.
Sobolev embedding gives $u_0\in L^Q(\mathbb R^2)$.
Recall that, in this case,
$\int_{\mathbb R^2}\theta_0(x)\,\dd x=0$,
and $\int_{\mathbb R^2}|x|\,|\theta_0(x)|\,\dd x<\infty$.
Since the temperature is transported by a divergence-free
velocity, its integral and Lebesgue norms are conserved.
Thus
\[
  \int_{\mathbb R^2}\theta(t,x)\,\dd x=0,
  \qquad
  \Vert \theta(t)\Vert _{L^s}=\Vert \theta_0\Vert _{L^s},
  \quad 1\leq s\leq\infty.
\]

The two-dimensional heat kernel
$K_r(x)=\frac1{4\pi r}e^{-\frac{|x|^2}{4r}}$, $r>0$,
satisfies
\[
  \Vert \nabla K_r\Vert _{L^Q}
  =r^{-\frac32+\frac1Q}\Vert \nabla K_1\Vert _{L^Q}.
\]
Using the zero-mean condition, we write
\[
  e^{r\Delta}\theta(t,x)
  =
  \int_{\mathbb R^2}
  \bigl(K_r(x-y)-K_r(x)\bigr)\theta(t,y)\,\dd y.
\]
The mean value theorem and Minkowski's inequality give
\[
  \Vert e^{r\Delta}\theta(t)\Vert _{L^Q}
  \leq
  C_Qr^{-\frac32+\frac1Q}
  \int_{\mathbb R^2}|x|\,|\theta(t,x)|\,\dd x.
\]
Combining this with the $L^Q$ contractivity of the heat
semigroup and the $L^Q$ boundedness of the Leray projector $\mathbb P$, we obtain
\begin{equation}\label{eq:alpha-one-semigroup}
\begin{aligned}
  \bigl\Vert 
    e^{r\Delta}\mathbb P(\theta(t)e_2)
  \bigr\Vert _{L^Q}
  \leq {}
  C_Q\min\left\{1,r^{-\frac32+\frac1Q}\right\}
  \left(
    \Vert \theta_0\Vert _{L^Q}
    +\int_{\mathbb R^2}|x|\,|\theta(t,x)|\,\dd x
  \right).
\end{aligned}
\end{equation}
Since $Q>p\geq2$, we have
\begin{equation}\label{eq:alpha-one-time-integrability}
  \int_0^\infty
  \min\left\{1,r^{-\frac32+\frac1Q}\right\}\,\dd r
  <\infty.
\end{equation}

We next control the first moment.
Let $X_t$ denote the measure-preserving flow of $u$
defined by \eqref{eq:flowmap}. Since
\[
  X_t(y)=y+\int_0^t u(s,X_s(y))\,\dd s,
  \qquad
  \theta(t,X_t(y))=\theta_0(y),
\]
H\"older's inequality gives
\begin{equation}\label{eq:theta-first-moment}
\begin{aligned}
  \int_{\mathbb R^2}|x|\,|\theta(t,x)|\,\dd x
  &=
  \int_{\mathbb R^2}|X_t(y)|\,|\theta_0(y)|\,\dd y
  \\
  &\leq
  \int_{\mathbb R^2}|y|\,|\theta_0(y)|\,\dd y
  +\int_0^t\int_{\mathbb R^2}
    |u(s,x)|\,|\theta(s,x)|\,\dd x\,\dd s
  \\
  &\leq
  \int_{\mathbb R^2}|y|\,|\theta_0(y)|\,\dd y
  +\Vert \theta_0\Vert _{L^{Q/(Q-1)}}
    \int_0^t\Vert u(s)\Vert _{L^Q}\,\dd s,
\end{aligned}
\end{equation}
where we used conservation of the
$L^{Q/(Q-1)}$ norm of $\theta$.

Taking $\alpha=1$ in the mild formulation
\eqref{eq:u-mild-Lp}, applying
\eqref{eq:alpha-one-semigroup}, and changing variables
$r=(t-\tau)/\varepsilon$, we find
\begin{align*}
  \Vert u(t)\Vert _{L^Q}
  \leq {}&
  \Vert u_0\Vert _{L^Q}
  +C_Q\int_0^{t/\varepsilon}
    \min\left\{1,r^{-\frac32+\frac1Q}\right\}
    \left(
      \Vert \theta_0\Vert _{L^Q}
      +\int_{\mathbb R^2}
        |x|\,|\theta(t-\varepsilon r,x)|\,\dd x
    \right)\,\dd r
  \\
  &+
  C_Q\int_0^t
    \Vert u(s)\Vert _{L^Q}\Vert \nabla u(s)\Vert _{L^\infty}\,\dd s.
\end{align*}
Using \eqref{eq:theta-first-moment} at time
$t-\varepsilon r\in[0,t]$ and
\eqref{eq:alpha-one-time-integrability}, we obtain
\[
  \Vert u(t)\Vert _{L^Q}
  \leq
  C+C\int_0^t
    \bigl(1+\Vert \nabla u(s)\Vert _{L^\infty}\bigr)
    \Vert u(s)\Vert _{L^Q}\,\dd s.
\]
Here $C$ depends on $Q$, $\Vert u_0\Vert _{L^Q}$,
$\Vert \theta_0\Vert _{L^1\cap L^\infty}$, and
$\int_{\mathbb R^2}|x|\,|\theta_0(x)|\,\dd x$,
but is independent of $\varepsilon$.
Gronwall's inequality and the uniform bound
$\Vert \nabla u\Vert _{L^1_T(L^\infty)}\leq Ce^{CT}$
therefore yield
\[
  \Vert u\Vert _{L^\infty_T(L^Q)}
  \leq
  C\exp\left(
    CT+C\Vert \nabla u\Vert _{L^1_T(L^\infty)}
  \right)
  \leq Ce^{\exp(CT)}.
\]
This proves \eqref{eq:u-LQ-es} when $\alpha=1$.
\end{proof}

We now establish uniqueness in the following class.

\begin{proposition}\label{prop:uniq}
Let $\alpha\in(\tfrac12,1]$,
$\varepsilon\in(0,1]$, and $T>0$.
Assume that
\[
  (u_0,\theta_0)
  \in B^0_{p,\infty}(\mathbb R^2)
  \times B^{-1}_{p,\infty}(\mathbb R^2)
\]
for some $2\leq p<\infty$, with $\div u_0=0$.
Then the 2D Boussinesq system \eqref{BoussEq}
has at most one solution pair $(u,\theta)$
on $[0,T]$ in the class
\[
  \mathcal X_T
  :=
  \bigl(
    L^1_T(B^1_{\infty,1})
    \cap L^\infty_T(W^{1,p})
  \bigr)
  \times
  \bigl(
    L^\infty_T(B^{-1}_{p,\infty})
    \cap L^\infty_T(B^0_{\infty,\infty})
  \bigr).
\]
\end{proposition}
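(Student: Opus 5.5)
We compare two solutions $(u_1,\theta_1),(u_2,\theta_2)\in\mathcal X_T$ with the same data and set $\delta u:=u_1-u_2$, $\delta\theta:=\theta_1-\theta_2$. The plan is to estimate $\delta\theta$ in $B^{-1}_{p,\infty}$ and $\delta u$ in $B^0_{p,\infty}$ (one derivative below the regularity of the solutions, as in Danchin--Pa\"icu and \cite{HKR10}), and then close with Osgood's lemma.

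\textbf{Temperature difference.} The difference satisfies
\[
  \partial_t\delta\theta+u_1\cdot\nabla\delta\theta=-\delta u\cdot\nabla\theta_2,
  \qquad \delta\theta|_{t=0}=0 .
\]
We apply the endpoint transport estimate \eqref{eq:TE-prog-es3}; the exponential factor is controlled by $\Vert u_1\Vert_{L^1_T(B^1_{\infty,1})}$. For the forcing we write $\delta u\cdot\nabla\theta_2=\div(\delta u\,\theta_2)$ and use Lemma~\ref{lem-610-2} with $s=-1$:
\[
  \Vert \delta u\cdot\nabla\theta_2\Vert_{B^{-1}_{p,\infty}}
  \leq C\Vert\theta_2\Vert_{B^0_{\infty,\infty}}\Vert\delta u\Vert_{B^0_{p,1}} .
\]
The $B^0_{p,1}$ norm is the loss. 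We recover it by the logarithmic interpolation
\[
  \Vert f\Vert_{B^0_{p,1}}\leq C\Vert f\Vert_{B^0_{p,\infty}}
  \log\Bigl(e+\frac{\Vert f\Vert_{B^{1}_{p,\infty}}}{\Vert f\Vert_{B^0_{p,\infty}}}\Bigr),
\]
with $\Vert\delta u\Vert_{B^1_{p,\infty}}\leq C\Vert(u_1,u_2)\Vert_{W^{1,p}}$ bounded by the class assumptions. This gives
\[
  \Vert\delta\theta(t)\Vert_{B^{-1}_{p,\infty}}
  \leq C\int_0^t\mu\bigl(\Vert\delta u\Vert_{B^0_{p,\infty}}\bigr)\,\dd\tau,
  \qquad \mu(r)=r\log(e+1/r).
\]

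\textbf{Velocity difference.} Dividing the momentum equation by $\varepsilon$, $\delta u$ solves \eqref{eq:LB} with $v=u_1$, dissipation $\varepsilon^{-1}\Lambda^{2\alpha}$, zero initial data, and forcing
\[
  f=-\delta u\cdot\nabla u_2+\varepsilon^{-1}\mathbb P(\delta\theta\, e_2).
\]
Uniqueness is for fixed $\varepsilon$, so $\varepsilon$-dependent constants are harmless; rescaling time reduces to Proposition~\ref{prop-linear-velocity-eqn} with $s=0$. We use the split form \eqref{es:linear-velocity-eqn-2}.
\begin{itemize}
\item The convective piece takes $\rho_1=1$. By \eqref{eq:prodBes} with $\delta$ close to $0$, or by Lemma~\ref{lem-610-2}, it is bounded in $B^0_{p,\infty}$ by $\Vert\nabla u_2\Vert$ in $B^0_{\infty,1}\hookrightarrow L^\infty$ times $\Vert\delta u\Vert_{B^0_{p,1}}$. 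This produces the same logarithmic term, with weight $\Vert u_2\Vert_{B^1_{\infty,1}}\in L^1_T$.
\item The buoyancy piece takes $\rho_2=\infty$. The requirement is $\delta\theta\in L^\infty_t(B^{-2\alpha}_{p,\infty})$, which follows from $B^{-1}_{p,\infty}$ since $2\alpha>1$. This gains dissipation and gives a factor $(1+t)\sup_{\tau\leq t}\Vert\delta\theta(\tau)\Vert_{B^{-1}_{p,\infty}}$.
\end{itemize}

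\textbf{Closing the estimate.} Combining the two bounds with $X(t):=\sup_{\tau\leq t}\bigl(\Vert\delta u\Vert_{B^0_{p,\infty}}+\Vert\delta\theta\Vert_{B^{-1}_{p,\infty}}\bigr)$ yields
\[
  X(t)\leq C_T\int_0^t\bigl(1+\Vert u_2\Vert_{B^1_{\infty,1}}+\Vert\theta_2\Vert_{B^0_{\infty,\infty}}\bigr)\,\mu(X)\,\dd\tau .
\]
Osgood's lemma, using $\int_0 \dd r/\mu(r)=\infty$, then forces $X\equiv0$. The data assumption $(u_0,\theta_0)\in B^0_{p,\infty}\times B^{-1}_{p,\infty}$ ensures that the differences lie in these spaces, so $X$ is finite.

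\textbf{Main obstacle.} The delicate point is the endpoint product $\delta u\cdot\nabla\theta_2$ in $B^{-1}_{p,\infty}$ with $\theta_2$ merely in $B^0_{\infty,\infty}$ (a patch). This forces $\delta u\in B^0_{p,1}$ and hence the logarithmic loss. It is also why \eqref{eq:TE-prog-es3} requires $u_1\in L^1_T(B^1_{\infty,1})$ rather than only Lipschitz velocity. If the interpolation step fails to close directly, I would instead gain the missing $\ell^1$ summability from the parabolic smoothing: estimate $\delta u$ in $\widetilde L^1_t(B^{2\alpha}_{p,\infty})$ via \eqref{eq:TD-sm4}-type smoothing, and interpolate against $L^\infty_t(B^0_{p,\infty})$ to place $\delta u$ in $L^1_t(B^0_{p,1})$ with a logarithmic constant.
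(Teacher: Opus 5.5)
Your proposal is correct and follows essentially the same route as the paper. You estimate $\delta\theta$ with the endpoint transport estimate \eqref{eq:TE-prog-es3} and Lemma~\ref{lem-610-2} at $s=-1$. You estimate $\delta u$ with the time-rescaled linearized estimate \eqref{es:linear-velocity-eqn-2} with $s=0$, $\rho_1=1$, $\rho_2=\infty$, using $B^{-1}_{p,\infty}\hookrightarrow B^{-2\alpha}_{p,\infty}$. The $B^0_{p,1}$ loss is recovered by logarithmic interpolation against the bounded $W^{1,p}$ norm, and Osgood's lemma with $\mu(r)=r\log(e+1/r)$ closes the argument.

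One small correction: for the convective term in $B^0_{p,\infty}$, neither \eqref{eq:prodBes} nor Lemma~\ref{lem-610-2} applies, since both concern strictly negative indices. Instead use, as the paper does, $\Vert \delta u\cdot\nabla u_2\Vert_{B^0_{p,\infty}}\leq C\Vert\nabla u_2\Vert_{L^\infty}\Vert\delta u\Vert_{L^p}\leq C\Vert\nabla u_2\Vert_{L^\infty}\Vert\delta u\Vert_{B^0_{p,1}}$.
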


\begin{proof}[Proof of Proposition~\ref{prop:uniq}]
Let $(u^{(1)},\theta^{(1)})$
and $(u^{(2)},\theta^{(2)})$
be two solutions in $\mathcal X_T$
with the same initial data.
Set
\[
  \delta u:=u^{(1)}-u^{(2)},
  \qquad
  \delta\theta:=\theta^{(1)}-\theta^{(2)},
  \qquad
  \delta p:=p^{(1)}-p^{(2)}.
\]
Then
\[
\left\{
\begin{aligned}
  \varepsilon
  \bigl(
    \partial_t\delta u
    +u^{(2)}\cdot\nabla\delta u
  \bigr)
  +\Lambda^{2\alpha}\delta u+\nabla\delta p
  &=
  -\varepsilon\delta u\cdot\nabla u^{(1)}
  +\delta\theta e_2,
  \\
  \partial_t\delta\theta
  +u^{(2)}\cdot\nabla\delta\theta
  &=
  -\delta u\cdot\nabla\theta^{(1)},
  \\
  \div\delta u&=0,
  \\
  (\delta u,\delta\theta)|_{t=0}&=(0,0).
\end{aligned}
\right.
\]

Applying \eqref{es:linear-velocity-eqn-2}
after the time change $t=\varepsilon\tau$,
with $s=0$, $\rho_1=1$, and $\rho_2=\infty$,
we obtain
\begin{equation}\label{eqn-the-thing-to-bound}
\begin{aligned}
  \Vert \delta u\Vert _{L^\infty_t(B^0_{p,\infty})}
  \leq
  Ce^{C\int_0^t
    \Vert u^{(2)}(\tau)\Vert _{W^{1,\infty}}\,\dd\tau}
  \Bigl(
    \Vert \delta u\cdot\nabla u^{(1)}
      \Vert _{L^1_t(B^0_{p,\infty})}
      +(1+t/\varepsilon)
    \Vert \delta\theta\Vert _{L^\infty_t(B^{-1}_{p,\infty})}
  \Bigr).
\end{aligned}
\end{equation}
Here we used $2\alpha>1$ and the embedding
$L^\infty_t(B^{-1}_{p,\infty}) \hookrightarrow L^\infty_t(B^{-2\alpha}_{p,\infty})
= \widetilde L^\infty_t(B^{-2\alpha}_{p,\infty})$.

For the nonlinear velocity term,
H\"older's inequality and the embeddings
$B^0_{p,1}\hookrightarrow L^p
\hookrightarrow B^0_{p,\infty}$ give
\begin{align*}
  \Vert \delta u\cdot\nabla u^{(1)}\Vert _{B^0_{p,\infty}}
  \leq
  C\Vert \nabla u^{(1)}\Vert _{L^\infty}
  \Vert \delta u\Vert _{L^p}
  \leq
  C\Vert \nabla u^{(1)}\Vert _{L^\infty}
  \Vert \delta u\Vert _{B^0_{p,1}}.
\end{align*}

We use the logarithmic interpolation inequality
\[
  \Vert \delta u\Vert _{B^0_{p,1}}
  \leq
  C\Vert \delta u\Vert _{B^0_{p,\infty}}
  \log\left(
    e+
    \frac{\Vert \delta u\Vert _{W^{1,p}}}
         {\Vert \delta u\Vert _{B^0_{p,\infty}}}
  \right),
\]
with the right-hand side interpreted as zero
when $\delta u=0$.

Set
\[
  \mu(r):=r\log(e+1/r)\quad\text{for }r>0,
  \qquad \mu(0):=0.
\]
Using
$\log(e+ab)\leq C\log(e+a)\log(e+b)$
for $a,b\geq0$, we obtain
\[
  \Vert \delta u\Vert _{B^0_{p,1}}
  \leq
  C\mu\bigl(\Vert \delta u\Vert _{B^0_{p,\infty}}\bigr)
  \log\bigl(e+\Vert \delta u\Vert _{W^{1,p}}\bigr).
\]
Consequently,
\begin{equation}\label{eqn-log-interpol}
\begin{aligned}
  \Vert \delta u\cdot\nabla u^{(1)}\Vert _{B^0_{p,\infty}}
  &\leq
  C\Vert \nabla u^{(1)}\Vert _{L^\infty}
  \mu\bigl(\Vert \delta u\Vert _{B^0_{p,\infty}}\bigr)
  \log\bigl(e+\Vert \delta u\Vert _{W^{1,p}}\bigr).
\end{aligned}
\end{equation}

For the temperature difference,
\eqref{eq:TE-prog-es3}
and Lemma~\ref{lem-610-2} with $s=-1$ yield
\begin{equation}\label{eqn-theta-bound}
\begin{aligned}
  \Vert \delta\theta\Vert _{L^\infty_t(B^{-1}_{p,\infty})}
  &\leq
  Ce^{C\Vert u^{(2)}\Vert _{L^1_t(B^1_{\infty,1})}}
  \int_0^t
  \Vert \delta u\cdot\nabla\theta^{(1)}(\tau)
    \Vert _{B^{-1}_{p,\infty}}\,\dd\tau
  \\
  &\leq
  Ce^{C\Vert u^{(2)}\Vert _{L^1_t(B^1_{\infty,1})}}
  \int_0^t
  \Vert \theta^{(1)}(\tau)\Vert _{B^0_{\infty,\infty}}
  \Vert \delta u(\tau)\Vert _{B^0_{p,1}}\,\dd\tau.
\end{aligned}
\end{equation}

Set
\[
  X(t):=
  \Vert \delta u\Vert _{L^\infty_t(B^0_{p,\infty})}
  +
  \Vert \delta\theta\Vert _{L^\infty_t(B^{-1}_{p,\infty})}.
\]
Combining \eqref{eqn-the-thing-to-bound},
\eqref{eqn-log-interpol}, and \eqref{eqn-theta-bound}
with the logarithmic interpolation inequality gives
\[
  X(t)
  \leq
  C_{\varepsilon,T}
  \int_0^t
  \bigl(
    1+\Vert \nabla u^{(1)}(\tau)\Vert _{L^\infty}
  \bigr)
  \mu\bigl(X(\tau)\bigr)\,\dd\tau,
  \qquad 0\leq t\leq T.
\]
Here $C_{\varepsilon,T}$ depends on the fixed parameters
and the $\mathcal X_T$ norms of the two solutions.
We have also used the monotonicity of $\mu$
and the embedding
$B^1_{\infty,1}\hookrightarrow W^{1,\infty}$.
In particular,
\[
  1+\Vert \nabla u^{(1)}(\cdot)\Vert _{L^\infty}
  \in L^1(0,T).
\]
Since
$\int_{0^+}^1\frac{\dd r}{\mu(r)}=\infty$,
Osgood's lemma implies $X(t)=0$ on $[0,T]$.
Thus the two solutions coincide.
\end{proof}

\begin{proof}[Proof of Theorem~\ref{thm:main1}]
We first establish existence.
Fix $\varepsilon\in(0,1]$.
Let $(u_{0,\delta},\theta_{0,\delta})$
be the smooth approximations constructed in
Appendix~\ref{sec:A1}, and let
$(u_\delta,\theta_\delta)$
be the corresponding global smooth solutions
of \eqref{BoussEq}.

By Proposition~\ref{prop:compactness-main1},
for every $T>0$, a subsequence,
still denoted by $\delta$, converges to a pair
$(u,\theta)$ in the senses specified in
\eqref{conv-main1-u-weak}-\eqref{conv-main1-theta-Linf}.
The limit is a weak solution of \eqref{BoussEq},
in the sense of Definition~\ref{defn:weak-Bouss},
with initial data $(u_0,\theta_0)$.
Moreover,
\[
  \theta\in L^\infty([0,T],L^1\cap L^\infty),
  \qquad
  u\in L^\infty([0,T],W^{1,p})
  \cap\widetilde L^1([0,T],B^{2\alpha}_{\infty,\infty}),
\]
and all the estimates in
Proposition~\ref{prop:ap-es-basic}
pass to the limit, with constants independent of
$\varepsilon\in(0,1]$ as stated there.

To prove uniqueness, note that $2\alpha>1$ gives
$\widetilde L^1([0,T],B^{2\alpha}_{\infty,\infty})\hookrightarrow
  L^1([0,T],B^1_{\infty,1})$.
Moreover,
$L^1\cap L^\infty \hookrightarrow L^p \hookrightarrow B^{-1}_{p,\infty}$ and 
$L^\infty\hookrightarrow B^0_{\infty,\infty}$.
Thus $(u,\theta)$ belongs to the uniqueness class
$\mathcal X_T$
in Proposition~\ref{prop:uniq},
and uniqueness follows.

Since $T>0$ is arbitrary, uniqueness gives a global solution.
This completes the proof.
\end{proof}

\section{Persistence of the \texorpdfstring{$C^{1,\gamma}$}{C(1,gamma)}-boundary regularity with \texorpdfstring{$0<\gamma\leq1$}{0 < gamma <= 1}}\label{sec:C1}
In this section, we shall prove Theorem \ref{thm:C1}, 
which concerns the global persistence of $C^{1,\gamma}$-boundary regularity of temperature patches with $0<\gamma\leq 1$. 

We first record the relation between the striated estimates and the
boundary regularity. Restoring the superscript $\varepsilon$, set
$z^\varepsilon(a,t):=X_t^\varepsilon(z_0(a))$. The equation
for $\partial_{a }z^\varepsilon$ and the uniform Lipschitz estimate
for $u^\varepsilon$ give
\begin{align}\label{es:inf-dz}
\inf_{0<\varepsilon\leq1}
\inf_{0\leq t\leq T}
\inf_{a\in\mathbb S^1}
|\partial_{a}z^\varepsilon(a,t)|
\geq
c_0e^{-C_T}>0,
\qquad
c_0:=
\inf_{a\in\mathbb S^1}
|\partial_{a}z_0(a)|.
\end{align}
Moreover, refer to \eqref{eq:boundary-identities}, we only need to show the regularity of $W^\varepsilon$.

Throughout this section, we prove the required {\it{{\it{a priori}}}} estimates at the level of smooth solutions. The construction of smooth approximations and the rigorous passage to the limiting solution are deferred to Appendices \ref{sec:A} and \ref{sec:B2}, respectively.

We divide the proof into four cases according to the values of $\gamma\in (0,1]$ 
and $\alpha\in (\frac{1}{2},1]$.
\vskip1mm

$\bullet$ \textbf{Case I: $\gamma\in(0,2\alpha-1]$, $\alpha\in (\frac{1}{2},1)$.}
\vskip1mm

Noting that $\partial D(t) = X_t(\partial D_0)$ with the flow map $X_t(\cdot)$ given by \eqref{eq:flowmap},
to get \eqref{eq:reg-parD(t)-1-0-main}, it suffices to prove $\nabla u \in L^1_T(C^\gamma)$ uniformly in $\varepsilon$,
since it directly implies $\nabla X_t^{\pm 1}\in L^\infty_T(C^\gamma)$ uniformly in $\varepsilon$. 
This has been addressed in \cite{LXY25}; indeed, one can refer to \eqref{eq:u-Gam-es2} and \eqref{es:nabu-Gam2} 
in Proposition \ref{prop:ap-es-basic} for $\gamma\in(0,2\alpha-1)$ (using the embedding 
$\widetilde{L}^1_T(B^{2\alpha-1}_{\infty,\infty})\hookrightarrow L^1_T(C^\gamma)$), 
and \eqref{es:nabu-Gam2} for $\gamma=2\alpha-1$.
\vskip1mm

$\bullet$ \textbf{Case II: $\gamma\in(0,1)$, $\alpha=1$.}
\vskip1mm

Thanks to \eqref{es:nabu-Gam2-2b} and \eqref{es:nabu-Gam2} in Proposition \ref{prop:ap-es-basic}, 
we have $\nabla u\in L_T^1(C^\gamma)$ uniformly in $\varepsilon$, 
which immediately implies the global uniform-in-$\varepsilon$ persistence of the 
$C^{1,\gamma}$-patch boundary $\partial D(t)$.
\vskip1mm
 
$\bullet$ \textbf{Case III: $\gamma\in(2\alpha-1,1)$, $\alpha\in(\frac{1}{2},1)$.} 
\vskip1mm

In this case $u_0\in W^{1,p}(\mathbb{R}^2)$ with $p>\frac{2}{2\alpha-\gamma}$. 
Noting that $\varphi(\cdot,t)$ is the level-set characterization of the patch boundary 
$\partial D(t)$ and $W=\nabla^\perp \varphi$, 
it suffices to show that $W \in L^\infty([0,T]; C^\gamma(\R^2))$ uniformly in $\varepsilon$. 

Thanks to $W$-equation \eqref{eq:W-eq} and the regularity propagation estimate \eqref{eq:T-BesEs}, we have
\begin{equation}\label{eq:W-Cgam}
\begin{aligned}
  \Vert  W(t)\Vert _{C^\gamma} \leq & C \left\Vert  W_0\right\Vert _{C^\gamma}
  + C \int_0^t\left\Vert \partial_W u(\tau) \right\Vert _{C^\gamma} \mathrm{d} \tau 
  + C \int_0^t \Vert \nabla u(\tau)\Vert _{L^\infty}\Vert  W(\tau)\Vert _{C^\gamma} \dd\tau.
\end{aligned}
\end{equation}
The remaining main task is to estimate $\Vert \partial_W  u\Vert _{L^1_t(C^\gamma)}$. 
The identity \eqref{eq:u-Gam-rela} gives
\begin{align}\label{pWu}
  \Vert \partial_W  u\Vert _{L^1_t(C^\gamma)} 
  \leq \big\Vert \partial_W \nabla \Lambda^{-2} \Gamma\big\Vert _{L^1_t(C^\gamma)}
  + \big\Vert \partial_W  \nabla^2\Lambda^{-2-2\alpha} \theta \big\Vert _{L^1_t(C^\gamma)}.
\end{align}
For the first term, due to that
\begin{equation}\label{es:mDGam}
\begin{aligned}
  \Vert \nabla^2 \Lambda^{-2} \Gamma\Vert _{C^\gamma}
  \leq C \big(\Vert \Delta_{-1}(\nabla^2 \Lambda^{-2} \Gamma)\Vert _{L^\infty} + 
  \Vert \Gamma\Vert _{C^\gamma}\big)
  \leq C \Vert \Gamma\Vert _{L^p\cap C^\gamma},
\end{aligned}
\end{equation}
one can get 
\begin{align*}
  \Vert \partial_W \nabla \Lambda^{-2} \Gamma\Vert _{C^\gamma} 
  \leq C \Vert W\Vert _{C^\gamma} \Vert \nabla^2 \Lambda^{-2} \Gamma\Vert _{C^\gamma}
  \leq C \Vert W\Vert _{C^\gamma} \Vert \Gamma\Vert _{L^p \cap C^\gamma}.
\end{align*}
According to \eqref{es:prod-es6} with $\sigma = -2\alpha\in (-2,-1)$, and using the fact that 
$\p_W(\nabla^2\Lambda^{-2-2\alpha} \theta)=\nabla^2 \Lambda^{-2-2\alpha} 
\p_W\theta-[\nabla^2\Lambda^{-2-2\alpha}, W\cdot\nabla]\theta$, 
we deduce 
\begin{align*}
  &\big\Vert \partial_W \nabla^2 \Lambda^{-2-2\alpha} \theta \big\Vert _{C^\gamma} \\
  & \leq \Vert  \nabla^2 \Lambda^{-2-2\alpha} \p_W\theta\Vert _{C^\gamma} 
  + \Vert [\nabla^2 \Lambda^{-2-2\alpha}, W\cdot\nabla]\theta\Vert _{C^\gamma} \\
  & \leq C \Vert \Delta_{-1} \nabla^2 \Lambda^{-2-2\alpha} \p_W\theta\Vert _{L^\infty}
  + C \sup\limits_{j\in\NN} 2^{j\gamma}\Vert \Delta_j \nabla^2 \Lambda^{-2-2\alpha} 
  \p_W\theta\Vert _{L^\infty} + C \Vert  W\Vert _{C^\gamma}\Vert \theta\Vert _{L^1\cap L^\infty};
\end{align*}
together with the following estimates
\begin{align*}
  \Vert \Delta_{-1} \nabla^2 \Lambda^{-2-2\alpha} \p_W\theta\Vert _{L^\infty}
  & =\Vert \Delta_{-1} \nabla^2 \Lambda^{-2-2\alpha} \mathrm{div}(W\theta)\Vert _{L^\infty}
  \leq C\Vert \p \Lambda^{-2\alpha} (W\theta)\Vert _{L^p}\\
  &\leq C\Vert W\theta\Vert _{L^\frac{2p}{(2\alpha-1)p+2}}\leq C\Vert W\Vert _{L^\infty}\Vert \theta\Vert _{L^\frac{2p}{(2\alpha-1)p+2}},
\end{align*}
and
\begin{align*}
  \sup\limits_{j\in\NN}2^{j\gamma}\Vert \Delta_j \nabla^2 
  \Lambda^{-2-2\alpha} \p_W\theta\Vert _{L^\infty}
  \leq C \sup\limits_{j\in\NN} 2^{j(\gamma-2\alpha)}\Vert \Delta_j \p_W\theta\Vert _{L^\infty}
  \leq C \Vert  \p_W\theta\Vert _{C^{\gamma-2\alpha}},
\end{align*}
we infer that 
\begin{align*}
  \big\Vert \partial_W \nabla^{\perp}\p_1\Lambda^{-2-2\alpha} \theta(t) \big\Vert _{C^\gamma}  
  \leq  C \Vert  \p_W\theta(t)\Vert _{C^{\gamma-2\alpha}} 
  + C \Vert  W(t)\Vert _{C^\gamma} \Vert \theta_0\Vert _{L^1\cap L^\infty}.
\end{align*}
In the above, we also have used the simple fact that for every $r\in[1,\infty]$,\,
\begin{align}\label{eq:the-MP} 
  \Vert \theta(t)\Vert _{L^r}=\Vert \theta_0\Vert _{L^r},\quad \forall t\geq 0.
\end{align}
Since $\partial_W\theta$ solves the transport equation
\begin{align}\label{eq:par-W-the}
  \partial_t (\partial_W \theta) + u\cdot \nabla(\partial_W \theta) =0,\quad 
  \partial_W \theta |_{t=0} = \partial_{W_0} \theta_0,
\end{align}
we apply Lemma \ref{lem:TE-prog-es} and \eqref{es:nabu-Gam2} to show that for every $\gamma\in(2\alpha-1,1)$,
\begin{align}\label{eq:parWthe-Bes2}
  \Vert \partial_W \theta(t)\Vert _{C^{\gamma-2\alpha}} \leq C  e^{C \int_0^t \Vert \nabla u(\tau)\Vert _{L^\infty}
  \dd \tau} \Vert \partial_{W_0}\theta_0\Vert _{C^{\gamma-2\alpha}} \leq C  e^{\exp(C t)}.
\end{align}
Substituting these estimates into \eqref{pWu} and \eqref{eq:W-Cgam}, we obtain
\begin{align*}
  \Vert \partial_W  u\Vert _{L^1_t(C^\gamma)} 
  \leq C \int_0^t \Vert W(\tau)\Vert _{C^\gamma} \big(\Vert \Gamma(\tau)\Vert _{L^p \cap C^\gamma} 
  + 1  \big) \dd \tau 
  + C e^{\exp (Ct)},
\end{align*}
and
\begin{align*}
  \Vert W(t)\Vert _{C^\gamma}  \leq Ce^{\exp(Ct)} + C \int_0^t \Vert W(\tau)\Vert _{C^\gamma} 
  \big(\Vert \Gamma(\tau)\Vert _{L^p\cap C^\gamma} + \Vert \nabla u(\tau)\Vert _{L^\infty} 
  + 1 \big) \mathrm{d} \tau.
\end{align*}
Since $\widetilde{L}^1_T(B^{2\alpha}_{p,\infty})\hookrightarrow L_T^1(L^p\cap C^\gamma)$ for $p>\frac{2}{2\alpha-\gamma}$, 
taking advantage of Gronwall's inequality and the uniform estimate \eqref{es:nabu-Gam2} gives 
\begin{align}\label{es:W-Cgam}
  \Vert W\Vert _{L^\infty([0,T], C^\gamma(\mathbb{R}^2))}\leq C e^{\exp(CT)} \exp\Big\{C T + C\Vert \Gamma\Vert _{L^1_T(L^p\cap C^\gamma)}
  + C \Vert \nabla u\Vert _{L^1_T(L^\infty)}\Big\} 
  \leq C e^{\exp (CT)}.
\end{align}
\vskip1mm

$\bullet$ \textbf{Case IV: $\gamma=1$, $\alpha\in(\frac{1}{2},1]$.} 
\vskip1mm

Recall that in this case $u_0\in W^{1,p}(\mathbb{R}^2)$, $p>\frac{2}{2\alpha-1}$. 
According to \eqref{eq:boundary-identities} and \eqref{es:inf-dz}, it suffices to show that $\big(\partial_W W\big)(\cdot,t) \in L^\infty([0,T]; L^\infty(\R^2))$
uniformly in $\varepsilon$. 
In fact, we shall prove a stronger result, that is, $W\in L^\infty([0,T], W^{1,\infty}(\R^2))$ uniformly in $\varepsilon$.

The maximum principle of the transport equation \eqref{eq:W-eq} gives
\begin{align}\label{eq:W-Linf-es}
  \Vert W(t)\Vert _{L^\infty} \leq \Vert W_0\Vert _{L^\infty} 
  + \int_0^t \Vert \nabla u(\tau)\Vert _{L^\infty} \Vert W(\tau)\Vert _{L^\infty} \dd \tau,
\end{align}
which combined with \eqref{es:nabu-Gam2} implies that
\begin{align}\label{eq:W-Linf-es-2}
  \Vert W(t)\Vert _{L^\infty} \leq \Vert W_0\Vert _{L^\infty} 
  e^{\int_0^t \Vert \nabla u(\tau)\Vert _{L^\infty}\dd \tau}
  \leq C e^{\exp(Ct)}.
\end{align}
Applying the $\nabla$ operator to the $W$-equation \eqref{eq:W-eq} gives
\begin{align}\label{eq:nabW}
  \partial_t \nabla W+u \cdot \nabla(\nabla W)
  =\partial_W \nabla u+\nabla W \cdot \nabla u-\nabla u \cdot \nabla W,
  \quad \nabla W|_{t=0} = \nabla W_0.
\end{align}
Taking advantage of the maximum principle again, we get
\begin{equation}\label{eq:nabW-Linf}
\begin{aligned}
  \Vert \nabla W(t)\Vert _{L^\infty} \leq  \left\Vert \nabla W_0\right\Vert _{L^\infty}
  + \int_0^t\left\Vert \partial_W \nabla u(\tau) \right\Vert _{L^\infty} \mathrm{d} \tau
  + 2\int_0^t\Vert \nabla u(\tau)\Vert _{L^\infty}\Vert \nabla W(\tau)\Vert _{L^\infty} \mathrm{d} \tau .
\end{aligned}
\end{equation}

The main task is to estimate $\partial_W \nabla u$. 
Using the identity \eqref{eq:u-Gam-rela}, we find 
\begin{align}\label{es:parWnabu-Linf}
  \Vert \partial_W \nabla u\Vert _{L^1_t (L^\infty)} 
  \leq \big\Vert \partial_W \nabla^2 \Lambda^{-2} \Gamma\big\Vert _{L^1_t(L^\infty)}
  + \big\Vert \partial_W \nabla^3\Lambda^{-2-2\alpha} \theta \big\Vert _{L^1_t(L^\infty)}.
\end{align}
For the first term on the right-hand side of the above inequality, by virtue of \eqref{es:nabu-Gam2}, \eqref{eq:W-Linf-es-2}
and the embedding $\widetilde{L}^1_t(B^{2\alpha}_{p,\infty})\hookrightarrow L^1_t(L^p\cap B^1_{\infty,1})$, 
we see that
\begin{align}
  \Vert \partial_W\nabla^2 \Lambda^{-2}\Gamma\Vert _{L^1_t(L^\infty)} 
  & \leq C \Vert W\Vert _{L^\infty_t(L^\infty)} 
  \Vert \nabla^3 \Lambda^{-2}\Gamma\Vert _{L^1_t(L^\infty)} \nonumber\\
  & \leq C \Vert W\Vert _{L^\infty_t(L^\infty)} \big(\Vert \Delta_{-1}
  (\nabla^3\Lambda^{-2}\Gamma)\Vert _{L^1_t(L^p)}+ 
  \Vert \Gamma\Vert _{L^1_t(B^1_{\infty,1})}\big) \nonumber\\
  & \leq C \Vert W\Vert _{L^\infty_t(L^\infty)} \Vert \Gamma\Vert _{L^1_t(L^p\cap B^1_{\infty,1})} \nonumber\\
  &\leq Ce^{\exp(Ct)}.\label{es:parWnabu-Linf1}
\end{align}
For the second term, using \eqref{es:prod-es5} with $\sigma= 1-2\alpha \in (-2,0)$ and continuous embedding 
$B^{\mu+2\alpha-2}_{\infty,\infty} \hookrightarrow L^\infty\,(\mu\in(2-2\alpha,1))$
leads to
\begin{align*}
  \big\Vert \partial_W \nabla^3\Lambda^{-2-2\alpha} \theta \big\Vert _{L^1_t(L^\infty)}
  & \leq \Vert \nabla^3\Lambda^{-2-2\alpha} \p_W\theta\Vert _{L_t^1(L^\infty)} 
  + \Vert [\nabla^3\Lambda^{-2-2\alpha}, W\cdot\nabla]\theta\Vert _{L_t^1(L^\infty)} \\
  & \lesssim  \Vert \nabla^3\Lambda^{-2-2\alpha} \p_W\theta\Vert _{L_t^1(B^{\mu+2\alpha-2}_{\infty, \infty})} 
  +  \int_0^t\Vert \nabla W\Vert _{L^\infty}\Vert \theta\Vert _{L^1\cap L^\infty}\dd\tau.
\end{align*}
Noticing that
\begin{align*}
   \Vert \nabla^3\Lambda^{-2-2\alpha} \p_W\theta\Vert _{B^{\mu+2\alpha-2}_{\infty, \infty}} &\leq C \Vert \Delta_{-1}\nabla^3\Lambda^{-2-2\alpha} \mathrm{div}\,(W\theta)\Vert _{L^\infty}  + \sup\limits_{j\geq 0}2^{j(\mu+2\alpha-2)}
  \Vert \Delta_j \nabla^3\Lambda^{-2-2\alpha} \p_W\theta\Vert _{L^\infty} \\
  & \lesssim \Vert  W\theta\Vert _{L^2} 
  + C\sup\limits_{j\geq 0}2^{j(\mu-1)}\Vert \Delta_j \p_W\theta\Vert _{L^\infty} \\
  &\lesssim \Vert W\Vert _{L^\infty}\Vert \theta\Vert _{L^2}+ \Vert \p_W\theta\Vert _{B^{\mu-1}_{\infty,\infty}},
\end{align*}
together with \eqref{eq:the-MP} and \eqref{eq:W-Linf-es-2}, we deduce that
\begin{align}
  \big\Vert \partial_W \nabla^3\Lambda^{-2-2\alpha} \theta \big\Vert _{L^1_t(L^\infty)} 
  \leq Ce^{\exp(Ct)}
  + Ct\Vert \p_W\theta\Vert _{L_t^\infty(B^{\mu-1}_{\infty,\infty})} 
  + C\int_0^t \Vert \nabla W(\tau)\Vert _{L^\infty} \dd\tau.\label{es:parWnabu-Linf2}
\end{align}
In light of the fact that $\partial_{W_0}\theta_0\in C^{\mu-1}$ by Lemma \ref{lem:pWk-theta} 
and our assumption on $\theta_0$, similarly as deriving \eqref{eq:parWthe-Bes2}, we have, for the exponent $\mu$ in the profile assumption,
\begin{align*}
  \Vert \partial_W \theta(t)\Vert _{C^{\mu-1}} \lesssim   e^{C \int_0^t \Vert \nabla u(\tau)\Vert _{L^\infty}
  \dd \tau} \Vert \partial_{W_0}\theta_0\Vert _{C^{\mu-1}} \leq C  e^{\exp(C t)}.
\end{align*}
Inserting the above estimates into \eqref{es:parWnabu-Linf2}, together with \eqref{es:parWnabu-Linf1}, \eqref{es:parWnabu-Linf} and \eqref{eq:nabW-Linf} gives
\begin{align*}
  \Vert \nabla W(t)\Vert _{L^\infty} \leq C e^{\exp(Ct)}
  + C \int_0^t\big(\Vert \nabla u(\tau)\Vert _{L^\infty} + 1\big)\Vert \nabla W(\tau)\Vert _{L^\infty} \mathrm{d} \tau .
\end{align*}
Gronwall's inequality combined with \eqref{eq:W-Linf-es-2} and \eqref{es:nabu-Gam2} ensures 
\begin{align}\label{eq:W-W1inf-bdd}
  \Vert W\Vert _{L^\infty_T(W^{1,\infty})} 
  + \Vert \partial_W \nabla u\Vert _{L^1_T(L^\infty)}
  \leq C e^{\exp(CT)} e^{CT+C\int_0^T\Vert \nabla u(t)\Vert _{L^\infty} \dd t} 
  \leq Ce^{\exp (CT)},
\end{align}
and
\begin{align}\label{eq:nab-paw-u-L1Linf}
  \Vert \nabla \partial_W u\Vert _{L^1_T(L^\infty)}
  \leq \Vert \partial_W \nabla u\Vert _{L^1_T(L^\infty)}
  + \Vert \nabla W\Vert _{L^\infty_T(L^\infty)} \Vert \nabla u\Vert _{L^1_T(L^\infty)}
  \leq C e^{\exp(CT)}.
\end{align}

The preceding \textit{{\it{a priori}}} estimates are uniform with respect to
$\varepsilon\in(0,1]$. The approximation and compactness argument
given in Appendix~\ref{sec:B2} shows that these bounds are inherited
by the solution $(u,\theta)$ constructed in
Theorem~\ref{thm:main1}. In particular, the associated tangent field
$W=\nabla^\perp\varphi$ satisfies the corresponding $C^\gamma$ estimate
when $0<\gamma<1$, and the $W^{1,\infty}$ estimate when $\gamma=1$.
Consequently, the $C^{1,\gamma}$-regularity of $\partial D(t)$ is
preserved on $[0,T]$, with a bound independent of $\varepsilon$.
This completes the proof of Theorem~\ref{thm:C1}.

\section{Persistence of the \texorpdfstring{$C^{2,\gamma}$}{C(2,gamma)}-boundary regularity with \texorpdfstring{$0<\gamma\leq1$}{0 < gamma <= 1}}\label{sec:C2}
The purpose of this section is to provide the proof of Theorem \ref{thm:main2} with any $\alpha\in(\frac{1}{2},1]$ and $\gamma\in (0,1]$. As in the preceding section, all computations below are carried out at the level of smooth solutions. Their rigorous justification for the solution of Theorem \ref{thm:main1} is given in Appendix \ref{sec:B3}.

First, we present the following result, which is crucial for relaxing the regularity assumptions on $\partial_{W_0}u_0$ imposed in \cite{LXY25}.
\begin{lemma}\label{lem:pGamma}
Consider \{\textrm{case\, I}: $\gamma\in(0,2\alpha-1]$, $\alpha\in(\tfrac{1}{2},1)$\} 
and \{\textrm{case II}: $\gamma\in(0,1)$, $\alpha=1$\}.
Assume that $\theta_0(x) = \bar{\theta}_0(x) \mathbf{1}_{D_0}(x)$, $\partial D_0\in C^{2,\gamma}$,  
$\bar\theta_0$ satisfies the corresponding profile assumption
\eqref{eq:bar-the-cd3} or \eqref{eq:bar-the-cd4}, 
and $u_0\in W^{1,q}$ for some $q>\frac{2}{2\alpha-1}$, 
$\partial_{W_0}u_0\in \dot{W}^{1,p}$ for some 
$p > \frac{2}{2\alpha-\gamma}$.
Then we have
\begin{align*}
  \Vert \partial_W \Gamma\Vert _{L^1_T(B^\gamma_{\infty,1})} \leq C e^{\exp (CT)}.
\end{align*}
\end{lemma}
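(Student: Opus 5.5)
We would differentiate the $\Gamma$-equation \eqref{eq:Gamma-intr} along $W$. Since $[\partial_t+u\cdot\nabla,\partial_W]=0$, the quantity $\partial_W\Gamma$ satisfies
\[
  \varepsilon\bigl(\partial_t+u\cdot\nabla\bigr)\partial_W\Gamma+\Lambda^{2\alpha}\partial_W\Gamma
  =[\Lambda^{2\alpha},\partial_W]\Gamma+\varepsilon\,\partial_W\bigl([\mathcal R_{1-2\alpha},u\cdot\nabla]\theta\bigr),
\]
with initial datum $\partial_{W_0}\Gamma_0=\partial_{W_0}\omega_0-\partial_{W_0}\mathcal R_{1-2\alpha}\theta_0$. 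After rescaling time by $t=\varepsilon\tau$, we would apply the smoothing estimate \eqref{eq:TD-sm4} with $\nu=1$, $\rho=1$, $r=1$, and $p=\infty$. We choose a base regularity $s=\gamma-2\alpha+\eta\in(-1,1)$ with a tiny loss $\eta>0$, so that the output gain $2\alpha$ gives $\partial_W\Gamma\in L^1_T(B^{\gamma+\eta}_{\infty,\infty})\hookrightarrow L^1_T(B^\gamma_{\infty,1})$ at high frequencies. Because the rescaling multiplies $\int\|\nabla u\|_{L^\infty}$ and the forcing integrals by exactly the right powers of $\varepsilon$, the constants stay uniform in $\varepsilon$. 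The low-frequency block $\Delta_{-1}\partial_W\Gamma$ is handled separately: we write $\partial_W\Gamma=\operatorname{div}(W\Gamma)$ and bound it by $\|W\|_{L^\infty}\|\Gamma\|_{L^q}$, using $\|\Gamma\|_{L^\infty_T(L^q)}$ from \eqref{es:nabu-Gam2}.

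The initial datum needs $\partial_{W_0}\Gamma_0\in B^{s}_{\infty,\infty}$. For the vorticity part, we write $\partial_{W_0}\omega_0=\operatorname{curl}(\partial_{W_0}u_0)-(\text{terms }\nabla W_0\cdot\nabla u_0)$. Since $\partial_{W_0}u_0\in\dot W^{1,p}$ with $p>\frac{2}{2\alpha-\gamma}$, Sobolev embedding puts this part in $B^{-2/p}_{\infty,\infty}\subset B^{s}_{\infty,\infty}$, which is exactly where the threshold on $p$ is used. The product terms are bounded in $L^q\hookrightarrow B^{-2/q}_{\infty,\infty}$, and $q>\frac{2}{2\alpha-1}$ gives $-2/q>1-2\alpha\ge \gamma-2\alpha+\eta$. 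For the temperature part, we use \eqref{eq:paWmD-es} together with Lemma~\ref{lem:pWk-theta}, which gives $\partial_{W_0}\theta_0\in B^{\mu-1}_{\infty,\infty}$ under the profile assumption.

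For the forcing, we would estimate $\varepsilon\,\partial_W([\mathcal R_{1-2\alpha},u\cdot\nabla]\theta)$ in $L^1_T(B^s_{\infty,\infty})$. When $\alpha<1$, the operator $\mathcal R_{1-2\alpha}$ has order $1-2\alpha<0$, so Lemma~\ref{lem:pW} \eqref{es:prod-es4} with $\sigma=1-2\alpha$ bounds it in $L^\infty\subset B^s_{\infty,\infty}$. This needs $\|\nabla W\|_{L^\infty}$, $\|\nabla\partial_W u\|_{L^\infty}$, and $\|\partial_W\theta\|_{L^1\cap L^\infty}$. Of these, $\partial_W\theta$ is transported, and is in $L^1\cap L^\infty$ by Lemma~\ref{lem:pWk-theta} with $k=3$ once $\bar\theta_0\in C^{1,\mu}$. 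The quantity $\nabla\partial_W u$ is bounded in terms of $\partial_W\Gamma$ itself, plus $\nabla W$ and $\partial_W\theta$, via \eqref{eq:u-Gam-rela} and \eqref{eq:paWmD-es}. When $\alpha=1$, we would instead use the Bony-type bound \eqref{eq:R-1cm-es1} after commuting $\partial_W$ through.

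\textbf{Main obstacle.} We expect the main difficulty to be the commutator $[\Lambda^{2\alpha},\partial_W]\Gamma$. We would write it as $-[\Lambda^{2\alpha},W\cdot\nabla]\Gamma$ and apply \eqref{eq:commEs2} with $\sigma=2\alpha$ and $s=\gamma-2\alpha+\eta$. Then $s+\sigma=\gamma+\eta$, which is below $1$ in Cases I and II. This gives the bound $C\|W\|_{W^{1,\infty}}\|\Gamma\|_{B^{\gamma+\eta}_{\infty,\infty}}$, plus a term in $\|W\|_{B^{\gamma+\eta}}\|\nabla\Gamma\|_{L^\infty}$ if needed. The factor $\|W\|_{W^{1,\infty}}$ is controlled by the $C^{2,\gamma}$ analogue of \eqref{eq:W-W1inf-bdd}. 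The factor $\|\Gamma\|_{L^1_T(B^{\gamma+\eta}_{\infty,\infty})}$ comes from the $\widetilde L^1_T(B^{2\alpha}_{q,\infty})$ bound via embedding, because $2\alpha-2/q>1\ge\gamma+\eta$. Everything is then linear in $\|\partial_W\Gamma\|_{L^1_t(B^\gamma_{\infty,1})}$ with integrable coefficients, and Gronwall's inequality closes the estimate with the bound $Ce^{\exp(CT)}$.
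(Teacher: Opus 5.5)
There are genuine gaps, and they fall in exactly the regimes the lemma is designed to cover.

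\textbf{The velocity datum.} Lemma~\ref{lem:TD-sm} needs $s>-1$. Since $\gamma\le 2\alpha-1$, this forces $\gamma+\eta>2\alpha-1$, so $\eta$ is not tiny. Embedding $\operatorname{curl}(\partial_{W_0}u_0)\in L^p\hookrightarrow B^{-2/p}_{\infty,\infty}$ into $B^{s}_{\infty,\infty}$ then requires $-2/p\ge s>-1$, i.e.\ $p>2$. The hypothesis $p>\frac{2}{2\alpha-\gamma}$, however, allows $p\le 2$ whenever $\gamma<2\alpha-1$ or $\alpha=1$, and this relaxation of the assumption on $\partial_{W_0}u_0$ is the stated purpose of the lemma. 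The threshold on $p$ is therefore not ``exactly'' what your embedding uses. The paper uses linearity to split $\partial_W\Gamma=F^{(1)}+F^{(2)}$, where $F^{(2)}$ solves the homogeneous equation \eqref{eq:F2} with datum $\operatorname{curl}(\partial_{W_0}u_0)$. This piece is estimated by \eqref{eq:TD-sm4} in the $L^p$-based space $B^{s_0}_{p,1}$, with $\max\{-1,\gamma+\frac2p-2\alpha\}<s_0<0$, followed by Bernstein's inequality; the low frequencies are handled by the $L^p$ maximum principle.

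\textbf{The buoyancy commutator for $\alpha<1$.} Estimate \eqref{es:prod-es4} requires $\partial_W\theta\in L^1\cap L^\infty$, i.e.\ $\bar\theta_0\in C^{1,\mu}$. In Case I with $\gamma<2\alpha-1$, hypothesis \eqref{eq:bar-the-cd3} only gives $\bar\theta_0\in C^{2+\gamma-2\alpha}$ with $2+\gamma-2\alpha<1$. Then $\partial_{W_0}\theta_0$ lies only in $B^{1+\gamma-2\alpha}_{\infty,\infty}$, a negative-order space, so your ``once $\bar\theta_0\in C^{1,\mu}$'' is an extra assumption. The paper needs no striated regularity of $\theta$ at this level. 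Because the target index $\gamma'-2\alpha$ is negative, \eqref{eq:prodBes} gives $\|\partial_W f\|_{B^{\gamma'-2\alpha}_{\infty,1}}\le C\|W\|_{L^\infty}\|\nabla f\|_{B^{\gamma'-2\alpha}_{\infty,1}}$. Then \eqref{eq:Rbeta-cm-es} bounds $f=[\mathcal R_{1-2\alpha},u\cdot\nabla]\theta$ in $B^{\gamma'+1-2\alpha+2/q}_{q,1}$ by $C\|\nabla u\|_{L^q}\|\theta\|_{L^2\cap L^\infty}$.

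\textbf{Case II.} For $\alpha=1$, the condition $s=\gamma-2+\eta>-1$ forces $\gamma+\eta>1$, so your claim $s+\sigma<1$ is false there. Then \eqref{eq:commEs2} produces the term $\|W\|_{B^{\gamma+\eta}_{\infty,\infty}}\|\nabla\Gamma\|_{L^\infty}$, which needs a $C^{1,\beta}$ bound on $W$ with $\beta=\gamma+\eta-1>0$. That bound is only obtained afterwards, in Proposition~\ref{prop:pGamma-low}, from this very lemma. To use it you would have to couple the estimate with the $C^{1,\beta}$ transport estimate for $W$ and run a joint Gronwall argument, which you do not set up. The paper avoids this with the endpoint estimate \eqref{eq:TD-sm5} at $s=-1$, $r=\infty$. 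It writes $[\Delta,\partial_W]\Gamma=\Delta W\cdot\nabla\Gamma+2\nabla W:\nabla^2\Gamma$ and bounds this in $B^{-1}_{\infty,\infty}$ via \eqref{eq:prodBes-endp}, using only $\|W\|_{W^{1,\infty}}$ and $\|\Gamma\|_{B^1_{\infty,1}}$. Since $\gamma<1$, the quantity $\sup_j 2^j\|\Delta_jF^{(1)}\|_{L^1_tL^\infty}$ then controls the $B^\gamma_{\infty,1}$ norm.
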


\begin{proof}[Proof of Lemma \ref{lem:pGamma}]
(1) We first address case I.
From the $\Gamma$-equation \eqref{eq:Gamma-intr} and the fact that $\left[\partial_W, \partial_t+u \cdot \nabla\right]=0$,
we see that $\partial_W \Gamma$ solves the following equation:
\begin{equation}\label{eq:parW}
\left\{
\begin{aligned}
  \partial_t\left(\partial_W \Gamma\right)+u \cdot \nabla\left(\partial_W \Gamma\right)
  + \frac{1}{\varepsilon} \Lambda^{2\alpha}\left(\partial_W \Gamma\right)
  & = \frac{1}{\varepsilon} [\Lambda^{2\alpha},W\cdot\nabla]\Gamma
  + \partial_W \big(\left[\mathcal{R}_{1-2\alpha}, u \cdot \nabla\right] \theta\big), \\
  \partial_W \Gamma |_{t=0} & = \partial_{W_0}\Gamma_0.
\end{aligned}
\right.
\end{equation}
Taking advantage of the identities 
$\Gamma_0=\omega_0-\mathcal{R}_{1-2\alpha} \theta_0  =\omega_0-\partial_1 \Lambda^{-2\alpha} \theta_0$
and $\partial_{W_0}\omega_0=\partial_{W_0}\mathrm{curl}\,u_0=\mathrm{curl}\, (\partial_{W_0}u_0)
+\partial_2W_0\cdot\nabla u_{1,0}-\partial_1W_0\cdot\nabla u_{2,0}$ (recalling $u_0= (u_{1,0},u_{2,0})$),
we decompose $\partial_W\Gamma$ as
\begin{equation*}
 \partial_W\Gamma = F^{(1)} + F^{(2)},
\end{equation*}
where $F^{(1)}$ and $F^{(2)}$ respectively solve 
\begin{equation}\label{eq:F1}
\left\{
\begin{aligned}
  & \partial_t F^{(1)} + u\cdot\nabla F^{(1)} + \frac{1}{\varepsilon} \Lambda^{2\alpha} F^{(1)}
  = \frac{1}{\varepsilon} [\Lambda^{2\alpha},W\cdot\nabla]\Gamma
  + \partial_W \big(\left[\mathcal{R}_{1-2\alpha}, u \cdot \nabla\right] \theta\big), \\
  & F^{(1)}\big|_{t=0} = -\partial_{W_0} \mathcal{R}_{1-2\alpha} \theta_0 + \partial_2W_0\cdot\nabla u_{1,0} 
  - \partial_1W_0\cdot\nabla u_{2,0},
\end{aligned}
\right.
\end{equation}
and
\begin{equation}\label{eq:F2}
\begin{aligned}
  \partial_t F^{(2)} + u\cdot\nabla F^{(2)} + \frac{1}{\varepsilon} \Lambda^{2\alpha} F^{(2)} = 0, 
  \quad F^{(2)}\big|_{t=0} = \mathrm{curl} \,(\partial_{W_0}u_0) .
\end{aligned}
\end{equation}

According to Lemma \ref{lem:TD-sm},
there exists a constant $C>0$ independent of $\varepsilon$ so that for all $\gamma'$ such that  $2\alpha-1<\gamma^{\prime}<
\min\{1,4\alpha-2-\frac{2}{q}\}$ (recalling that $0<\frac{2}{q}<2\alpha-1 $), we have 
\begin{align*}
  \sum_{j\in\mathbb{N}} 2^{j\gamma'} \Vert \Delta_j F^{(1)}\Vert _{L^1_t(L^\infty)}
  & \leq C\, \varepsilon\, e^{C\int_0^t\Vert \nabla u(\tau)\Vert _{L^\infty}\dd \tau}
  \bigg(\left\Vert \partial_{W_0} \mathcal{R}_{1-2\alpha}\theta_0\right\Vert _{B_{\infty, 1}^{\gamma^{\prime}-2\alpha}} 
  + \left\Vert \nabla W_0\cdot\nabla u_0\right\Vert _{B_{\infty, 1}^{\gamma^{\prime}-2\alpha}} \\
  & \quad +\frac{1}{\varepsilon} \int_0^t\left\Vert \left[\Lambda^{2\alpha}, W \cdot \nabla\right] 
  \Gamma\right\Vert _{ B_{\infty, 1}^{\gamma^{\prime}-2\alpha}}\dd \tau 
  +\int_0^t\left\Vert \partial_W\left(\left[\mathcal{R}_{1-2\alpha}, u \cdot \nabla\right] 
  \theta\right)\right\Vert _{B_{\infty, 1}^{\gamma^{\prime}-2\alpha}}
  \dd \tau \bigg).
\end{align*}
Taking advantage of the product estimate \eqref{eq:prodBes}, we get
\begin{align*}
  \left\Vert \partial_{W_0} \mathcal{R}_{1-2\alpha} \theta_0\right\Vert _{B_{\infty, 1}^{\gamma^{\prime}-2\alpha}} 
  \leq C \left\Vert W_0\right\Vert _{L^\infty} 
  \left\Vert \nabla \mathcal{R}_{1-2\alpha} \theta_0\right\Vert _{ B_{\infty, 1}^{\gamma^{\prime}-2\alpha}}
  \leq C \left\Vert \varphi_0\right\Vert _{W^{1, \infty}}\left\Vert \theta_0\right\Vert _{L^2\cap L^\infty} <\infty,
\end{align*}
and
\begin{align*}
   \left\Vert \nabla W_0\cdot\nabla u_0\right\Vert _{ B_{\infty, 1}^{\gamma^{\prime}-2\alpha}}  \leq C \left\Vert \nabla W_0\right\Vert _{L^{\infty}}
  \left\Vert \nabla u_0\right\Vert _{ B_{\infty, 1}^{\gamma^{\prime}-2\alpha}}
  \leq C\left\Vert \varphi_0\right\Vert _{W^{2, \infty}}\left\Vert \nabla u_0\right\Vert _{L^q} <\infty,
\end{align*}
where we use the continuous embedding
\begin{align}\label{eq:embed-fact1}
  L^q(\mathbb{R}^2)\hookrightarrow B^{\gamma'-2\alpha}_{\infty,1}(\mathbb{R}^2),\quad
  L^\infty(\mathbb{R}^2)\hookrightarrow B^{\gamma'+2-4\alpha + \frac{2}{q}}_{\infty,1}(\mathbb{R}^2),
\end{align}
for all $\gamma'$ such that  $2\alpha-1<\gamma^{\prime}<
\min\{1,4\alpha-2-\frac{2}{q}\}$. 

For the above choice of $\gamma'$, Lemma~\ref{lem:Rbeta-cm},
together with \eqref{es:nabu-Gam2}, \eqref{eq:the-MP},
and \eqref{eq:embed-fact1}, gives
\begin{align}\label{eq:R-comm-Bes1}
  \left\Vert \nabla\left(\left[\mathcal{R}_{1-2\alpha}, u \cdot \nabla\right] \theta\right)
  \right\Vert _{L^1_t\big(B_{\infty, 1}^{\gamma^{\prime}-2\alpha}\big)}
  &\leq C \left\Vert \left[\mathcal{R}_{1-2\alpha}, u \cdot \nabla\right] \theta
  \right\Vert _{L^1_t\big(B_{q, 1}^{\gamma^{\prime}+1-2\alpha+\frac{2}{q}}\big)} \nonumber \\
  &\leq C\Vert \nabla u\Vert _{L^1_t(L^q)}\Big(
  \Vert \theta\Vert _{L^\infty_t\big(B^{\gamma^{\prime}+ 2-4\alpha+\frac{2}{q}}_{\infty,1}\big)}
  +\Vert \theta\Vert _{L^\infty_t(L^2)}\Big) \nonumber \\
  &\leq C \Vert \nabla u\Vert _{L^1_t(L^q)} \Vert \theta\Vert _{L^\infty\cap L^2} \\
  &\leq C e^{Ct}, 
\end{align}
and together with \eqref{eq:prodBes} and \eqref{eq:W-Linf-es-2} (recalling $-1<\gamma^{\prime}-2\alpha<1-2\alpha<0$),
\begin{align*}
  \left\Vert \partial_W\left(\left[\mathcal{R}_{1-2\alpha}, u \cdot \nabla\right] \theta\right)
  \right\Vert _{L^1_t\big(B_{\infty, 1}^{\gamma^{\prime}-2\alpha}\big)}
  \leq C\Vert W\Vert _{L^\infty_t(L^\infty)} \left\Vert \nabla\left[\mathcal{R}_{1-2\alpha}, u \cdot \nabla\right] \theta\right\Vert _{L^1_t\big(B_{\infty, 1}^{\gamma^{\prime}-2\alpha}\big)} 
  \leq C e^{\exp(Ct)} .
\end{align*}
According to \eqref{eq:commEs} with $\sigma =2\alpha$ and $s = \gamma'-2\alpha$ and the uniform estimates  
\eqref{es:nabu-Gam2}, \eqref{eq:W-W1inf-bdd}, 
we deduce that for all $2\alpha-1<\gamma^{\prime}<1$,
\begin{align*}
  \left\Vert \left[\Lambda^{2\alpha}, W \cdot \nabla\right] \Gamma\right\Vert _{L^1_t(B_{\infty, 1}^{\gamma^{\prime}-2\alpha})}
  \leq C \Vert  W\Vert _{L^\infty_t(W^{1,\infty})} \Vert \Gamma\Vert _{L^1_t(B_{\infty, 1}^{\gamma'})} 
  \leq C e^{\exp(Ct)}.
\end{align*}
Hence, collecting the above estimates gives that for all $2\alpha-1<\gamma^{\prime}< \min\{1,4\alpha-2-\frac{2}{q}\}$,
\begin{align*}
  \big\Vert  \big( 2^{j\gamma'} \Vert \Delta_j F^{(1)}\Vert _{L^1_t(L^\infty)} \big)_{j\in\mathbb{N}} \big\Vert _{\ell^1}
  \leq C e^{\exp(Ct)}.
\end{align*}

We next estimate $F^{(2)}$ by an argument valid
in both cases I and II.
Since $p>2/(2\alpha-\gamma)$, we may choose
\[
  \max\{-1,\gamma+2/p-2\alpha\}<s_0<0.
\]
Using
$L^p\hookrightarrow B^{s_0}_{p,1}$ and
$s_0+2\alpha-2/p>\gamma$,
Bernstein's inequality and Lemma~\ref{lem:TD-sm}
applied to \eqref{eq:F2} yield
\begin{align*}
  \sum_{j\geq0}2^{j\gamma}
    \Vert \Delta_jF^{(2)}\Vert _{L^1_t(L^\infty)}
  &\leq
  C\sum_{j\geq0}2^{j(s_0+2\alpha)}
    \Vert \Delta_jF^{(2)}\Vert _{L^1_t(L^p)}
  \\
  &\leq
  C\varepsilon
  e^{C\int_0^t\Vert \nabla u(\tau)\Vert _{L^\infty}\,\dd\tau}
  \Vert \partial_{W_0}u_0\Vert _{\dot W^{1,p}}.
\end{align*}
For the low-frequency block, Bernstein's inequality
and the $L^p$ maximum principle give
\[
  \Vert \Delta_{-1}F^{(2)}\Vert _{L^1_t(L^\infty)}
  \leq Ct\Vert F^{(2)}(0)\Vert _{L^p}
  \leq Ct\Vert \partial_{W_0}u_0\Vert _{\dot W^{1,p}}.
\]
Combining these estimates, we obtain
\begin{equation}\label{eq:F2-unified}
\begin{aligned}
  \Vert F^{(2)}\Vert _{L^1_t(B^\gamma_{\infty,1})}
  \leq C(t+\varepsilon)
  e^{C\int_0^t\Vert \nabla u(\tau)\Vert _{L^\infty}\,\dd\tau}
  \Vert \partial_{W_0}u_0\Vert _{\dot W^{1,p}}
  \leq Ce^{\exp(Ct)}.
\end{aligned}
\end{equation}

Therefore, gathering the above estimates leads to that for every $\gamma\in (0,2\alpha-1]$,
\begin{align}\label{eq:parW-Gam-L1Bes1}
  \left\Vert \partial_W \Gamma\right\Vert _{L_t^1\big(B_{\infty, 1}^{\gamma}\big)}  
  &\leq \Vert \Delta_{-1} (\partial_W \Gamma)\Vert _{L_t^1(L^\infty)} + \sum_{j\in\mathbb{N}} 2^{j\gamma} 
  \Vert \Delta_j F^{(1)}\Vert _{L^1_t(L^\infty)}
  +\sum_{j\in\mathbb{N}} 2^{j\gamma} \Vert \Delta_j F^{(2)}\Vert _{L^1_t(L^\infty)} \nonumber \\
  &\leq C \Vert W\, \Gamma\Vert _{L_t^1(L^q)} 
  + C \sum_{j\in\mathbb{N}} 2^{j\gamma'} \Vert \Delta_j F^{(1)}\Vert _{L^1_t(L^\infty)}
  + C e^{\exp(Ct)}  \leq C e^{\exp(Ct)}.
\end{align}

(2) Next, we deal with case II. 
In this case, $\partial_W \Gamma$ solves the following equation
\begin{align*}
  \partial_t\left(\partial_W \Gamma\right)+u \cdot \nabla\left(\partial_W \Gamma\right)
  -\frac{1}{\varepsilon}\Delta\left(\partial_W \Gamma\right)
  & =-\frac{1}{\varepsilon}\left[\Delta, \partial_W\right] \Gamma
  +\partial_W\left(\left[\mathcal{R}_{-1}, u \cdot \nabla\right] \theta\right) \\
  & =-\frac{1}{\varepsilon}\Delta W \cdot \nabla \Gamma-\frac{2}{\varepsilon} \nabla W: \nabla^2 \Gamma
  + \partial_W\left(\left[\mathcal{R}_{-1}, u \cdot \nabla\right] \theta\right),
\end{align*}
with initial data $(\partial_W \Gamma)|_{t=0} = \partial_{W_0}\Gamma_0$. 
We also have $\partial_W \Gamma = F^{(1)} + F^{(2)}$ with $F^{(1)}$ and $F^{(2)}$ satisfying \eqref{eq:F1} and \eqref{eq:F2} 
in the case $\alpha =1$, 
respectively. According to Lemma \ref{lem:TD-sm}, together with the product estimate \eqref{eq:prodBes}, 
we find that 
\begin{align*}
  & \sup_{j\in\mathbb{N}} 2^j \Vert \Delta_j F^{(1)}\Vert _{L^1_t(L^\infty)} \\
  & \leq C\, \varepsilon\, e^{C\int_0^t\Vert \nabla u(\tau)\Vert _{B^0_{\infty,1}} \dd \tau}
  \bigg(\left\Vert \partial_{W_0} \mathcal{R}_{-1}\theta_0\right\Vert _{B_{\infty, \infty}^{-1}} 
  + \left\Vert \nabla W_0\cdot\nabla u_0\right\Vert _{B_{\infty, \infty}^{-1}} 
  +\frac{1}{\varepsilon} \int_0^t\left\Vert \Delta W \cdot \nabla\Gamma\right\Vert _{B_{\infty, \infty}^{-1}} \dd \tau \\
  & \quad 
  +\frac{1}{\varepsilon} \int_0^t\left\Vert \nabla W: \nabla^2 \Gamma \right\Vert _{ B_{\infty, \infty}^{-1}}\dd \tau 
  +\int_0^t\left\Vert \partial_W\left(\left[\mathcal{R}_{-1}, u \cdot \nabla\right] 
  \theta\right)\right\Vert _{B_{\infty, \infty}^{-1}}
  \dd \tau \bigg).
\end{align*}
Estimate \eqref{eq:F2-unified} remains valid for $\alpha=1$.
To estimate the initial data for $F^{(1)}$,
fix $\gamma'\in(1,2-2/q)$.
Using \eqref{eq:prodBes}, \eqref{eq:prodBes-endp} 
and the relevant Besov embeddings,
we obtain
\begin{align*}
  \left\Vert \nabla W_0\cdot\nabla u_0\right\Vert _{ B_{\infty, \infty}^{-1}}  
  & \leq C \Vert \nabla W_0\cdot \nabla u_0\Vert _{B^{\gamma'-2}_{\infty,\infty}} \\
  & \leq C \left\Vert \nabla W_0\right\Vert _{L^{\infty}}
  \left\Vert \nabla u_0\right\Vert _{ B_{\infty, \infty}^{\gamma^{\prime}-2}}
  \leq C\left\Vert \varphi_0\right\Vert _{W^{2, \infty}}\left\Vert \nabla u_0\right\Vert _{L^q} <\infty,
\end{align*}
\begin{align*}
  \left\Vert \partial_{W_0} \mathcal{R}_{-1} \theta_0\right\Vert _{B_{\infty,\infty}^{-1}} 
  & \leq C \left\Vert \partial_{W_0} \mathcal{R}_{-1} \theta_0\right\Vert _{B_{\infty, \infty}^{\gamma^{\prime}-2}} \\
  & \leq C \left\Vert W_0\right\Vert _{L^\infty} 
  \left\Vert \nabla \mathcal{R}_{-1} \theta_0\right\Vert _{ B_{\infty, \infty}^{\gamma^{\prime}-2}}
  \leq C \left\Vert \varphi_0\right\Vert _{W^{1, \infty}}\left\Vert \theta_0\right\Vert _{L^2\cap L^\infty} <\infty,
\end{align*}
and
\begin{align*}
  \Vert \Delta W \cdot \nabla \Gamma\Vert _{L^1_t(B^{-1}_{\infty,\infty})}
  \leq C \Vert \Delta W\Vert _{L^\infty_t(B^{-1}_{\infty,\infty})} \Vert \nabla \Gamma\Vert _{L^1_t(B^0_{\infty,1})}
  \leq  C \Vert W\Vert _{L^\infty_t(W^{1,\infty})} \Vert  \Gamma\Vert _{L^1_t(B^1_{\infty,1})},
\end{align*}
\begin{align*}
  \Vert \nabla W:\nabla^2\Gamma\Vert _{L^1_t(B^{-1}_{\infty,\infty})} 
  & = \Vert  \partial_j (\nabla W_j \cdot \nabla\Gamma) \Vert _{L^1_t(B^{-1}_{\infty,\infty})} \\
  & \leq C \Vert \nabla W_j\cdot \nabla \Gamma\Vert _{L^1_t(L^\infty)}
  \leq C \Vert \nabla W\Vert _{L^\infty_t(L^\infty)} \Vert \nabla \Gamma\Vert _{L^1_T(L^\infty)} .
\end{align*}
Thanks to \eqref{eq:R-1cm-es1}, \eqref{es:nabu-Gam2}, \eqref{eq:W-Linf-es-2} 
and the embedding $L^\infty\hookrightarrow B^{\gamma'-2}_{\infty,1}$ for every $\gamma'\in(1,2-\frac{2}{q})$, 
we get
\begin{align*}
  &\varepsilon\left\Vert \partial_W\left(\left[\mathcal{R}_{-1}, u \cdot \nabla\right] \theta\right)
  \right\Vert _{L_t^1\big(B_{\infty, \infty}^{-1}\big)}\\
  & \leq C \varepsilon\,\left\Vert \partial_W\left(\left[\mathcal{R}_{-1}, u \cdot \nabla\right] \theta\right)
  \right\Vert _{L_t^1\big(B_{\infty, \infty}^{\gamma'-2}\big)}\leq  C  \varepsilon\, \Vert W\Vert _{L_t^\infty(L^\infty)} \left\Vert \left[\mathcal{R}_{-1}, u \cdot \nabla\right]
  \theta\right\Vert _{L_t^1 \big(B_{\infty, \infty}^{\gamma^\prime -1}\big)} \nonumber \\
  &\leq  C \varepsilon\, e^{\exp(Ct)}  \Big(\Vert \nabla u\Vert _{L^1_t(L^\infty)}\Vert \theta\Vert _{L^\infty_t\big(B^{\gamma'-2}_{\infty,\infty}\cap L^2\big)}
  + \Vert \nabla u\Vert _{L^1_t(L^q)}\Vert \theta\Vert _{L^\infty_t(L^2)} \Big) \leq C \varepsilon\, e^{\exp(Ct)}.
\end{align*}
Collecting the above estimates and \eqref{es:nabu-Gam2}, \eqref{eq:W-W1inf-bdd}, we find that for every $\gamma\in (0,1)$,
\begin{align*}
  & \left\Vert \partial_W \Gamma\right\Vert _{L_t^1\big(B_{\infty, 1}^{\gamma}\big)}  
  \leq \Vert \Delta_{-1} (\partial_W \Gamma)\Vert _{L_t^1(L^\infty)} + \sum_{j\in\mathbb{N}} 2^{j\gamma} 
  \Vert \Delta_j F^{(1)}\Vert _{L^1_t(L^\infty)}
  +\sum_{j\in\mathbb{N}} 2^{j\gamma} \Vert \Delta_j F^{(2)}\Vert _{L^1_t(L^\infty)} \nonumber \\
  &\leq C \Vert W\, \Gamma\Vert _{L_t^1(L^q)} 
  + C \sup_{j\in\mathbb{N}} 2^j \Vert \Delta_j F^{(1)}\Vert _{L^1_t(L^\infty)} 
  + C\Vert F^{(2)}\Vert _{L^1_t(B^\gamma_{\infty,1})}
  \nonumber \\
  & \leq C e^{\exp(Ct)}.
\end{align*}
\end{proof}

Based on Lemma \ref{lem:pGamma}, we obtain the following proposition: by suitably weakening the assumptions on the initial data, 
one can recover the same conclusions as \cite{LXY25}, 
and moreover, the persistence of the boundary regularity in case $\alpha=1$ is uniform with respect to $\mathrm{Pr}$.
\begin{proposition}\label{prop:pGamma-low}
Assume that $\theta_0(x) =\bar{\theta}_0(x) \mathbf{1}_{D_0}(x)$, where
$\partial D_0\in C^{2,\gamma}$ and $\bar{\theta}_0$ satisfies \eqref{eq:bar-the-cd3}-\eqref{eq:bar-the-cd4},
and $u_0\in W^{1,q}(\mathbb{R}^2)$, $q>\frac{2}{2\alpha-1}$, $\partial_{W_0}u_0\in \dot W^{1,p}(\mathbb{R}^2)$, $p>\frac{2}{2\alpha-\gamma}$.
Consider \{\textrm{case\, I}: $\gamma\in(0,2\alpha-1]$, $\alpha\in(\tfrac{1}{2},1)$\},
and \{\textrm{Case II}: $\gamma\in(0,1)$, $\alpha=1$\}, 
then we have 
\begin{align}\label{nab-W-Cgam-es}
  \Vert W\Vert _{L^\infty_T(C^{1,\gamma})}  
  + \Vert \partial_W \Gamma\Vert _{L^1_T(B^\gamma_{\infty,1})}  
  + \big\Vert \big(\nabla u, \partial_W \nabla u, \nabla (\partial_W u)\big)\big\Vert _{L^1_T (B^\gamma_{\infty,\infty})} 
  \leq C e^{\exp(CT)},
\end{align}
with $C>0$ depending on the initial data but independent of $\varepsilon=\frac{1}{\mathrm{Pr}}\in (0,1]$.
Consequently, it implies the uniform-in-$\varepsilon$ persistence of $C^{2,\gamma}$-boundary regularity of $\partial D(t)$
in case I and case II.
\end{proposition}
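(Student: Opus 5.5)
The plan is to propagate $\nabla W$ in $C^\gamma$ through the equation \eqref{eq:nabW} for $\nabla W$, and to read off the bounds on $\nabla u$, $\partial_W\nabla u$ and $\nabla(\partial_Wu)$ from Lemma~\ref{lem:pGamma} and the representation \eqref{eq:u-Gam-rela}.

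First, in both cases Proposition~\ref{prop:ap-es-basic} gives $\nabla u\in L^1_T(B^\gamma_{\infty,\infty})$ uniformly in $\varepsilon$. In case I we use $\widetilde L^1_T(B^{2\alpha-1}_{\infty,\infty})$, or $L^1_T(B^{2\alpha-1}_{\infty,\infty})$ since $q>2$. In case II we use $\widetilde L^1_T(B^1_{\infty,\infty})$. Since $q>2/(2\alpha-1)>2$ and $C^{1,\gamma}\subset W^{2,\infty}$, the $C^{1,\gamma}$ case of Theorem~\ref{thm:C1} (Case IV) already gives $\Vert W\Vert_{L^\infty_T(W^{1,\infty})}\le Ce^{\exp(CT)}$. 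The bound on $\partial_W\Gamma$ is Lemma~\ref{lem:pGamma}.

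Applying Lemma~\ref{lem:TE-prog-es} with $s=\gamma\in(0,1)$ to \eqref{eq:nabW} gives
\[
\Vert\nabla W(t)\Vert_{C^\gamma}\le C\Vert\nabla W_0\Vert_{C^\gamma}+C\int_0^t\bigl(\Vert\partial_W\nabla u\Vert_{C^\gamma}+\Vert\nabla W\Vert_{C^\gamma}\Vert\nabla u\Vert_{L^\infty}+\Vert\nabla W\Vert_{L^\infty}\Vert\nabla u\Vert_{C^\gamma}\bigr)\,\dd\tau .
\]
The work is therefore to bound $\Vert\partial_W\nabla u\Vert_{C^\gamma}$ linearly in $\Vert\nabla W\Vert_{C^\gamma}$, with $L^1_T$ coefficients.

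Split $\nabla u=\nabla\nabla^\perp\Lambda^{-2}\Gamma+\nabla\nabla^\perp\partial_1\Lambda^{-2-2\alpha}\theta$.

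\textbf{The $\Gamma$ part.} Apply \eqref{eq:paWmD-es} with the order-zero multiplier $\sigma=0$ and $s=\gamma$:
\[
\Vert\partial_W(m(D)\Gamma)\Vert_{C^\gamma}\lesssim\Vert\partial_W\Gamma\Vert_{B^\gamma_{\infty,\infty}}+\Vert W\Vert_{W^{1,\infty}}\Vert\Gamma\Vert_{B^\gamma_{\infty,\infty}} .
\]
The low frequencies are controlled by the $L^q$ bounds. Both terms lie in $L^1_T$ by Lemma~\ref{lem:pGamma} and \eqref{es:nabu-Gam2}.

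\textbf{The temperature part.} Write it as $m(D)\partial_W\theta-[m(D),W\cdot\nabla]\theta$ with $m(D)=\nabla^2\partial_1\Lambda^{-2-2\alpha}$, of order $\sigma=1-2\alpha\in(-1,0)$.
\begin{itemize}
\item For the first term, we need $\partial_W\theta\in L^\infty_T(C^{\gamma+2\alpha-1})$ in case I, and $\partial_W\theta\in C^{\gamma}$ in case II. This comes from transporting $\partial_{W_0}\theta_0$ via Lemma~\ref{lem:TE-prog-es}. The initial regularity comes from Lemma~\ref{lem:pWk-theta} with $k=3$: the profile $C^{1,\mu}$ with $\mu=1+\gamma-2\alpha$ gives $\partial_{W_0}^2\theta_0\in C^{\mu-1}$. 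At this stage we would rather use $\partial_W\theta=\mathrm{div}(W\mathds 1_D\,\partial_W\tilde\theta)$ together with the profile regularity. The cleanest route is to estimate $\nabla\partial_1\Lambda^{-2-2\alpha}\partial_W\theta$ in $C^{1,\gamma}$ using the low-frequency $L^p$ bounds and $\Vert\partial_W\theta\Vert_{C^{\gamma+2\alpha-2+1}}$.
\item For the commutator with $W$, use a first-order-lower analogue of Lemma~\ref{lem:pW-Cgam}. Differentiating the kernel formula \eqref{eq:comm-u-phi} once and applying the $C^s$ argument to $\nabla W$ should give
\[
\Vert[m(D),W\cdot\nabla]\theta\Vert_{C^\gamma}\lesssim\Vert\nabla W\Vert_{C^\gamma}\Vert\theta\Vert_{L^1\cap L^\infty}+\Vert W\Vert_{W^{1,\infty}}\Vert\theta\Vert_{L^1\cap L^\infty}.
\]
This needs the kernel order to satisfy $-d<\sigma-1<-1$, which holds since $\sigma-1=-2\alpha$.
\end{itemize}

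Gronwall's inequality then closes the bound on $\Vert\nabla W\Vert_{L^\infty_T(C^\gamma)}$. The bound on $\nabla(\partial_Wu)=\partial_W\nabla u+\nabla W\cdot\nabla u$ follows by the product estimate in $C^\gamma$. Finally, \eqref{eq:boundary-identities} and \eqref{es:inf-dz} transfer the bounds to the boundary. Since $\partial^2_az=(\partial_WW)\circ z$ and $W\in C^{1,\gamma}$, we get $\partial_WW\in C^\gamma$, and composing with $z\in C^{1}$ gives $\partial D\in L^\infty_T(C^{2,\gamma})$.

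The main obstacle is the temperature commutator at the $C^\gamma$ level of $\nabla W$, which is linear in $\Vert\nabla W\Vert_{C^\gamma}$. The first thing to try is the kernel splitting used in Lemma~\ref{lem:pW-Cgam}, applied to $\partial_W$ of the order-$(1-2\alpha)$ operator, treating $\nabla W$ as the Hölder coefficient.
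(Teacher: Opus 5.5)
Your architecture is the paper's. You transport $\nabla W$ in $C^\gamma$ through \eqref{eq:nabW} and \eqref{eq:T-BesEs0}, import $\Vert W\Vert_{L^\infty_T(W^{1,\infty})}$ from \eqref{eq:W-W1inf-bdd}, split $\partial_W\nabla u$ via \eqref{eq:u-Gam-rela}, bound the $\Gamma$-part by \eqref{eq:paWmD-es} and Lemma~\ref{lem:pGamma}, and close with Gronwall. The $\Gamma$-part, the bound on $\nabla(\partial_Wu)$ and the transfer to the boundary are fine.

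The gap is in the linear temperature term. The operator $m(D)=\nabla^2\partial_1\Lambda^{-2-2\alpha}$ has order $\sigma=1-2\alpha\le0$, so it \emph{gains} $2\alpha-1$ derivatives. Hence $m(D)\partial_W\theta\in C^\gamma$ only requires $\partial_W\theta\in C^{\gamma+\sigma}=C^{\gamma+1-2\alpha}$, a space of nonpositive index. You reversed the sign and ask for $\partial_W\theta\in L^\infty_T(C^{\gamma+2\alpha-1})$; at $\alpha=1$ you ask for $C^\gamma$, where the correct space is $C^{\gamma-1}$. That requirement cannot be met for patches. Even for a smooth profile, $\partial_W\theta=(\partial_W\tilde\theta)\mathbf{1}_{D(t)}$ jumps across $\partial D(t)$ unless the tangential derivative of the profile vanishes there, so it has no positive H\"older regularity. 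In case I with $\gamma<2\alpha-1$ the profile is only $C^{2+\gamma-2\alpha}$ with $2+\gamma-2\alpha<1$, so $\partial_W\theta$ is in general not even a bounded function. The source you propose does not apply either. Lemma~\ref{lem:pWk-theta} with $k=3$ needs a $C^{1,\mu}$ profile with $\mu\in(0,1)$, whereas your $\mu=1+\gamma-2\alpha$ is $\le0$. That case of the lemma also concerns $\partial_{W_0}^2\theta_0$, which plays no role at this level. Also, $\partial_W\theta=\mathrm{div}(W\theta)$, not $\mathrm{div}(W\mathbf{1}_D\partial_W\tilde\theta)$.

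The correct input is exactly what the profile hypotheses \eqref{eq:bar-the-cd3}--\eqref{eq:bar-the-cd4} are designed to give. Lemma~\ref{lem:pWk-theta} with $k=2$ yields $\partial_{W_0}\theta_0\in C^{\gamma+1-2\alpha}$, taking $\mu=2+\gamma-2\alpha\in(2-2\alpha,1)$ in case I with $\gamma<2\alpha-1$, or $\mu=\gamma$ in case II. At $\gamma=2\alpha-1$, the $C^{1+\widetilde\mu}$ profile gives $\partial_{W_0}\theta_0\in L^\infty\hookrightarrow B^0_{\infty,\infty}$. Since $\partial_W\theta$ solves the free transport equation \eqref{eq:par-W-the} and $\gamma+1-2\alpha\in(-1,0]$, \eqref{eq:T-BesEs} propagates this with bound $Ce^{\exp(Ct)}$. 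The low-frequency block of $m(D)\partial_W\theta$ is handled by writing $\partial_W\theta=\mathrm{div}(W\theta)$ and using $\Vert W\Vert_{L^\infty}\Vert\theta\Vert_{L^2}$. This is how the paper obtains \eqref{eq:parW-the-L1Cgam1}.

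By contrast, the commutator you flag as the main obstacle is routine. Your claimed bound is correct, but it is more than needed. In case I, \eqref{eq:commEs} with $s=\gamma$ and $\sigma=1-2\alpha$ applies, since $s+\sigma\le0$ and $L^\infty\hookrightarrow B^{s+\sigma}_{\infty,\infty}$. It gives a bound by $\Vert W\Vert_{W^{1,\infty}}\Vert\theta\Vert_{L^2\cap L^\infty}$. So, as in the paper, the whole temperature contribution is at most $Ce^{\exp(Ct)}$ by \eqref{eq:W-W1inf-bdd}, never enters the Gronwall loop, and needs no $C^{1,\gamma}$ norm of $W$. Note also that your hypothesis check $-d<\sigma-1$ fails at $\alpha=1$, where $\sigma-1=-2=-d$. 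In case II, run the splitting of Lemma~\ref{lem:pW-Cgam} directly on the degree $-1$ kernel of $\nabla^2\partial_1\Lambda^{-4}$, using only the Lipschitz bound on $W$. This gives the $C^\gamma$ bound for every $\gamma<1$, with the far-field terms costing a factor $|h|\log(1/|h|)$.
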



\begin{proof}[Proof of Proposition \ref{prop:pGamma-low}]
We deal with these two cases, respectively, in the sequel.
\vskip1mm
\textbf{Case I: $\gamma\in (0,2\alpha-1]$, $\alpha\in (\frac{1}{2},1)$.}
\vskip1mm

From the equation of $\nabla W$ in \eqref{eq:nabW}, using \eqref{eq:T-BesEs0} and the relation \eqref{eq:u-Gam-rela}, 
we find
\begin{equation}\label{eq:nabW-Cgam-1}
  \Vert \nabla W(t)\Vert _{C^\gamma} \leq \Vert \nabla W_0\Vert _{C^\gamma} 
  + C \int_0^t \Vert \partial_W \nabla u(\tau)\Vert _{C^\gamma} \dd \tau 
  + C \int_0^t \Vert \nabla u(\tau)\Vert _{C^\gamma} \Vert \nabla W(\tau)\Vert _{C^\gamma} \dd \tau, 
\end{equation}
and
\begin{equation}\label{eq:parW-nab-u-L1Cgam}
  \int_0^t \Vert \partial_W \nabla u(\tau)\Vert _{C^\gamma} \dd \tau 
  \leq  \int_0^t \Vert \partial_W \nabla^2 \Lambda^{-2} \Gamma(\tau)\Vert _{C^\gamma} \dd \tau 
  +  \int_0^t \Vert \partial_W \nabla^3 \Lambda^{-2-2\alpha} \theta(\tau)\Vert _{C^\gamma} \dd \tau .
\end{equation}
According to \cite[pp. 1416-1417]{LXY25} and \eqref{es:nabu-Gam2}, \eqref{eq:W-W1inf-bdd}, we have
\begin{equation}\label{eq:parW-Gam-L1Cgam1}
\begin{split}
  \Vert \partial_W(\nabla^2 \Lambda^{-2} \Gamma)\Vert _{L^1_t(C^\gamma)}
  & \leq C e^{\exp(Ct)} + 
  C \Vert \partial_W \Gamma\Vert _{L^1_t(C^\gamma)} 
  + C \int_0^t \Vert W(\tau)\Vert _{W^{1,\infty}} \Vert \Gamma(\tau)\Vert _{C^\gamma}\dd \tau \\
  & \leq C e^{\exp(Ct)} +  C \Vert \partial_W \Gamma\Vert _{L^1_t(C^\gamma)} ,
\end{split}
\end{equation}
and
\begin{align}\label{eq:parW-the-L1Cgam1}
  \Vert \partial_W(\nabla^3 \Lambda^{-2-2\alpha}\theta)\Vert _{L^1_t(C^\gamma)}
  & \leq C e^{\exp(Ct)} + C \Vert \partial_W\theta\Vert _{L^1_t(C^{\gamma+1-2\alpha})} + C \int_0^t \Vert W(\tau)\Vert _{W^{1,\infty}}
  \Vert \theta(\tau)\Vert _{L^2\cap L^\infty} \dd \tau \nonumber \\
  & \leq C e^{\exp(Ct)} + C \int_0^t \Vert W(\tau)\Vert _{W^{1,\infty}} \dd \tau \leq C e^{\exp(Ct)} .
\end{align}
In Lemma \ref{lem:pGamma}, we obtain the crucial uniform estimate $\Vert \partial_W \Gamma\Vert _{L^1_t(B^\gamma_{\infty,1})} \leq C e^{\exp(Ct)}$. 
Hence, collecting the above estimates, and using Gronwall's inequality and \eqref{es:nabu-Gam2} gives that
\begin{equation}\label{nab-W-Cgam-es1}
\begin{split}
  \Vert W\Vert _{L^\infty_T(C^{1,\gamma})} + \Vert \partial_W \Gamma\Vert _{L^1_T(B^\gamma_{\infty,1})}  
  + \Vert (\nabla u, \partial_W \nabla u)\Vert _{L^1_T (C^\gamma)} 
  & \leq C e^{\exp (CT)} e^{C \Vert \nabla u\Vert _{L^1_T(C^\gamma)}} \\
  & \leq C e^{\exp(CT)}.
\end{split}
\end{equation}
Thanks to $\nabla (\partial_Wu)=\partial_W\nabla u + \nabla W\cdot\nabla u$, we also get 
\begin{equation}\label{nabpWu}
\begin{aligned}
  \Vert \nabla (\partial_W u)\Vert _{L^1_T (C^\gamma)} 
  & \leq \Vert \partial_W\nabla u \Vert _{L^1_T (C^\gamma)}
  + \Vert \nabla W\cdot\nabla u\Vert _{L^1_T (C^\gamma)} \\
  & \leq C\Vert \partial_W\nabla u \Vert _{L^1_T (C^\gamma)} 
  + C\Vert \nabla W\Vert _{L^\infty_T (C^\gamma)}\Vert \nabla u\Vert _{L^1_T (C^\gamma)} 
  \leq Ce^{\exp (CT)}.
\end{aligned}
\end{equation}
\vskip1mm
\textbf{Case II: $\gamma\in (0,1)$, $\alpha=1$.}
\vskip1mm

We start with \eqref{eq:nabW-Cgam-1} and \eqref{eq:parW-nab-u-L1Cgam} with $\alpha=1$;
and for the right-hand side of \eqref{eq:parW-nab-u-L1Cgam}, both estimates \eqref{eq:parW-Gam-L1Cgam1}
and \eqref{eq:parW-the-L1Cgam1} with $\alpha=1$ also hold. 
The crucial estimate of $L^1_t(B^\gamma_{\infty,1})$-norm of 
$\partial_W \Gamma$ (which controls $\Vert \partial_W\Gamma\Vert _{L^1_t(C^\gamma)}$) is addressed in Lemma \ref{lem:pGamma}.
Hence, these estimates together with Gronwall's inequality and 
\eqref{es:nabu-Gam2} guarantee that for every $\gamma\in (0,1)$,
\begin{align*}
  \Vert W\Vert _{L_T^{\infty}\left(C^{1,\gamma}\right)}
  +\left\Vert \partial_W \Gamma\right\Vert _{L_T^1\left(B^\gamma_{\infty,1}\right)}
  +\left\Vert \partial_W \nabla u\right\Vert _{L_T^1\left(C^\gamma\right)} \leq C e^{\exp(CT)},
  \end{align*}
where $C>0$ is independent of $\varepsilon$. 
Combining this inequality with \eqref{nabpWu} gives \eqref{nab-W-Cgam-es} in case II.
\end{proof}

For the remaining two cases, we need stronger regularity
estimates for $\partial_W\Gamma$.
To this end, we establish the following lemma.

\begin{lemma}\label{lem:pGamma-high}
\begin{enumerate}[(1)]
\item
Consider $\{\textrm{case III}: \gamma\in(2\alpha-1,1), \,\alpha\in(\frac{1}{2},1)\}$.
Assume that
$\theta_0=\bar\theta_0\mathds{1}_{D_0}$,
$\partial D_0\in C^{2,\gamma}$,
$\bar\theta_0\in
C^{1,\gamma+1-2\alpha}(\overline{D_0})$,
and
\[
  u_0\in W^{1,q}(\mathbb R^2),
  \qquad q>\tfrac{2}{2\alpha-1},
  \qquad
  \partial_{W_0}u_0\in\dot W^{1,p}(\mathbb R^2),
  \qquad p>\tfrac{2}{2\alpha-\gamma}.
\]
Then there exists $C>0$, independent of $\varepsilon$,
such that, for every $T>0$,
\begin{equation}\label{es:pGamma-high1}
  \Vert \partial_W\Gamma\Vert _{L^1_T(B^\gamma_{\infty,1})}
  \leq Ce^{\exp(CT)}.
\end{equation}

\item
Consider $\{\textrm{case IV}:\gamma=1, \,\alpha\in(\frac{1}{2},1]\}$.
Assume that
$\theta_0=\bar\theta_0\mathds{1}_{D_0}$,
$\partial D_0\in C^{2,1}$,
$\bar\theta_0\in C^{1,\mu}(\overline{D_0})$
for some $\mu\in(2-2\alpha,1)$, and
\[
  u_0\in W^{1,q}(\mathbb R^2),
  \qquad q>\tfrac{2}{2\alpha-1},
  \qquad
  \partial_{W_0}u_0\in\dot W^{1,p}(\mathbb R^2),
  \qquad p>\tfrac{2}{2\alpha-1}.
\]
Then, for every
\[
  1<\gamma'
  <\min\left\{2\alpha-\tfrac2p,\,2\alpha-\tfrac2q\right\},
\]
there exists $C>0$, which may depend on $\gamma'$
but is independent of $\varepsilon$, such that,
for every $T>0$,
\begin{equation}\label{es:pGamma-high2}
  \Vert \partial_W\Gamma\Vert _{L^1_T(B^{\gamma'}_{\infty,1})}
  \leq Ce^{\exp(CT)}.
\end{equation}
\end{enumerate}
\end{lemma}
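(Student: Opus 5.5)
The plan is to follow the proof of Lemma~\ref{lem:pGamma} closely: write $\partial_W\Gamma=F^{(1)}+F^{(2)}$ with $F^{(1)},F^{(2)}$ solving \eqref{eq:F1}--\eqref{eq:F2}, and apply the smoothing estimate \eqref{eq:TD-sm4} of Lemma~\ref{lem:TD-sm} (with $\nu=1/\varepsilon$, after multiplying by $\varepsilon$) with target regularity $\gamma''$. Here $\gamma''$ is some exponent slightly above $\gamma$ in case III, and $\gamma''=\gamma'>1$ in case IV. We then take $s=\gamma''-2\alpha\in(-1,1)$. The $F^{(2)}$ part is handled exactly as in \eqref{eq:F2-unified}. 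Since $p>2/(2\alpha-\gamma)$ in case III, and $\gamma'<2\alpha-2/p$ in case IV, we can choose $s_0\in(-1,0)$ with $s_0+2\alpha-2/p>\gamma''$; this gives $\|F^{(2)}\|_{L^1_T(B^{\gamma''}_{\infty,1})}\le C(T+\varepsilon)e^{C\int\|\nabla u\|_{L^\infty}}\|\partial_{W_0}u_0\|_{\dot W^{1,p}}$. The initial data of $F^{(1)}$, namely $\partial_{W_0}\mathcal R_{1-2\alpha}\theta_0$ and $\nabla W_0\cdot\nabla u_0$, lie in $B^{\gamma''-2\alpha}_{\infty,1}$ by \eqref{eq:prodBes} and the embedding $L^q\hookrightarrow B^{\gamma''-2\alpha}_{\infty,1}$. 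This embedding needs $\gamma''<2\alpha-2/q$, which is exactly the constraint on $\gamma'$ in case IV. The low block $\Delta_{-1}(\partial_W\Gamma)$ is controlled by $\|W\Gamma\|_{L^1_T(L^q)}$ as before.

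The two forcing terms of $F^{(1)}$ are where the new input is needed.

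\emph{Dissipative commutator.} First, $\frac1\varepsilon[\Lambda^{2\alpha},W\cdot\nabla]\Gamma$ is estimated in $L^1_T(B^{\gamma''-2\alpha}_{\infty,1})$. In case III, $s+\sigma=\gamma''<1$, so \eqref{eq:commEs} bounds it by $\|W\|_{W^{1,\infty}}\|\Gamma\|_{B^{\gamma''}_{\infty,1}}$. This is finite because $W\in L^\infty_T(W^{1,\infty})$ by \eqref{eq:W-W1inf-bdd} (Case IV of Theorem~\ref{thm:C1}, valid since $p>2/(2\alpha-\gamma)>2/(2\alpha-1)$), and because $\Gamma\in L^1_T(B^{\gamma''}_{\infty,1})$ by \eqref{es:nabu-Gam2} with $u_0\in W^{1,q}$, using the embedding $\widetilde L^1_T(B^{2\alpha}_{q,\infty})\hookrightarrow L^1_T(B^{\gamma''}_{\infty,1})$ for $\gamma''<2\alpha-2/q$. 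In case IV, $s+\sigma=\gamma'\ge1$, so \eqref{eq:commEs2} adds the term $\|W\|_{B^{\gamma'}_{\infty,1}}\|\nabla\Gamma\|_{L^\infty}$. Since $W$ is only Lipschitz, I plan to split $\Gamma$ by paraproducts so that the derivative falls on $\Gamma$, which lies in $B^{\gamma'+}$. Should that fail, I would replace $W$ by $\partial_W W$-type quantities via the identity $[\Lambda^{2\alpha},\partial_W]\Gamma=\Lambda^{2\alpha}\partial_W\Gamma-\partial_W\Lambda^{2\alpha}\Gamma$ and absorb the term by Gronwall. For $\alpha=1$ I would use the explicit form $\Delta W\cdot\nabla\Gamma+2\nabla W:\nabla^2\Gamma$ as in Case II of Lemma~\ref{lem:pGamma}, putting $\nabla W\in L^\infty$ against $\nabla^2\Gamma\in L^1_T(B^{\gamma'-1}_{\infty,1})$.

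\emph{Temperature commutator.} Second, $\partial_W([\mathcal R_{1-2\alpha},u\cdot\nabla]\theta)$ cannot simply be written as $W$ times a gradient, because $\gamma''-2\alpha$ now exceeds $-1$ only barely, or the product estimate is no longer sufficient. Instead I use the kernel bounds of Lemma~\ref{lem:pW} and Lemma~\ref{lem:pW-Cgam} with $\sigma=1-2\alpha\in(-1,0)$, for which \eqref{es:prod-es4} gives an $L^\infty$ bound. Since $L^\infty\hookrightarrow B^{\gamma''-2\alpha}_{\infty,1}$ whenever $\gamma''<2\alpha$, this suffices. The resulting bound involves $\|\nabla(\partial_Wu)\|_{L^\infty}$, $\|\nabla W\|_{L^\infty}\|\nabla u\|_{L^\infty}$, and $\|\partial_W\theta\|_{L^1\cap L^\infty}$. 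The last is finite by Lemma~\ref{lem:pWk-theta} with $k=2$ and transport conservation. The quantity $\nabla(\partial_Wu)\in L^1_T(L^\infty)$ comes from \eqref{eq:nab-paw-u-L1Linf}. A term involving $\varepsilon$ appears, but it is harmless because the smoothing estimate carries the factor $\varepsilon$.

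The main obstacle will be the case IV endpoint commutator $[\Lambda^{2\alpha},\partial_W]\Gamma$ at regularity above $1$ when $W$ is only $W^{1,\infty}$. If the paraproduct split fails, I will close it by a Gronwall argument coupling $\|\partial_W\Gamma\|_{L^1_t(B^{\gamma'}_{\infty,1})}$ with $\|\partial_WW\|_{L^\infty_t(W^{1,\infty})}$, using that $\partial_W W$ obeys a transport equation forced by $\partial_W^2u$. The factor $\varepsilon/\varepsilon=1$ keeps every constant independent of $\varepsilon$, and Gronwall then yields the bounds \eqref{es:pGamma-high1}--\eqref{es:pGamma-high2}.
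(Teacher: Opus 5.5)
Your treatment of case III is sound. So, in case IV, are your handling of the initial data, of $F^{(2)}$, and of the temperature commutator through the $L^\infty$ bound \eqref{es:prod-es4}. This is essentially the paper's argument: your $F^{(1)}+F^{(2)}$ split is equivalent to the paper's direct bound of $\partial_{W_0}\Gamma_0$ in $B^{\gamma-2\alpha}_{\infty,1}$ via $L^p\hookrightarrow B^{\gamma-2\alpha}_{\infty,1}$. There are two small slips. In case III one has $2/(2\alpha-\gamma)<2/(2\alpha-1)$, so \eqref{eq:W-W1inf-bdd} is available because $u_0\in W^{1,q}$ with $q>2/(2\alpha-1)$, not because of $p$. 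Also, $\partial_{W_0}\theta_0\in L^1\cap L^\infty$ is Lemma~\ref{lem:pWk-theta} with $k=3$, not $k=2$.

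The genuine gap is the extra term $C\Vert W\Vert_{B^{\gamma'}_{\infty,1}}\Vert\nabla\Gamma\Vert_{L^\infty}$ produced by \eqref{eq:commEs2} in case IV. It stems from your premise that $W$ is only Lipschitz, and under that premise none of your three devices works.

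\emph{Paraproduct split.} In the piece $\sum_j[\Lambda^{2\alpha},\Delta_jW\cdot\nabla]S_{j-1}\Gamma$, the output is localized at the frequency $2^j$ of $W$. So $\Lambda^{2\alpha}$ costs $2^{2\alpha j}$, and the $B^{\gamma'-2\alpha}_{\infty,1}$ norm is of size $\sum_j 2^{j\gamma'}\Vert\Delta_jW\Vert_{L^\infty}\Vert\nabla\Gamma\Vert_{L^\infty}$. This diverges for Lipschitz $W$ once $\gamma'>1$, and no rearrangement moves this derivative onto $\Gamma$.

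\emph{Explicit form at $\alpha=1$.} The term $\Delta W\cdot\nabla\Gamma$ has the same defect. Moreover, $\nabla^2\Gamma\in L^1_T(B^{\gamma'-1}_{\infty,1})$ would require $\Gamma\in L^1_T(B^{\gamma'+1}_{\infty,1})$ with $\gamma'+1>2$, beyond the regularity $<2-2/q$ supplied by \eqref{es:nabu-Gam2}.

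\emph{Gronwall coupling with $\partial_WW\in W^{1,\infty}$.} Tangential derivatives of $W$ do not control the full norm $\Vert W\Vert_{B^{\gamma'}_{\infty,1}}$. In addition, the bound \eqref{eq:es-k=2-IV} is itself derived from this lemma.

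The missing idea is a bootstrap from the lower-order result. Pick $\beta\in(\gamma'-1,2\alpha-1)$, which is possible because $\gamma'<2\alpha-2/q<2\alpha$. The case IV hypotheses then imply those of Proposition~\ref{prop:pGamma-low} with boundary exponent $\beta$:
\begin{itemize}
\item $\partial D_0\in C^{2,1}\subset C^{2,\beta}$;
\item $\bar\theta_0\in C^{1,\mu}$ lies in the profile space required by \eqref{eq:bar-the-cd3}--\eqref{eq:bar-the-cd4} at exponent $\beta$;
\item $p>2/(2\alpha-1)>2/(2\alpha-\beta)$.
\end{itemize}
Hence \eqref{nab-W-Cgam-es} gives $\Vert W\Vert_{L^\infty_T(C^{1,\beta})}\le Ce^{\exp(CT)}$ uniformly in $\varepsilon$. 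Since $1+\beta>\gamma'$, we have $C^{1,\beta}\hookrightarrow B^{\gamma'}_{\infty,1}$. The offending term then closes directly with $\Vert\nabla\Gamma\Vert_{L^1_T(L^\infty)}$ from \eqref{es:nabu-Gam2}, and no Gronwall argument is needed.
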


\begin{proof}[Proof of Lemma~\ref{lem:pGamma-high}]
The assumptions in both cases allow us to use
uniform estimates
\eqref{eq:W-W1inf-bdd}-\eqref{eq:nab-paw-u-L1Linf}.

\medskip
\noindent
(1) We first consider case III. 
According to equation \eqref{eq:parW} and Lemma \ref{lem:TD-sm}, 
we infer that
\begin{equation}\label{eq:paw-Gam-L1Bes1}
\begin{split}
  \left\Vert \partial_W \Gamma\right\Vert _{L_T^1\big( B_{\infty, 1}^{\gamma}\big)}
  \leq  & \, \big\Vert  \big( 2^{j\gamma} \Vert \Delta_j (\partial_W \Gamma)\Vert _{L^1_T(L^\infty)}
  \big)_{j\in\mathbb{N}} \big\Vert _{\ell^1} +
  C \Vert \Delta_{-1} (\partial_W \Gamma)\Vert _{L^1_T(L^\infty)}  \\
  \leq & \,C\, \varepsilon\, e^{C\int_0^T\Vert \nabla u(t)\Vert _{L^\infty}\dd t}
  \bigg(\left\Vert \partial_{W_0} \Gamma_0\right\Vert _{B_{\infty, 1}^{\gamma-2\alpha}}
  + \frac{1}{\varepsilon} \int_0^T\left\Vert \left[\Lambda^{2\alpha}, W \cdot \nabla\right] 
  \Gamma\right\Vert _{ B_{\infty, 1}^{\gamma-2\alpha}}\dd t  \\
  & +\int_0^T\left\Vert \partial_W\left(\left[\mathcal{R}_{1-2\alpha}, u \cdot \nabla\right] 
  \theta\right)\right\Vert _{B_{\infty, 1}^{\gamma-2\alpha}}
  \dd t\bigg) + C\int_0^T\Vert W(t)\Vert _{L^\infty}\Vert \Gamma(t)\Vert _{L^\infty}\dd t.
\end{split}
\end{equation}

To estimate the initial data, recall the identities
\begin{equation}\label{eq:idd-rela1}
\begin{aligned}
  \Gamma_0
  =\omega_0-\mathcal R_{1-2\alpha}\theta_0,\qquad
  \partial_{W_0}\partial_j u_0
  =\partial_j(\partial_{W_0}u_0)
    -\partial_jW_0\cdot\nabla u_0.
\end{aligned}
\end{equation}
Using the embeddings
$L^p,\ L^q\hookrightarrow  B^{\gamma-2\alpha}_{\infty,1}$,
$B^{\gamma-1}_{\infty,\infty}\hookrightarrow B^{\gamma-2\alpha}_{\infty,1}$,
and \eqref{eq:paWmD-es} with
$\sigma=1-2\alpha$ and $s=\gamma-1$ ($s,s+\sigma\in(-1,1)$), we obtain
\begin{align*}
  \Vert \partial_{W_0}\Gamma_0
    \Vert _{B^{\gamma-2\alpha}_{\infty,1}}
  &\leq C\Bigl(
    \Vert \partial_{W_0}u_0\Vert _{\dot W^{1,p}}
    +\Vert \nabla W_0\Vert _{L^\infty}\Vert \nabla u_0\Vert _{L^q}
  + \Vert \partial_{W_0}\mathcal R_{1-2\alpha}\theta_0
      \Vert _{B^{\gamma-1}_{\infty,\infty}}
  \Bigr)
  \\
  &\leq C\Bigl(
    \Vert \partial_{W_0}u_0\Vert _{\dot W^{1,p}}
    +\Vert W_0\Vert _{W^{1,\infty}}
      \bigl(\Vert \nabla u_0\Vert _{L^q}
            +\Vert \theta_0\Vert _{L^\infty}\bigr)
      +  \Vert \partial_{W_0}\theta_0
      \Vert _{B^{\gamma-2\alpha}_{\infty,\infty}}
  \Bigr)
  \leq C.
\end{align*}
Here, the last term of $\partial_{W_0} \theta_0$
is controlled by Lemma~\ref{lem:pWk-theta}.

The striated commutator estimate \eqref{es:prod-es4},
together with \eqref{es:nabu-Gam2} and
\eqref{eq:W-W1inf-bdd}-\eqref{eq:nab-paw-u-L1Linf},
gives
\begin{align}\label{eq:paw-cm-es1}
  & \left\Vert \partial_W\left(\left[\mathcal{R}_{1-2\alpha}, u \cdot \nabla\right] \theta\right)
  \right\Vert _{L^1_T(B_{\infty, 1}^{\gamma-2\alpha})}
  \leq C \left\Vert \partial_W\left(\left[\mathcal{R}_{1-2\alpha}, u \cdot \nabla\right] \theta\right)
  \right\Vert _{L^1_T(L^\infty)} \nonumber \\
  & \leq C \big(\Vert \nabla W\Vert _{L^\infty_T(L^\infty)} \Vert \nabla u\Vert _{L^1_T(L^\infty)} 
  + \Vert \nabla \partial_W u\Vert _{L^1_T(L^\infty)} \big) 
  \Vert \theta\Vert _{L^\infty_T(L^1\cap L^\infty)} 
  + C \Vert \nabla u\Vert _{L^1_T(L^\infty)} 
  \Vert \partial_W \theta\Vert _{L^\infty_T(L^1\cap L^\infty)} \nonumber \\
  & \leq C e^{\exp(CT)}\big( 1 + \Vert \partial_W \theta\Vert _{L^\infty_T(L^1\cap L^\infty)}
  \big).
\end{align}

At the level of smooth approximations,
$[\partial_W,\partial_t+u\cdot\nabla]=0$ implies
\begin{equation}\label{eq:parWj-the}
  \partial_t(\partial_W^j\theta)
  +u\cdot\nabla(\partial_W^j\theta)=0,
  \qquad
  \partial_W^j\theta|_{t=0}
  =\partial_{W_0}^j\theta_0,
  \qquad j\geq1.
\end{equation}
Hence \eqref{pWj-theta} and conservation of the
temperature striated norms yield, for every $r\in[1,\infty]$,
\begin{equation}\label{eq:paw-the-Lp}
  \Vert \partial_W\theta\Vert _{L^\infty_T(L^r)}
  =\Vert \partial_{W_0}\theta_0\Vert _{L^r}
  \leq C.
\end{equation}

Finally, applying \eqref{eq:commEs} with
$\sigma=2\alpha$ and $s=\gamma-2\alpha$, we obtain
\begin{align*}
  \Vert [\Lambda^{2\alpha},W\cdot\nabla]\Gamma
    \Vert _{L^1_T(B^{\gamma-2\alpha}_{\infty,1})}
  &\leq
  C\Vert W\Vert _{L^\infty_T(W^{1,\infty})}
    \Vert \Gamma\Vert _{L^1_T(B^\gamma_{\infty,1})}
  \leq Ce^{\exp(CT)}.
\end{align*}
Substituting these estimates into
\eqref{eq:paw-Gam-L1Bes1} and using
\eqref{es:nabu-Gam2} and \eqref{eq:W-Linf-es-2}
proves \eqref{es:pGamma-high1}.

\medskip
\noindent
(2) We now consider case IV.
Estimate \eqref{eq:paw-Gam-L1Bes1} remains valid
with $\gamma'$ in place of $\gamma$, since
$-1<\gamma'-2\alpha<0$.

We first estimate the initial data.
Suppose that $\alpha<1$.
Using \eqref{eq:idd-rela1},
\eqref{eq:paWmD-es} with $s=0$ and $\sigma=1-2\alpha$,
and Lemma~\ref{lem:pWk-theta}, we obtain
\begin{align*}
  \Vert \partial_{W_0}\Gamma_0
    \Vert _{B^{\gamma'-2\alpha}_{\infty,1}}
  &\leq C\Bigl(
    \Vert \partial_{W_0}u_0\Vert _{\dot W^{1,p}}
    +\Vert \nabla W_0\Vert _{L^\infty}\Vert \nabla u_0\Vert _{L^q}
  +  \Vert \partial_{W_0}\mathcal R_{1-2\alpha}\theta_0
      \Vert _{B^0_{\infty,\infty}}
  \Bigr)
  \\
  &\leq C\Bigl(
    \Vert \partial_{W_0}u_0\Vert _{\dot W^{1,p}}
    +\Vert W_0\Vert _{W^{1,\infty}}
      \bigl(\Vert \nabla u_0\Vert _{L^q}
            +\Vert \theta_0\Vert _{L^\infty}\bigr)
    +  \Vert \partial_{W_0}\theta_0
      \Vert _{B^{1-2\alpha}_{\infty,\infty}}
  \Bigr)
  \leq C.
\end{align*}
Here we have used
$B^0_{\infty,\infty},\ L^p,\ L^q \hookrightarrow B^{\gamma'-2\alpha}_{\infty,1}$.

At $\alpha=1$, we instead use the
Calder\'on-Zygmund estimate for $\nabla\mathcal R_{-1}$.
Since $\gamma'<2-2/q$, we have
\begin{align*}
  \Vert \partial_{W_0}\mathcal R_{-1}\theta_0
    \Vert _{B^{\gamma'-2}_{\infty,1}}
  \leq
  C\Vert \partial_{W_0}\mathcal R_{-1}\theta_0\Vert _{L^q}
  \leq
  C_q\Vert W_0\Vert _{L^\infty}\Vert \theta_0\Vert _{L^q}.
\end{align*}
Together with \eqref{eq:idd-rela1} and the preceding
embeddings, this again gives
\[
  \Vert \partial_{W_0}\Gamma_0
    \Vert _{B^{\gamma'-2}_{\infty,1}}
  \leq C.
\]

The argument in
\eqref{eq:paw-cm-es1}-\eqref{eq:paw-the-Lp}
also applies in case IV, including $\alpha=1$.
Since $\gamma'-2\alpha<0$, it gives
\begin{align*}
  \Vert \partial_W(
    [\mathcal R_{1-2\alpha},u\cdot\nabla]\theta)
    \Vert _{L^1_T(B^{\gamma'-2\alpha}_{\infty,1})}
  &\leq
  C\Vert \partial_W(
    [\mathcal R_{1-2\alpha},u\cdot\nabla]\theta)
    \Vert _{L^1_T(L^\infty)}
  \leq Ce^{\exp(CT)}.
\end{align*}

To estimate the dissipative commutator, choose
$\gamma'-1<\beta<2\alpha-1$.
The assumptions of Proposition~\ref{prop:pGamma-low}
hold with boundary exponent $\beta$.
Thus \eqref{nab-W-Cgam-es} and the embedding
$C^{1,\beta}\hookrightarrow B^{\gamma'}_{\infty,1}$
yield
\[
  \Vert W\Vert _{L^\infty_T(B^{\gamma'}_{\infty,1})}
  \leq Ce^{\exp(CT)}.
\]
Applying \eqref{eq:commEs2} with
$s=\gamma'-2\alpha$ and $\sigma=2\alpha$, we obtain
\begin{align*}
  &\Vert [\Lambda^{2\alpha},W\cdot\nabla]\Gamma
    \Vert _{L^1_T(B^{\gamma'-2\alpha}_{\infty,1})}
  \\
  &\quad\leq
  C\Vert W\Vert _{L^\infty_T(W^{1,\infty})}
    \Vert \Gamma\Vert _{L^1_T(B^{\gamma'}_{\infty,1})}
  +C\Vert W\Vert _{L^\infty_T(B^{\gamma'}_{\infty,1})}
    \Vert \nabla\Gamma\Vert _{L^1_T(L^\infty)}
  \leq Ce^{\exp(CT)}.
\end{align*}
Here the required bounds for $\Gamma$ follow from
\eqref{es:nabu-Gam2} and $\gamma'<2\alpha-2/q$.
Substituting these estimates into
\eqref{eq:paw-Gam-L1Bes1}, with $\gamma'$ in place
of $\gamma$, proves \eqref{es:pGamma-high2}.
\end{proof}

Next, we present the proof of the main result in this section.
\begin{proof}[Proof of Theorem \ref{thm:main2}]
As in the above section, we also divide the proof into four cases.
Since Proposition \ref{prop:pGamma-low} has addressed case I and case II, 
we only need to consider the remaining two cases.
\vskip1mm

$\bullet$ \textbf{Case III: $\gamma\in (2\alpha-1,1)$, $\alpha\in(\frac{1}{2},1)$.} 
\vskip1mm

To establish the uniform-in-$\varepsilon$ $C^{2,\gamma}$-regularity of $\p D(t)$, according to \eqref{eq:boundary-identities} and \eqref{es:inf-dz}, 
it suffices to show that $\partial_W W \in L^\infty([0,T], C^\gamma(\R^2))$ uniformly in $\varepsilon$. 

Taking advantage of $W$-equation \eqref{eq:W-eq} and the fact that $[\p _W, \p _t+u\cdot\nabla]=0$, we have 
\begin{equation}\label{pWW}
\begin{split}
  \partial_t (\p_W W) + u\cdot\nabla (\p_W W) =  \partial_W^2 u , \quad  (\p_W W)|_{t=0}  = \p_{W_0} W_0.
\end{split}
\end{equation}
Estimate \eqref{eq:T-BesEs0} in Lemma~\ref{lem:TE-prog-es} directly implies
\begin{equation}
\begin{aligned}
  \Vert \p_W  W\Vert _{C^\gamma} \leq & C \left\Vert \p_{W_0}  W_0\right\Vert _{C^\gamma}
  + C \int_0^t\left\Vert \partial_W^2 u(\tau) \right\Vert _{C^\gamma} \mathrm{d} \tau
  + C \int_0^t \Vert \nabla u(\tau)\Vert _{L^\infty} \Vert \p_W  W(\tau)\Vert _{C^\gamma}  \mathrm{d} \tau .
\end{aligned}
\end{equation}
The main task below is to estimate the $L^1_t(C^\gamma)$-norm of $\partial_W^2 u$. Using the identity 
\eqref{eq:u-Gam-rela} gives
\begin{align}\label{eq:parWu-Cgam-0}
  \Vert \partial_W^2 u\Vert _{L^1_t(C^\gamma)} 
  \leq \big\Vert \partial_W^2  \nabla^{\perp} \Lambda^{-2} \Gamma\big\Vert _{L^1_t(C^\gamma)}
  + \big\Vert \partial_W^2 \nabla^2 \Lambda^{-2-2\alpha} \theta \big\Vert _{L^1_t(C^\gamma)}.
\end{align}
For the first term on the right-hand side of \eqref{eq:parWu-Cgam-0}, due to that 
\begin{align*}
  \partial_W^2  \nabla^{\perp} \Lambda^{-2} \Gamma
  = \partial_W(W\cdot\nabla  \nabla^{\perp} \Lambda^{-2} \Gamma)
  = (\partial_W W) \cdot \nabla\nabla^{\perp} \Lambda^{-2} \Gamma 
  + W\cdot\partial_W(\nabla \nabla^{\perp} \Lambda^{-2} \Gamma),
\end{align*}
we get
\begin{align*}
  \Vert \partial_W^2  \nabla^{\perp} \Lambda^{-2} \Gamma\Vert _{L^1_t(C^\gamma)}
  \leq \int_0^t\Vert \partial_W W (\tau)\Vert _{C^\gamma}\Vert \nabla^2 \Lambda^{-2} \Gamma(\tau)\Vert _{C^\gamma} \dd \tau
  + \Vert W\Vert _{L^\infty_t(C^\gamma)} \Vert \partial_W\nabla^2\Lambda^{-2} \Gamma\Vert _{L^1_t(C^\gamma)}.
\end{align*}
Observing that referring to \eqref{eq:paWmD-es}, \eqref{es:nabu-Gam2}, \eqref{eq:W-W1inf-bdd}, 
and \eqref{es:pGamma-high1} in Proposition \ref{lem:pGamma-high}, one can get
\begin{align*}
  \Vert \partial_W\nabla^2 \Lambda^{-2} \Gamma\Vert _{L^1_t(C^\gamma)} 
  & \leq C\left( \left\Vert  \p_W \Gamma\right\Vert _{L^1_t(C^\gamma)} + \Vert W\Vert _{L^\infty_t(W^{1,\infty})} 
  \left\Vert \Gamma\right\Vert _{L^1_t(C^\gamma)} \right) \leq C e^{\exp(Ct)}.
\end{align*}
In combination with \eqref{es:mDGam}, we have 
\begin{align}\label{eq:parW2-Gam}
  \Vert \partial_W^2  \nabla^{\perp} \Lambda^{-2} \Gamma\Vert _{L^1_t(C^\gamma)}
  \leq C \int_0^t \Vert \partial_W W(\tau) \Vert _{C^\gamma} \Vert \Gamma(\tau)\Vert _{L^q\cap C^\gamma} \dd \tau
  + C e^{\exp(Ct)}.
\end{align}
In light of the splitting
\begin{equation*}
\begin{split}
  \p_W^2\big(\nabla^2\Lambda^{-2-2\alpha} \theta \big) 
  = \nabla^2\Lambda^{-2-2\alpha} \p_W^2\theta
  -\p_W\big([\nabla^2\Lambda^{-2-2\alpha}, W\cdot\nabla]\theta\big)
  - [\nabla^2\Lambda^{-2-2\alpha}, W\cdot\nabla]\p_W\theta,
\end{split}
\end{equation*} 
it follows that 
\begin{align*}
  \big\Vert \partial^2_W \nabla^2 \Lambda^{-2-2\alpha} \theta(t) \big\Vert _{C^\gamma} 
  & \leq \Vert \nabla^2\Lambda^{-2-2\alpha} \p_W^2\theta(t)\Vert _{C^\gamma} 
  + \big\Vert \p_W\big([\nabla^2 \Lambda^{-2-2\alpha}, W\cdot\nabla]\theta\big)(t)\big\Vert _{C^\gamma}\\
  & \quad + \Vert [\nabla^2 \Lambda^{-2-2\alpha}, W\cdot\nabla]\p_W\theta (t)\Vert _{C^\gamma}.
\end{align*}
The high-low frequency decomposition and Bernstein's inequality ensure that
\begin{align*}
  \Vert \nabla^2 \Lambda^{-2-2\alpha} \p^2_W\theta(t)\Vert _{C^\gamma}
  & \leq C \Vert \Delta_{-1} \nabla^2 \Lambda^{-2-2\alpha} \p^2_W\theta (t)\Vert _{L^\infty} 
  + \sup\limits_{j\in\NN}2^{j\gamma}\Vert \Delta_j \nabla^2\Lambda^{-2-2\alpha} \p^2_W\theta (t)\Vert _{L^\infty}\\
  &\leq C \Vert \Delta_{-1} \nabla^2 \Lambda^{-2-2\alpha}\mathrm{div}(W \p_W\theta) (t)\Vert _{L^\infty} 
  + C \sup\limits_{j\in\NN}2^{j(\gamma-2\alpha)}\Vert \Delta_j \p^2_W\theta(t)\Vert _{L^\infty}  \\
  & \leq C \Vert W(t)\Vert _{L^\infty}\Vert  \p_W\theta(t)\Vert _{L^1\cap L^\infty} + C \Vert \p^2_W\theta(t)\Vert _{C^{\gamma-2\alpha}} .
\end{align*}
Since $\partial_W^j\theta$ for every $j\in\mathbb{N}$ solves the transport equation \eqref{eq:parWj-the},
we use \eqref{eq:T-BesEs}, \eqref{es:nabu-Gam2} and Lemma \ref{lem:pWk-theta} to deduce that for every $\gamma\in(2\alpha-1,1)$,
\begin{equation*}
  \Vert \partial_W^2\theta\Vert _{L_t^\infty(C^{\gamma-2\alpha})}
  \leq Ce^{C\Vert \nabla u\Vert _{L_t^1(L^\infty)}}\Vert \partial_{W_0}^2\theta_0\Vert _{C^{\gamma-2\alpha}}\leq Ce^{\exp (Ct)},
\end{equation*}
and 
\begin{equation}\label{eq:the-uni-bdd2}
\begin{aligned}
  \Vert (\theta,\,\partial_W\theta)\Vert _{L_t^\infty(L^1\cap L^\infty)} 
  = \Vert (\theta_0,\,\partial_{W_0}\theta_0)\Vert _{L^1\cap L^\infty} \leq C.
\end{aligned}
\end{equation}
Thanks to commutator estimates \eqref{es:prod-es6} (with $\sigma=-2\alpha\in (-2,-1)$) 
and \eqref{es:prod-es7} in Lemma \ref{lem:pW-Cgam}, we obtain 
\begin{align*}
  \Vert [\nabla^2\Lambda^{-2-2\alpha}, W\cdot\nabla]\p_W\theta(t)\Vert _{C^\gamma}
  \leq C\Vert W(t)\Vert _{C^\gamma}\Vert \p_W\theta(t)\Vert _{L^1\cap L^\infty},
\end{align*}
and
\begin{align*}
  & \big\Vert \p_W\big([\nabla^2 \Lambda^{-2-2\alpha}, W\cdot\nabla]\theta\big)(t)\big\Vert _{C^\gamma} \\
  & \leq C\left(\Vert \nabla W(t)\Vert _{L^\infty}^2 + \Vert \partial_W W(t)\Vert _{C^\gamma}\right)\Vert \theta(t)\Vert _{L^1\cap L^\infty} 
  + C \Vert \nabla W(t)\Vert _{L^\infty}\Vert \partial_W\theta(t)\Vert _{L^1\cap L^\infty}.
\end{align*}
Hence, gathering the above estimates with the uniform bounds \eqref{eq:W-W1inf-bdd} leads to 
\begin{align}\label{eq:parW2-the}
  \big\Vert \partial_W^2 \nabla^{\perp}\p_1\Lambda^{-2-2\alpha} \theta \big\Vert _{L^1_t(C^\gamma)} 
  \leq C e^{\exp(Ct)}  + C \int_0^t \Vert \partial_W W(\tau)\Vert _{C^\gamma}\dd \tau.
\end{align}
Substituting these estimates \eqref{eq:parW2-Gam} and \eqref{eq:parW2-the} into \eqref{eq:parWu-Cgam-0} gives
\begin{align*}
  \Vert  \p_WW(t)\Vert _{C^\gamma} \leq C e^{\exp(Ct)}
  + C \int_0^t \big(\Vert \nabla u(\tau)\Vert _{L^\infty} 
  + \Vert \Gamma(\tau)\Vert _{L^q\cap C^\gamma}+ 1\big) \Vert  \p_W W(\tau)\Vert _{C^\gamma} \dd\tau .
\end{align*}
Taking advantage of Gronwall's inequality and \eqref{es:nabu-Gam2}, we conclude that
\begin{align}\label{es:pWW-Cgam}
  \Vert  \p_W W\Vert _{L^\infty_T(C^\gamma)}\leq C e^{\exp (CT)}.
\end{align}
\vskip1mm
$\bullet$ \textbf{Case IV: $\gamma=1$, $\alpha\in(\frac{1}{2},1]$.} 
\vskip1mm

In view of \eqref{eq:boundary-identities}, uniform
$C^{2,1}$ boundary persistence reduces to a uniform bound
for $\partial_W^2W$ in $L^\infty([0,T],L^\infty)$.
We prove the stronger result 
$\p_W W\in L^\infty([0,T], W^{1,\infty}(\R^2))$ uniformly in 
$\varepsilon$.

In light of \eqref{eq:nabW} and the fact that $[\p _W, \p _t+u\cdot\nabla]=0,$ one can get 
\begin{equation}\label{pWnaW}
\left\{
\begin{aligned}
  \partial_t (\p_W \nabla W) + u\cdot\nabla (\p_W\nabla W) 
  & =  \partial_W^2 \nabla u + \p_W(\nabla W \cdot \nabla u) - \p_W(\nabla u \cdot \nabla W), \\
  \quad (\p_W\nabla W)|_{t=0} & = \p_{W_0}\nabla W_0,
\end{aligned}
\right.
\end{equation}
Taking advantage of the maximum principle and the uniform bound \eqref{eq:W-W1inf-bdd} gives
\begin{equation}\label{eq:pWnabW-L}
\begin{aligned}
  \Vert \p_W \nabla W(t)\Vert _{L^\infty} & \leq C \left\Vert \p_{W_0}\nabla W_0\right\Vert _{L^\infty}
  + C \int_0^t\left\Vert \partial_W^2 \nabla u(\tau) \right\Vert _{L^\infty} \mathrm{d} \tau \\
  & \quad + C \int_0^t \Vert \p_W\nabla W(\tau)\Vert _{L^\infty} \Vert  \nabla u(\tau)\Vert _{L^\infty} \dd \tau
  + \Vert \p_W\nabla u\Vert _{L^1_t(L^\infty)} \Vert \nabla W\Vert _{L^\infty_t(L^\infty)}  \\
  & \leq C e^{\exp(Ct)} +  C \int_0^t\left\Vert \partial_W^2 \nabla u(\tau) \right\Vert _{L^\infty} \mathrm{d} \tau
  + \int_0^t \Vert \p_W\nabla W(\tau)\Vert _{L^\infty} \Vert  \nabla u(\tau)\Vert _{L^\infty} \dd \tau.
\end{aligned}
\end{equation}
The main task remained is to estimate the $L^1_t(L^\infty)$-norm of $\partial_W^2 \nabla u$. Using the identity 
\eqref{eq:subcritical}, we find that
\begin{align}\label{eq:parWu-Cgam-IV}
  \Vert \partial_W^2 \nabla u\Vert _{L^1_t (L^\infty)} 
  \leq \big\Vert \partial_W^2 \nabla^2 \Lambda^{-2} \Gamma\big\Vert _{L^1_t(L^\infty)}
  + \big\Vert \partial_W^2 \nabla^3\Lambda^{-2-2\alpha} \theta \big\Vert _{L^1_t(L^\infty)}.
\end{align}
For the first term on the right-hand side of \eqref{eq:parWu-Cgam-IV}, 
using  \eqref{eq:W-W1inf-bdd} we have
\begin{align*}
  \Vert \partial_W^2\nabla^2\Lambda^{-2}\Gamma\Vert _{L^1_t(L^\infty)}
  & = \Vert \p_W( W \cdot \nabla^3 \Lambda^{-2}\Gamma)\Vert _{L^1_t(L^\infty)} \\
  & \leq \Vert W\Vert _{L^\infty_t(L^\infty)} 
  \Vert \p_W \nabla^3\Lambda^{-2} \Gamma\Vert _{L^1_t(L^\infty)} 
  + \Vert \p_W W\Vert _{L^\infty_t(L^\infty)} 
  \Vert \nabla^3 \Lambda^{-2} \Gamma\Vert _{L^1_t(L^\infty)} \\
  & \leq C e^{\exp(Ct)} \big(1+ \Vert \p_W \nabla^3\Lambda^{-2} \Gamma\Vert _{L^1_t(L^\infty)}  \big),
\end{align*}
where in the last line we also used the following estimate
\begin{align}\label{es:mDGamma}
  \Vert \nabla^3\Lambda^{-2} \Gamma\Vert _{L^1_t(L^\infty)} 
  \leq \Vert \nabla^3\Lambda^{-2} \Gamma\Vert _{L^1_t(B^0_{\infty,1})}
  \leq C\big(\Vert \Gamma\Vert _{L^1_t(L^q)}+ \Vert \Gamma\Vert _{L^1_t(B^1_{\infty,1})}\big).
\end{align}
Thanks to \eqref{eq:paWmD-es2} with $\sigma=1$ and $s=0$, one can see that
\begin{align*}
  & \left\Vert \p_W\nabla^3 \Lambda^{-2} \Gamma\right\Vert _{L^1_t(L^\infty)} 
  \leq \left\Vert \p_W\nabla^3\Lambda^{-2} \Gamma\right\Vert _{L^1_t(B^0_{\infty,1})} \\
  & \leq C\left\Vert  \p_W \Gamma\right\Vert _{L^1_t(B^1_{\infty,1})} 
  + C \Vert W\Vert _{L^\infty_t(W^{1,\infty})} \left\Vert \Gamma\right\Vert _{L^1_t(B^1_{\infty,1})} 
  + C \left\Vert   W\right\Vert _{L^\infty_t(B^1_{\infty,1})} \left\Vert \nabla\Gamma\right\Vert _{L^1_t(L^\infty)},
\end{align*}
which together with the uniform estimates \eqref{es:nabu-Gam2}, \eqref{nab-W-Cgam-es} 
and \eqref{es:pGamma-high2} implies
\begin{align*}
  \Vert \partial_W \nabla^3 \Lambda^{-2}\Gamma\Vert _{L^1_t(L^\infty)} 
  \leq C e^{\exp(Ct)}.
\end{align*}
For the second term on the right-hand side of \eqref{eq:parWu-Cgam-IV}, in light of the decomposition that  
\begin{align*}
  \p_W^2(\nabla^3\Lambda^{-2-2\alpha} \theta) 
  = \nabla^3\Lambda^{-2-2\alpha} \p_W^2\theta
  -\p_W \big([\nabla^3\Lambda^{-2-2\alpha}, W\cdot\nabla]\theta\big) 
  - [\nabla^3\Lambda^{-2-2\alpha}, W\cdot\nabla]\p_W\theta,
\end{align*} 
one can get 
\begin{align*}
  \big\Vert \partial^2_W \nabla^3\Lambda^{-2-2\alpha} \theta \big\Vert _{L^1_t(L^\infty)}
  & \leq \Vert \nabla^3\Lambda^{-2-2\alpha} \p_W^2\theta\Vert _{L_t^1(L^\infty)} 
  + \big\Vert \p_W\big([\nabla^3\Lambda^{-2-2\alpha}, W\cdot\nabla]\theta\big)\big\Vert _{L_t^1(L^\infty)}\\
  &\quad+\Vert [\nabla^3\Lambda^{-2-2\alpha}, W\cdot\nabla]\p_W\theta\Vert _{L_t^1(L^\infty)}.
\end{align*}
Recalling that $\mu\in (2-2\alpha,1)$ ($\mu$ appears in the assumption of $\bar{\theta}_0$), we have
\begin{align*}
  \Vert \nabla^3 \Lambda^{-2-2\alpha} \p^2_W & \theta \Vert _{L^1_t(L^\infty)} 
  \leq \Vert \nabla^3 \Lambda^{-2-2\alpha} \partial_W^2 
  \theta\Vert _{L^1_t(B^0_{\infty,1})} \\
  & \leq \Vert \Delta_{-1}\nabla^3\Lambda^{-2-2\alpha} \p^2_W\theta\Vert _{L^1_t(L^\infty)} 
  + \sum_{j\in\mathbb{N}} \Vert \Delta_j\nabla^3\Lambda^{-2-2\alpha} \p^2_W\theta\Vert _{L^1_t(L^\infty)} \\
  & \leq C\Vert \Delta_{-1}\nabla^3\Lambda^{-2-2\alpha} \div (W\p_W\theta)\Vert _{L^1_t(L^2)} 
  + C\sum\limits_{j\in\mathbb{N}} 2^{j(2-2\alpha - \mu)} 2^{j(\mu-1)} \Vert \Delta_j \p^2_W\theta\Vert _{L^1_t(L^\infty)} \\
  & \leq C\Vert W\Vert _{L^\infty_t(L^\infty)} \Vert \p_W\theta\Vert _{L^1_t(L^2)} 
  + C\Vert \p^2_W\theta\Vert _{L^1_t(B^{\mu-1}_{\infty,\infty})}.
\end{align*}
Due to that $\partial_W^j\theta$ for all $j\in\mathbb{N}$ solves the equation \eqref{eq:parWj-the}, 
we apply \eqref{eq:T-BesEs}, \eqref{es:nabu-Gam2} and Lemma \ref{lem:pWk-theta} 
to infer that \eqref{eq:the-uni-bdd2} holds and for every $\mu\in(2-2\alpha,1)$,
\begin{equation*}
  \Vert \partial_W^2\theta\Vert _{L_t^\infty(C^{\mu-1})}
  \leq Ce^{C\Vert \nabla u\Vert _{L_t^1(L^\infty)}}\Vert \partial_{W_0}^2\theta_0\Vert _{C^{\mu-1}}\leq Ce^{\exp (Ct)}.
\end{equation*}
Thanks to \eqref{es:prod-es4} and \eqref{es:prod-es5} with $\sigma =1-2\alpha \in (-2, 0)$ respectively, we deduce that
\begin{align*}
  \big\Vert \p_W\big([\nabla^3\Lambda^{-2-2\alpha}, W\cdot\nabla]\theta\big)\big\Vert _{L^1_t(L^\infty)} 
  &\leq C \Vert \nabla W\Vert _{L^\infty_t(L^\infty)}^2 \Vert \theta\Vert _{L^1_t(L^1\cap L^\infty)}   
  + C \Vert \nabla W\Vert _{L^\infty_t(L^\infty)} \Vert \partial_W\theta\Vert _{L^1_t(L^1\cap L^\infty)} \\
  & \quad + C \int_0^t \Vert \nabla (\partial_W W)(\tau)\Vert _{L^\infty} \Vert \theta(\tau)\Vert _{L^1\cap L^\infty} \dd \tau ,
\end{align*}
and
\begin{align*}
  \Vert [\nabla^3\Lambda^{-2-2\alpha}, W\cdot\nabla]\p_W\theta\Vert _{L_t^1(L^\infty)}
  \leq C\Vert \nabla W\Vert _{L^\infty_t(L^\infty)} \Vert \p_W\theta\Vert _{L^1_t(L^1\cap L^\infty)}.
\end{align*}
Consequently, in combination with \eqref{eq:W-W1inf-bdd}, the above estimates ensure that
\begin{align*}
  \big\Vert \partial_W^2 \nabla^3 \Lambda^{-2-2\alpha} \theta \big\Vert _{L^1_t(L^\infty)} \leq C e^{\exp(Ct)}
  + C\int_0^t\Vert \nabla (\partial_W W)(\tau)\Vert _{ L^\infty} \dd\tau.
\end{align*}

Substituting the above estimates of \eqref{eq:parWu-Cgam-IV} into \eqref{eq:pWnabW-L}, we obtain
\begin{align*}
  & \Vert  \p_W\nabla W(t)\Vert _{L^\infty} + \Vert \nabla \p_WW(t)\Vert _{L^\infty}
  \leq 2\Vert  \p_W\nabla W(t)\Vert _{L^\infty} + \Vert  \nabla W(t)\Vert ^2_{L^\infty} \\
  & \leq C e^{\exp (Ct)} 
  + C \int_0^t \big(\Vert \nabla u(\tau)\Vert _{L^\infty}+1\big) \big(\Vert  \p_W\nabla W\Vert _{L^\infty} 
  + \Vert \nabla \p_W W\Vert _{L^\infty}\big) \dd\tau .
\end{align*}
Gronwall's inequality and \eqref{es:nabu-Gam2} guarantee that 
\begin{align}\label{eq:es-k=2-IV}
  \Vert  \p_W(\nabla W)\Vert _{L^\infty_T(L^\infty)} + \Vert \nabla \p_W W\Vert _{L^\infty_T(L^\infty)} 
  + \Vert \partial_W^2 \nabla u\Vert _{L^1_T(L^\infty)}  \leq C e^{\exp(CT)} ,
\end{align}
which implies $\p^2_W W \in L^\infty_T(L^{\infty})$.

The preceding striated estimates are uniform with respect to
$\varepsilon\in(0,1]$. The approximation and compactness argument
carried out in Appendix~\ref{sec:B3} shows that these bounds are
inherited by the solution $(u,\theta)$ constructed in
Theorem~\ref{thm:main1}. In particular, when $0<\gamma<1$, the
associated tangent field $W$ and its striated derivative
$\partial_W W$ satisfy the required uniform $C^\gamma$ estimates.
At the endpoint $\gamma=1$, the corresponding $W^{1,\infty}$
estimates yield the $C^{2,1}=W^{3,\infty}$ boundary regularity.
Consequently, the $C^{2,\gamma}$-regularity of $\partial D(t)$ is
preserved on $[0,T]$, with a bound independent of $\varepsilon$.
This completes the proof of Theorem~\ref{thm:main2}.
\end{proof}



\section{Infinite Prandtl number limit}\label{sec:limit}

This section proves Theorem~\ref{thm:infinite-Prandtl}
on the infinite Prandtl number limit.

\subsection{A general convergence result}

We first establish a general convergence result
for initial temperatures uniformly bounded
in $L^1\cap L^\infty$, without assuming a patch structure.
At the endpoint $\alpha=1$, we additionally require
their supports to lie in a fixed ball.

\begin{proposition}\label{prop:weak-limit-R2}
Let $\alpha\in(\frac12,1]$, $T>0$, and $2<p<\infty$.
For each $\varepsilon=1/\mathrm{Pr}\in(0,1]$,
let $u_0^\varepsilon$ be divergence-free, and assume that
\begin{equation}\label{eq:inidata-eps-assum}
\begin{gathered}
  \sup_{0<\varepsilon\leq1}
  \Vert u_0^\varepsilon\Vert _{W^{1,p}}
  +
  \sup_{0<\varepsilon\leq1}
  \Vert \theta_0^\varepsilon\Vert _{L^1\cap L^\infty}
  <\infty,
  \\
  \theta_0^\varepsilon\longrightarrow\theta_0
  \qquad\text{strongly in }L^2(\mathbb R^2).
\end{gathered}
\end{equation}
If $\alpha=1$, assume in addition that there exists
$R_0>0$, independent of $\varepsilon$, such that
\begin{equation}\label{eq:the-eps-supp-assum}
  \operatorname{supp}\theta_0^\varepsilon
  \subset B_{R_0},
  \qquad 0<\varepsilon\leq1.
\end{equation}

Let $(u^\varepsilon,\theta^\varepsilon)$ be the unique
global solution of \eqref{BoussEq} on $\mathbb R^2$
with initial data $(u_0^\varepsilon,\theta_0^\varepsilon)$,
provided by Theorem~\ref{thm:main1}.

As $\varepsilon\to0$,
the following statements hold for the whole family.
\begin{enumerate}[(i)]
\item
If $\frac12<\alpha<1$, or if $\alpha=1$ and
\begin{equation}\label{eq:zero-mean-family}
  \int_{\mathbb R^2}\theta_0^\varepsilon(x)\,\dd x=0,
  \qquad 0<\varepsilon\leq1,
\end{equation}
the original solutions $(u^\varepsilon,\theta^\varepsilon)$
converge to $(u,\theta)$.
Set $a^\varepsilon\equiv0$ and
$(U^\varepsilon,\Theta^\varepsilon)
:=(u^\varepsilon,\theta^\varepsilon)$.

\item
If $\alpha=1$ and \eqref{eq:zero-mean-family} is not assumed,
there exist absolutely continuous paths
$a^\varepsilon:[0,T]\to\mathbb R^2$,
with $a^\varepsilon(0)=0$, such that the transformed fields
$(U^\varepsilon,\Theta^\varepsilon)$ defined in
\eqref{def:Theta-U-eps} converge to $(u,\theta)$.
\end{enumerate}
In both cases, $(u,\theta)$ is the unique normalized global
weak solution of the fractional Stokes-transport system
\eqref{eq:ST}, with
\[
  \theta(0)=\theta_0,
  \qquad u=G_\alpha*\theta,
\]
where $G_\alpha$ is given by
\eqref{def:G-alp-frac}-\eqref{def:G-alp=1}.
More precisely, for every $R>0$,
\begin{equation}\label{eq:U-The-eps-conv-gen}
\begin{aligned}
  U^\varepsilon
  &\rightharpoonup^\ast u
  &&\text{weakly-$*$ in }\;
  L^\infty\bigl(0,T;W^{1,p}(B_R)\bigr),
  \\
  \Theta^\varepsilon
  &\longrightarrow\theta
  &&\text{strongly in }\;
  C\bigl([0,T];H^{-1/2}(B_R)\bigr).
\end{aligned}
\end{equation}
Moreover, if $\alpha=1$, in either case there exists $R_T>0$ such that
\begin{equation}\label{eq:theta-supp}
  \operatorname{supp}\theta(t)\subset B_{R_T},
  \qquad 0\leq t\leq T.
\end{equation}
\end{proposition}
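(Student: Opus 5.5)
The plan is a compactness argument built on the $\varepsilon$-uniform bounds of Proposition~\ref{prop:ap-es-basic}. The limit is then identified through the Stokes relation, and the additive constant is fixed either by global integrability or by an averaged normalization.

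\textbf{Step 1: uniform bounds in the chosen frame.} By \eqref{es:nabu-Gam2}, $\nabla u^\varepsilon$ is bounded in $L^\infty_T(L^p)\cap L^1_T(L^\infty)$ uniformly in $\varepsilon$.

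In case (i) we take $a^\varepsilon\equiv0$ and use \eqref{eq:u-LQ-es}. For $\alpha=1$ the first moments are uniformly bounded because of \eqref{eq:the-eps-supp-assum}. This gives $u^\varepsilon$ bounded in $L^\infty_T(L^Q)$, hence in $L^\infty_T(W^{1,p}(B_R))$.

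In case (ii) we define $a^\varepsilon$ by the averaged normalization: $\dot a^\varepsilon(t)$ is the average of $u^\varepsilon(t,\cdot+a^\varepsilon(t))$ over a fixed ball $B_{1}$, with $a^\varepsilon(0)=0$. This is an ODE with a Lipschitz right-hand side in $a$, so $a^\varepsilon$ exists and is absolutely continuous. Then $U^\varepsilon$ has zero mean on $B_1$. Poincar\'e's inequality together with the gradient bound then gives $\Vert U^\varepsilon\Vert_{L^\infty_T(W^{1,p}(B_R))}\le C_R$. The sublinear growth $|U^\varepsilon(x)|\le C(1+|x|^{1-2/p})$ follows from Morrey's inequality.

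Since $\Theta^\varepsilon$ is transported by $U^\varepsilon$, its support stays in $B_{R_T}$. To see this, use $\int_0^T\Vert \nabla U^\varepsilon\Vert_{L^\infty}\le C$, the local bound, and a Gronwall argument on $|X_t|$ for the translated flow. This also yields \eqref{eq:theta-supp}.

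\textbf{Step 2: compactness and passage to the limit.} $\Theta^\varepsilon$ is bounded in $L^\infty_T(L^1\cap L^\infty)$. Its time derivative $\partial_t\Theta^\varepsilon=-\div(U^\varepsilon\Theta^\varepsilon)$ is bounded in $L^\infty_T(H^{-1}(B_R))$. Aubin--Lions with $L^2\Subset H^{-1/2}\subset H^{-1}$ on balls then gives strong convergence of a subsequence in $C([0,T];H^{-1/2}(B_R))$, and weak-$*$ compactness gives the convergence of $U^\varepsilon$.

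We pass to the limit in the weak formulation of Definition~\ref{defn:weak-Bouss} in the moving frame. The frame acceleration is absorbed into the pressure, as in Remark~\ref{rmk:norm-velocity}, so divergence-free test fields do not see it. The inertial terms carry a factor $\varepsilon$ and are locally bounded. The term $\varepsilon\int U\cdot\partial_t\phi$ vanishes, and $\varepsilon U\otimes U$ vanishes because $U^\varepsilon$ is locally bounded in $L^\infty$. The product $\Theta^\varepsilon U^\varepsilon$ converges, because strong $H^{-1/2}$ convergence pairs with weak $H^1_{\rm loc}$ convergence. (Strictly this needs time compactness; $L^\infty_T(H^{1/2+})$ against $C_T H^{-1/2}$ suffices.)

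The nonlocal term $\int U\cdot\Lambda^{2\alpha}\phi$ for $\alpha<1$ requires the tail decay $|\Lambda^{2\alpha}\phi(x)|\lesssim|x|^{-2-2\alpha}$ together with the global $L^Q$ bound. The limit is a weak solution of \eqref{eq:ST}.

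\textbf{Step 3: identifying the velocity and getting convergence of the whole family.} This is the main obstacle. The difference $u-G_\alpha*\theta$ is divergence-free and $\Lambda^{2\alpha}$-harmonic modulo gradients, and it has sublinear growth. A Liouville argument on the Fourier side shows its support is at the origin, so it equals a constant $c(t)$.

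In case (i), both $u$ and $G_\alpha*\theta$ lie in $L^Q$. For $\alpha=1$ with zero mean, $G_1*\theta=O(|x|^{-1})$, hence $c=0$. In case (ii), $c(t)$ is fixed by the $B_1$-average normalization, which passes to the limit. Here I would redefine the normalization so that $G_1*\theta$ also has zero $B_1$-average; if needed, one modifies $a^\varepsilon$ by a vanishing correction. This is the delicate point.

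Uniqueness of the normalized weak solution comes from Lemma~\ref{lem:ST-uniqueness}. It makes every subsequential limit the same, so the whole family converges.
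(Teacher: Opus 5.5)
Your treatment of case~(i), the compactness step, the passage to the limit, and the Liouville argument reducing to $u=G_\alpha*\theta+c(t)$ all match the paper's proof. The gap is in case~(ii), at the point you flag yourself, and the repair you suggest does not close it.

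With $\dot a^\varepsilon=|B_1|^{-1}\int_{B_1}u^\varepsilon(t,x+a^\varepsilon(t))\,\dd x$, the identity $\int_{B_1}U^\varepsilon(t)\,\dd x=0$ passes to the weak-$*$ limit. This gives $\int_{B_1}u(t)\,\dd x=0$, and hence
\[
  c(t)=-\frac1{|B_1|}\int_{B_1}(G_1*\theta)(t,x)\,\dd x,
\]
which has no reason to vanish. For example, \eqref{def:G-alp=1} shows that the $B_1$-average of $G_1$ is $\frac1{8\pi}e_2$. So if $\theta(t)$ is concentrated near the origin with mass $M\neq0$, then $c(t)\approx-\frac{M}{8\pi}e_2$.

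Your limit is therefore the normalized solution seen in a frame moving along a path $A$ with $\dot A=c$. It is not the solution with $u=G_1*\theta$ that the statement requires. Moreover, Lemma~\ref{lem:ST-uniqueness} is formulated for the normalization $u=G_\alpha*\theta$, so it does not give you whole-family convergence in your frame.

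The proposed repair fails on both counts. You cannot ``redefine the normalization so that $G_1*\theta$ has zero $B_1$-average'', because $G_1*\theta$ is determined by $\theta$. And the correction to $a^\varepsilon$ that would actually be needed is $-\int_0^t|B_1|^{-1}\int_{B_1}G_1*\Theta^\varepsilon\,\dd x\,\dd s$. This converges to an order-one path, not to zero.

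The missing idea is to build the Stokes kernel into the frame from the start. Take $\dot a^\varepsilon=b^\varepsilon(t,a^\varepsilon)$ with $b^\varepsilon(t,z)=\int\eta(x)(u^\varepsilon-G_1*\theta^\varepsilon)(t,x+z)\,\dd x$, which enforces \eqref{eq:averaged-normalization}. This choice brings three consequences you would have to handle.

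\begin{enumerate}
\item For fixed $\varepsilon$, existence of $a^\varepsilon$ requires local Lipschitz and linear-growth bounds on $b^\varepsilon$. These use the $\varepsilon$-dependent bounded support of $\theta^\varepsilon(t)$ and the logarithmic growth of $G_1*\theta^\varepsilon$, via Lemma~\ref{lem:Caratheodory-Lipschitz}.

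\item Your clean Poincar\'e bound is lost. Decompose $U^\varepsilon$ as in \eqref{eq:translated-velocity-decomposition}. The Morrey part is $O(1+|x|^\sigma)$, but the term $\int\eta\,G_1*\Theta^\varepsilon$ is only bounded by $C(1+\log(2+R^\varepsilon(t)))$, where $R^\varepsilon(t)$ is the support radius of $\Theta^\varepsilon(t)$. The local velocity bound and the support bound must therefore be closed together, by Gronwall for $R^\varepsilon$ using $\log(2+r)\lesssim 1+\sqrt r$.

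\item The normalization passes to the limit by writing $\int\eta\,(G_1*\Theta^\varepsilon)\,\dd x=\int K_{1,\eta}\Theta^\varepsilon\,\dd y$, with the smooth kernel $K_{1,\eta}(y)=\int\eta(x)G_1(x-y)\,\dd x$. Cut this kernel off on the common support ball and pair it with the $C([0,T];H^{-1/2})$ convergence.
\end{enumerate}

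This gives $c\equiv0$, after which Lemma~\ref{lem:ST-uniqueness} yields whole-family convergence.

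Alternatively, you could keep your frame and afterwards shift by the order-one path $A$. But then you would need uniqueness for the $B_1$-normalized limit problem, which nothing in your argument supplies.
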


No well-preparedness assumption is imposed on the initial
velocities: only the stated uniform bounds on
$u_0^\varepsilon$ are required.
The weak-$*$ convergence of the velocities does not
imply convergence of their initial traces
and does not exclude an initial velocity layer.
The temperature convergence, however, holds up to $t=0$.

Before proving Proposition~\ref{prop:weak-limit-R2},
we collect two auxiliary results.
The first is a version of Carath\'eodory's theorem,
used to construct the translation paths in case~(ii);
see, for instance,~\cite[Chapter~2]{CL55}.

\begin{lemma}\label{lem:Caratheodory-Lipschitz}
Let $T>0$ and
$b:[0,T]\times\mathbb R^d\to\mathbb R^d$.
Assume that:
\begin{enumerate}
\item
For every $z\in\mathbb R^d$,
the map $t\mapsto b(t,z)$ is measurable.

\item
For every $M>0$, there exists a nonnegative
$L_M\in L^1(0,T)$ such that
$|b(t,z_1)-b(t,z_2)| \leq L_M(t)|z_1-z_2|$
for almost every $t\in(0,T)$
and all $z_1,z_2\in B_M$.

\item
There exists a nonnegative $m\in L^1(0,T)$ such that
$|b(t,z)|\leq m(t)(1+|z|)$ 
for almost every $t\in(0,T)$
and every $z\in\mathbb R^d$.
\end{enumerate}

Then, for every $z_0\in\mathbb R^d$,
there exists a unique absolutely continuous function
$a:[0,T]\to\mathbb R^d$ satisfying
$a(t)=z_0+\int_0^t b(s,a(s))\,\dd s$ on $t\in[0,T]$.
\end{lemma}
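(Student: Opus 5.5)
This is the classical Carathéodory existence and uniqueness theorem under a local Lipschitz condition and linear growth. I will prove it in three steps: a priori confinement, local existence and uniqueness by Picard iteration, and continuation.

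\textbf{Step 1: a priori bound.} Let $a$ be any absolutely continuous solution on a subinterval $[0,t_1]$. Hypothesis (3) gives
\[
  1+|a(t)|\leq 1+|z_0|+\int_0^t m(s)\bigl(1+|a(s)|\bigr)\,\dd s .
\]
Gronwall's inequality then yields
\[
  |a(t)|\leq M:=(1+|z_0|)\exp\Bigl(\int_0^T m\,\dd s\Bigr),
\]
and $M$ does not depend on $t_1$. Consequently, every solution stays in the ball $B_{M}$, and the relevant Lipschitz function is $L:=L_{M+1}$ from (2).

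\textbf{Step 2: existence and uniqueness with a truncated field.} To work globally, replace $b$ by $\tilde b(t,z):=b(t,\pi(z))$, where $\pi$ is the radial retraction onto $\overline{B_{M+1}}$. Since $\pi$ is $1$-Lipschitz, $\tilde b$ is globally Lipschitz in $z$ with constant $L(t)\in L^1(0,T)$. Measurability of $t\mapsto \tilde b(t,a(t))$ for continuous $a$ follows from (1) together with continuity in $z$: approximate $a$ uniformly by step functions and pass to the pointwise limit. Integrability follows from (3), since $|\tilde b(t,a(t))|\leq m(t)(M+2)$.

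Define the Picard map
\[
  (\Phi a)(t)=z_0+\int_0^t \tilde b(s,a(s))\,\dd s
\]
on $C([0,T];\mathbb R^d)$. Equip this space with the weighted norm
\[
  \|a\|_\ast:=\sup_t e^{-2\Lambda(t)}|a(t)|, \qquad \Lambda(t):=\int_0^t L .
\]
Then
\[
  |\Phi a-\Phi c|(t)\leq\int_0^t L(s)\,e^{2\Lambda(s)}\,\dd s\,\|a-c\|_\ast\leq \tfrac12 e^{2\Lambda(t)}\|a-c\|_\ast ,
\]
so $\Phi$ is a contraction with constant $\tfrac12$. The Banach fixed point theorem gives a unique continuous fixed point. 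It is absolutely continuous because it is the integral of an $L^1$ function.

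\textbf{Step 3: removing the truncation.} Step 1 applies to the truncated problem as well, because $|\tilde b(t,z)|\leq m(t)(1+|\pi(z)|)\leq m(t)(1+|z|)$. Hence the fixed point satisfies $|a|\leq M<M+1$, so $\tilde b(t,a(t))=b(t,a(t))$ and $a$ solves the original equation. For uniqueness, any other solution of the original equation also stays in $B_M$ by Step 1, so it is a fixed point of $\Phi$ and coincides with $a$.

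The main obstacle is the measurability of the composition $t\mapsto b(t,a(t))$, which the later steps rely on. It holds because $b$ is Carathéodory on the relevant balls: measurable in $t$ by (1), and continuous in $z$ for almost every $t$ by (2). Alternatively, one can cite~\cite[Chapter~2]{CL55} directly.
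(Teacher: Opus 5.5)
Your argument is correct, and it differs from the paper's treatment. The paper gives no proof of this lemma. It refers to Coddington--Levinson \cite[Chapter~2]{CL55}, where the ingredients are obtained differently:
\begin{itemize}
\item existence of a Carath\'eodory solution comes from a compactness argument (delayed approximate solutions and Ascoli's lemma), which uses only measurability in $t$, continuity in $z$ and an integrable bound;
\item uniqueness is a separate Gronwall argument with the integrable Lipschitz coefficient from (2);
\item existence on all of $[0,T]$ follows from the continuation results, once the growth bound (3) rules out escape to infinity.
\end{itemize}
You instead truncate the field with the radial retraction, so it becomes globally Lipschitz with an $L^1$ coefficient, and run Picard iteration in the Bielecki-type norm $\sup_t e^{-2\Lambda(t)}|a(t)|$. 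This gives existence and uniqueness on the whole interval in one step. The a priori Gronwall bound from (3) then replaces the continuation argument and also removes the truncation. Your route is self-contained and avoids compactness. The price is that it uses the Lipschitz hypothesis already for existence, which is harmless here since (2) is assumed anyway. It also makes explicit the one place where the Carath\'eodory structure matters, namely the measurability of $t\mapsto b(t,a(t))$.

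One small fix is needed. Hypothesis (2) is stated on the open balls $B_M$, but you retract onto the closed ball $\overline{B_{M+1}}$. You should therefore use $L_{M+2}$, or retract onto a slightly smaller closed ball. That way the Lipschitz bound, and the a.e.\ continuity in $z$ used for measurability, hold on the whole range of $\pi$.
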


The second establishes uniqueness for the normalized
Stokes-transport system in the solution class
arising in the convergence argument.
We give a self-contained proof covering both
the fractional and classical cases.

\begin{lemma}\label{lem:ST-uniqueness}
Let $\frac12<\alpha\leq1$, $T>0$, and
$\theta_0\in L^1\cap L^\infty(\mathbb R^2)$.
Suppose that $(u_i,\theta_i)$, $i=1,2$, are weak solutions
of \eqref{eq:ST} with the same initial temperature
$\theta_0$, satisfying
\[
  \theta_i\in
  L^\infty([0,T],L^1\cap L^\infty(\mathbb R^2)),
  \qquad
  u_i=G_\alpha*\theta_i.
\]
The initial condition is attained in the sense of
distributions:
$\theta_i(t)\to\theta_0$ in $\mathcal D'(\mathbb R^2)$
as $t\rightarrow0$. When $\alpha=1$, assume additionally that there exists
$R_T<\infty$ such that
\[
  \operatorname{supp}\theta_i(t)
  \subset\overline B_{R_T},
  \qquad i=1,2,
\]
for almost every $t\in(0,T)$.
Then $\theta_1=\theta_2$ and $u_1=u_2$
almost everywhere on $(0,T)\times\mathbb R^2$.
\end{lemma}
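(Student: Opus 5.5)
The plan is a Lagrangian stability argument: compare the two flow maps in the sup norm, with the kernel difference controlled through the homogeneity of $\nabla G_\alpha$.

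\textbf{Step 1: regularity of $u_i$ and Lagrangian representation.} First I check that each $u_i=G_\alpha*\theta_i$ is Lipschitz in space, uniformly in time. Write $\nabla u_i=\nabla\nabla^\perp\partial_1\Lambda^{-2-2\alpha}\theta_i$, an operator of order $1-2\alpha<0$. This gives the following bounds.
\begin{itemize}
\item For $\frac12<\alpha<1$, the kernel of $\nabla u_i$ is $\nabla G_\alpha$. It is homogeneous of degree $2\alpha-3\in(-2,-1)$, so it is locally integrable and decays at infinity. The splitting \eqref{conv-phi} then gives $\Vert \nabla u_i\Vert _{L^\infty}\leq C\Vert \theta_i\Vert _{L^1\cap L^\infty}$, and $u_i$ itself is bounded.
\item For $\alpha=1$, $\nabla G_1$ is homogeneous of degree $-1$. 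The same splitting again bounds $\nabla u_i$ in $L^\infty$. The compact support assumption $\operatorname{supp}\theta_i\subset\overline B_{R_T}$ makes $G_1*\theta_i$ well defined, with at most logarithmic growth $|u_i(x)|\leq C(1+\log(e+|x|))$.
\end{itemize}
In either case $u_i$ is divergence-free, Lipschitz in $x$, and has sublinear growth. It therefore generates a unique measure-preserving bi-Lipschitz flow $X_i$, by the Cauchy--Lipschitz theory or Lemma~\ref{lem:Caratheodory-Lipschitz} applied pointwise in $y$.

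Next I argue that the weak solution $\theta_i\in L^\infty(L^1\cap L^\infty)$ of the linear transport equation with this Lipschitz field, with initial datum attained in $\mathcal D'$, is unique. The route is duality: solve the backward transport equation for smooth test functions along $X_i$. Uniqueness then forces $\theta_i(t)=\theta_0\circ X_i(t)^{-1}$. This identification is the step I expect to need the most care, because the initial datum is only attained distributionally and, when $\alpha=1$, $u_i$ is unbounded. The weak-in-time continuity of $t\mapsto\theta_i(t)$, together with the compact support at $\alpha=1$, should handle the test-function cutoffs.

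\textbf{Step 2: a kernel difference inequality.} Set $\beta:=3-2\alpha\in[1,2)$. I prove that for all $z,w\neq0$,
\[
|G_\alpha(z)-G_\alpha(w)|\leq C|z-w|\bigl(|z|^{-\beta}+|w|^{-\beta}\bigr).
\]
I split into two cases.
\begin{itemize}
\item If $|z-w|\leq\frac12|z|$, the segment joining $z$ and $w$ stays at distance at least $|z|/2$ from the origin. The mean value theorem together with $|\nabla G_\alpha(\xi)|\leq C|\xi|^{-\beta}$ gives the bound.
\item Otherwise $|z|<2|z-w|$, and by the symmetric argument we may also assume $|w|\leq 3|z-w|$. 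For $\alpha<1$, bound each term separately: $|G_\alpha(z)|\leq C|z|^{2\alpha-2}\leq C|z-w||z|^{-\beta}$, and likewise for $w$. For $\alpha=1$, the angular part of $G_1$ is bounded and $|\log|z|-\log|w||\leq C|z-w|(|z|^{-1}+|w|^{-1})$.
\end{itemize}

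\textbf{Step 3: Gronwall on the flow difference.} Let $\eta(t):=\sup_y|X_1(t,y)-X_2(t,y)|$. At $\alpha=1$ I take the supremum over $y$ in the support of $\theta_0$; finiteness there follows from the bounded velocities on the compact supports. I write
\[
u_1(X_1)-u_2(X_2)=\bigl[u_1(X_1)-u_1(X_2)\bigr]+\bigl[u_1(X_2)-u_2(X_2)\bigr].
\]
The first bracket is at most $\Vert \nabla u_1\Vert _{L^\infty}\eta$. For the second, Step 1 and the measure-preserving change of variables give, for every $x$,
\[
u_1(x)-u_2(x)=\int\bigl(G_\alpha(x-X_1(y))-G_\alpha(x-X_2(y))\bigr)\theta_0(y)\,\dd y.
\]
Step 2 bounds this by
\[
C\eta\sum_i\int|x-X_i(y)|^{-\beta}|\theta_0(y)|\,\dd y=C\eta\sum_i\int|x-z|^{-\beta}|\theta_i(z)|\,\dd z\leq C\eta\Vert \theta_0\Vert _{L^1\cap L^\infty},
\]
using $\beta<2$ and \eqref{conv-phi}. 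Hence $\eta(t)\leq C\int_0^t\eta(s)\,\dd s$, and since $\eta(0)=0$, Gronwall gives $\eta\equiv0$. Then $X_1=X_2$, so $\theta_1=\theta_2$ by the representation in Step 1, and $u_1=u_2$ because $u_i=G_\alpha*\theta_i$.
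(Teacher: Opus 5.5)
Your proposal is correct and follows essentially the same route as the paper: a uniform Lipschitz bound on $u_i$ from $|\nabla G_\alpha(z)|\lesssim |z|^{2\alpha-3}$, the Lagrangian representation $\theta_i(t)=\theta_0\circ X_{t,i}^{-1}$, the kernel-difference estimate $|G_\alpha(z)-G_\alpha(w)|\lesssim |z-w|\bigl(|z|^{2\alpha-3}+|w|^{2\alpha-3}\bigr)$, and Gronwall for $\sup_{a\in K}|X_{t,1}(a)-X_{t,2}(a)|$, with $K=\mathbb R^2$ when $\alpha<1$ and $K=\overline B_{R_T}$ when $\alpha=1$. The one point to state explicitly is that at $\alpha=1$ your supremum over $\operatorname{supp}\theta_0$ needs this support to be bounded, and this follows from the support hypothesis on $\theta_i(t)$ together with the distributional initial trace, exactly as the paper notes.
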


\begin{proof}
We compare the Lagrangian flows;
see~\cite[Section~4.1]{MS24} for related stability arguments for three-dimensional Stokes-transport system.

The explicit kernel formulas
\eqref{def:G-alp-frac}-\eqref{def:G-alp=1} give
\[
  |\nabla G_\alpha(z)|
  \leq C_\alpha|z|^{2\alpha-3},
  \qquad z\neq0.
\]
Since $\alpha>\frac12$, splitting the convolution
at unit distance gives
\begin{equation}\label{es:u:lip}
  \Vert \nabla u_i(t)\Vert _{L^\infty}
  \leq C_\alpha
  \Vert \theta_i(t)\Vert _{L^1\cap L^\infty}.
\end{equation}
For $\alpha<1$, the same splitting gives
$u_i\in L^\infty([0,T],L^\infty)$.
For $\alpha=1$, the support assumption gives
$|u_i(t,x)|\leq C_T(1+\log(2+|x|))$;
see~\cite[Lemma~1]{Gray23}.
Thus both velocities have at most linear growth.
Together with \eqref{es:u:lip} and
$\operatorname{div}u_i=0$, this ensures that each $u_i$
generates a global measure-preserving bi-Lipschitz flow
$X_{t,i}$ satisfying \eqref{eq:flowmap} with $u=u_i$.
For each fixed velocity $u_i$, uniqueness for the
linear transport equation gives
\begin{equation}\label{id:theta-i}
  \theta_i(t,X_{t,i}(a))=\theta_0(a),
  \qquad 0\leq t\leq T,
\end{equation}
for almost every $a\in\mathbb R^2$,
after choosing the transported representative of $\theta_i$.
In particular, the $L^1$ and $L^\infty$ norms
are conserved.

Set $K=\mathbb R^2$ when $\alpha<1$.
When $\alpha=1$, the support assumption and the
distributional initial trace imply
$\operatorname{supp}\theta_0\subset\overline B_{R_T}$;
we therefore take $K=\overline B_{R_T}$.
Define
\[
  D(t):=\sup_{a\in K}
  |X_{t,1}(a)-X_{t,2}(a)|.
\]
For $\alpha<1$,
\[
  D(t)\leq
  \int_0^t
  \bigl(\Vert u_1(s)\Vert _{L^\infty}
       +\Vert u_2(s)\Vert _{L^\infty}\bigr)\,\dd s<\infty.
\]
Moreover, for $0\leq s\leq t\leq T$,
$|D(t)-D(s)|
  \leq
  \int_s^t
  \bigl(\Vert u_1(\tau)\Vert _{L^\infty}
       +\Vert u_2(\tau)\Vert _{L^\infty}\bigr)\,\dd\tau$,
which gives continuity.
For $\alpha=1$, finiteness and continuity follow
from the joint continuity of the flows
on the compact set $[0,T]\times K$.
Thus $D\in C([0,T])$ and $D(0)=0$.

We next record the kernel difference estimate
\begin{equation}\label{es:diff}
  |G_\alpha(z)-G_\alpha(z')|
  \leq C_\alpha|z-z'|
  \bigl(|z|^{2\alpha-3}+|z'|^{2\alpha-3}\bigr),
  \qquad z,z'\neq0.
\end{equation}
For $\alpha<1$, this follows from
\[
  |G_\alpha(z)|\leq C_\alpha|z|^{2\alpha-2},
  \qquad
  |\nabla G_\alpha(z)|
  \leq C_\alpha|z|^{2\alpha-3}.
\]
Indeed, if
$|z-z'|\leq\frac12\min\{|z|,|z'|\}$,
the gradient estimate applies along the segment
joining $z$ and $z'$.
Otherwise, $\max\{|z|,|z'|\}<3|z-z'|$,
and the bound on $G_\alpha$ gives \eqref{es:diff}.

For $\alpha=1$, the explicit formula has the form
\[
  G_1(z)
  =-\frac1{4\pi}\log|z|\,e_2
   +H\left(\frac{z}{|z|}\right),
  \qquad
  H\in C^\infty(\mathbb S^1;\mathbb R^2).
\]
The bounds
\begin{align*}
  \bigl|\log|z|-\log|z'|\bigr|
  \leq
  |z-z'|\left(\frac1{|z|}+\frac1{|z'|}\right),\qquad
  \left|\frac{z}{|z|}-\frac{z'}{|z'|}\right|
  \leq
  2|z-z'|\left(\frac1{|z|}+\frac1{|z'|}\right),
\end{align*}
together with the Lipschitz continuity of $H$,
prove \eqref{es:diff} at $\alpha=1$.

By \eqref{id:theta-i} and the measure-preserving property,
\[
  u_i(t,x)
  =
  \int_K
  G_\alpha\bigl(x-X_{t,i}(a)\bigr)
  \theta_0(a)\,\dd a.
\]
Using \eqref{es:diff} and changing variables
$y=X_{t,j}(a)$, we obtain
\begin{align*}
  |u_1(t,x)-u_2(t,x)|
  &\leq C_\alpha D(t)
  \sum_{j=1}^2
  \int_K
  |x-X_{t,j}(a)|^{2\alpha-3}
  |\theta_0(a)|\,\dd a
  \\
  &=C_\alpha D(t)
  \sum_{j=1}^2
  \int_{\mathbb R^2}
  |x-y|^{2\alpha-3}|\theta_j(t,y)|\,\dd y
  \leq
  C_\alpha\Vert \theta_0\Vert _{L^1\cap L^\infty}D(t).
\end{align*}
Consequently, for almost every $t\in(0,T)$,
\[
  \Vert u_1(t)-u_2(t)\Vert _{L^\infty}
  \leq C_\alpha
  \Vert \theta_0\Vert _{L^1\cap L^\infty}D(t).
\]

Comparing the flow equations \eqref{eq:flowmap}
for $X_{t,1}$ and $X_{t,2}$
and taking the supremum over $a\in K$, we find
\begin{align*}
  D(t)
  \leq
  \int_0^t
  \Bigl(
    \Vert \nabla u_1(s)\Vert _{L^\infty}D(s)
    +\Vert u_1(s)-u_2(s)\Vert _{L^\infty}
  \Bigr)\,\dd s
  \leq
  C_\alpha\Vert \theta_0\Vert _{L^1\cap L^\infty}
  \int_0^tD(s)\,\dd s.
\end{align*}
Since $D(0)=0$, Gronwall's inequality gives
$D\equiv0$ on $[0,T]$.
Thus the two flows coincide on $K$.
By the transport representation and
the measure-preserving property, for every
$\varphi\in C_c^\infty(\mathbb R^2)$,
\[
  \int_{\mathbb R^2}\varphi(x)\theta_i(t,x)\,\dd x
  =
  \int_K
  \varphi(X_{t,i}(a))\theta_0(a)\,\dd a,
\]
where we used that $\theta_0$ vanishes outside $K$.
The right-hand sides agree, so $\theta_1=\theta_2$.
The normalization $u_i=G_\alpha*\theta_i$
then gives $u_1=u_2$.
\end{proof}

We now turn to the proof of
Proposition~\ref{prop:weak-limit-R2}.

\begin{proof}[Proof of Proposition~\ref{prop:weak-limit-R2}]
We prove both cases within a common five-step argument,
using the original frame in case~(i) and an averaged
normalization in case~(ii).
After deriving common local bounds, we establish compactness
and pass to the limit in the equations.
Global $L^Q$ integrability identifies the limiting velocity
in case~(i), and the averaged normalization does so in case~(ii).

Throughout the proof, constants without an $\varepsilon$ subscript
may depend on $T$, $\alpha$, $p$, the uniform initial bounds,
the exponent $Q$ in case~(i), $R_0$ when $\alpha=1$,
and the fixed averaging function $\eta$ in case~(ii),
but not on $\varepsilon$.

\medskip
\noindent\textbf{Step 1. Choice of reference frame.}

In case~(i), choose a finite exponent
$Q>\max\{p,1/(1-\alpha)\}$ when $\alpha<1$,
or $Q>p$ when $\alpha=1$.
In the latter case, \eqref{eq:the-eps-supp-assum} gives
$\sup_{0<\varepsilon\leq1}
  \int_{\mathbb R^2}|x|\,|\theta_0^\varepsilon(x)|\,\dd x
  \leq R_0\sup_{0<\varepsilon\leq1}
      \Vert \theta_0^\varepsilon\Vert _{L^1}<\infty$.
Thus \eqref{eq:u-LQ-es}, together with
\eqref{eq:zero-mean-family} when $\alpha=1$, gives
\begin{equation}\label{eq:limit-u-LQ-bound}
  \sup_{0<\varepsilon\leq1}
  \Vert u^\varepsilon\Vert _{L^\infty([0,T],L^Q(\mathbb R^2))}
  \leq C_T.
\end{equation}
We set $a^\varepsilon\equiv0$, so that
$(U^\varepsilon,\Theta^\varepsilon)
=(u^\varepsilon,\theta^\varepsilon)$.

For the remainder of this construction, consider case~(ii).
We define the moving frame as follows.
Fix a nonnegative function
$\eta\in C_c^\infty(B_1)$ with
$\int_{\mathbb R^2}\eta\,\dd x=1$, and define
\[
  b^\varepsilon(t,z)
  :=\int_{\mathbb R^2}\eta(x)
  \bigl(u^\varepsilon-G_1*\theta^\varepsilon\bigr)
  (t,x+z)\,\dd x.
\]
For each fixed $\varepsilon$, we verify the hypotheses of
Lemma~\ref{lem:Caratheodory-Lipschitz}.

By Theorem~\ref{thm:main1} and the embedding
$W^{1,p}(\mathbb R^2)\hookrightarrow L^\infty(\mathbb R^2)$
for $p>2$,
\[
  \Vert u^\varepsilon\Vert _{L^\infty([0,T],L^\infty)}
  \leq C_{\varepsilon,T}.
\]
The flow representation and
\eqref{eq:the-eps-supp-assum} therefore imply
\[
  \operatorname{supp}\theta^\varepsilon(t)
  \subset B_{R_{\varepsilon,T}},
  \qquad 0\leq t\leq T,
\]
for some finite $R_{\varepsilon,T}$.
The logarithmic kernel estimate
in~\cite[Lemma~1]{Gray23}, together with conservation
of the temperature norms, gives
\begin{equation}\label{es:log-grow}
  |(G_1*\theta^\varepsilon)(t,\xi)|
  \leq
  C\bigl(1+\log(2+|\xi|+R_{\varepsilon,T})\bigr)
  \Vert \theta_0^\varepsilon\Vert _{L^1\cap L^\infty},
  \qquad \xi\in\mathbb R^2.
\end{equation}
In particular,
$u^\varepsilon-G_1*\theta^\varepsilon
\in L^\infty([0,T],L^1_{\mathrm{loc}})$.
Writing
\[
  b^\varepsilon(t,z)
  =\int_{\mathbb R^2}\eta(y-z)
  \bigl(u^\varepsilon-G_1*\theta^\varepsilon\bigr)
  (t,y)\,\dd y
\]
shows that $b^\varepsilon(t,z)$ is measurable in $t$
for each fixed $z$ and smooth in $z$ for almost every $t$.
Moreover, for every $M>0$,
\[
  \sup_{|z|\leq M}|\nabla_z b^\varepsilon(t,z)|
  \leq
  \Vert \nabla\eta\Vert _{L^\infty}
  \Vert u^\varepsilon(t)-G_1*\theta^\varepsilon(t)
    \Vert _{L^1(B_{M+1})}
  \leq C_{\varepsilon,T,M}.
\]
Thus the required local Lipschitz condition holds.

Since $\operatorname{supp}\eta\subset B_1$,
the velocity bound and \eqref{es:log-grow} also give
\[
  |b^\varepsilon(t,z)|
  \leq C_{\varepsilon,T}
       \bigl(1+\log(3+|z|)\bigr)
  \leq C_{\varepsilon,T}(1+|z|).
\]
Hence the linear growth condition is satisfied.
Lemma~\ref{lem:Caratheodory-Lipschitz} therefore gives
a unique absolutely continuous function
$a^\varepsilon:[0,T]\to\mathbb R^2$ satisfying
\begin{equation}\label{def:a-eps}
  \dot a^\varepsilon(t)
  =b^\varepsilon(t,a^\varepsilon(t)),
  \qquad a^\varepsilon(0)=0,
\end{equation}
for almost every $t\in(0,T)$.
The growth bound and Gronwall's inequality further imply
$a^\varepsilon\in W^{1,\infty}(0,T;\mathbb R^2)$,
with a bound that may depend on $\varepsilon$.

Still assuming $\alpha=1$, introduce
$(U^\varepsilon,\Theta^\varepsilon)$ as in
\eqref{def:Theta-U-eps}.
Since convolution with $G_1$ commutes with translations,
\[
  (G_1*\Theta^\varepsilon)(t,x)
  =(G_1*\theta^\varepsilon)
    \bigl(t,x+a^\varepsilon(t)\bigr).
\]

It follows directly from the definitions of $a^\varepsilon$ and $b^\varepsilon$ that for almost every $t\in[0,T]$,
\begin{equation}\label{eq:averaged-normalization}
\begin{split}
  & \int_{\mathbb{R}^2} \eta(x) \bigl( U^\varepsilon(t,x) - G_1*\Theta^\varepsilon(t,x) \bigr)\,\dd x \\
  & = \int_{\mathbb{R}^2} \eta(x) \big(u^\varepsilon (t, x+a^\varepsilon(t))  
  - G_1*\theta^\varepsilon(t,x+a^\varepsilon(t)) \big) \dd x 
  -b^\varepsilon(t,a^\varepsilon(t)) = 0.
\end{split}
\end{equation}

For either choice of frame, the transformed fields satisfy
\[
  \partial_t\Theta^\varepsilon
  +U^\varepsilon\cdot\nabla\Theta^\varepsilon=0,
  \qquad
  \operatorname{div}U^\varepsilon=0,
  \qquad
  \Theta^\varepsilon|_{t=0}=\theta_0^\varepsilon.
\]
Moreover, for every divergence-free test function
$\varphi\in C_c^\infty((0,T)\times\mathbb R^2;\mathbb R^2)$,
the momentum equation takes the form
\begin{equation}\label{eq:weak-momentum-translated}
\begin{aligned}
  &-\varepsilon\int_0^T\!\int_{\mathbb R^2}
    U^\varepsilon\cdot\partial_t\varphi\,\dd x\,\dd t
  -\varepsilon\int_0^T\!\int_{\mathbb R^2}
    (U^\varepsilon\otimes U^\varepsilon):\nabla\varphi
    \,\dd x\,\dd t
  \\
  &\qquad
  +\int_0^T\!\int_{\mathbb R^2}
    U^\varepsilon\cdot\Lambda^{2\alpha}\varphi
    \,\dd x\,\dd t
  =
  \int_0^T\!\int_{\mathbb R^2}
    \Theta^\varepsilon e_2\cdot\varphi\,\dd x\,\dd t.
\end{aligned}
\end{equation}
In case~(i), these are the original equations.
In case~(ii), they follow by using translated test
functions in the original weak formulations.
In particular, the momentum identity follows by testing
with $\varphi(t,y-a^\varepsilon(t))$,
justified by smooth approximation of $a^\varepsilon$ in time.
The additional terms cancel using
$\operatorname{div}U^\varepsilon=0$ and
\[
  \int_{\mathbb R^2}\varphi_i(t,x)\,\dd x
  =
  \int_{\mathbb R^2}
    \operatorname{div}\bigl(x_i\varphi(t,x)\bigr)\,\dd x
  =0,
  \qquad i=1,2.
\]
Since $a^\varepsilon\in W^{1,\infty}(0,T;\mathbb R^2)$
for each fixed $\varepsilon$, this argument requires
no classical second derivative of $a^\varepsilon$.

\medskip
\noindent\textbf{Step 2. Uniform velocity and support bounds.}
Set $\sigma=1-2/p\in(0,1)$.

Let $X_t^\varepsilon$ be the flow map generated by $u^\varepsilon$, and
define the relative flow $Y_t^\varepsilon(x) := X_t^\varepsilon(x)-a^\varepsilon(t)$.
Then 
\begin{align*}
  \partial_tY_t^\varepsilon(x) = U^\varepsilon\bigl(t,Y_t^\varepsilon(x)\bigr),
  \qquad Y_0^\varepsilon(x)=x,
\end{align*}
and the transport equation gives 
\begin{align}\label{eq:Theta-eps-exp}
  \Theta^\varepsilon(t,x) = \theta_0^\varepsilon \bigl((Y_t^\varepsilon)^{-1}(x)\bigr) 
  = \theta_0^\varepsilon \big((X_t^\varepsilon)^{-1} (x + a^\varepsilon(t)) \big) = \theta^\varepsilon \big(t, x + a^\varepsilon(t) \big).
\end{align}

We first treat case~(i).
By \eqref{eq:limit-u-LQ-bound}, H\"older's inequality,
and the uniform gradient estimate,
\[
  \Vert u^\varepsilon\Vert _{L^\infty([0,T],L^p(B_R))}
  +\Vert \nabla u^\varepsilon\Vert _{-L^p)}
  \leq C_{T,R}.
\]
Averaging Morrey's inequality over $B_1$ gives,
for almost every $t\in(0,T)$ and every $x\in\mathbb R^2$,
\begin{align*}
  |U^\varepsilon(t,x)|
  =|u^\varepsilon(t,x)|
  &\leq C\Vert u^\varepsilon(t)\Vert _{L^Q}
    +C(1+|x|^\sigma)
      \Vert \nabla u^\varepsilon(t)\Vert _{L^p}
  \\
  &\leq C_T(1+|x|^\sigma).
\end{align*}
If $\alpha=1$ in case~(i), the initial temperatures
are supported in $B_{R_0}$ and $Y_t^\varepsilon=X_t^\varepsilon$.
The preceding growth bound and Gronwall's inequality
therefore keep their supports in a common ball on $[0,T]$.

For later use, we also record a convolution estimate
for the fractional Stokes kernel.
For $\frac12<\alpha<1$, the kernel $G_\alpha$
defined by \eqref{def:G-alp-frac} satisfies
\[
  |G_\alpha(x)|\leq C_\alpha|x|^{2\alpha-2},
  \qquad x\neq0.
\]
For every $f\in L^1\cap L^\infty$,
splitting the convolution at $|x-y|=1$ gives
\begin{equation}\label{eq:Galp-conv-Linf-bdd}
\begin{aligned}
  \Vert G_\alpha*f\Vert _{L^\infty}
  \leq C_\alpha
  \bigg(
    \Vert f\Vert _{L^\infty}
    \int_{|z|\leq1}|z|^{2\alpha-2}\,\dd z
    +\Vert f\Vert _{L^1}
  \bigg)
  \leq C_\alpha\Vert f\Vert _{L^1\cap L^\infty}.
\end{aligned}
\end{equation}
Conservation of the temperature norms
and \eqref{eq:inidata-eps-assum} therefore yield
\begin{equation}\label{es:Galp*the-Linf}
  \Vert G_\alpha*\theta^\varepsilon(t)\Vert _{L^\infty}
  \leq C_\alpha
    \Vert \theta_0^\varepsilon\Vert _{L^1\cap L^\infty}
  \leq C,
\end{equation}
uniformly in $\varepsilon\in(0,1]$ and $t\in[0,T]$.

We next treat case~(ii), where $\alpha=1$. Define
\[
  R^\varepsilon(t)
  :=\sup_{x\in\overline B_{R_0}}|Y_t^\varepsilon(x)|.
\]
Then $R^\varepsilon(t)<\infty$, $R^\varepsilon(0)=R_0$,
and $\operatorname{supp}\Theta^\varepsilon(t)
\subset\overline B_{R^\varepsilon(t)}$.

To estimate the translated velocity, we use the definition of $a^\varepsilon$ 
in \eqref{def:a-eps} to write
\begin{align}\label{eq:translated-velocity-decomposition}
  U^\varepsilon(t,x) = u^\varepsilon(t,x+a^\varepsilon(t))
  - \int_{\mathbb{R}^2} \eta(z)u^\varepsilon(t,z+a^\varepsilon(t))\,\dd z +
  \int_{\mathbb{R}^2} \eta(z) (G_1*\Theta^\varepsilon)(t,z)\,\dd z.
\end{align}
By the homogeneous Morrey estimate
$[u^\varepsilon(t)]_{C^{0,\sigma}} we have
\leq C\Vert \nabla u^\varepsilon(t)\Vert _{L^p}$,
\begin{equation}\label{eq:Morrey-translated}
\begin{split}
  &\left| u^\varepsilon(t,x+a^\varepsilon(t)) - \int_{\mathbb{R}^2} \eta(z)u^\varepsilon(t,z+a^\varepsilon(t))\,\dd z \right| \\
  &\leq \int_{\mathbb{R}^2} \eta(z) \left| u^\varepsilon(t,x+a^\varepsilon(t)) 
  - u^\varepsilon(t,z+a^\varepsilon(t)) \right|\dd z 
  \leq C(1+|x|^\sigma) \Vert \nabla u^\varepsilon(t)\Vert _{L^p}.
\end{split}
\end{equation}

The logarithmic kernel estimate gives
\[
  \left|\int_{\mathbb R^2}
    \eta(z)(G_1*\Theta^\varepsilon)(t,z)\,\dd z\right|
  \leq C\bigl(1+\log(2+R^\varepsilon(t))\bigr).
\]
Since $\log(2+r)\leq C(1+\sqrt r)$ for $r\geq0$,
we obtain
\begin{equation}\label{eq:global-growth-Ueps}
  |U^\varepsilon(t,x)|
  \leq C_T\left(1+|x|^\sigma+\sqrt{R^\varepsilon(t)}\right).
\end{equation}
The constants are independent of $\varepsilon$.
We next close the support estimate in this endpoint case.

For every $x\in\overline B_{R_0}$,
the relative trajectory satisfies the integral identity
\begin{align*}
  Y_t^\varepsilon(x) = x + \int_0^t U^\varepsilon\bigl(s,Y_s^\varepsilon(x)\bigr)\,\dd s.
\end{align*}
Taking the supremum over $\overline B_{R_0}$
and using \eqref{eq:global-growth-Ueps}, we obtain
\begin{align*}
  R^\varepsilon(t) \leq R_0 + C\int_0^t \left( 1+(R^\varepsilon(s))^\sigma +\sqrt{R^\varepsilon(s)} \right)\,\dd s \leq R_0 + C\int_0^t \bigl(1+R^\varepsilon(s)\bigr)\,\dd s.
\end{align*}
Gronwall's inequality gives
$\sup_{0\leq t\leq T}R^\varepsilon(t)\leq C_T$,
uniformly in $\varepsilon$.
Combining this with the support bound obtained in case~(i)
and enlarging $R_T$ if necessary, we conclude that,
whenever $\alpha=1$,
\begin{align}\label{eq:The-eps-supp}
  \operatorname{supp}\Theta^\varepsilon(t)\subset B_{R_T},
  \qquad 0\leq t\leq T.
\end{align}

Substituting this support bound into
\eqref{eq:global-growth-Ueps} in case~(ii), and combining
it with the estimate obtained in case~(i), we find that, for all
$\alpha\in(\frac12,1]$, almost every $t\in(0,T)$,
and every $x\in\mathbb R^2$,
\begin{align}\label{eq:U-eps-uppbdd}
  |U^\varepsilon(t,x)|\leq C_T(1+|x|^\sigma),
\end{align}
with $\sigma = 1- \frac{2}{p}$.
Moreover, from $\nabla U^\varepsilon(t,x) = \nabla u^\varepsilon(t,x+a^\varepsilon(t))$, it follows that, for every $R>0$,
\begin{align}\label{eq:local-W1p-Ueps}
  \Vert U^\varepsilon\Vert _{L^\infty([0,T],W^{1,p}(B_R))} \leq C_{T,R}.
\end{align}

\medskip
\noindent\textbf{Step 3. Compactness.}


By the Banach-Alaoglu theorem and a diagonal argument,
there exist a subsequence and a vector field $u$ such that,
for every $R>0$,
\begin{equation}\label{eq:local-weak-Ueps}
  U^\varepsilon\rightharpoonup^\ast u
  \quad\text{in }
  L^\infty([0,T],W^{1,p}(B_R)).
\end{equation}
In case~(i), \eqref{eq:limit-u-LQ-bound}
allows a further extraction such that
\begin{equation}\label{eq:limit-u-global-LQ}
  U^\varepsilon=u^\varepsilon\rightharpoonup^\ast u
  \quad\text{in }
  L^\infty([0,T],L^Q(\mathbb R^2)).
\end{equation}
The global and local limits agree by uniqueness
of the distributional limit.

Weak-$\ast$ lower semicontinuity gives, for every $R>0$,
\[
  \Vert u\Vert _{L^\infty([0,T],W^{1,p}(B_R))}
  \leq\liminf_{\varepsilon\to0}
    \Vert U^\varepsilon\Vert _{L^\infty([0,T],W^{1,p}(B_R))}
  \leq C_{T,R}.
\]
The growth bound \eqref{eq:U-eps-uppbdd}
also passes to the limit.
Indeed, for every
$\psi\in C_c^\infty((0,T)\times\mathbb R^2;\mathbb R^2)$,
\eqref{eq:local-weak-Ueps} gives
\begin{align*}
  \left|
    \int_0^T\!\int_{\mathbb R^2}
    u\cdot\psi\,\dd x\,\dd t
  \right|
  =
  \lim_{\varepsilon\to0}
  \left|
    \int_0^T\!\int_{\mathbb R^2}
    U^\varepsilon\cdot\psi\,\dd x\,\dd t
  \right|
  \leq
  C_T\int_0^T\!\int_{\mathbb R^2}
    (1+|x|^\sigma)|\psi|\,\dd x\,\dd t.
\end{align*}
By density and $L^1$-$L^\infty$ duality
with respect to the measure
$(1+|x|^\sigma)\,\dd x\,\dd t$, we obtain
\[\Big\Vert 
    \frac{u}{1+|x|^\sigma}
  \Big\Vert _{L^\infty((0,T)\times\mathbb R^2)}
  \leq C_T.
\]
Thus $|u(t,x)|\leq C_T(1+|x|^\sigma)$
for almost every $(t,x)\in(0,T)\times\mathbb R^2$.
Since $p>2$, Morrey's embedding provides a spatially
continuous representative of $u(t,\cdot)$
for almost every $t\in(0,T)$.
For this representative,
\begin{equation}\label{eq:u-upper-bdd}
  |u(t,x)|\leq C_T(1+|x|^\sigma),
  \qquad x\in\mathbb R^2,
\end{equation}
for almost every $t\in(0,T)$.

Next, we establish compactness of $\Theta^\varepsilon$.
By \eqref{eq:Theta-eps-exp}, the measure-preserving
property of the flow, and \eqref{eq:inidata-eps-assum},
\begin{equation}\label{eq:The-eps-L1Linf-bdd}
  \sup_{0<\varepsilon\leq1}
  \Vert \Theta^\varepsilon\Vert _{L^\infty([0,T],L^1\cap L^\infty)}
  \leq C.
\end{equation}
Since
$\partial_t\Theta^\varepsilon
=-\div(U^\varepsilon\Theta^\varepsilon)$,
the uniform local velocity bound
\eqref{eq:local-W1p-Ueps} gives
\[
  \Vert \partial_t\Theta^\varepsilon
    \Vert _{L^\infty([0,T],H^{-1}(B_R))}
  \leq C_{T,R}.
\]
Moreover, $\Theta^\varepsilon$ is uniformly bounded
in $L^\infty([0,T],L^2(B_R))$.
Using the compact embedding
$L^2(B_R)\hookrightarrow H^{-1/2}(B_R)$
and the continuous embedding
$H^{-1/2}(B_R)\hookrightarrow H^{-1}(B_R)$,
the Aubin-Lions lemma
(see e.g.~\cite[Lemma~2.10]{LXY25})
and a diagonal extraction yield, for every $R>0$,
\begin{equation}\label{eq:strong-local-theta}
  \Theta^\varepsilon\longrightarrow\theta
  \quad\text{strongly in }
  C\bigl([0,T];H^{-1/2}(B_R)\bigr).
\end{equation}

For every fixed $t\in[0,T]$,
the uniform local $L^2$ bound and
\eqref{eq:strong-local-theta} imply
\[
  \Theta^\varepsilon(t)\rightharpoonup\theta(t)
  \quad\text{weakly in }L^2(B_R),
\]
and hence also weakly in $L^1(B_R)$.
By weak lower semicontinuity,
\[
  \Vert \theta(t)\Vert _{L^1(B_R)}
  \leq\liminf_{\varepsilon\to0}
    \Vert \Theta^\varepsilon(t)\Vert _{L^1(B_R)}
  \leq C.
\]
The uniform $L^\infty$ bound also passes to the limit.
Since these bounds are independent of $R$ and $t$,
letting $R\to\infty$ gives
\begin{equation}\label{eq:the-L1Linf-bdd}
  \theta\in
  L^\infty([0,T],L^1\cap L^\infty(\mathbb R^2)).
\end{equation}

At $\alpha=1$, the uniform support property
\eqref{eq:The-eps-supp} also passes to the limit.
Indeed, for every
$\varphi\in C_c^\infty(
\mathbb R^2\setminus\overline B_{R_T})$,
\eqref{eq:strong-local-theta} gives
$\langle\theta(t),\varphi\rangle = \lim_{\varepsilon\to0}
  \langle\Theta^\varepsilon(t),\varphi\rangle =0$,  
$t\in[0,T]$.
Hence
\begin{equation}\label{eq:the-supp-alp=1}
  \operatorname{supp}\theta(t)
  \subset\overline B_{R_T},
  \qquad 0\leq t\leq T.
\end{equation}

\medskip
\noindent\textbf{Step 4. Passage to the limit in the equations.}


We now pass to the limit in the transport equation.
Let $\zeta\in C_c^\infty([0,T)\times\mathbb R^2)$,
and choose $R>0$ such that
$\operatorname{supp}\zeta\subset[0,T)\times B_R$.
The translated transport equation has the weak formulation
\begin{equation}\label{eq:Theta-eps-weak-form}
  \int_0^T\!\int_{\mathbb R^2}
    \Theta^\varepsilon
    \bigl(\partial_t\zeta
      +U^\varepsilon\cdot\nabla\zeta\bigr)
    \,\dd x\,\dd t
  +\int_{\mathbb R^2}
    \theta_0^\varepsilon(x)\zeta(0,x)\,\dd x
  =0.
\end{equation}
The term involving $\partial_t\zeta$ converges by
\eqref{eq:strong-local-theta}, since
$\partial_t\zeta\in L^1([0,T],H_0^{1/2}(B_R))$.

For the nonlinear term, write $\Theta^\varepsilon U^\varepsilon-\theta u
  =
  \theta(U^\varepsilon-u)
  +(\Theta^\varepsilon-\theta)U^\varepsilon$. Since $\theta\in L^\infty((0,T)\times B_R)$,
we have
$\theta\nabla\zeta\in L^1([0,T],L^{p'}(B_R))$.
Thus \eqref{eq:local-weak-Ueps} gives
\[
  \int_0^T\!\int_{B_R}
    (U^\varepsilon-u)\cdot\theta\nabla\zeta
    \,\dd x\,\dd t
  \longrightarrow0.
\]
Moreover, $p>2$ and the compact spatial support
of $\nabla\zeta$ imply
$U^\varepsilon\cdot\nabla\zeta\in H_0^1(B_R)$
for almost every $t$, with
$\Vert U^\varepsilon\cdot\nabla\zeta
    \Vert _{L^1([0,T],H_0^{1/2}(B_R))}
  \leq
  C_{\zeta,R}
  \Vert U^\varepsilon\Vert _{L^1([0,T],W^{1,p}(B_R))}
  \leq C_{T,\zeta,R}$.
Consequently,
\begin{align*}
  \left|
    \int_0^T
    \left\langle
      \Theta^\varepsilon-\theta,\,
      U^\varepsilon\cdot\nabla\zeta
    \right\rangle_{H^{-1/2},H_0^{1/2}}
    \,\dd t
  \right|
  \leq
  C_{T,\zeta,R}
  \Vert \Theta^\varepsilon-\theta
    \Vert _{C([0,T];H^{-1/2}(B_R))}
  \longrightarrow0.
\end{align*}

Finally, the strong $L^2$ convergence of
$\theta_0^\varepsilon$ gives convergence of the initial term.
The uniform $L^1$ and $L^\infty$ bounds also imply
$\theta_0\in L^1\cap L^\infty$.
Passing to the limit in
\eqref{eq:Theta-eps-weak-form}, we obtain
\[
  \int_0^T\!\int_{\mathbb R^2}
    \theta\bigl(\partial_t\zeta+u\cdot\nabla\zeta\bigr)
    \,\dd x\,\dd t
  +\int_{\mathbb R^2}\theta_0(x)\zeta(0,x)\,\dd x
  =0.
\]

We next pass to the limit in the momentum equation.
Let
$\varphi\in C_c^\infty((0,T)\times\mathbb R^2;\mathbb R^2)$
satisfy $\div\varphi=0$.
The translated momentum equation has the weak formulation \eqref{eq:weak-momentum-translated}.

Choose $R>0$ such that
$\operatorname{supp}\varphi\subset(0,T)\times B_R$.
Since $p>2$, \eqref{eq:local-W1p-Ueps}
and Sobolev embedding give
\[
  \sup_{0<\varepsilon\leq1}
  \Vert U^\varepsilon\Vert _{L^\infty((0,T)\times B_R)}
  \leq C_{T,R}.
\]
Hence both inertial terms vanish:
\begin{align*}
  \varepsilon
  \left|
    \int_0^T\!\int_{\mathbb R^2}
    U^\varepsilon\cdot\partial_t\varphi\,\dd x\,\dd t
  \right|
  +
  \varepsilon
  \left|
    \int_0^T\!\int_{\mathbb R^2}
    (U^\varepsilon\otimes U^\varepsilon):\nabla\varphi
    \,\dd x\,\dd t
  \right|
  \leq C_{T,\varphi,R}\,\varepsilon
  \longrightarrow0.
\end{align*}
The forcing term converges by
\eqref{eq:strong-local-theta}, since
$e_2\cdot\varphi\in L^1([0,T],H_0^{1/2}(B_R))$.

For the dissipative term, we distinguish two cases.
If $\alpha=1$, then
$\Lambda^2\varphi=-\Delta\varphi$
is smooth and compactly supported,
so \eqref{eq:local-weak-Ueps} gives the required convergence.

If $\frac12<\alpha<1$, the fractional Laplacian satisfies
\begin{equation}\label{eq:fractional-test-decay}
  |\Lambda^{2\alpha}\varphi(t,x)|
  \leq C_{\alpha,\varphi}(1+|x|)^{-2-2\alpha}.
\end{equation}
In particular,
$\Lambda^{2\alpha}\varphi
\in L^1([0,T],L^{Q'}(\mathbb R^2))$,
where $Q'=Q/(Q-1)$.
The global weak-$\ast$ convergence
\eqref{eq:limit-u-global-LQ} therefore gives $\int_0^T\!\int_{\mathbb R^2}
    (U^\varepsilon-u)\cdot\Lambda^{2\alpha}\varphi
    \,\dd x\,\dd t
  \longrightarrow0$. 

Thus, in both cases, passing to the limit in
\eqref{eq:weak-momentum-translated} gives
\begin{equation}\label{eq:fracStokes}
  \int_0^T\!\int_{\mathbb R^2}
    u\cdot\Lambda^{2\alpha}\varphi\,\dd x\,\dd t
  =
  \int_0^T\!\int_{\mathbb R^2}
    \theta e_2\cdot\varphi\,\dd x\,\dd t.
\end{equation}

Finally, the identity $\operatorname{div}u=0$ follows directly
from $\operatorname{div}U^\varepsilon=0$
and local distributional convergence.


\medskip
\noindent\textbf{Step 5. Identification of the normalized limiting velocity.}


We first show that the limiting (fractional) Stokes relation
and the sublinear growth bound determine the velocity
up to a spatially constant vector.
Set $w:=u-G_\alpha*\theta$, 
where $G_\alpha$ is defined by
\eqref{def:G-alp-frac}-\eqref{def:G-alp=1}.
Since both $u$ and $G_\alpha*\theta$ are divergence-free
and satisfy \eqref{eq:fracStokes}, we have,
for almost every $t\in(0,T)$,
\begin{equation}\label{eq:w}
  \langle w(t),\Lambda^{2\alpha}\Phi\rangle=0,
  \qquad
  \langle w(t),\nabla\Psi\rangle=0,
\end{equation}
for every divergence-free
$\Phi\in C_c^\infty(\mathbb R^2;\mathbb R^2)$
and every $\Psi\in C_c^\infty(\mathbb R^2)$.

For $\frac12<\alpha<1$,
\eqref{eq:Galp-conv-Linf-bdd}
and \eqref{eq:the-L1Linf-bdd} give $|(G_\alpha*\theta)(t,x)|\leq C_T$.

For $\alpha=1$, the kernel estimate
in~\cite[Lemma~1]{Gray23}, together with
\eqref{eq:the-L1Linf-bdd}
and \eqref{eq:the-supp-alp=1}, gives
\[
  |(G_1*\theta)(t,x)|
  \leq C_T\bigl(1+\log(2+|x|)\bigr).
\]
Combining these estimates with \eqref{eq:u-upper-bdd},
we obtain
\[
  |w(t,x)|\leq C_T(1+|x|^\sigma),
  \qquad \sigma=1-\frac2p\in(0,1).
\]
Thus $w(t,\cdot)$ has sublinear growth
and defines a tempered distribution.

Choose $\rho\in\mathcal S(\mathbb R^2)$ with
$\widehat\rho\in C_c^\infty(\mathbb R^2\setminus\{0\})$.
To justify testing \eqref{eq:w} with
$\Phi=\nabla^\perp\rho$, take
$\chi\in C_c^\infty(\mathbb R^2)$ equal to one
near the origin and set $\Phi_R:=\nabla^\perp\bigl(\chi(\cdot/R)\rho\bigr)$. 
Then $\Phi_R$ is compactly supported and divergence-free,
and $\Phi_R\to\nabla^\perp\rho$ in $\mathcal S$.

For $0<\alpha<1$ and $0\leq\sigma<2\alpha$,
the singular integral representation of $\Lambda^{2\alpha}$
gives
\[
  \int_{\mathbb R^2}(1+|x|^\sigma)
    |\Lambda^{2\alpha}f(x)|\,\dd x
  \leq C_{\alpha,\sigma}
  \sum_{|\beta|\leq2}
  \int_{\mathbb R^2}(1+|x|^\sigma)
    |\partial^\beta f(x)|\,\dd x,
  \qquad f\in\mathcal S(\mathbb R^2).
\]
Indeed, this follows by splitting the kernel integral
at unit distance, using second differences near the origin
and $\sigma<2\alpha$ at infinity.
Applying this estimate componentwise gives
\[
  \int_{\mathbb R^2}(1+|x|^\sigma)
  \left|
    \Lambda^{2\alpha}
    (\Phi_R-\nabla^\perp\rho)(x)
  \right|\,\dd x
  \longrightarrow0,\qquad\textrm{as}\;\; R\rightarrow \infty.
\]
For $\alpha=1$, the same convergence follows directly
from $\Lambda^2=-\Delta$ and convergence in $\mathcal S$.
The growth bound on $w$ therefore allows us to pass
to the limit in \eqref{eq:w}, giving $\langle w(t),\Lambda^{2\alpha}\nabla^\perp\rho\rangle=0$. 

Fourier duality, applied away from the origin
where the multiplier is smooth, gives
\[
  |\xi|^{2\alpha}\xi^\perp\cdot\widehat w(t,\xi)=0
  \quad\text{in }
  \mathcal D'(\mathbb R^2\setminus\{0\}),
\]
where $\xi^\perp=(\xi_2,-\xi_1)$.
The divergence-free condition also gives $\xi\cdot\widehat w(t,\xi)=0$ in $\mathcal S'(\mathbb R^2)$. 

Since $|\xi|^{2\alpha}$ is smooth and nonvanishing
away from the origin, these identities imply
\[
  |\xi|^2\widehat w
  =
  \xi(\xi\cdot\widehat w)
  +\xi^\perp(\xi^\perp\cdot\widehat w)
  =0
  \quad\text{in }
  \mathcal D'(\mathbb R^2\setminus\{0\}).
\]
Multiplying by $|\xi|^{-2}$ on this set gives
$\widehat w=0$ away from the origin.
Thus $\operatorname{supp}\widehat w(t)\subset\{0\}$.
The structure theorem for distributions supported
at a point shows that $w(t,\cdot)$ is
a vector-valued polynomial.
Its sublinear growth forces it to be constant.
Consequently,
\begin{equation}\label{eq:velocity-up-to-constant}
  u(t,\cdot)=G_\alpha*\theta(t,\cdot)+c(t)
  \quad\text{in }\mathcal D'(\mathbb R^2)
\end{equation}
for almost every $t\in(0,T)$,
where $c(t)$ is a spatially constant vector.

If $\frac12<\alpha<1$, then
$G_\alpha\mathds{1}_{B_1}\in L^1$ and
$G_\alpha\mathds{1}_{\mathbb R^2\setminus B_1}\in L^Q$,
since $Q>1/(1-\alpha)$.
Young's inequality and \eqref{eq:the-L1Linf-bdd} give
\[
  \Vert G_\alpha*\theta(t)\Vert _{L^Q}
  \leq C_{\alpha,Q}
  \bigl(\Vert \theta(t)\Vert _{L^Q}+\Vert \theta(t)\Vert _{L^1}\bigr)
  \leq C_T.
\]
If $\alpha=1$ in case~(i), conservation of the total
temperature, the common support bound
\eqref{eq:The-eps-supp}, and local distributional convergence
give $\int_{\mathbb R^2}\theta(t,x)\,\dd x=0$.
Enlarge $R_T$ so that $R_T\geq1$.
Since $|\nabla G_1(x)|\leq C|x|^{-1}$ for $x\ne0$,
zero mean and \eqref{eq:the-supp-alp=1} imply, for $|x|>2R_T$,
\[
  |(G_1*\theta)(t,x)|
  =\left|\int_{B_{R_T}}
    \bigl(G_1(x-y)-G_1(x)\bigr)\theta(t,y)\,\dd y\right|
  \leq \frac{C}{|x|}
    \int_{B_{R_T}}|y|\,|\theta(t,y)|\,\dd y
  \leq \frac{C_T}{|x|}.
\]
On $B_{2R_T}$, local integrability of $G_1$ and the
uniform $L^\infty$ bound on $\theta$ give a uniform bound
for the convolution. Hence,
\[
  \Vert G_1*\theta\Vert _{L^\infty([0,T],L^Q(\mathbb R^2))}
  \leq C_{T,Q},\qquad Q>2.
\]
Thus, throughout case~(i), both $u$ and $G_\alpha*\theta$
belong to $L^\infty([0,T],L^Q(\mathbb R^2))$
for the admissible finite exponent $Q$ fixed above.
By \eqref{eq:velocity-up-to-constant},
their difference $c(t)$ is spatially constant and belongs
to $L^Q(\mathbb R^2)$, so $c(t)=0$ for almost every $t$.

In case~(ii), we instead pass to the limit
in the averaged normalization.
Let $\chi\in C_c^\infty(0,T)$.
Multiplying \eqref{eq:averaged-normalization}
by $\chi(t)$ and integrating in time gives
\begin{equation}\label{eq:normalization-before-limit}
  \int_0^T\chi(t)\int_{\mathbb R^2}
  \eta(x)\bigl(
    U^\varepsilon-G_1*\Theta^\varepsilon
  \bigr)(t,x)\,\dd x\,\dd t=0.
\end{equation}
Since $\eta$ is compactly supported,
the local weak-$*$ convergence
\eqref{eq:local-weak-Ueps} gives
\begin{equation}\label{eq:limit-normalization-U}
  \int_0^T\chi(t)\int_{\mathbb R^2}
  \eta(x)U^\varepsilon(t,x)\,\dd x\,\dd t
  \longrightarrow
  \int_0^T\chi(t)\int_{\mathbb R^2}
  \eta(x)u(t,x)\,\dd x\,\dd t.
\end{equation}

For the convolution term, Fubini's theorem gives
\[
  \int_{\mathbb R^2}
  \eta(x)(G_1*\Theta^\varepsilon)(t,x)\,\dd x
  =
  \int_{\mathbb R^2}
  K_{1,\eta}(y)\Theta^\varepsilon(t,y)\,\dd y,
\]
where
\[
  K_{1,\eta}(y)
  :=
  \int_{\mathbb R^2}\eta(x)G_1(x-y)\,\dd x
  =
  \int_{\mathbb R^2}\eta(y+z)G_1(z)\,\dd z.
\]
Since $G_1\in L^1_{\mathrm{loc}}$ and
$\eta\in C_c^\infty(B_1)$,
differentiating $\eta$ under the integral signe shows that
$K_{1,\eta}\in C^\infty(\mathbb R^2)$.

Fix $R>\max\{1,R_T\}$ and choose
$\rho_R\in C_c^\infty(B_{2R})$
with $\rho_R=1$ on $B_R$.
By \eqref{eq:The-eps-supp}
and \eqref{eq:the-supp-alp=1},
\[
  \int_{\mathbb R^2}
  K_{1,\eta}(\Theta^\varepsilon-\theta)\,\dd y
  =
  \int_{\mathbb R^2}
  \rho_R K_{1,\eta}(\Theta^\varepsilon-\theta)\,\dd y.
\]
Since $\rho_R K_{1,\eta}\in C_c^\infty(B_{2R})$,
\eqref{eq:strong-local-theta} implies
\begin{align*}
  &\left|
    \int_0^T\chi(t)\int_{\mathbb R^2}
    K_{1,\eta}(y)
    (\Theta^\varepsilon-\theta)(t,y)\,\dd y\,\dd t
  \right|
  \\
  &\qquad\leq
  \Vert \chi\Vert _{L^1(0,T)}
  \Vert \rho_RK_{1,\eta}\Vert _{H_0^{1/2}(B_{2R})}
  \Vert \Theta^\varepsilon-\theta\Vert _
    {L^\infty([0,T],H^{-1/2}(B_{2R}))}
  \longrightarrow0.
\end{align*}
Consequently,
\begin{equation}\label{eq:limit-normalization-convolution}
\begin{aligned}
  \int_0^T\chi(t)\int_{\mathbb R^2}
  \eta(x)(G_1*\Theta^\varepsilon)(t,x)\,\dd x\,\dd t
  \longrightarrow
  \int_0^T\chi(t)\int_{\mathbb R^2}
  \eta(x)(G_1*\theta)(t,x)\,\dd x\,\dd t.
\end{aligned}
\end{equation}

Passing to the limite in
\eqref{eq:normalization-before-limit}, we obtain
\[
  \int_0^T\chi(t)\int_{\mathbb R^2}
  \eta(x)(u-G_1*\theta)(t,x)\,\dd x\,\dd t=0.
\]
By \eqref{eq:velocity-up-to-constant}
and $\int_{\mathbb R^2}\eta\,\dd x=1$,
this reduces to
$\int_0^T\chi(t)c(t)\,\dd t=0$.
Since $\chi$ is arbitrary, $c(t)=0$
for almost every $t\in(0,T)$.
Thus, in both cases,
\[
  u(t,\cdot)=G_\alpha*\theta(t,\cdot)
  \quad\text{in }\mathcal D'(\mathbb R^2)
\]
for almost every $t\in(0,T)$.

The normalization is fixed throughout the familly:
$a^\varepsilon\equiv0$ in case~(i), while in case~(ii)
the fixed averaging function $\eta$ and \eqref{def:a-eps}
uniquely determine $a^\varepsilon$.

Every subsequential limit constructed in Steps~3-5
belongs to the class of Lemma~\ref{lem:ST-uniqueness},
with the support condition at $\alpha=1$
ensured by \eqref{eq:the-supp-alp=1}.
Thus all such limits coincide, and relative compactness
implies that the stated convergences hold
for the whole familly as $\varepsilon\to0$.
Since $T>0$ is arbitrary, uniqueness makes the limits
on different finite time intervals compatible,
yielding a global normalized solution.
This completes the proof.
\end{proof}

We next strengthen the temperature convergence to strong
convergence in Lebesgue spaces, uniformly in time.
\begin{corollary}\label{cor:strong-limit-R2}
Under the assumptions of
Proposition~\ref{prop:weak-limit-R2},
as $\varepsilon\rightarrow 0$,
\begin{equation}\label{eq:Thet-eps-the-CLs-conv}
  \Theta^\varepsilon\longrightarrow\theta
  \quad\text{strongly in }
  C\bigl([0,T];L^s(\mathbb R^2)\bigr),
  \qquad 1<s<\infty.
\end{equation}
For $\alpha=1$, the convergence also holds for $s=1$.
In case~(i) of Proposition~\ref{prop:weak-limit-R2},
these convergences concern the original temperatures
$\Theta^\varepsilon=\theta^\varepsilon$.
\end{corollary}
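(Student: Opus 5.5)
The plan is to upgrade the local $C([0,T];H^{-1/2}(B_R))$ convergence of Proposition~\ref{prop:weak-limit-R2} to strong $C([0,T];L^2(\mathbb R^2))$ convergence, using conservation of $L^2$ norms. Interpolation with the uniform $L^1\cap L^\infty$ bounds then gives every $1<s<\infty$. The case $s=1$ at $\alpha=1$ will follow from the common support bound.

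First I would record the limiting Lagrangian structure. The limit velocity $u=G_\alpha*\theta$ satisfies $\nabla u\in L^\infty$ by \eqref{es:u:lip}, so it generates a measure-preserving bi-Lipschitz flow $X_t$. The limit is then $\theta(t)=\theta_0\circ X_t^{-1}$, as in the proof of Lemma~\ref{lem:ST-uniqueness}. Consequently $\Vert\theta(t)\Vert_{L^2}=\Vert\theta_0\Vert_{L^2}$ for all $t$, and $t\mapsto\theta(t)$ lies in $C([0,T];L^2)$. Continuity holds because composition with a continuous family of measure-preserving homeomorphisms is continuous in $L^2$; I would check this by density of $C_c$ functions in $L^2$. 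Similarly, $\Vert\Theta^\varepsilon(t)\Vert_{L^2}=\Vert\theta_0^\varepsilon\Vert_{L^2}\to\Vert\theta_0\Vert_{L^2}$, uniformly in $t$, by \eqref{eq:Theta-eps-exp} and the strong convergence in \eqref{eq:inidata-eps-assum}.

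Next comes the key step, uniform-in-time strong $L^2$ convergence. I would expand
\[
\Vert\Theta^\varepsilon(t)-\theta(t)\Vert_{L^2}^2=\Vert\theta_0^\varepsilon\Vert_{L^2}^2+\Vert\theta_0\Vert_{L^2}^2-2\langle\Theta^\varepsilon(t),\theta(t)\rangle ,
\]
so it suffices to show $\sup_t|\langle\Theta^\varepsilon(t)-\theta(t),\theta(t)\rangle|\to0$. The set $\{\theta(t):t\in[0,T]\}$ is compact in $L^2$ by the continuity established above. Given $\delta>0$, I would choose finitely many $\psi_k\in C_c^\infty(B_R)$ that $\delta$-approximate this set in $L^2$. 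For each $\psi_k$, the $H^{-1/2}(B_R)$ convergence from \eqref{eq:strong-local-theta} gives $\sup_t|\langle\Theta^\varepsilon(t)-\theta(t),\psi_k\rangle|\to0$. The error terms are at most $\delta\sup_{t,\varepsilon}\Vert\Theta^\varepsilon(t)-\theta(t)\Vert_{L^2}\le C\delta$. This yields convergence in $C([0,T];L^2(\mathbb R^2))$. I expect this compactness/equicontinuity argument, together with the verification that $\theta\in C([0,T];L^2)$, to be the main obstacle. The weak convergence is only local, so the tightness of $\theta(t)$ in $L^2$ must come from the compactness of the limiting orbit rather than from any tightness of the family.

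Finally I would interpolate. For $2<s<\infty$, Hölder gives $\Vert f\Vert_{L^s}\le\Vert f\Vert_{L^2}^{2/s}\Vert f\Vert_{L^\infty}^{1-2/s}$. For $1<s<2$, $\Vert f\Vert_{L^s}\le\Vert f\Vert_{L^1}^{1-\vartheta}\Vert f\Vert_{L^2}^{\vartheta}$ with $\vartheta>0$. Combined with the uniform bounds \eqref{eq:The-eps-L1Linf-bdd} and \eqref{eq:the-L1Linf-bdd}, these give \eqref{eq:Thet-eps-the-CLs-conv}. For $\alpha=1$, both $\Theta^\varepsilon(t)$ and $\theta(t)$ are supported in $B_{R_T}$ by \eqref{eq:The-eps-supp} and \eqref{eq:the-supp-alp=1}. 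Cauchy--Schwarz then gives $\Vert\Theta^\varepsilon(t)-\theta(t)\Vert_{L^1}\le C R_T\Vert\Theta^\varepsilon(t)-\theta(t)\Vert_{L^2}$, which settles $s=1$. In case~(i) we have $a^\varepsilon\equiv0$, so these statements concern $\theta^\varepsilon$ itself.
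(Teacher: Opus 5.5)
Your proposal is correct and follows the paper's proof essentially step for step. The shared steps are:
\begin{itemize}
\item conservation of the $L^2$ norms along the measure-preserving flows;
\item the expansion of $\Vert\Theta^\varepsilon(t)-\theta(t)\Vert_{L^2}^2$, reducing the problem to the cross term;
\item compactness of the orbit $\{\theta(t):0\le t\le T\}$ in $L^2$, together with the $C([0,T];H^{-1/2}(B_R))$ convergence, to control that cross term uniformly in time;
\item interpolation with the uniform $L^1\cap L^\infty$ bounds, and the common support bound at $\alpha=1$ for $s=1$.
\end{itemize}
Your extra justification of $\theta\in C([0,T];L^2)$, via density of $C_c$ functions and measure preservation, fills in a step the paper only asserts.
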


\begin{proof}[Proof of Corollary~\ref{cor:strong-limit-R2}]
We first prove that
\begin{equation}\label{eq:The-eps-conv-CL2}
  \Theta^\varepsilon\longrightarrow\theta
  \quad\text{strongly in }
  C\bigl([0,T];L^2(\mathbb R^2)\bigr).
\end{equation}
The flow representation established in the proof of
Lemma~\ref{lem:ST-uniqueness}, together with the
corresponding representation of $\Theta^\varepsilon$,
gives
$\Theta^\varepsilon,\theta\in C([0,T];L^2(\mathbb R^2))$.
Since both flows preserve Lebesgue measure,
\begin{equation}\label{eq:The-eps-the-L2-cons}
  \Vert \Theta^\varepsilon(t)\Vert _{L^2}
  =\Vert \theta_0^\varepsilon\Vert _{L^2},
  \qquad
  \Vert \theta(t)\Vert _{L^2}=\Vert \theta_0\Vert _{L^2},
  \quad 0\leq t\leq T.
\end{equation}

Since $\theta\in C([0,T];L^2(\mathbb R^2))$,
the set
$\mathcal K:=\{\theta(t):0\leq t\leq T\}$
is compact in $L^2(\mathbb R^2)$
as the continuous image of the compact interval $[0,T]$.
Thus, for every $\delta>0$, there exist
$g_1,\ldots,g_N\in C_c^\infty(\mathbb R^2)$ such that
$\sup_{0\leq t\leq T}\min_{1\leq i\leq N}\Vert \theta(t)-g_i\Vert _{L^2}<\delta$.
Choose $R_\delta>0$ so that all the $g_i$
are compactly supported in $B_{R_\delta}$.
Approximating $\theta(t)$ by one of these functions
and using the uniform $L^2$ bounds, we obtain
\begin{align*}
  \sup_{0\leq t\leq T}
  \left|
    \int_{\mathbb R^2}
    (\Theta^\varepsilon(t)-\theta(t))\theta(t)\,\dd x
  \right|
  \leq
  \Vert \Theta^\varepsilon-\theta\Vert _
    {C([0,T];H^{-1/2}(B_{R_\delta}))}
  \max_{1\leq i\leq N}
  \Vert g_i\Vert _{H_0^{1/2}(B_{R_\delta})}
  +C\delta.
\end{align*}
First letting $\varepsilon\to0$ using
\eqref{eq:strong-local-theta}, and then
$\delta\rightarrow0$, gives
\begin{equation}\label{eq:The-eps-conv-mid1}
  \sup_{0\leq t\leq T}
  \left|
    \int_{\mathbb R^2}
    (\Theta^\varepsilon(t)-\theta(t))\theta(t)\,\dd x
  \right|
  \longrightarrow0.
\end{equation}

By \eqref{eq:The-eps-the-L2-cons},
\begin{align*}
  \Vert \Theta^\varepsilon(t)-\theta(t)\Vert _{L^2}^2
  =
  \Vert \theta_0^\varepsilon\Vert _{L^2}^2
  -\Vert \theta_0\Vert _{L^2}^2
  -2\int_{\mathbb R^2}
  (\Theta^\varepsilon(t)-\theta(t))\theta(t)\,\dd x.
\end{align*}
Taking the supremum over $t\in[0,T]$,
the strong $L^2$ convergence of the initial temperatures
and \eqref{eq:The-eps-conv-mid1}
prove \eqref{eq:The-eps-conv-CL2}.

The uniform bounds
\eqref{eq:The-eps-L1Linf-bdd}
and \eqref{eq:the-L1Linf-bdd}
allow interpolation with $L^1$ and $L^\infty$:
\[
  \Vert \Theta^\varepsilon(t)-\theta(t)\Vert _{L^s}
  \leq
  \begin{cases}
    C_T\Vert \Theta^\varepsilon(t)-\theta(t)\Vert _{L^2}^
      {2(1-1/s)},
      &1<s<2,\\[1mm]
    C_T\Vert \Theta^\varepsilon(t)-\theta(t)\Vert _{L^2}^{2/s},
      &2\leq s<\infty.
  \end{cases}
\]
Time continuity in these spaces follows from
the same interpolation inequalities.
Taking the supremum in time and using
\eqref{eq:The-eps-conv-CL2}
therefore proves \eqref{eq:Thet-eps-the-CLs-conv}.

Finally, when $\alpha=1$,
\eqref{eq:The-eps-supp}
and \eqref{eq:the-supp-alp=1}
place both temperature supports in a common ball
$B_{R_T}$, after enlarging $R_T$ if necessary.
Hence,
$\Vert \Theta^\varepsilon(t)-\theta(t)\Vert _{L^1}\leq |B_{R_T}|^{1/2}
  \Vert \Theta^\varepsilon(t)-\theta(t)\Vert _{L^2}$.
Together with \eqref{eq:The-eps-conv-CL2},
this gives convergence in
$C([0,T];L^1(\mathbb R^2))$.
\end{proof}

\subsection{Proof of Theorem \ref{thm:infinite-Prandtl}}

Fix $T>0$.
Apply Proposition~\ref{prop:weak-limit-R2}
and Corollary~\ref{cor:strong-limit-R2}
with $\theta_0^\varepsilon=\theta_0$
and velocity exponent $p$ when $k=1$,
or $q$ when $k=2$.
Take $a^\varepsilon\equiv0$ when $\alpha<1$,
or when $\alpha=1$ and
$\int_{\mathbb R^2}\theta_0\,\dd x=0$.
In the latter case, compact support ensures
the finite first absolute moment required
in \eqref{eq:u-LQ-es}.
For nonzero total temperature at $\alpha=1$,
choose $a^\varepsilon$ by \eqref{def:a-eps}.
As $\varepsilon\to0$, the whole familly
$(U^\varepsilon,\Theta^\varepsilon)$ converges
in the senses of \eqref{eq:U-The-eps-conv-gen}
and \eqref{eq:The-eps-conv-CL2}
to the unique normalized solution $(u,\theta)$
with $u=G_\alpha*\theta$.

Since $\theta_0$ is compactly supported,
\eqref{eq:The-eps-supp} gives uniform support control
when $\alpha=1$.
For $\alpha<1$, the growth bound
\eqref{eq:U-eps-uppbdd} and Gronwall's inequality
applied to $Y_t^\varepsilon=X_t^\varepsilon$
give the same conclusion.
This support bound passes to the limit, so
there exists $R_T>0$, independent of $\varepsilon$,
such that
\begin{equation}\label{eq:limit-temperature-support}
  \operatorname{supp}\Theta^\varepsilon(t)
  \cup\operatorname{supp}\theta(t)
  \subset B_{R_T},
  \qquad 0\leq t\leq T.
\end{equation}
Together with \eqref{eq:The-eps-conv-CL2},
this gives strong convergence in
$C([0,T];L^1(\mathbb R^2))$.
Interpolation with the uniform $L^\infty$ bound
then gives strong convergence in
$C([0,T];L^r(\mathbb R^2))$
for every $1\leq r<\infty$.

Let $X_t$ denote the flow map generated by
the limiting velocity $u=G_\alpha*\theta$.
As shown in the proof of
Corollary~\ref{cor:strong-limit-R2},
$\nabla u\in L^\infty([0,T],L^\infty)$
and $u$ has sublinear growth.
Together with $\operatorname{div}u=0$,
these properties ensure that $X_t$ is a globally defined,
measure-preserving bi-Lipschitz flow on $[0,T]$.
Uniqueness for the transport equation then gives $\theta(t,x)=\theta_0(X_t^{-1}(x))$. 
Since $\theta_0=\overline\theta_0\mathds{1}_{D_0}$,
the limiting temperature retains the patch structure:
\begin{equation}\label{eq:limit-temperature-front}
  \theta(t,x)
  =
  \overline\theta_0\bigl(X_t^{-1}(x)\bigr)
  \mathds{1}_{D(t)}(x),
  \qquad D(t):=X_t(D_0).
\end{equation}

It remains to transfer the uniform boundary estimates
to the limit.
We first consider $C^{1,\gamma}$ boundary persistence.

Define the translated tangent field by  $\mathcal W^\varepsilon(t,x)
  :=W^\varepsilon(t,x+a^\varepsilon(t))$. 
It satisfies
\begin{equation}\label{eq:translated-W-base}
  \partial_t\mathcal W^\varepsilon
  +U^\varepsilon\cdot\nabla\mathcal W^\varepsilon
  =
  \mathcal W^\varepsilon\cdot\nabla U^\varepsilon,
  \qquad
  \mathcal W^\varepsilon|_{t=0}=W_0,
\end{equation}
with
$\operatorname{div}U^\varepsilon
=\operatorname{div}\mathcal W^\varepsilon=0$.
Throughout this argument,
$C^{0,1}$ is identified with $W^{1,\infty}$.

If $\frac12<\alpha<1$ and $0<\gamma\leq2\alpha-1$,
or if $\alpha=1$ and $0<\gamma<1$,
the preceding velocity estimates give $\sup\limits_{0<\varepsilon\leq1}
  \Vert \nabla U^\varepsilon\Vert _{L^1([0,T],C^{0,\gamma})}
  \leq C_T$. 
The H\"older transport estimate \eqref{eq:T-BesEs} applied to
\eqref{eq:translated-W-base}, together with
$\Vert \mathcal W^\varepsilon\cdot\nabla U^\varepsilon  \Vert _{C^{0,\gamma}}\leq C \Vert \mathcal W^\varepsilon\Vert _{C^{0,\gamma}} \Vert \nabla U^\varepsilon\Vert _{C^{0,\gamma}}$
and Gronwall's inequality gives
\begin{equation}\label{eq:W-uniform-Cgamma}
  \sup_{0<\varepsilon\leq1}
  \Vert \mathcal W^\varepsilon
    \Vert _{L^\infty([0,T],C^{0,\gamma}(\mathbb R^2))}
  \leq C_T.
\end{equation}
For the remaining cases,
$\frac12<\alpha<1$ with $2\alpha-1<\gamma\leq1$
and $\alpha=1$ with $\gamma=1$,
the uniform tangent-field estimates established
in \eqref{es:W-Cgam} and \eqref{eq:W-W1inf-bdd}
give \eqref{eq:W-uniform-Cgamma} directly,
since spatial translations preserve these norms.

By the Banach-Alaoglu theorem,
after extracting a subsequence,
there exists $\mathcal W$ such that
\[
  \mathcal W^\varepsilon\rightharpoonup^\ast\mathcal W
  \quad\text{in }
  L^\infty((0,T)\times\mathbb R^2).
\]
The uniform spatial H\"older bounds,
including the Lipschitz bound at $\gamma=1$,
pass to the limit under distributional convergence.
Thus, for every $0<\gamma\leq1$ under consideration,
\[
  \Vert \mathcal W\Vert _
    {L^\infty([0,T],C^{0,\gamma}(\mathbb R^2))}
  \leq
  \liminf_{\varepsilon\to0}
  \Vert \mathcal W^\varepsilon\Vert _
    {L^\infty([0,T],C^{0,\gamma}(\mathbb R^2))}
  \leq C_T.
\]
At $\gamma=1$, we may also extract the subsequence so that
\[
  \nabla\mathcal W^\varepsilon
  \rightharpoonup^\ast\nabla\mathcal W
  \quad\text{in }
  L^\infty((0,T)\times\mathbb R^2),
\]
with the limit identified by distributional differentiation.

We next establish strong local compactness.
Since $U^\varepsilon$ and $\mathcal W^\varepsilon$
are divergence-free,
\eqref{eq:translated-W-base} can be written as $\partial_t\mathcal W^\varepsilon
  =
  \operatorname{div}\bigl(
    U^\varepsilon\otimes\mathcal W^\varepsilon
    -\mathcal W^\varepsilon\otimes U^\varepsilon
  \bigr)$. 
For every $R>0$, H\"older's inequality,
\eqref{eq:local-W1p-Ueps}, and \eqref{eq:W-uniform-Cgamma}
therefore give
\begin{equation}\label{eq:W-time-bound}
\begin{aligned}
  \Vert \partial_t\mathcal W^\varepsilon
    \Vert _{L^\infty([0,T],W^{-1,p}(B_R))}
  \leq
  C\Vert U^\varepsilon\Vert _{L^\infty([0,T],L^p(B_R))}
  \Vert \mathcal W^\varepsilon
    \Vert _{L^\infty((0,T)\times B_R)}
  \leq C_{T,R},
\end{aligned}
\end{equation}
uniformly in $\varepsilon$.

For every $0<\gamma'<\gamma\leq1$, we have
$C^{0,\gamma}(\overline{B_R})
  \hookrightarrow\hookrightarrow
  C^{0,\gamma'}(\overline{B_R})
  \hookrightarrow W^{-1,p}(B_R)$.
The Aubin-Lions lemma,
applied with \eqref{eq:W-uniform-Cgamma}
and \eqref{eq:W-time-bound}, therefore gives,
after extracting a further subsequence,
\begin{equation}\label{eq:W-strong-local}
  \mathcal W^\varepsilon\longrightarrow\mathcal W
  \quad\text{strongly in }
  C\bigl([0,T];C^{0,\gamma'}(\overline{B_R})\bigr).
\end{equation}
The limit agrees with the preceding weak-$*$ limit
by uniqueness of the distributional limit.
A diagonal argument over integer radii
and a sequence of exponents increasing to $\gamma$
gives a common subsequence for every $R>0$
and every $0<\gamma'<\gamma$.

We now identify the equation satisfied by the limit.
For every
$\psi\in C_c^\infty([0,T)\times\mathbb R^2;\mathbb R^2)$,
\eqref{eq:translated-W-base} gives
\begin{align*}
  \int_0^T\int_{\mathbb R^2}
  \mathcal W^\varepsilon\cdot\partial_t\psi\,\dd x\,\dd t
  +
  \int_0^T\int_{\mathbb R^2}
  \bigl(
    \mathcal W^\varepsilon\otimes U^\varepsilon
    -U^\varepsilon\otimes\mathcal W^\varepsilon
  \bigr):\nabla\psi\,\dd x\,\dd t
  +\int_{\mathbb R^2}W_0\cdot\psi(0)\,\dd x=0.
\end{align*}
The strong local convergence \eqref{eq:W-strong-local}
and the weak-$*$ convergence \eqref{eq:local-weak-Ueps}
allow passage to the limit in the product terms.
Thus
\begin{equation}\label{eq:limit-W-base}
  \partial_t\mathcal W+u\cdot\nabla\mathcal W
  =\mathcal W\cdot\nabla u,
  \qquad
  \mathcal W|_{t=0}=W_0,
\end{equation}
in the sense of distributions.
The divergence-free condition also passes to the limit,
so $\operatorname{div}\mathcal W=0$.

The kernel estimates imply that $\nabla u(t,\cdot)$
is continuous for almost every $t$.
Together with the preceding gradient and growth bounds,
this ensures that $X_t$ is continuously differentiable
in space.
Uniqueness for \eqref{eq:limit-W-base} along the flow gives $\mathcal W(t,X_t(x))=DX_t(x)W_0(x)$, see \cite[Lemma 1.4]{MB02}.
For $z(a,t):=X_t(z_0(a))$, with
$\partial_a z_0=W_0\circ z_0$, it follows that $\partial_a z(a,t)=\mathcal W(t,z(a,t))$. 

Since
$\partial_t\partial_a z(a,t) =\nabla u(t,z(a,t))\,\partial_a z(a,t)$, we obtain
\[
  |\partial_a z(a,t)|
  \geq |\partial_a z_0(a)|
  \exp\left(
    -\int_0^t\Vert \nabla u(\tau)\Vert _{L^\infty}\,\dd\tau
  \right),
\]
so the boundary tangent remains uniformly nondegenerate
on $[0,T]$.

The uniform $C^{0,\gamma}$ bound on $\mathcal W$,
together with the flow bounds, now gives
\begin{equation}\label{eq:limit-C1gamma}
  \sup_{0\leq t\leq T}
  \Vert z(\cdot,t)\Vert _{C^{1,\gamma}}
  \leq C_T.
\end{equation}
This proves $C^{1,\gamma}$ boundary persistence
for the limiting patch, including
$W^{2,\infty}$ persistence when $\gamma=1$.

We next prove $C^{2,\gamma}$ boundary persistence.
The commutation identity
$[\partial_t+U^\varepsilon\cdot\nabla, \partial_{\mathcal W^\varepsilon}]=0$
and \eqref{eq:translated-W-base} give
\begin{equation}\label{eq:Z2-evolution}
  \partial_t
    (\partial_{\mathcal W^\varepsilon}\mathcal W^\varepsilon)
  +U^\varepsilon\cdot\nabla
    (\partial_{\mathcal W^\varepsilon}\mathcal W^\varepsilon)
  =
  \partial_{\mathcal W^\varepsilon}^{\,2}U^\varepsilon.
\end{equation}
The uniform striated estimates established
in the proof of Theorem~\ref{thm:main2} yield
\begin{equation}\label{eq:Z2-uniform}
  \sup_{0<\varepsilon\leq1}
  \Vert \partial_{\mathcal W^\varepsilon}\mathcal W^\varepsilon
    \Vert _{L^\infty([0,T],C^{0,\gamma}(\mathbb R^2))}
  \leq C_T.
\end{equation}
Indeed,
$\partial_{\mathcal W^\varepsilon}\mathcal W^\varepsilon(t,x)=
  (\partial_{W^\varepsilon}W^\varepsilon) (t,x+a^\varepsilon(t))$, 
and spatial translations preserve these norms.

To establish time compactness, we use
$\operatorname{div}\mathcal W^\varepsilon=0$ to write
$\partial_{\mathcal W^\varepsilon}^{\,2}U^\varepsilon
  =
  \operatorname{div}\bigl(
    (\partial_{\mathcal W^\varepsilon}U^\varepsilon)
    \otimes\mathcal W^\varepsilon
  \bigr)$.
Since $\operatorname{div}U^\varepsilon=0$,
\eqref{eq:Z2-evolution} becomes
\[
  \partial_t
    (\partial_{\mathcal W^\varepsilon}\mathcal W^\varepsilon)
  =
  \operatorname{div}\Bigl(
    (\partial_{\mathcal W^\varepsilon}U^\varepsilon)
      \otimes\mathcal W^\varepsilon
    -
    (\partial_{\mathcal W^\varepsilon}\mathcal W^\varepsilon)
      \otimes U^\varepsilon
  \Bigr).
\]
For every $R>0$, it follows that
\begin{align*}
  \bigl\Vert 
    \partial_t
    (\partial_{\mathcal W^\varepsilon}\mathcal W^\varepsilon)
  \bigr\Vert _{W^{-1,q}(B_R)}
  \leq
  C\Vert \mathcal W^\varepsilon\Vert _{L^\infty(B_R)}^2
  \Vert \nabla U^\varepsilon\Vert _{L^q(B_R)}
  +  C\Vert \partial_{\mathcal W^\varepsilon}\mathcal W^\varepsilon
    \Vert _{L^\infty(B_R)}
  \Vert U^\varepsilon\Vert _{L^q(B_R)}.
\end{align*}
The uniform local $W^{1,q}$ bounds for $U^\varepsilon$,
together with \eqref{eq:W-uniform-Cgamma}
and \eqref{eq:Z2-uniform}, therefore give
\begin{equation}\label{eq:Z2-time}
  \sup_{0<\varepsilon\leq1}
  \bigl\Vert 
    \partial_t
    (\partial_{\mathcal W^\varepsilon}\mathcal W^\varepsilon)
  \bigr\Vert _{L^\infty([0,T],W^{-1,q}(B_R))}
  \leq C_{T,R}.
\end{equation}

For $0<\gamma_2<\gamma\leq1$, the embeddings
$C^{0,\gamma}(\overline{B_R})
  \hookrightarrow\hookrightarrow
  C^{0,\gamma_2}(\overline{B_R})
  \hookrightarrow W^{-1,q}(B_R)$,
together with the spatial and time-derivative bounds,
allow us to apply the Aubin-Lions lemma.
After a diagonal extraction,
\begin{equation}\label{eq:Z2-strong}
  \partial_{\mathcal W^\varepsilon}\mathcal W^\varepsilon
  \longrightarrow\mathcal Z
  \quad\text{strongly in }
  C\bigl([0,T];C^{0,\gamma_2}(\overline{B_R})\bigr)
\end{equation}
for every $R>0$ and $0<\gamma_2<\gamma$.

Since $\operatorname{div}\mathcal W^\varepsilon=0$, we have
$ \partial_{\mathcal W^\varepsilon}\mathcal W^\varepsilon
  =
  \operatorname{div}
  (\mathcal W^\varepsilon\otimes\mathcal W^\varepsilon)$.
Passing to the limit using
\eqref{eq:W-strong-local} and \eqref{eq:Z2-strong} gives $\mathcal Z
  =
  \operatorname{div}(\mathcal W\otimes\mathcal W)
  =
  \partial_{\mathcal W}\mathcal W$ 
in the sense of distributions.
Lower semicontinuity of the spatial H\"older norm gives $\Vert \partial_{\mathcal W}\mathcal W
    \Vert _{L^\infty([0,T],C^{0,\gamma}(\mathbb R^2))}
  \leq C_T$. 
At $\gamma=1$, this is the corresponding
$W^{1,\infty}$ bound, which can also be obtained
by weak-$*$ convergence of the first derivatives.

Finally, arguing as in the derivation of~\eqref{eq:boundary-identities},  the uniform bounds for $\mathcal W$ and
$\partial_{\mathcal W}\mathcal W$,
together with the previously established tangent
nondegeneracy, yield $C^{2,\gamma}$ boundary persistence,
including the $W^{3,\infty}$ endpoint when $\gamma=1$.
This completes the proof of
Theorem~\ref{thm:infinite-Prandtl}.

\appendix
\addtocontents{toc}{\protect\setcounter{tocdepth}{1}}

\section{Regularization of the initial data}\label{sec:A}

This appendix constructs smooth approximations
of the initial data compatible with the {\it{a priori}} estimates
in Sections~\ref{sec:well-pose}-\ref{sec:C2}.
We first recall the approximation scheme used
in the existence proof of Theorem~\ref{thm:main1}.
We then establish the additional uniform striated bounds
required for boundary regularity persistence.

\subsection{Smooth approximation of the initial data
for the existence theory}\label{sec:A1}

Let $\theta_0\in L^1\cap L^\infty(\mathbb R^2)$
and $u_0\in W^{1,p}(\mathbb R^2)$,
where $2\leq p<\infty$ and $\operatorname{div}u_0=0$.
Choose a nonnegative mollifier
$\rho\in C_c^\infty(\mathbb R^2)$ with
$\int_{\mathbb R^2}\rho\,\dd x=1$, and set $\rho_\delta(x):=\delta^{-2}\rho(x/\delta)$. 
Let $\chi\in C_c^\infty(\mathbb R^2)$ satisfy
$0\leq\chi\leq1$, $\chi=1$ on $B_1$, and
$\operatorname{supp}\chi\subset B_2$,
and define $\chi_\delta(x):=\chi(\delta x)$.
Denoting the Leray projector by $\mathbb P$, set $\theta_{0,\delta}:=\rho_\delta*\theta_0,\,u_{0,\delta}:=\rho_\delta*\mathbb P(\chi_\delta u_0)$. 
Then $\operatorname{div}u_{0,\delta}=0$, and,
for every fixed $\delta>0$,
\[
  u_{0,\delta},\theta_{0,\delta}
  \in H^m(\mathbb R^2),
  \qquad m\geq0.
\]
Indeed, $\chi_\delta u_0$ is compactly supported
and belongs to $L^2$, while $\mathbb P$ is bounded
on $L^2$.
Also, $\theta_0\in L^2$ because
$\theta_0\in L^1\cap L^\infty$.
The asserted regularity therefore follows
from the smoothing property of convolution.

As $\delta\to0$, we have
\[
  u_{0,\delta}\longrightarrow u_0
  \quad\text{strongly in }W^{1,p}(\mathbb R^2),
  \qquad
  \theta_{0,\delta}\longrightarrow\theta_0
  \quad\text{strongly in }L^r(\mathbb R^2),
  \quad 1\leq r<\infty.
\]
For the velocity, since $\mathbb P u_0=u_0$
and $\mathbb P$ commutes with convolution
and is bounded on $W^{1,p}$,
\begin{align*}
  \Vert u_{0,\delta}-u_0\Vert _{W^{1,p}}
  &\leq
  C\Vert \rho_\delta*(\chi_\delta u_0)-u_0\Vert _{W^{1,p}}
  \\
  &\leq
  C\Vert \chi_\delta u_0-u_0\Vert _{W^{1,p}}
  +C\Vert \rho_\delta*u_0-u_0\Vert _{W^{1,p}}
  \longrightarrow0.
\end{align*}
Here, $\chi_\delta u_0\to u_0$ in $W^{1,p}$
follows from dominated convergence and
$\Vert \nabla\chi_\delta\,u_0\Vert _{L^p}\leq C\delta\Vert u_0\Vert _{L^p}\longrightarrow 0$.
The temperature convergence follows from
$\theta_0\in L^r$ for every $1\leq r<\infty$
and the approximation property of mollifiers.

Moreover,
\begin{align*}
  \sup_{0<\delta\leq1}
  \Vert u_{0,\delta}\Vert _{W^{1,p}}
  \leq C\Vert u_0\Vert _{W^{1,p}},\qquad
  \sup_{0<\delta\leq1}
  \Vert \theta_{0,\delta}\Vert _{L^1\cap L^\infty}
  \leq \Vert \theta_0\Vert _{L^1\cap L^\infty}.
\end{align*}
The temperature mollification also preserves its integral
and controls its first absolute moment:
\[
  \int_{\mathbb R^2}\theta_{0,\delta}\,\dd x
  =\int_{\mathbb R^2}\theta_0\,\dd x,
  \qquad
  \int_{\mathbb R^2}|x|\,|\theta_{0,\delta}(x)|\,\dd x
  \leq \int_{\mathbb R^2}|x|\,|\theta_0(x)|\,\dd x
       +C_\rho\delta\Vert \theta_0\Vert _{L^1},
\]
whenever the initial first absolute moment is finite,
where $C_\rho:=\int_{\mathbb R^2}|y|\rho(y)\,\dd y$.
Thus the additional hypotheses of \eqref{eq:u-LQ-es}
at $\alpha=1$ are preserved uniformly in $\delta$.
These are the approximations and uniform bounds
used in the proof of Theorem~\ref{thm:main1}.
The velocity convergence and uniform bound also hold
with $p$ replaced by any $q\in(1,\infty)$
for which $u_0\in W^{1,q}(\mathbb R^2)$.

\subsection{Regularization of the initial temperature
and tangent vector field}\label{sec:A2}

Recall that, in Section~\ref{subsec:intr-patch-bdr},
we chose the initial level-set function $\varphi_0$
to be constant outside a sufficiently large ball
before imposing the striated regularity assumptions
on the initial velocity.
Set
\[
  W_0:=\nabla^\perp\varphi_0,
  \qquad K:=\operatorname{supp}W_0\Subset\mathbb R^2.
\]
For the boundary regularity argument, we also define $\varphi_{0,\delta}:=\rho_\delta*\varphi_0,\,W_{0,\delta}:=\nabla^\perp\varphi_{0,\delta}
  =\rho_\delta*W_0$. 
Then $\theta_{0,\delta}$, $\varphi_{0,\delta}$,
and $W_{0,\delta}$ are smooth, and
 $\operatorname{div}W_{0,\delta}=0,\,W_{0,\delta}\cdot\nabla\varphi_{0,\delta}=0$. 
The uniform continuity of $W_0$, supplied by
the boundary regularity assumptions, also gives
$W_{0,\delta}\to W_0$ uniformly on $\mathbb R^2$.

We begin with a commutator estimate for mollification
that will be used repeatedly below.

\begin{lemma}\label{lem:mollified-striated}
Let $\rho_\delta$ be a standard mollifier,
$W\in L^\infty(\mathbb R^2;\mathbb R^2)$
a divergence-free vector field, and
$f\in L^1_{\mathrm{loc}}(\mathbb R^2)$.
Set $W_\delta:=\rho_\delta*W,\,f_\delta:=\rho_\delta*f$, 
and define $\partial_Wf:=\operatorname{div}(Wf)$.
Then
\[
  \partial_{W_\delta}f_\delta
  =
  (\partial_Wf)*\rho_\delta+\mathcal R_\delta(f,W)
\]
in the sense of distributions, where
\[
  \mathcal R_\delta(f,W)(x)
  :=
  \int_{\mathbb R^2}
  f(x-y)\bigl(W_\delta(x)-W(x-y)\bigr)
  \cdot\nabla\rho_\delta(y)\,\dd y.
\]
The following estimates hold uniformly for
$0<\delta\leq1$:
\begin{enumerate}
\item
If $W\in W^{1,\infty}$ and $f\in L^p$,
with $1\leq p\leq\infty$, then
\begin{equation}\label{eq:Rdelta-comm1}
  \Vert \mathcal R_\delta(f,W)\Vert _{L^p}
  \leq C\Vert \nabla W\Vert _{L^\infty}\Vert f\Vert _{L^p}.
\end{equation}

\item
If $W\in C^{0,\beta}$ and $f\in L^\infty$,
with $0<\beta<1$, then
\begin{equation}\label{eq:Rdelta-comm8}
  \Vert \mathcal R_\delta(f,W)\Vert _{C^{\beta-1}}
  \leq C\Vert \ W\Vert _{C^{0,\beta}}\Vert f\Vert _{L^\infty},
\end{equation}
where $C^{\beta-1}:=B^{\beta-1}_{\infty,\infty}$.
\end{enumerate}
\end{lemma}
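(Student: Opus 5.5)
The proof has three parts: verify the distributional identity, then bound $\mathcal R_\delta$ in $L^p$ using the Lipschitz bound on $W$, then bound it in $C^{\beta-1}$ through a dyadic argument.

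\emph{The identity.} I first take $f$ and $W$ smooth and then pass to the general case by density. Since $\operatorname{div}W=0$, we have $(\partial_W f)*\rho_\delta(x)=\int \operatorname{div}(Wf)(x-y)\rho_\delta(y)\,\dd y$. Writing $\operatorname{div}_x$ as $-\operatorname{div}_y$ acting on the $(x-y)$ argument and integrating by parts in $y$ gives
\[
  (\partial_Wf)*\rho_\delta(x)=\int f(x-y)W(x-y)\cdot\nabla\rho_\delta(y)\,\dd y.
\]
For the other term, $\partial_{W_\delta}f_\delta(x)=W_\delta(x)\cdot\nabla f_\delta(x)=\int f(x-y)W_\delta(x)\cdot\nabla\rho_\delta(y)\,\dd y$. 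Subtracting the two expressions gives exactly $\mathcal R_\delta(f,W)$. For general data, both sides are continuous in $f\in L^1_{\rm loc}$ with $W\in L^\infty$ fixed (after pairing with a test function), so the identity holds in $\mathcal D'$.

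\emph{Estimate (1).} Since $y\in\operatorname{supp}\rho_\delta\subset B_\delta$, the Lipschitz bound gives
\[
  |W_\delta(x)-W(x-y)|\le \int\rho_\delta(z)|W(x-z)-W(x-y)|\,\dd z\le 2\delta\|\nabla W\|_{L^\infty}.
\]
Combined with $\|\nabla\rho_\delta\|_{L^1}\le C\delta^{-1}$, this yields $|\mathcal R_\delta(f,W)(x)|\le C\|\nabla W\|_{L^\infty}\,(|f|*\delta^{-2}\mathbf 1_{B_\delta}\|\nabla\rho\|_\infty)(x)$. Young's inequality then gives \eqref{eq:Rdelta-comm1}, uniformly in $\delta$.

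\emph{Estimate (2).} The same argument with Hölder differences shows
\[
  \|\mathcal R_\delta\|_{L^\infty}\le C\delta^{\beta-1}[W]_{C^{0,\beta}}\|f\|_{L^\infty},
\]
which is bounded but not uniformly in $\delta$, so I split into low and high frequencies. For dyadic blocks with $2^j\gtrsim\delta^{-1}$, the crude bound gives $2^{j(\beta-1)}\|\Delta_j\mathcal R_\delta\|_{L^\infty}\le C(2^j\delta)^{\beta-1}[W]_{C^\beta}\|f\|_\infty\le C[W]_{C^\beta}\|f\|_\infty$. For $2^j\le\delta^{-1}$, I instead use the decomposition $\mathcal R_\delta=\partial_{W_\delta}f_\delta-(\partial_Wf)*\rho_\delta=\operatorname{div}(W_\delta f_\delta)-\rho_\delta*\operatorname{div}(Wf)$. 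By \eqref{eq:prodBes} (or directly via $\operatorname{div}$ and Bernstein), both terms are bounded in $C^{\beta-1}$ by $C\|W\|_{L^\infty}\|f\|_{L^\infty}$ when taken at frequency $2^j$, since $2^{j(\beta-1)}\|\Delta_j\operatorname{div}(g)\|_\infty\le C2^{j\beta}\|g\|_\infty$. This is only acceptable for bounded $j$, so it is wasteful for $1\ll 2^j\le\delta^{-1}$.

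To fix this, for those $j$ I write $\mathcal R_\delta=\operatorname{div}\bigl(W_\delta f_\delta-\rho_\delta*(Wf)\bigr)$. The quantity inside the divergence is a Friedrichs-type commutator:
\[
  |W_\delta f_\delta-\rho_\delta*(Wf)|(x)=\Bigl|\int\rho_\delta(y)f(x-y)\bigl(W_\delta(x)-W(x-y)\bigr)\,\dd y\Bigr|\le C\delta^\beta[W]_{C^\beta}\|f\|_\infty.
\]
Hence $2^{j(\beta-1)}\|\Delta_j\mathcal R_\delta\|_\infty\le C2^{j\beta}\delta^\beta[W]_{C^\beta}\|f\|_\infty\le C[W]_{C^\beta}\|f\|_\infty$ for $2^j\le\delta^{-1}$. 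The low block $j=-1$ is handled by the same $L^\infty$ bound on the commutator. Taking the supremum over $j$ proves \eqref{eq:Rdelta-comm8}.

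The main obstacle is the regime $2^j\approx\delta^{-1}$, where the two bounds must match. They do, since each gives a factor $(2^j\delta)^{\pm}$ to a nonnegative power that is bounded by a constant there, so I expect no loss.
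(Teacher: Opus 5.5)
Your proposal is correct and is essentially the paper's proof. The identity and estimate (1) match the paper's computations, namely $|W_\delta(x)-W(x-y)|\lesssim\delta\Vert\nabla W\Vert_{L^\infty}$ together with $\Vert\nabla\rho_\delta\Vert_{L^1}\lesssim\delta^{-1}$. Estimate (2) rests, as in the paper, on writing $\mathcal R_\delta=\operatorname{div}F_\delta$ with $F_\delta:=W_\delta f_\delta-\rho_\delta*(Wf)$ and using the bound $\Vert F_\delta\Vert_{L^\infty}\lesssim\delta^\beta\Vert W\Vert_{C^{0,\beta}}\Vert f\Vert_{L^\infty}$.

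The only difference is in how the interpolation is carried out. You split Littlewood--Paley blocks at $2^j\sim\delta^{-1}$ and use $\Vert\operatorname{div}F_\delta\Vert_{L^\infty}=\Vert\mathcal R_\delta\Vert_{L^\infty}\lesssim\delta^{\beta-1}$ at high frequencies. The paper instead performs H\"older interpolation between $\Vert F_\delta\Vert_{L^\infty}$ and $\Vert\nabla F_\delta\Vert_{L^\infty}\lesssim\delta^{\beta-1}$ to get $F_\delta\in C^{0,\beta}$ uniformly, and then applies $\operatorname{div}\colon C^{0,\beta}\to C^{\beta-1}$. Your version needs only the divergence of $F_\delta$, not its full gradient.
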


\begin{proof}
Since $\operatorname{div}W=\operatorname{div}W_\delta=0$, one can get
\begin{align}\label{id:comm-pWf}
  \partial_{W_\delta}f_\delta
  -(\partial_Wf)*\rho_\delta
  =
  \operatorname{div}\bigl(
    W_\delta f_\delta-\rho_\delta*(Wf)
  \bigr)
  =\mathcal R_\delta(f,W).
\end{align}
If $W\in W^{1,\infty}$, then $|W_\delta(x)-W(x-y)|
  \leq C\delta\Vert \nabla W\Vert _{L^\infty}$ for $y\in\operatorname{supp}\nabla\rho_\delta$. 
Together with
$\Vert \nabla\rho_\delta\Vert _{L^1}\leq C\delta^{-1}$,
Young's inequality gives the first estimate \eqref{eq:Rdelta-comm1}.

For the second estimate, set $F_\delta:=W_\delta f_\delta-\rho_\delta*(Wf)$. 
The representation
\[
  F_\delta(x)
  =
  \int_{\mathbb R^2}
  \rho_\delta(y)
  \bigl(W_\delta(x)-W(x-y)\bigr)f(x-y)\,\dd y
\]
and the standard mollification estimates yield
\[
  \Vert F_\delta\Vert _{L^\infty}
  +\delta\Vert \nabla F_\delta\Vert _{L^\infty}
  \leq
  C\delta^\beta
  \Vert W\Vert _{C^{0,\beta}}\Vert f\Vert _{L^\infty}.
\]
H\"older interpolation therefore gives
\begin{align*}
  \Vert F_\delta\Vert _{C^{0,\beta}}
  \leq
  C\left(
    \Vert F_\delta\Vert _{L^\infty}
    +
    \Vert F_\delta\Vert _{L^\infty}^{1-\beta}
    \Vert \nabla F_\delta\Vert _{L^\infty}^{\beta}
  \right)
  \leq
  C\Vert W\Vert _{C^{0,\beta}}\Vert f\Vert _{L^\infty},
\end{align*}
where we used $0<\delta\leq1$. Thanks to \eqref{id:comm-pWf}, one can get $\mathcal R_\delta(f,W)=\operatorname{div}F_\delta$, which gives
\eqref{eq:Rdelta-comm8}.
\end{proof}

We now verify that the initial striated norms
of the regularized data are bounded uniformly in $\delta$.

Let $k=2$, $\frac12<\alpha<1$, and
$2\alpha-1<\gamma<1$.
Applying Lemma~\ref{lem:mollified-striated}
with $f=\theta_0$ and $W=W_0$, and using
the boundedness of mollification on
$C^{\gamma-2\alpha}
=B^{\gamma-2\alpha}_{\infty,\infty}$, we obtain
\[
  \Vert \partial_{W_{0,\delta}}\theta_{0,\delta}
    \Vert _{C^{\gamma-2\alpha}}
  \leq
  C\Vert \partial_{W_0}\theta_0\Vert _{C^{\gamma-2\alpha}}
  +C\Vert W_0\Vert _{C^{0,\gamma}}\Vert \theta_0\Vert _{L^\infty}.
\]
Here we used
$C^{\gamma-1}\hookrightarrow C^{\gamma-2\alpha}$,
since $2\alpha>1$.

At $\gamma=1$, let $\frac12<\alpha\leq1$
and $2-2\alpha<\mu<1$.
Since $W_0\in W^{1,\infty}$,
the first estimate in Lemma~\ref{lem:mollified-striated}
and the embedding $L^\infty\hookrightarrow C^{\mu-1}$
give
\[
  \Vert \partial_{W_{0,\delta}}\theta_{0,\delta}
    \Vert _{C^{\mu-1}}
  \leq
  C\Vert \partial_{W_0}\theta_0\Vert _{C^{\mu-1}}
  +C\Vert \nabla W_0\Vert _{L^\infty}\Vert \theta_0\Vert _{L^\infty}.
\]
Both estimates are uniform in $\delta$.

We next consider $k=3$.
Suppose that $\frac12<\alpha<1$,
$2\alpha-1<\gamma<1$, and $W_0\in C^{1,\gamma}$.
Applying Lemma~\ref{lem:mollified-striated}
with $p=1$ and $p=\infty$ gives
\[
  \Vert \partial_{W_{0,\delta}}\theta_{0,\delta}
    \Vert _{L^1\cap L^\infty}
  \leq
  C\Bigl(
    \Vert \partial_{W_0}\theta_0\Vert _{L^1\cap L^\infty}
    +
    \Vert \nabla W_0\Vert _{L^\infty}
    \Vert \theta_0\Vert _{L^1\cap L^\infty}
  \Bigr).
\]

To estimate the second tangential derivative,
apply $\partial_{W_{0,\delta}}$ to the identity
in Lemma~\ref{lem:mollified-striated}.
Using the same lemma with $f=\partial_{W_0}\theta_0$,
we have
\[
  \partial_{W_{0,\delta}}
  \bigl((\partial_{W_0}\theta_0)*\rho_\delta\bigr)
  =
  (\partial_{W_0}^2\theta_0)*\rho_\delta
  +\mathcal R_\delta(\partial_{W_0}\theta_0,W_0).
\]
Differentiating $\mathcal R_\delta(\theta_0,W_0)$
along $W_{0,\delta}$ and integrating by parts,
using $\operatorname{div}W_0=0$, gives
\begin{align*}
  \partial_{W_{0,\delta}}^2\theta_{0,\delta}(x)
  ={}&
  \bigl((\partial_{W_0}^2\theta_0)*\rho_\delta\bigr)(x)
  \\
  &+
  2\int_{\mathbb R^2}
  \partial_{W_0}\theta_0(x-y)
  \bigl(W_{0,\delta}(x)-W_0(x-y)\bigr)
  \cdot\nabla\rho_\delta(y)\,\dd y
  \\
  &+
  \int_{\mathbb R^2}
  \theta_0(x-y)
  \bigl(
    \partial_{W_{0,\delta}}W_{0,\delta}(x)
    -\partial_{W_0}W_0(x-y)
  \bigr)
  \cdot\nabla\rho_\delta(y)\,\dd y
  \\
  &+
  \sum_{i,j=1}^2
  \int_{\mathbb R^2}
  \theta_0(x-y)
  \bigl(W_{0,\delta}^i(x)-W_0^i(x-y)\bigr)
  \bigl(W_{0,\delta}^j(x)-W_0^j(x-y)\bigr)
  \partial_{ij}\rho_\delta(y)\,\dd y
  \\
  :={}&D_1+D_2+D_3+D_4.
\end{align*}

The boundedness of mollification controls $D_1$,
while Lemma~\ref{lem:mollified-striated} controls
$D_2=2\mathcal R_\delta(\partial_{W_0}\theta_0,W_0)$.
For $D_4$, we use
\[
  |W_{0,\delta}(x)-W_0(x-y)|
  \leq C\delta\Vert \nabla W_0\Vert _{L^\infty}
  \qquad
  \text{for }y\in\operatorname{supp}\nabla^2\rho_\delta,
\]
together with
$\Vert \nabla^2\rho_\delta\Vert _{L^1}\leq C\delta^{-2}$.
Consequently,
\begin{align*}
  \Vert D_1\Vert _{C^{\gamma-2\alpha}}
  +\Vert D_2\Vert _{L^\infty}
  +\Vert D_4\Vert _{L^\infty}
  \leq
  C\Bigl(
    \Vert \partial_{W_0}^2\theta_0\Vert _{C^{\gamma-2\alpha}}
    +
    \Vert \nabla W_0\Vert _{L^\infty}
    \Vert \partial_{W_0}\theta_0\Vert _{L^\infty}
    +
    \Vert \nabla W_0\Vert _{L^\infty}^2
    \Vert \theta_0\Vert _{L^\infty}
  \Bigr).
\end{align*}

It remains to estimate $D_3$.
We write
\begin{align*}
  D_3
  ={}
  \operatorname{div}\Bigl[
    \partial_{W_{0,\delta}}W_{0,\delta}\,
      \theta_{0,\delta}
    -
    \rho_\delta*
    \bigl((\partial_{W_0}W_0)\theta_0\bigr)
  \Bigr]
  -
  \operatorname{div}
  (\partial_{W_{0,\delta}}W_{0,\delta})
  \,\theta_{0,\delta}.
\end{align*}
Since $W_0\in C^{1,\gamma}$, for
$y\in\operatorname{supp}\rho_\delta$ we have
\begin{align*}
  \bigl|
    \partial_{W_{0,\delta}}W_{0,\delta}(x)
    -\partial_{W_0}W_0(x-y)
  \bigr|
  &\leq
  |W_{0,\delta}(x)-W_0(x-y)|
  \Vert \nabla W_{0,\delta}\Vert _{L^\infty}
  \\
  &\quad+
  |W_0(x-y)|
  |\nabla W_{0,\delta}(x)-\nabla W_0(x-y)|
  \\
  &\leq
  C\delta^\gamma\Vert W_0\Vert _{C^{1,\gamma}}^2.
\end{align*}

The cancellation
$\int_{\mathbb R^2}\nabla\rho_\delta\,\dd y=0$
also gives
\[
  \Vert \nabla^2W_{0,\delta}\Vert _{L^\infty}
  \leq
  [\nabla W_0]_{C^{0,\gamma}}
  \int_{\mathbb R^2}
  |\nabla\rho_\delta(y)|\,|y|^\gamma\,\dd y
  \leq
  C\delta^{\gamma-1}\Vert W_0\Vert _{C^{1,\gamma}}.
\]
Consequently,
\begin{align*}
  \Vert \nabla(\partial_{W_{0,\delta}}W_{0,\delta})
    \Vert _{L^\infty}
  \leq
  C\Vert \nabla W_{0,\delta}\Vert _{L^\infty}^2
  +
  C\Vert W_{0,\delta}\Vert _{L^\infty}
  \Vert \nabla^2W_{0,\delta}\Vert _{L^\infty}
  \leq
  C\delta^{\gamma-1}\Vert W_0\Vert _{C^{1,\gamma}}^2,
\end{align*}
where we used $0<\delta\leq1$ and $\gamma<1$.

Differentiating the convolutions gives, component wise,
\begin{align*}
  &\partial_i\Bigl[
    \partial_{W_{0,\delta}}W_{0,\delta}\,
      \theta_{0,\delta}
    -
    \rho_\delta*
    \bigl((\partial_{W_0}W_0)\theta_0\bigr)
  \Bigr] \\
  & =
  \partial_i(\partial_{W_{0,\delta}}W_{0,\delta})
  \,\theta_{0,\delta}
  +
  \int_{\mathbb R^2}
  \partial_i\rho_\delta(y)\theta_0(x-y)
  \bigl(
    \partial_{W_{0,\delta}}W_{0,\delta}(x)
    -\partial_{W_0}W_0(x-y)
  \bigr)\,\dd y.
\end{align*}
The preceding estimates therefore imply
\begin{align*}
  &\Bigl\Vert 
    \partial_{W_{0,\delta}}W_{0,\delta}\,
      \theta_{0,\delta}
    -
    \rho_\delta*
    \bigl((\partial_{W_0}W_0)\theta_0\bigr)
  \Bigr\Vert _{L^\infty}
  \\
  &+
  \delta\Bigl\Vert 
    \nabla\Bigl[
      \partial_{W_{0,\delta}}W_{0,\delta}\,
        \theta_{0,\delta}
      -
      \rho_\delta*
      \bigl((\partial_{W_0}W_0)\theta_0\bigr)
    \Bigr]
  \Bigr\Vert _{L^\infty}
  \leq
  C\delta^\gamma
  \Vert W_0\Vert _{C^{1,\gamma}}^2
  \Vert \theta_0\Vert _{L^\infty}.
\end{align*}
H\"older interpolation, together with the
$L^\infty$ bound, then gives
\begin{align*}
  \Bigl\Vert 
    \operatorname{div}\Bigl[
      \partial_{W_{0,\delta}}W_{0,\delta}\,
        \theta_{0,\delta}
      -
      \rho_\delta*
      \bigl((\partial_{W_0}W_0)\theta_0\bigr)
    \Bigr]
  \Bigr\Vert _{C^{\gamma-1}}
  \leq
  C\Vert W_0\Vert _{C^{1,\gamma}}^2
  \Vert \theta_0\Vert _{L^\infty}.
\end{align*}

For the remaining term, the divergence-free condition give
$\operatorname{div}
  (\partial_{W_{0,\delta}}W_{0,\delta})
  =
  \sum_{i,j=1}^2
  \partial_iW_{0,\delta}^j
  \partial_jW_{0,\delta}^i$.
Hence
\[
  \bigl\Vert 
    \operatorname{div}
    (\partial_{W_{0,\delta}}W_{0,\delta})
    \,\theta_{0,\delta}
  \bigr\Vert _{L^\infty}
  \leq
  C\Vert \nabla W_0\Vert _{L^\infty}^2
  \Vert \theta_0\Vert _{L^\infty}.
\]
Since $L^\infty\hookrightarrow C^{\gamma-1}$,
we conclude that  $\Vert D_3\Vert _{C^{\gamma-1}}
  \leq
  C\Vert W_0\Vert _{C^{1,\gamma}}^2
  \Vert \theta_0\Vert _{L^\infty}$. 

Combining this with the estimates for $D_1$, $D_2$,
and $D_4$, and using the embeddings
$L^\infty\hookrightarrow C^{\gamma-2\alpha}$
and $C^{\gamma-1}\hookrightarrow C^{\gamma-2\alpha}$,
we obtain
\begin{align*}
  \sup_{0<\delta\leq1}
  \Vert \partial_{W_{0,\delta}}^2\theta_{0,\delta}
    \Vert _{C^{\gamma-2\alpha}}
  \leq
  C\Bigl(
    \Vert \partial_{W_0}^2\theta_0\Vert _{C^{\gamma-2\alpha}}
    +
    \Vert \nabla W_0\Vert _{L^\infty}
    \Vert \partial_{W_0}\theta_0\Vert _{L^\infty}
    +
    \Vert W_0\Vert _{C^{1,\gamma}}^2
    \Vert \theta_0\Vert _{L^\infty}
  \Bigr).
\end{align*}

Finally, let $\gamma=1$, $\frac12<\alpha\leq1$,
$2-2\alpha<\mu<1$, and $W_0\in W^{2,\infty}$.
The estimates for $D_2$ and $D_4$ remain unchanged,
while $D_1$ is controlled by the boundedness
of mollification on $C^{\mu-1}$.
Moreover,
\[
  \bigl|
    \partial_{W_{0,\delta}}W_{0,\delta}(x)
    -\partial_{W_0}W_0(x-y)
  \bigr|
  \leq C\delta\Vert W_0\Vert _{W^{2,\infty}}^2
\]
for $y\in\operatorname{supp}\rho_\delta$.
Thus the integral representation of $D_3$ gives $\Vert D_3\Vert _{L^\infty}
  \leq C\Vert W_0\Vert _{W^{2,\infty}}^2
  \Vert \theta_0\Vert _{L^\infty}$. 
Using $L^\infty\hookrightarrow C^{\mu-1}$,
we obtain
\begin{align*}
  \sup_{0<\delta\leq1}
  \Vert \partial_{W_{0,\delta}}^2\theta_{0,\delta}
    \Vert _{C^{\mu-1}}
  \leq
  C\Bigl(
    \Vert \partial_{W_0}^2\theta_0\Vert _{C^{\mu-1}}
    +
    \Vert \nabla W_0\Vert _{L^\infty}
    \Vert \partial_{W_0}\theta_0\Vert _{L^\infty}
    +
    \Vert W_0\Vert _{W^{2,\infty}}^2
    \Vert \theta_0\Vert _{L^\infty}
  \Bigr).
\end{align*}

\subsection{Regularization of the striated derivative
of the initial velocity}
\label{sec:A3}

Recall the approximations introduced in
Sections~\ref{sec:A1}-\ref{sec:A2}.
We take $\operatorname{supp}\rho\subset B_1$.
Under the assumptions of Section~\ref{sec:C2},
$W_0\in W^{1,\infty}(\mathbb R^2)$
and $K:=\operatorname{supp}W_0$ is compact.
Set $v_{0,\delta}:=\mathbb P(\chi_\delta u_0),\,u_{0,\delta}:=\rho_\delta*v_{0,\delta},\, r:=\min\{p,q\}$.
We prove that
\begin{equation}\label{eq:velocity-regularization-uniform}
  \sup_{0<\delta\leq1}
  \left(
    \Vert u_{0,\delta}\Vert _{W^{1,q}}
    +
    \Vert \partial_{W_{0,\delta}}u_{0,\delta}
      \Vert _{\dot W^{1,r}}
  \right)
  \leq
  C\left(
    \Vert u_0\Vert _{W^{1,q}}
    +
    \Vert \partial_{W_0}u_0\Vert _{\dot W^{1,p}}
  \right),
\end{equation}
where $C$ may depend on $p,q,\rho,\chi,K$,
and $\Vert W_0\Vert _{W^{1,\infty}}$,
but is independent of $\delta$.

The uniform $W^{1,q}$ bound, smoothness of $u_{0,\delta}$,
and strong convergence $u_{0,\delta}\to u_0$ in $W^{1,q}$
follow from Section~\ref{sec:A1}.
It remains to estimate the striated derivative.

Since $\partial_{W_0}u_0$ and its distributional gradient
are supported in $K$, H\"older's inequality gives
\[
  \Vert \partial_{W_0}u_0\Vert _{\dot W^{1,r}}
  \leq C_K
  \Vert \partial_{W_0}u_0\Vert _{\dot W^{1,p}}.
\]
We choose $\delta_0\in(0,1)$ sufficiently small
so that $\chi_\delta=1$ on a fixed neighborhood
of $K+B_1$ for $0<\delta\leq\delta_0$.
On this neighborhood,
\[
  v_{0,\delta}-u_0
  =
  -\nabla\Delta^{-1}
  (\nabla\chi_\delta\cdot u_0),
\]
because $\operatorname{div}u_0=0$.
The scaling of $\chi_\delta$ and H\"older's inequality give $\Vert \nabla\chi_\delta\cdot u_0\Vert _{L^1}
  \leq
  C\delta^{-1+2/q}\Vert u_0\Vert _{L^q}$. 

Let $G(x):=(2\pi)^{-1}\log|x|$ be the fundamental
solution of the Laplacian on $\mathbb R^2$.
For $m=1,2$,
\[
  |\nabla^{m+1}G(z)|
  \leq C_m|z|^{-m-1},
  \qquad z\neq0.
\]
After decreasing $\delta_0$ if necessary, $|x-y|\geq c\delta^{-1}$ 
whenever $x\in K+B_1$,
$y\in\operatorname{supp}\nabla\chi_\delta$,
and $0<\delta\leq\delta_0$.
Thus
\begin{align*}
  |\nabla^m(v_{0,\delta}-u_0)(x)|
  =
  \left|
    \int_{\mathbb R^2}
    \nabla^{m+1}G(x-y)
    (\nabla\chi_\delta\cdot u_0)(y)\,\dd y
  \right|
  \leq
  C\delta^{m+1}
  \Vert \nabla\chi_\delta\cdot u_0\Vert _{L^1}.
\end{align*}
Consequently,
\begin{equation}\label{eq:far-field}
  \Vert \nabla^m(v_{0,\delta}-u_0)
    \Vert _{L^\infty(K+B_1)}
  \leq
  C\delta^{m+2/q}\Vert u_0\Vert _{L^q},
  \qquad m=1,2.
\end{equation}
Using $\operatorname{supp}W_0\subset K$, we obtain $\Vert \partial_{W_0}v_{0,\delta}\Vert _{\dot W^{1,r}}
  \leq
  C\left(
    \Vert \partial_{W_0}u_0\Vert _{\dot W^{1,p}}
    +
    \Vert W_0\Vert _{W^{1,\infty}}\Vert u_0\Vert _{L^q}
  \right)$. 

Lemma~\ref{lem:mollified-striated} gives $\partial_{W_{0,\delta}}u_{0,\delta}
  =
  \rho_\delta*(\partial_{W_0}v_{0,\delta})
  +\mathcal R_\delta(v_{0,\delta},W_0)$. 
Since $v_{0,\delta}\in W^{1,q}$,
the remainder can also be written as
\begin{align*}
  \mathcal R_\delta(v_{0,\delta},W_0)(x)
  =
  \int_{\mathbb R^2}\int_{\mathbb R^2}
  \rho_\delta(y)\rho_\delta(z)
  \bigl(W_0(x-y)-W_0(x-z)\bigr)
  \cdot\nabla v_{0,\delta}(x-z)\,\dd y\,\dd z.
\end{align*}
Differentiating in distributions and integrating by parts
in $z$, we find, for $k=1,2$,
\begin{align*}
  \partial_k\mathcal R_\delta(v_{0,\delta},W_0)(x)
  ={}&
  \int_{\mathbb R^2}\int_{\mathbb R^2}
  \rho_\delta(y)\rho_\delta(z)
  \partial_kW_0(x-y)\cdot\nabla v_{0,\delta}(x-z)
  \,\dd y\,\dd z
  \\
  &+
  \int_{\mathbb R^2}\int_{\mathbb R^2}
  \rho_\delta(y)\partial_k\rho_\delta(z)
  \bigl(W_0(x-y)-W_0(x-z)\bigr)
  \cdot\nabla v_{0,\delta}(x-z)
  \,\dd y\,\dd z.
\end{align*}
For $y,z\in\operatorname{supp}\rho_\delta$,
$|W_0(x-y)-W_0(x-z)|\leq C\delta\Vert \nabla W_0\Vert _{L^\infty}$.
Together with
$\Vert \nabla\rho_\delta\Vert _{L^1}\leq C\delta^{-1}$,
Young's inequality gives $\Vert \nabla\mathcal R_\delta(v_{0,\delta},W_0)\Vert _{L^q}
  \leq
  C\Vert \nabla W_0\Vert _{L^\infty}
  \Vert \nabla v_{0,\delta}\Vert _{L^q}$. 
The boundedness of $\mathbb P$ on $W^{1,q}$ gives
$\Vert \nabla v_{0,\delta}\Vert _{L^q}
  \leq C\Vert \chi_\delta u_0\Vert _{W^{1,q}}
  \leq C\Vert u_0\Vert _{W^{1,q}}$.
Moreover, $\mathcal R_\delta(v_{0,\delta},W_0)$
is supported in the fixed bounded set $K+B_1$.
Since $r\leq q$, H\"older's inequality therefore gives
\[
  \Vert \mathcal R_\delta(v_{0,\delta},W_0)\Vert _{\dot W^{1,r}}
  \leq
  C\Vert \nabla W_0\Vert _{L^\infty}\Vert u_0\Vert _{W^{1,q}}.
\]
Combining the preceding estimates, we conclude that
\[
  \Vert \partial_{W_{0,\delta}}u_{0,\delta}\Vert _{\dot W^{1,r}}
  \leq
  C\left(
    \Vert \partial_{W_0}u_0\Vert _{\dot W^{1,p}}
    +
    \Vert W_0\Vert _{W^{1,\infty}}\Vert u_0\Vert _{W^{1,q}}
  \right)
\]
for $0<\delta\leq\delta_0$.

For $\delta_0\leq\delta\leq1$, Young's inequality gives
$\Vert \nabla u_{0,\delta}\Vert _{L^q}+\Vert \nabla^2u_{0,\delta}\Vert _{L^q}
  \leq C_{\delta_0}\Vert u_0\Vert _{W^{1,q}}$.
Since $W_{0,\delta}$ is uniformly bounded
in $W^{1,\infty}$ and supported in $K+B_1$,
and $r\leq q$, it follows that
\begin{align*}
  \Vert \partial_{W_{0,\delta}}u_{0,\delta}\Vert _{\dot W^{1,r}}
  &\leq
  C\Vert W_0\Vert _{W^{1,\infty}}
  \left(
    \Vert \nabla u_{0,\delta}\Vert _{L^q}
    +\Vert \nabla^2u_{0,\delta}\Vert _{L^q}
  \right)
  \leq
  C_{\delta_0}\Vert W_0\Vert _{W^{1,\infty}}
  \Vert u_0\Vert _{W^{1,q}}.
\end{align*}
This proves \eqref{eq:velocity-regularization-uniform}
for all $0<\delta\leq1$.

Since $p>\frac{2}{2\alpha-\gamma}$ and
$q>\frac{2}{2\alpha-1}\geq\frac{2}{2\alpha-\gamma}$,
the exponent $r=\min\{p,q\}$ satisfies
$r>\frac{2}{2\alpha-\gamma}$.
Thus, the regularized data satisfy the assumptions required for the
estimates in Section~\ref{sec:C2}, with $r$ as the striated
exponent and $q$ as the velocity exponent.

\section{Compactness and passage to the limit in the
regularization procedure}\label{sec:B}

\subsection{Compactness of the regularized solutions and proof of
existence in Theorem~\ref{thm:main1}}
\label{sec:B1}

In this subsection, we justify the compactness argument used in
the proof of Theorem~\ref{thm:main1}.
Throughout the regularization procedure, $\varepsilon\in(0,1]$
is fixed.

\begin{proposition}\label{prop:compactness-main1}
Under the assumptions of Theorem~\ref{thm:main1}, let
$(u_{0,\delta},\theta_{0,\delta})$ be the smooth initial data
constructed in Appendix~\ref{sec:A1}, and let
$(u_\delta,\theta_\delta)$ be the corresponding global smooth
solutions to \eqref{BoussEq}.
Then, for every $T>0$, there exist a subsequence, still denoted
by $\delta$, and functions $(u,\theta)$ such that
\begin{align}
  u_\delta
  &\rightharpoonup^\ast u
  \qquad\qquad\text{in }L^\infty([0,T],W^{1,p}(\mathbb R^2)),
  \label{conv-main1-u-weak}\\
  u_\delta
  &\longrightarrow u
  \qquad\text{strongly in }L^2([0,T],L^2(B_R))
  \quad\text{for every }R>0,
  \label{cov-u-loc}\\
  \theta_\delta
  &\rightharpoonup^\ast\theta
  \qquad\qquad\text{in }L^\infty([0,T],L^2(\mathbb R^2)),
  \label{conv-main1-theta-L2}\\
  \theta_\delta
  &\rightharpoonup^\ast\theta
  \qquad\qquad\text{in }L^\infty((0,T)\times\mathbb R^2).
  \label{conv-main1-theta-Linf}
\end{align}
The limiting pair $(u,\theta)$ is a weak solution of
\eqref{BoussEq}, in the sense of
Definition~\ref{defn:weak-Bouss}, with initial data
$(u_0,\theta_0)$.
Moreover,
\[
  \theta\in L^\infty([0,T],L^1\cap L^\infty),
  \qquad
  u\in L^\infty([0,T],W^{1,p})
  \cap\widetilde L^1([0,T],B_{\infty,\infty}^{2\alpha}),
\]
and all the estimates in Proposition~\ref{prop:ap-es-basic}
pass to the limite.
\end{proposition}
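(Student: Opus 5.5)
The plan is to run the standard compactness scheme with $\varepsilon$ fixed, using as input the uniform-in-$\delta$ bounds of Proposition~\ref{prop:ap-es-basic}. These apply to $(u_\delta,\theta_\delta)$ because the data of Appendix~\ref{sec:A1} are uniformly bounded in $W^{1,p}\times(L^1\cap L^\infty)$; when $\alpha=1$ they also preserve the mean and the first moment. This gives
\[
\sup_{0<\delta\le1}\Bigl(\Vert u_\delta\Vert_{L^\infty_T(W^{1,p})}+\Vert u_\delta\Vert_{\widetilde L^1_T(B^{2\alpha}_{\infty,\infty})}+\Vert\theta_\delta\Vert_{L^\infty_T(L^1\cap L^\infty)}\Bigr)\le C_{\varepsilon,T}.
\]
The $L^p$ bound on $u_\delta$ is $\varepsilon$-dependent through \eqref{eq:u-Lp-es-refined}, which is harmless here. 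The bound on $\theta_\delta$ holds because the flow is measure preserving, so every $L^r$ norm is conserved. Banach--Alaoglu and a diagonal extraction then give \eqref{conv-main1-u-weak}, \eqref{conv-main1-theta-L2} and \eqref{conv-main1-theta-Linf}.

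For the strong local convergence \eqref{cov-u-loc}, I bound $\partial_t u_\delta$ in $L^2_T(H^{-s}(B_R))$ for some large $s$, uniformly in $\delta$, by writing
\[
\partial_t u_\delta=\varepsilon^{-1}\mathbb P\bigl(-\Lambda^{2\alpha}u_\delta+\theta_\delta e_2\bigr)-\mathbb P\,\mathrm{div}(u_\delta\otimes u_\delta).
\]
The nonlocal projector is handled in duality: I test only against divergence-free fields and work in the dual space $V'_R$ of compactly supported divergence-free test fields. Alternatively, I estimate the pressure via $-\Delta p=\mathrm{div}\,\mathrm{div}(u\otimes u)-\partial_2\theta$ in $L^{p/2}+\dot W^{1,\cdot}$. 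The product $u_\delta\otimes u_\delta$ is bounded in $L^\infty_T(L^{p/2})$, and $\Lambda^{2\alpha}u_\delta$ is bounded in $L^\infty_T(W^{-1,p})$. Aubin--Lions with $W^{1,p}(B_R)\Subset L^2(B_R)\hookrightarrow H^{-s}(B_R)$ then gives \eqref{cov-u-loc} for the solenoidal components. Since the sequence is divergence-free, the pressure part drops out once the identity is tested against divergence-free $\phi$.

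To pass to the limit in Definition~\ref{defn:weak-Bouss}, I treat each term separately.
\begin{itemize}
\item The linear terms pass directly by weak convergence. For $\int u_\delta\cdot\Lambda^{2\alpha}\phi$ I use $\Lambda^{2\alpha}\phi\in L^1_T(L^{p'})$, which holds by the decay $(1+|x|)^{-2-2\alpha}$.
\item The term $u_\delta\otimes u_\delta$ is handled by the strong local $L^2$ convergence.
\item The transport term $\theta_\delta u_\delta\cdot\nabla\psi$ is a product of a weakly-$*$ convergent sequence and a strongly $L^2_{loc}$ convergent one, hence converges.
\item The initial terms converge by the strong convergence of the data.
\end{itemize}
The divergence-free condition passes trivially. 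For the stated regularity, lower semicontinuity of weak-$*$ limits transfers the $L^\infty_T(L^1\cap L^\infty)$ and $W^{1,p}$ bounds. For the Chemin--Lerner norm, each $\Delta_j u_\delta\to\Delta_j u$ in $\mathcal D'$, so $\Vert\Delta_j u\Vert_{L^1_T(L^\infty)}\le\liminf_\delta\Vert\Delta_j u_\delta\Vert_{L^1_T(L^\infty)}$; taking $\sup_j 2^{2\alpha j}$ gives the bound. The same dyadic lower-semicontinuity argument covers all the norms in Proposition~\ref{prop:ap-es-basic}: $\Gamma_\delta$ converges in distributions because $\omega_\delta$ does and $\mathcal R_{1-2\alpha}\theta_\delta$ converges weakly.

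The main obstacle is the time-derivative bound on the whole space with the nonlocal operators $\mathbb P$ and $\Lambda^{2\alpha}$ combined with local compactness. I will avoid localizing $\mathbb P$ altogether by applying Aubin--Lions to the solenoidal part only, i.e.\ by using the duality with compactly supported divergence-free test functions as above. A secondary point is that $\theta_\delta$ converges only weakly. This suffices because every nonlinearity involving $\theta$ is paired with $u_\delta$, which converges strongly.
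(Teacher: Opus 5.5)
Your overall scheme is the same as the paper's. It uses uniform-in-$\delta$ bounds from Proposition~\ref{prop:ap-es-basic} and \eqref{eq:u-Lp-es-refined}, Banach--Alaoglu, Aubin--Lions from a time-derivative bound, term-by-term passage to the limit, and lower semicontinuity of dyadic norms. However, the step you single out as the main obstacle is resolved incorrectly.

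\textbf{The $V_R'$ route fails.} If you only control $\partial_t u_\delta$ in the dual $V_R'$ of compactly supported divergence-free fields on $B_R$, Aubin--Lions is not available. The map $L^2(B_R)\to V_R'$ is not injective on divergence-free fields. Every $\nabla h$ with $h$ harmonic on $B_R$, in particular every constant vector, is divergence-free and satisfies $\int\nabla h\cdot\phi\,\dd x=-\int h\,\div\phi\,\dd x=0$. Consequently the Lions inequality $\Vert v\Vert_{L^2(B_R)}\le\eta\Vert v\Vert_{W^{1,p}(B_R)}+C_\eta\Vert v\Vert_{V_R'}$ fails for constant $v$ and small $\eta$. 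Concretely, $v_\delta(t,x)=\sin(t/\delta)e_1$ on $B_R$ is divergence-free and bounded in $L^\infty_T(W^{1,p}(B_R))$, and $\partial_tv_\delta=0$ in $V_R'$. Yet it is not precompact in $L^2((0,T)\times B_R)$. So that route yields compactness only modulo local harmonic gradients, not \eqref{cov-u-loc}.

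\textbf{A smaller slip at $p=2$.} Your bound $u_\delta\otimes u_\delta\in L^\infty_T(L^{p/2})$ becomes $L^\infty_T(L^1)$ when $p=2$. Neither $\mathbb P$ nor the Calder\'on--Zygmund pressure estimate is bounded on $L^1$.

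\textbf{The fix: nothing needs to be localized.} The smooth solutions satisfy the projected equation on all of $\mathbb R^2$. Moreover, $\mathbb P$ is bounded on $L^r(\mathbb R^2)$ for $1<r<\infty$ and commutes with derivatives. Using $W^{1,p}\hookrightarrow L^{2p}$, so that $u_\delta\otimes u_\delta$ is bounded in $L^p$, the paper obtains, uniformly in $\delta$,
\[
  \Vert\partial_tu_\delta\Vert_{W^{-2,p}}
  \le C\bigl(\Vert u_\delta\Vert_{L^{2p}}^2+\varepsilon^{-1}\Vert u_\delta\Vert_{L^p}+\varepsilon^{-1}\Vert\theta_\delta\Vert_{L^p}\bigr).
\]
Restricting to $B_R$ bounds the full $\partial_tu_\delta$ in $H^{-m}(B_R)$. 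Aubin--Lions with the injective chain $W^{1,p}(B_R)\Subset L^2(B_R)\hookrightarrow H^{-m}(B_R)$ then gives \eqref{cov-u-loc}. Your pressure alternative also works once you estimate $u_\delta\otimes u_\delta$ in $L^p$ rather than $L^{p/2}$.

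The rest of your argument is fine. This includes the Chemin--Lerner lower semicontinuity: you argue directly in space--time, whereas the paper uses a.e.\ $t$ convergence in $\mathcal S'$ together with Fatou's lemma.
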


\begin{proof}
By Appendix~\ref{sec:A1}, the approximate initial data satisfy
\begin{align}
  \operatorname{div}u_{0,\delta}&=0,
  \qquad
  u_{0,\delta},\theta_{0,\delta}\in H^m(\mathbb R^2)
  \quad\text{for every }m\geq0,
  \label{regularized-main1-smooth}\\
  u_{0,\delta}&\longrightarrow u_0
  \quad\text{strongly in }W^{1,p}(\mathbb R^2),
  \label{regularized-main1-u-conv}\\
  \theta_{0,\delta}&\longrightarrow\theta_0
  \quad\text{strongly in }L^r(\mathbb R^2),
  \qquad 1\leq r<\infty,
  \label{regularized-main1-theta-conv}
\end{align}
as $\delta\to0$.
In addition,
\[
  \sup_{0<\delta\leq1}
  \Vert u_{0,\delta}\Vert _{W^{1,p}}
  \leq C\Vert u_0\Vert _{W^{1,p}},
  \qquad
  \sup_{0<\delta\leq1}
  \Vert \theta_{0,\delta}\Vert _{L^1\cap L^\infty}
  \leq\Vert \theta_0\Vert _{L^1\cap L^\infty}.
\]

For each fixed $\delta>0$, local smooth solvability,
the {\it{a priori}} estimates in
Proposition~\ref{prop:ap-es-basic}, and the standard
continuation argument yield a unique global smooth solution
$(u_\delta,\theta_\delta)$.
Indeed, the bound on
$\int_0^T\Vert \nabla u_\delta\Vert _{L^\infty}\,\dd t$
controls $\Vert \nabla\theta_\delta\Vert _{L^\infty}$ through the
transport equation and prevents blowup of the higher
Sobolev norms on finite time intervals.

The maximum principle,
Proposition~\ref{prop:ap-es-basic}, and
the velocity bound \eqref{eq:u-Lp-es-refined} give
\begin{equation}\label{regularized-main1-basic-bounds}
\begin{aligned}
  \sup_{0<\delta\leq1}\Big(
    \Vert \theta_\delta\Vert _{L^\infty([0,T],L^1\cap L^\infty)}
    +\Vert u_\delta\Vert _{L^\infty([0,T],W^{1,p})}
    +\Vert u_\delta\Vert _{\widetilde L^1
      (0,T;B_{\infty,\infty}^{2\alpha})}
  \Big)
  \leq C_{T,\varepsilon}.
\end{aligned}
\end{equation}
Here and below, the constants are independent of $\delta$.
The estimates in Proposition~\ref{prop:ap-es-basic}
that are uniform in $\varepsilon$ are also uniform in $\delta$.

Applying the Leray projector to the velocity equation gives
\[
  \varepsilon\partial_tu_\delta
  =
  -\varepsilon\mathbb P\operatorname{div}
    (u_\delta\otimes u_\delta)
  -\Lambda^{2\alpha}u_\delta
  +\mathbb P(\theta_\delta e_2).
\]
Since $W^{1,p}(\mathbb R^2)\hookrightarrow L^{2p}(\mathbb R^2)$
for $2\leq p<\infty$, the boundedness of $\mathbb P$ on
$L^p$ and the mapping properties of $\Lambda^{2\alpha}$ give
\[
  \Vert \partial_tu_\delta\Vert _{W^{-2,p}}
  \leq C_{\alpha,p}\left(
    \Vert u_\delta\Vert _{L^{2p}}^2
    +\frac1\varepsilon\Vert u_\delta\Vert _{L^p}
    +\frac1\varepsilon\Vert \theta_\delta\Vert _{L^p}
  \right).
\]
Consequently, for every $R>0$ and any sufficiently large
integer $m$,
\begin{equation}\label{regularized-main1-time-bound}
  \sup_{0<\delta\leq1}
  \Vert \partial_tu_\delta\Vert _{L^1([0,T],H^{-m}(B_R))}
  \leq C_{T,R,\varepsilon}.
\end{equation}

The Banach-Alaoglu theorem gives
\eqref{conv-main1-u-weak}.
Moreover, since
$W^{1,p}(B_R)\Subset L^2(B_R)
  \hookrightarrow H^{-m}(B_R)$,
the Aubin-Lions lemma and a diagonal argument yield
\eqref{cov-u-loc}.
The uniform temperature bounds similarly give
\eqref{conv-main1-theta-L2} and
\eqref{conv-main1-theta-Linf}, after a further extraction.
The limits agree by uniqueness of the distributional limit.

The strong local convergence of $u_\delta$ implies
\begin{align}
  u_\delta\otimes u_\delta
  &\longrightarrow u\otimes u
  \qquad\text{strongly in }
  L^1([0,T],L^1_{\mathrm{loc}}(\mathbb R^2)),
  \label{conv-main1-quadratic-u}\\
  \theta_\delta u_\delta
  &\longrightarrow\theta u
  \qquad\text{in }\mathcal D'((0,T)\times\mathbb R^2).
  \label{conv-main1-theta-u}
\end{align}
For the second convergence, we use
$\theta_\delta u_\delta-\theta u
  =
  \theta_\delta(u_\delta-u)+(\theta_\delta-\theta)u$,
together with the uniform $L^\infty$ bound and weak-$\ast$
convergence of $\theta_\delta$.

For the dissipative term, every
$\varphi\in C_c^\infty([0,T)\times\mathbb R^2;\mathbb R^2)$
satisfies
$\Lambda^{2\alpha}\varphi
\in L^1([0,T],L^{p'}(\mathbb R^2))$.
Thus \eqref{conv-main1-u-weak} gives
\[
  \int_0^T\!\int_{\mathbb R^2}
  u_\delta\cdot\Lambda^{2\alpha}\varphi\,\dd x\,\dd t
  \longrightarrow
  \int_0^T\!\int_{\mathbb R^2}
  u\cdot\Lambda^{2\alpha}\varphi\,\dd x\,\dd t.
\]
Using these convergences and
\eqref{regularized-main1-u-conv}-\eqref{regularized-main1-theta-conv}, we may pass to the limit
in the weak formulations of the momentum and transport equations.
The divergence-free condition also passes to the limit.
Hence $(u,\theta)$ satisfies
Definition~\ref{defn:weak-Bouss} with initial data
$(u_0,\theta_0)$.

For every $R>0$, weak-$\ast$ lower semicontinuity on $B_R$
and the maximum principle give
\[
  \Vert \theta\Vert _{L^\infty([0,T],L^1(B_R))}
  \leq
  \liminf_{\delta\to0}
  \Vert \theta_\delta\Vert _{L^\infty([0,T],L^1(B_R))}
  \leq\Vert \theta_0\Vert _{L^1}.
\]
Letting $R\to\infty$ and using
\eqref{conv-main1-theta-Linf}, we obtain
\[
  \Vert \theta\Vert _{L^\infty([0,T],L^1)}
  \leq\Vert \theta_0\Vert _{L^1},
  \qquad
  \Vert \theta\Vert _{L^\infty([0,T],L^\infty)}
  \leq\Vert \theta_0\Vert _{L^\infty}.
\]
Likewise, \eqref{conv-main1-u-weak} and weak-$\ast$ lower
semicontinuity yield
$u\in L^\infty([0,T],W^{1,p}(\mathbb R^2))$.

Under the assumptions of part~(4) of
Proposition~\ref{prop:ap-es-basic}, Appendix~\ref{sec:A1}
and \eqref{eq:u-LQ-es} give a uniform bound for
$u_\delta$ in $L^\infty([0,T],L^Q)$, also when $\alpha=1$
with zero mean and finite first absolute moment.
A further weak-$\ast$ extraction, identified by
\eqref{cov-u-loc}, and lower semicontinuity pass this
$\varepsilon$-independent bound to $u$.

It remains to pass the
$\widetilde L^1([0,T],B_{\infty,\infty}^{2\alpha})$ estimate
to the limit.
After extracting a further subsequence,
\eqref{cov-u-loc} gives $u_\delta(t)\rightarrow u(t)$ in  $L^2_{\mathrm{loc}}(\mathbb R^2)$ 
for almost every $t\in(0,T)$.
Together with the uniform global $L^p$ bound, this implies
convergence in $\mathcal S'(\mathbb R^2)$ for almost every $t$.
Therefore, for every fixed $j\geq-1, \Delta_j u_\delta(t)\longrightarrow\Delta_j u(t)$ in $\mathcal S'(\mathbb R^2)$
for almost every $t\in(0,T)$.

We claim that
\begin{equation}\label{es:dya-u-Linf}
  \Vert \Delta_j u(t)\Vert _{L^\infty}
  \leq
  \liminf_{\delta\to0}
  \Vert \Delta_j u_\delta(t)\Vert _{L^\infty}.
\end{equation}
If the liminf is finite, a subsequence realizing it admits
a weak-$\ast$ convergent subsequence in $L^\infty$.
Distributional convergence identifies its limit with
$\Delta_j u(t)$, proving \eqref{es:dya-u-Linf}.
If the liminf is infinite, the inequality is immediate.

Fatou's lemma now gives
$\Vert \Delta_j u\Vert _{L^1([0,T],L^\infty)} \leq
  \liminf_{\delta\to0}
  \Vert \Delta_j u_\delta\Vert _{L^1([0,T],L^\infty)}$.
Taking the supremum over $j\geq-1$, we obtain
\begin{align*}
  \Vert u\Vert _{\widetilde L^1
    (0,T;B_{\infty,\infty}^{2\alpha})}
  =
  \sup_{j\geq-1}
  2^{2\alpha j}
  \Vert \Delta_j u\Vert _{L^1([0,T],L^\infty)}
  \leq
  \liminf_{\delta\to0}
  \Vert u_\delta\Vert _{\widetilde L^1
    (0,T;B_{\infty,\infty}^{2\alpha})}
  \leq C_{T,\varepsilon}.
\end{align*}

The remaining estimates in
Proposition~\ref{prop:ap-es-basic} pass to the limit similarly,
by distributional convergence and lower semicontinuity
of the corresponding Sobolev and dyadic norms.
Their constants retain the asserted uniformity in
$\varepsilon\in(0,1]$.
This completes the proof.
\end{proof}

\subsection{Passage to the limit in the first-order striated estimates}
\label{sec:B2}

We now transfer the first-order striated estimates from the
regularized solutions to the limiting solution.
As in Appendix~\ref{sec:B1}, $\varepsilon\in(0,1]$ is fixed
throughout the compactness argument.

We first consider case III.
The uniform estimate \eqref{es:W-Cgam} and the
Banach-Alaoglu theorem give, after extracting a subsequence,
\[
  W_\delta\rightharpoonup^\ast\widetilde W
  \quad\text{in }
  L^\infty((0,T)\times\mathbb R^2).
\]
Since $\operatorname{div}u_\delta
=\operatorname{div}W_\delta=0$, the tangent equation reads
\[
  \partial_tW_\delta
  +\operatorname{div}
  \bigl(W_\delta\otimes u_\delta-u_\delta\otimes W_\delta\bigr)
  =0.
\]
Consequently, for every $R>0$,
\begin{align*}
  \Vert \partial_tW_\delta\Vert _
    {L^\infty([0,T],W^{-1,p}(B_R))}
  \leq
  C\Vert W_\delta\Vert _{L^\infty((0,T)\times\mathbb R^2)}
  \Vert u_\delta\Vert _{L^\infty([0,T],L^p(\mathbb R^2))}
  \leq C_{T,\varepsilon},
\end{align*}
uniformly in $\delta$.

For every $0<\gamma'<\gamma$, we have
$C^\gamma(\overline{B_R})
  \hookrightarrow C^{\gamma'}(\overline{B_R})
  \hookrightarrow W^{-1,p}(B_R)$ where the first embedding is compact.
The Aubin-Lions lemma therefore gives
\begin{equation}\label{eq:Wdelta-strong-C2}
  W_\delta\longrightarrow\widetilde W
  \quad\text{strongly in }
  C\bigl([0,T];C^{\gamma'}(\overline{B_R})\bigr).
\end{equation}
A diagonal argument allows the same subsequence to be chosen
for every $R>0$ and $0<\gamma'<\gamma$.

We next identify the limiting equation.
For every
$\zeta\in C_c^\infty([0,T)\times\mathbb R^2;\mathbb R^2)$,
the weak formulation reads
\begin{align*}
  \int_0^T\!\int_{\mathbb R^2}
  \Bigl[
    W_\delta\cdot\partial_t\zeta
    +\bigl(
      W_\delta\otimes u_\delta
      -u_\delta\otimes W_\delta
    \bigr):\nabla\zeta
  \Bigr]\,\dd x\,\dd t
  +
  \int_{\mathbb R^2}
  W_{0,\delta}(x)\cdot\zeta(0,x)\,\dd x=0.
\end{align*}
By Proposition~\ref{prop:compactness-main1},
\[
  u_\delta\longrightarrow u
  \quad\text{strongly in }
  L^2([0,T],L^2(B_R))
\]
for every $R>0$.
Together with \eqref{eq:Wdelta-strong-C2}, this implies
\[
  W_\delta\otimes u_\delta
  \longrightarrow\widetilde W\otimes u,
  \qquad
  u_\delta\otimes W_\delta
  \longrightarrow u\otimes\widetilde W,
\]
strongly in $L^2([0,T],L^2(B_R))$.

Since $W_{0,\delta}\to W_0$ in
$C^{\gamma'}_{\mathrm{loc}}(\mathbb R^2)$,
we may pass to the limit in the weak formulation.
The divergence-free conditions also pass to the limit,
giving $\operatorname{div}u=\operatorname{div}\widetilde W=0$.
Thus
\[
  \partial_t\widetilde W+u\cdot\nabla\widetilde W
  =\widetilde W\cdot\nabla u,
  \qquad
  \widetilde W|_{t=0}=W_0,
\]
in the sense of distributions.

In case III, the assumptions
$\gamma>2\alpha-1$ and $p>2/(2\alpha-\gamma)$ imply $p>2$.
The velocity estimates give
$\nabla u\in L^1([0,T],L^\infty)$.
Thus the tangent equation has at most one bounded solution,
by Gronwall's inequality along the flow.
On the other hand, setting
\[
  \varphi(t,x):=\varphi_0(X_t^{-1}(x)),
  \qquad W:=\nabla^\perp\varphi,
\]
and differentiating the transport equation for $\varphi$
shows that $W$ is a bounded distributional solution of
the same tangent equation with initial data $W_0$.
Consequently, $\widetilde W=W$.

The local uniform convergence
\eqref{eq:Wdelta-strong-C2} and lower semicontinuity
of the H\"older norm now yield
\[
  \Vert W\Vert _{L^\infty([0,T],C^\gamma(\mathbb R^2))}
  \leq
  \liminf_{\delta\to0}
  \Vert W_\delta\Vert _{L^\infty([0,T],C^\gamma(\mathbb R^2))}
  \leq Ce^{\exp(CT)},
\]
where $C$ is independent of $\varepsilon\in(0,1]$.
Moreover, the tangent equation along the flow gives
\[
  \inf_{x\in\partial D(t)}|W(t,x)|
  \geq
  \inf_{x\in\partial D_0}|W_0(x)|
  \exp\left(
    -\int_0^t\Vert \nabla u(\tau)\Vert _{L^\infty}\,\dd\tau
  \right)>0.
\]
Here the spatial continuity of $W$ extends the flow identity
from almost every initial point to the boundary.
The level-set representation and this nondegeneracy bound
therefore yield the uniform-in-$\varepsilon$ persistence
of $C^{1,\gamma}$ boundary regularity in case III.

We finally consider case IV.
The uniform estimate \eqref{eq:W-W1inf-bdd} gives
\[
  \sup_{0<\delta\leq1}
  \Vert W_\delta\Vert _{L^\infty([0,T],W^{1,\infty}(\mathbb R^2))}
  \leq Ce^{\exp(CT)},
\]
with $C$ independent of $\varepsilon$.
Using
$W^{1,\infty}(B_R)\Subset C^{\gamma'}(\overline{B_R})$
for $0<\gamma'<1$, the preceding compactness and
identification arguments again yield, after extraction,
\[
  W_\delta\longrightarrow W=\nabla^\perp\varphi
  \quad\text{strongly in }
  C\bigl([0,T];C^{\gamma'}(\overline{B_R})\bigr)
\]
for every $R>0$ and $0<\gamma'<1$.

By the Banach-Alaoglu theorem and distributional
differentiation, after a further extraction,
\[
  \nabla W_\delta\rightharpoonup^\ast\nabla W
  \quad\text{in }L^\infty((0,T)\times\mathbb R^2).
\]
Weak-$\ast$ lower semicontinuity therefore gives
\[
  \Vert W\Vert _{L^\infty([0,T],W^{1,\infty}(\mathbb R^2))}
  \leq
  \liminf_{\delta\to0}
  \Vert W_\delta\Vert _{L^\infty([0,T],W^{1,\infty}(\mathbb R^2))}
  \leq Ce^{\exp(CT)}.
\]
Together with the preceding tangent nondegeneracy bound,
this proves $W^{2,\infty}$ boundary persistence
in case IV, with bounds uniform in $\varepsilon$.


\subsection{Passage to the limit in the second-order striated estimates}
\label{sec:B3}

Fix $T>0$ and $\varepsilon\in(0,1]$.
We first consider cases I and II.
Using the uniform $L^\infty([0,T],C^{1,\gamma})$ bound
in place of the $L^\infty([0,T],C^\gamma)$ bound,
the compactness argument in Section~\ref{sec:B2} gives,
after a diagonal extraction,
\[
  W_\delta\longrightarrow W
  \quad\text{strongly in }
  C\bigl([0,T];C^{1,\gamma'}(\overline{B_R})\bigr)
\]
for every $R>0$ and $0<\gamma'<\gamma$.
Lower semicontinuity therefore gives
\[
  \Vert W\Vert _{L^\infty([0,T],C^{1,\gamma}(\mathbb R^2))}
  \leq
  \liminf_{\delta\to0}
  \Vert W_\delta\Vert _{L^\infty([0,T],C^{1,\gamma}(\mathbb R^2))}
  \leq Ce^{\exp(CT)},
\]
where $C$ is independent of $\varepsilon$.

It remains to consider cases III and IV.
We use the uniform striated estimates
\eqref{es:pWW-Cgam} and \eqref{eq:es-k=2-IV}
established in Section~\ref{sec:C2}.

\medskip
\noindent\textbf{Case III.}
Since $\operatorname{div}W_\delta=0$, one can get $\partial_{W_\delta}W_\delta
  =\operatorname{div}(W_\delta\otimes W_\delta)$. 
The strong local convergence \eqref{eq:Wdelta-strong-C2}
therefore implies
\[
  \partial_{W_\delta}W_\delta
  \longrightarrow
  \operatorname{div}(W\otimes W)
  :=\partial_WW
  \quad\text{in }
  \mathcal D'((0,T)\times\mathbb R^2).
\]

We next establish compactness in time.
Using
$\partial_{W_\delta}^{\,2}u_\delta
=\operatorname{div}
((\partial_{W_\delta}u_\delta)\otimes W_\delta)$,
we rewrite \eqref{pWW} as
\[
  \partial_t(\partial_{W_\delta}W_\delta)
  =
  \operatorname{div}\Bigl(
    (\partial_{W_\delta}u_\delta)\otimes W_\delta
    -(\partial_{W_\delta}W_\delta)\otimes u_\delta
  \Bigr).
\]
Consequently, for every $R>0$,
\begin{align*}
  \Vert \partial_t(\partial_{W_\delta}W_\delta)
    \Vert _{H^{-1}(B_R)}
  \leq
  C\Vert W_\delta\Vert _{L^\infty(B_R)}^2
    \Vert \nabla u_\delta\Vert _{L^2(B_R)}
  +
  C\Vert \partial_{W_\delta}W_\delta\Vert _{L^\infty(B_R)}
    \Vert u_\delta\Vert _{L^2(B_R)}.
\end{align*}
The uniform striated estimates and the velocity bounds
from Appendix~\ref{sec:B1} thus give
\[
  \sup_{0<\delta\leq1}
  \Vert \partial_t(\partial_{W_\delta}W_\delta)\Vert _
    {L^\infty([0,T],H^{-1}(B_R))}
  \leq C_{T,R,\varepsilon}.
\]
Using
$C^\gamma(\overline{B_R})
  \Subset C^{\gamma'}(\overline{B_R})
  \hookrightarrow H^{-1}(B_R)$,
$0<\gamma'<\gamma$,
the Aubin-Lions lemma and a diagonal argument yield,
after extraction,
\[
  \partial_{W_\delta}W_\delta
  \longrightarrow\partial_WW
  \quad\text{strongly in }
  C\bigl([0,T];C^{\gamma'}(\overline{B_R})\bigr)
\]
for every $R>0$ and $0<\gamma'<\gamma$.
The limit is identified by the preceding distributional
convergence.

Lower semicontinuity of the H\"older norm gives
\[
  \Vert \partial_WW\Vert _{L^\infty([0,T],C^\gamma(\mathbb R^2))}
  \leq
  \liminf_{\delta\to0}
  \Vert \partial_{W_\delta}W_\delta\Vert _
    {L^\infty([0,T],C^\gamma(\mathbb R^2))}
  \leq Ce^{\exp(CT)},
\]
with $C$ independent of $\varepsilon$.

\medskip
\noindent\textbf{Case IV.}
The same divergence-form argument gives the uniform
$L^\infty([0,T],H^{-1}(B_R))$ bound for
$\partial_t(\partial_{W_\delta}W_\delta)$.
Combining this with \eqref{eq:es-k=2-IV} and  $W^{1,\infty}(B_R)
  \Subset C^\beta(\overline{B_R})
  \hookrightarrow H^{-1}(B_R)$ for $0<\beta<1$, 
we obtain, after a diagonal extraction,
\[
  \partial_{W_\delta}W_\delta
  \longrightarrow\partial_WW
  \quad\text{strongly in }
  C\bigl([0,T];C^\beta(\overline{B_R})\bigr)
\]
for every $R>0$ and $0<\beta<1$,
with the limit identified as in case III.

By the Banach-Alaoglu theorem and 
differentiation in the sense of distribution, after a further extraction if necessary,
\[
  \nabla(\partial_{W_\delta}W_\delta)
  \rightharpoonup^\ast\nabla(\partial_WW)
  \quad\text{in }
  L^\infty((0,T)\times\mathbb R^2).
\]
Weak-$\ast$ lower semicontinuity therefore gives
\[
  \Vert \partial_WW\Vert _{L^\infty([0,T],W^{1,\infty}(\mathbb R^2))}
  \leq
  \liminf_{\delta\to0}
  \Vert \partial_{W_\delta}W_\delta\Vert _
    {L^\infty([0,T],W^{1,\infty}(\mathbb R^2))}
  \leq Ce^{\exp(CT)},
\]
with $C$ independent of $\varepsilon$.
For the representative obtained by local convergence
in time, the spatial Lipschitz bound holds
for every $t\in[0,T]$.

Finally, combining~\eqref{eq:boundary-identities} with the
estimates above and the uniform tangent nondegeneracy established
in Section~\ref{sec:B2}, we obtain the desired persistence of
$C^{2,\gamma}$ boundary regularity in cases I-III and of
$W^{3,\infty}$ boundary regularity in case IV, with bounds
uniform in $\varepsilon\in(0,1]$.

\addtocontents{toc}{\protect\setcounter{tocdepth}{2}}

\end{document}